\theoremstyle{plain}
\newtheorem{theorem}{Theorem}[section]
\newtheorem{proposition}[theorem]{Proposition}
\newtheorem{lemma}[theorem]{Lemma}
\newtheorem{corollary}[theorem]{Corollary}
\theoremstyle{definition}
\newtheorem{definition}[theorem]{Definition}
\newtheorem{example}[theorem]{Example}
\newtheorem{remark}[theorem]{Remark}
\DeclareMathOperator{\lk}{lk}
\DeclareMathOperator{\dl}{\text{dl}}
\DeclareMathOperator{\DEG}{\text{Deg}}
\newcommand{\bone}{rib}
\newcommand{\longmapsfrom}{\mathrel{\reflectbox{\ensuremath{\longmapsto}}}}
\newenvironment{enum*}
{\parskip=-4pt
\begin{enumerate*}

}
{\end{enumerate*}}
\begin{document}

\title{Generalized Macaulay representations and the flag $f$-vectors of generalized colored complexes}
\date{}
\author{Kai Fong Ernest Chong}
\address{Department of Mathematics\\
      Cornell University\\
      Ithaca, NY 14853-4201, USA}
\email{kc343@cornell.edu}

\keywords{flag $f$-vector, Macaulay representation, colored complex, balanced complex, color shifting, vertex-decomposable}
\subjclass[2010]{05E45}

\begin{abstract}
A colored complex of type ${\bf a} = (a_1, \dots, a_n)$ is a simplicial complex $\Delta$ on a vertex set $V$, together with an ordered partition $(V_1, \dots, V_n)$ of $V$, such that every face $F$ of $\Delta$ satisfies $|F \cap V_i| \leq a_i$. For each ${\bf b} = (b_1, \dots, b_n) \leq {\bf a}$, let $f_{\bf b}$ be the number of faces $F$ of $\Delta$ such that $|F \cap V_i| = b_i$. The array of integers $\{f_{\bf b}\}_{{\bf b} \leq {\bf a}}$ is called the fine $f$-vector of $\Delta$, and it is a refinement of the $f$-vector of $\Delta$. In this paper, we generalize the notion of Macaulay representations and give a numerical characterization of the fine $f$-vectors of colored complexes of arbitrary type, in terms of these generalized Macaulay representations. As part of the proof, we introduce the property of ${\bf a}$-Macaulay decomposability for simplicial complexes, which implies vertex-decomposability, and we show that every pure color-shifted balanced complex $\Delta$ of type ${\bf a}$ is ${\bf a}$-Macaulay decomposable. Combined with previously known results, we also obtain a numerical characterization of the flag $f$-vectors of completely balanced Cohen-Macaulay complexes.
\end{abstract}

\maketitle

\section{Introduction and Overview}\label{sec:Intro}
The Kruskal-Katona theorem~\cite{Katona1968,Kruskal1963,Schutzenberger1959} is a fundamental result in geometric combinatorics that gives a numerical characterization of the $f$-vectors of simplicial complexes. Since its proof in the 1960s, there has been much work on finding analogous numerical characterizations for various classes of simplicial complexes, multicomplexes, and polytopal complexes; see \cite{Macaulay1927,Nevo2006:GeneralizedWegnerTheorem,Stanley1975:UBCandCohenMacaulayRings,Stanley1977:CohenMacaulayComplexes,book:StanleyGreenBook,Wegner1984}. A common theme in many of these results is to prove that a list of several classes of complexes share the same $f$-vectors or $h$-vectors, and use the fact that an explicit numerical characterization is known for one of these classes. This strategy, albeit fruitful, is inherently limited by the numerical characterizations that are already known. Such characterizations are usually expressed in terms of Macaulay representations and suitably defined differentials on these representations, yet one fundamental obstacle remains: Macaulay representations are generally not well-suited for characterizing complexes with additional combinatorial structure.

In this paper, we introduce a generalized notion of Macaulay representations. Our motivation stems from the fact that colored complexes and completely balanced Cohen-Macaulay complexes, which include Coxeter complexes and order complexes of posets as important subclasses, are well-studied classes of complexes for which complete numerical characterizations of their fine $f$-vectors were previously unknown. Our main goal is to give numerical characterizations for these two classes in terms of generalized Macaulay representations.

Throughout this paper, a {\it colored complex} of type ${\bf a} = (a_1, \ldots, a_n)$ is a pair $(\Delta, \pi)$, where $\Delta$ is a simplicial complex with vertex set $V$, and $\pi = (V_1, \dots, V_n)$ is an ordered partition of $V$, such that every face $F$ of $\Delta$ satisfies $|F \cap V_i| \leq a_i$. Note that the usual notion of colored complexes many authors use is equivalent to our definition of colored complexes of type $(1, \dots, 1)$. A $(d-1)$-dimensional {\it balanced complex} of type ${\bf a}$ is a colored complex $\Delta$ of type ${\bf a}$ satisfying $\sum a_i = d$, while a $(d-1)$-dimensional {\it completely balanced complex} is a balanced complex of type ${\bf 1}_d := (1,\dots, 1)$. For brevity, we write `CM', `${\bf a}$-balanced' and `${\bf a}$-colored' to mean `Cohen-Macaulay', `balanced of type ${\bf a}$' and `colored of type ${\bf a}$' respectively. For each ${\bf b} = (b_1, \dots, b_n) \leq {\bf a}$, let $f_{\bf b}$ be the number of faces $F$ of $\Delta$ such that $|F \cap V_i| = b_i$. The array of integers $\{f_{\bf b}\}_{{\bf b} \leq {\bf a}}$ is called the {\it fine $f$-vector} of $\Delta$, and it is a refinement of the $f$-vector of $\Delta$. Although some authors call this array the flag $f$-vector, we instead reserve the notion of `flag $f$-vector' to mean something else that is closely related. All other relevant terminology that we use in the rest of this section will be defined in later sections.

Around the 1980s, Bj\"{o}rner, Frankl and Stanley~\cite{BjornerFranklStanley1987:CohenMacaulayComplexes,Stanley1979:BalancedCohenMacaulayComplexes} proved the following combinatorial characterization:

\begin{theorem}\label{thm:BFSthm}
Let ${\bf a} \in \mathbb{P}^n$, and let $f = \{f_{\bf b}\}_{{\bf b} \leq {\bf a}}$ be an array of integers. The following are equivalent:
\begin{enum*}
\item $f$ is the fine $h$-vector of an ${\bf a}$-balanced CM complex.
\item $f$ is the fine $h$-vector of an ${\bf a}$-balanced pure shellable complex.
\item $f$ is the fine $f$-vector of an ${\bf a}$-colored multicomplex.
\item $f$ is the fine $f$-vector of a color-compressed ${\bf a}$-colored multicomplex.
\end{enum*}
\end{theorem}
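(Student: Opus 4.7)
My plan is to prove the cycle $(ii) \Rightarrow (i) \Rightarrow (iii) \Rightarrow (iv) \Rightarrow (ii)$, noting that $(iv) \Rightarrow (iii)$ is immediate from the definitions. The implication $(ii) \Rightarrow (i)$ is the classical fact that pure shellability implies the Cohen--Macaulay property, and the ${\bf a}$-balanced structure is trivially preserved.

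For $(i) \Rightarrow (iii)$, I would work with the Stanley--Reisner ring $k[\Delta]$, endowed with the $\mathbb{Z}^n$-grading coming from the color partition $(V_1, \dots, V_n)$. Since $\Delta$ is ${\bf a}$-balanced of dimension $\sum a_i - 1$, a generic choice of $a_i$ linear forms in the variables of color $i$ for each $i$ yields a multigraded linear system of parameters $\Theta$, and the Cohen--Macaulay hypothesis forces $A := k[\Delta]/(\Theta)$ to be multigraded Artinian with multigraded Hilbert series exactly $\{h_{\bf b}\}_{{\bf b} \leq {\bf a}}$. The final step is to pass to a colored generic initial ideal, or an equivalent color-preserving deformation, so that the resulting Artinian monomial quotient is spanned by a set of monomials whose total degree in the $V_i$-variables is bounded by $a_i$; this set of monomials then constitutes an ${\bf a}$-colored multicomplex with the desired fine $f$-vector.

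For $(iii) \Rightarrow (iv)$, I would apply a color-by-color compression: fix an ordering of the colors, and for each color $i$ in turn, within each fixed multidegree ${\bf b}$ replace the monomials of multidegree ${\bf b}$ by the lexicographically initial segment in the $V_i$-variables while leaving the other color components unchanged. A routine check confirms that this operation preserves both the multicomplex property and the fine $f$-vector, and iterating over all colors produces a color-compressed multicomplex.

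The main obstacle is $(iv) \Rightarrow (ii)$, where one must construct a concrete pure ${\bf a}$-balanced shellable complex $\Delta$ from a color-compressed ${\bf a}$-colored multicomplex $M$ whose fine $f$-vector equals $f$. My plan is a colored analogue of the standard ``compression shelling'': fix a suitable ${\bf a}$-balanced reference $(d-1)$-simplex, and for each top-multidegree monomial $m$ of $M$, processed in the order refined by color-compression, attach a facet whose shelling restriction face corresponds to $m$ under the natural bijection between multidegrees ${\bf b} \leq {\bf a}$ and color-${\bf b}$ subsets of a fixed facet. The color-compression hypothesis is precisely what one needs to guarantee that each new facet meets the current complex along a pure codimension-one subcomplex and that the restriction face carries the prescribed color content, which then yields simultaneously the shelling, the ${\bf a}$-balanced structure, and the equality of fine $h$-vectors via the standard restriction-face formula. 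Verifying that this purported shelling order is valid is the most delicate combinatorial step and is where the bulk of the proof effort will lie.
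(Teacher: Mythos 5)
First, a point of reference: the paper does not prove Theorem \ref{thm:BFSthm} at all; it is quoted from Bj\"orner--Frankl--Stanley and Stanley, so there is no internal proof to compare against. Your cycle $(ii)\Rightarrow(i)\Rightarrow(iii)\Rightarrow(iv)\Rightarrow(ii)$ does reproduce the architecture of the original argument, and $(ii)\Rightarrow(i)$ is standard. The other three arrows, however, are sketched at a level where the genuinely hard content is asserted rather than proved.

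Two gaps are serious. For $(iii)\Rightarrow(iv)$, the claim that color-by-color compression preserves the multicomplex property is not a ``routine check'': it is exactly the Clements--Lindstr\"om/Macaulay theorem applied within one color class after fixing the complementary part of each monomial (this is the content of Lemma \ref{lemma:compression} in this paper, whose proof reduces to Clements--Lindstr\"om via the quotient $M/q$), and you also need a termination argument, since compressing in color $t$ can destroy compression in color $t'$ (the paper handles this with the potential function in \eqref{eqn-score-of-multicomplexes}). For $(iv)\Rightarrow(ii)$, no complex is actually constructed: ``fix a suitable reference simplex and attach a facet whose restriction face corresponds to $m$'' specifies neither the vertex set nor the facets. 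Moreover, since the number of facets of a pure shellable complex equals $\sum_{{\bf b}}h_{{\bf b}}=|M|$, you need one facet for \emph{every} monomial of $M$, with restriction face of color content $\DEG(m)$ --- not only for the ``top-multidegree'' monomials as written. The explicit monomial-to-facet map and the verification of the shelling condition are the entire substance of this implication, and you have deferred precisely that. In $(i)\Rightarrow(iii)$ the outline is right, but the existence of a multigraded l.s.o.p.\ with exactly $a_i$ forms of color $i$ (which is where balancedness enters) and the fact that the Artinian reduction has a monomial basis that is simultaneously an order ideal and satisfies $\deg(m_{X_i})\le a_i$ are the actual theorems being invoked, not consequences of a generic deformation one can wave at. As it stands the proposal is a correct road map of the classical proof, not a proof.
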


Subsequently, Babson and Novik~\cite{BabsonNovik2006:FaceNumbersNongenericInitialIdeals} proved that the fine $h$-vector of an ${\bf a}$-balanced CM complex is the fine $h$-vector of a color-shifted ${\bf a}$-balanced CM complex, and that a color-shifted balanced complex is CM if and only if it is pure. Also, Biermann and Van Tuyl~\cite{BiermannVanTuyl2013:BalancedVDSimplicialComplexesAndHVectors} recently proved that the $h$-vector of a completely balanced CM complex is the $h$-vector of a pure completely balanced vertex-decomposable complex. In fact, we prove the stronger statement that the fine $h$-vector of an ${\bf a}$-balanced CM complex is the fine $h$-vector of a pure ${\bf a}$-balanced vertex-decomposable complex (Corollary \ref{cor:BalancedCMListOfEquiv}). Thus, finding a numerical characterization for any of these classes of complexes would yield important enumerative information for all of them.

The Frankl-F\"{u}redi-Kalai theorem~\cite{FFK1988} extends the Kruskal-Katona theorem and gives a numerical characterization of the $f$-vectors of $(1, \dots, 1)$-colored complexes, or equivalently, the $f$-vectors of $(1, \dots, 1)$-colored multicomplexes. However, since its publication in 1988, very little progress has been made towards a numerical characterization of the more refined fine $f$-vectors. As pointed out in \cite{BjornerFranklStanley1987:CohenMacaulayComplexes}, part of the difficulty lies in the non-uniqueness of color-compressed multicomplexes with a given fine $f$-vector. Frohmader~\cite{Frohmader2012:FlagFVectorsOfColoredComplexes} showed it is not possible to make further progress towards a numerical characterization of the fine $f$-vectors of color-shifted $(1, \dots, 1)$-colored complexes through stronger restrictions on the color-selected subcomplexes. As for specific small cases, Walker~\cite{Walker2007:MulticoverInequalitiesOnColoredComplexes} gave a numerical characterization of the fine $f$-vectors of $(1,1)$-colored complexes, while Frohmader~\cite{Frohmader2012:FlagFVectorsOfThreeColoredComplexes} gave a numerical characterization of the fine $f$-vectors of $(1,1,1)$-colored complexes that involves a complicated brute-force check of many cases.

A large portion of this paper is devoted to addressing these difficulties. The central ideal that makes generalized Macaulay representations possible is the notion of `Macaulay decomposability', which we introduce in Section \ref{sec:MacaulayDecomposability}. An ${\bf a}$-Macaulay decomposable simplicial complex is vertex-decomposable (Proposition \ref{prop:Macaulay-decomposable=>vertex-decomposable}), and we show every pure color-shifted ${\bf a}$-balanced complex is ${\bf a}$-Macaulay decomposable and hence vertex-decomposable (Theorem \ref{thm:color-shifted=>Macaulay-decomposable}). This allows a geometric interpretation of `decomposing' ${\bf a}$-balanced complexes into pieces we can better understand.

The experienced reader would notice that Macaulay decomposability is a slight extension of the notion of vertex-decomposability. Nevertheless, it turns out that Macaulay decomposability is the ``correct'' notion in the context of (generalized) Macaulay representations. To illustrate what we mean, consider the following example: Let $\Delta$ be the (unique) pure $2$-dimensional compressed simplicial complex with $6$ facets and linearly ordered vertices $x_1 < \dots < x_5$. Using Macaulay decomposability, we can decompose $\Delta$ into (iterations of) the deletions and links of $\Delta$ at certain distinguished vertices that we call Macaulay shedding vertices. Note that the number $f_k(\Delta)$ of $k$-dimensional faces of $\Delta$ equals $f_k(\dl_{\Delta}(x)) + f_{k-1}(\lk_{\Delta}(x))$, where $\dl_{\Delta}(x)$ and $\lk_{\Delta}(x)$ denote the deletion and link of $\Delta$ respectively at any given vertex $x$ of $\Delta$. The (usual) $3$rd Macaulay representation of $f_2(\Delta) = 6$ is $6 = \bigl(\begin{smallmatrix} 4\\ 3 \end{smallmatrix} \bigr) + \bigl(\begin{smallmatrix} 2\\ 2 \end{smallmatrix} \bigr) + \bigl(\begin{smallmatrix} 1\\ 1 \end{smallmatrix} \bigr)$, and notice that the three summands $\bigl(\begin{smallmatrix} 4\\ 3 \end{smallmatrix} \bigr), \bigl(\begin{smallmatrix} 2\\ 2 \end{smallmatrix} \bigr), \bigl(\begin{smallmatrix} 1\\ 1 \end{smallmatrix} \bigr)$ equal the number of $2$-dimensional faces in $\dl_{\Delta}(x_5)$, the number of edges in $\dl_{\lk_{\Delta}(x_5)}(x_3)$, and the number of vertices in $\lk_{\lk_{\Delta}(x_5)}(x_3)$ respectively. The vertices $x_3, x_5$ are precisely the Macaulay shedding vertices of $\Delta$ that yield this ``decomposition'' of $\Delta$ into three subcomplexes corresponding to the summands. As we will later realize, all the abovementioned information (including the Macaulay shedding vertices and the $3$rd Macaulay representation of $6$) is already encoded in the labeled binary tree illustrated in Figure \ref{Fig:example-intro}. It is helpful to have this example in mind when delving into the details in subsequent sections, which can get rather technical.

\begin{figure}[h!t]
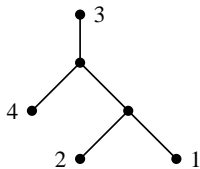

\centering
\scalebox{0.8}{\tikz[thick,scale=0.8]{
    \draw {
    (2,3) node[circle, draw, fill=black, inner sep=0pt, minimum width=4pt, label=right:{$3$}] {} -- (2,2) node[circle, draw, fill=black, inner sep=0pt, minimum width=4pt] {} -- (3,1) node[circle, draw, fill=black, inner sep=0pt, minimum width=4pt] {} -- (4,0) node[circle, draw, fill=black, inner sep=0pt, minimum width=4pt, label=right:{$1$}] {}
    (2,2) -- (1,1) node[circle, draw, fill=black, inner sep=0pt, minimum width=4pt, label=left:{$4$}] {}
    (3,1) -- (2,0) node[circle, draw, fill=black, inner sep=0pt, minimum width=4pt, label=left:{$2$}] {}
    };
}}
\caption{The generalized $3$-Macaulay representation of $6$ (with labels on interior vertices omitted).}
\label{Fig:example-intro}
\end{figure}

In general, we think of generalized Macaulay representations as labeled binary trees that succinctly encode the relevant combinatorial information when we keep track of (iterations of) the deletions and links of suitably defined complexes. The left and right branches of these trees correspond to deletions and links respectively. It is not {\it a priori} obvious that such generalized Macaulay representations are well-defined and can be defined without relying on the existence of certain complexes, so a significant part of this paper (see Sections \ref{subsec:ConstructionOfSheddingTrees}--\ref{subsec:GenMacReps}) involves constructing such representations, finding the ``correct'' definition, and proving that these representations are well-defined purely numerical notions that have associated well-defined differentials.

%Note that Claim 4.2(ii) in \cite{FFK1988} is not true, hence the original proof of the Frankl-F\"{u}redi-Kalai theorem in \cite{FFK1988} is incorrect as stated. Nevertheless, London~\cite{London1994} gave a different (and essentially correct) proof, Engel~\cite[Chapter 8]{book:EngelSpernerTheory} gave a proof using properties of Macaulay posets, while Mermin and Murai~\cite{MerminMurai2010} gave a proof in the language of monomial ideals, so the statement of the theorem is still true, i.e. the $f$-vector of a ${\bf 1}_n$-colored complex is the $f$-vector of a ${\bf 1}_n$-colored ``reverse-lexicographic'' complex. In a forthcoming paper, we will show that such ${\bf a}$-colored ``reverse-lexicographic'' complexes can never characterize the $f$-vectors of ${\bf a}$-colored complexes when $n>1$ and ${\bf a} \neq {\bf 1}_n$. On a related note, it was pointed out in \cite{book:BilleraBjornerHandbookDiscCompGeom} that the uniqueness claim in \cite[Lemma 1.1]{FFK1988} is incorrect, which makes the $\partial_k^{(r)}(\cdot)$ operator introduced in \cite{FFK1988} not well-defined. The version of the Frankl-F\"{u}redi-Kalai theorem stated in \cite{book:BilleraBjornerHandbookDiscCompGeom} includes a fix suggested by J. Eckhoff and agrees with our numerical characterization.

The rest of the paper is organized as follows. Section \ref{sec:Preliminaries} covers the preliminaries. In Section \ref{sec:ColorCompressionColoredComplexes}, we show that color compression preserves the fine $f$-vectors of colored complexes (Corollary \ref{cor:ColorCompressionPreservesFineFVector}). Section \ref{sec:MacaulayDecomposability} introduces ${\bf a}$-Macaulay decomposability and related results, while in Section \ref{sec:GeneralizedMacaulayRepresentations}, we define generalized ${\bf a}$-Macaulay representations using ${\bf a}$-Macaulay decomposability and relate them to the fine $f$-vectors of ${\bf a}$-colored complexes and completely balanced CM complexes.

\section{Preliminaries}\label{sec:Preliminaries}
Let $\mathbb{N}$ and $\mathbb{P}$ denote the non-negative integers and positive integers respectively, and for convenience, let $\overline{\mathbb{N}} = \mathbb{N} \cup \{\infty\}, \overline{\mathbb{P}} = \mathbb{P} \cup \{\infty\}$. For $n\in \mathbb{P}$ and ${\bf a} = (a_1, \ldots, a_n), {\bf b} = (b_1, \ldots, b_n)$ in $\mathbb{Z}^n$, let $[n]$ be the set $\{1, \ldots, n\}$, let $|{\bf a}| = a_1 + \ldots + a_n$, and write ${\bf a} \leq {\bf b}$ if $a_i \leq b_i$ for all $i\in [n]$. Set $[0] := \emptyset$. By convention, ${\bf a} < {\bf b}$ if ${\bf a} \leq {\bf b}$ and ${\bf a} \neq {\bf b}$. Also, write ${\bf a} \lessdot {\bf b}$ if ${\bf a} < {\bf b}$ and $|{\bf b}| = |{\bf a}| + 1$. For brevity, we use ${\bf 0}_n$ and ${\bf 1}_n$ to mean the $n$-tuples $(0,\dots, 0)$ and $(1, \dots, 1)$ respectively. Denote the Kronecker delta function by $\delta(i,j)$, i.e. $\delta(i,j) = 1$ if $i=j$, $\delta(i,j) = 0$ if $i\neq j$, and define $\boldsymbol{\delta}_{i,n} = (\delta(i,1), \delta(i,2), \dots, \delta(i,n)) \in \mathbb{N}^n$. Given $N,k\in \mathbb{P}$, it is easy to show there exists a unique expansion
\begin{equation*}
N = \binom{N_k}{k} + \binom{N_{k-1}}{k-1} + \dots + \binom{N_j}{j},
\end{equation*}
such that $N_k > N_{k-1} > \dots > N_j \geq j \geq 1$. (See, e.g. \cite[Section 8]{Greene-Kleitman}, for a proof.) Such an expansion is called the {\it $k$-th Macaulay representation of $N$}.

Suppose $\pi = (X_1, \dots, X_n)$ is a sequence of (possibly empty) subsets of a set $X$. For any $W \subseteq X$, define $\pi \cap W := (X_1 \cap W, \dots, X_n \cap W)$. If the subsets $X_1, \dots, X_n$ are not all empty, and if $1\leq i_1 < \dots < i_m \leq n$ are all the indices such that $X_{i_t} \neq \emptyset$ for each $t\in [m]$, then define $\overline{\pi} := (X_{i_1}, \dots, X_{i_m})$. Analogously, given ${\bf x} = (x_1, \dots, x_n) \in \mathbb{N}^n\backslash \{{\bf 0}_n\}$, if $1\leq i_1 < \dots < i_m \leq n$ are all the indices such that $x_{i_t} > 0$ for each $t\in [m]$, then define $\overline{{\bf x}} := (x_{i_1}, \dots, x_{i_m}) \in \mathbb{P}^m$.

An {\it ordered partition} of a non-empty set $S$ is a finite sequence $(S_1, \dots, S_n)$ of non-empty, pairwise disjoint subsets of $S$ satisfying $S = S_1 \cup \dots \cup S_n$. For each $k\in \mathbb{N}$, let $\binom{S}{k}$ denote the set of all $k$-subsets of $S$, which by default is empty if $|S| < k$. If $S$ is linearly ordered, then the {\it colex} (colexicographic) order $\leq_{c\ell}$ on $\binom{S}{k}$ is defined by $A <_{c\ell} B$ if and only if $\max\{i:i\in A-B\} < \max\{j:j\in B-A\}$. This is sometimes known as the reverse lexicographic order or the squashed order. Let $X = \{x_1, x_2, \dots\}$ be a set of variables linearly ordered by $x_1 < x_2 < \dots$, and let $\mathcal{M}_X^d$ be the collection of all monomials in variables $X$ of degree $d\in \mathbb{N}$. The {\it lex} (lexicographic) order $\leq_{\ell ex}$ on $\mathcal{M}_X^d$ induced by this linear order on $X$ is defined by $x_1^{\alpha_1}x_2^{\alpha_2}x_3^{\alpha_3}\cdots  <_{\ell ex} x_1^{\beta_1}x_2^{\beta_2}x_3^{\beta_3}\cdots$ if and only if $\alpha_i < \beta_i$ for the largest $i\in \mathbb{P}$ such that $\alpha_i \neq \beta_i$. The {\it rev-lex} (reverse lexicographic) order $\leq_{r\ell}$ on $\mathcal{M}_X^d$ induced by this linear order on $X$ is defined by $x_1^{\alpha_1}x_2^{\alpha_2}x_3^{\alpha_3}\cdots  <_{r\ell} x_1^{\beta_1}x_2^{\beta_2}x_3^{\beta_3}\cdots$ if and only if $\alpha_i > \beta_i$ for the smallest $i\in \mathbb{P}$ such that $\alpha_i \neq \beta_i$. By identifying each squarefree monomial $x_{i_1}\cdots x_{i_d} \in \mathcal{M}_X^d$ with $\{i_1, \dots, i_d\} \in \binom{\mathbb{P}}{d}$, the induced lex order on the subcollection of all squarefree monomials in $\mathcal{M}_X^d$ is equivalent to the colex order defined on $\binom{\mathbb{P}}{d}$.

\subsection{Colored Complexes and Balanced Complexes}
A {\it simplicial complex} $\Delta$ on a vertex set $V$ is a collection of subsets of $V$ such that $\{v\} \in \Delta$ for all $v\in V$ and $\Delta$ is closed under set inclusion (i.e. $F \in \Delta, F^{\prime} \subseteq F \Rightarrow F^{\prime} \in \Delta$). In this paper, we always assume $V$ is finite and non-empty. Elements of $\Delta$ are called {\it faces}. The {\it dimension} of each $F \in \Delta$ is $\dim F := |F| - 1$, and the {\it dimension} of $\Delta$, denoted by $\dim \Delta$, is the maximum dimension of its faces. Maximal faces are called {\it facets}, $0$-dimensional faces are called {\it vertices}, and $1$-dimensional faces are called {\it edges}. The {\it $f$-vector} of $\Delta$ is $(f_0, \dots, f_{\dim \Delta})$, where each $f_i$ is the number of $i$-dimensional faces of $\Delta$. By default, set $f_{-1} = 1$, which corresponds to the empty face $\emptyset \in \Delta$. If all facets of $\Delta$ have the same dimension, then we say $\Delta$ is {\it pure}. Given another simplicial complex $\Delta^{\prime}$ with vertex set $V^{\prime}$, we say $\Delta$ and $\Delta^{\prime}$ are {\it isomorphic} if there exists a bijection $\nu: V \to V^{\prime}$ such that $F \in \Delta$ if and only if $\nu(F) \in \Delta^{\prime}$. Given an arbitrary collection $\mathcal{F} = \{F_1, \dots, F_m\}$ of non-empty subsets of $V$, there is a (unique) smallest simplicial complex, denoted by $\langle \mathcal{F} \rangle$, which contains all $F_i$. This simplicial complex is said to be {\it generated} by $\mathcal{F}$, and a simplicial complex generated by one face, i.e. $|\mathcal{F}| = 1$, is called a {\it simplex}.

A {\it subcomplex} of $\Delta$ is a subcollection of $\Delta$ that is also a simplicial complex. The {\it $k$-skeleton} of $\Delta$ is the subcomplex $\{F \in \Delta: \dim F \leq k\}$. The {\it deletion} of a face $\sigma$ from $\Delta$ is the subcomplex $\dl_{\Delta}(\sigma) := \{\tau \in \Delta: \sigma \not\subseteq \tau\}$. The {\it link} of a face $\sigma$ in $\Delta$ is the subcomplex $\lk_{\Delta}(\sigma) := \{\tau \in \Delta: \tau \cap \sigma = \emptyset \text{ and }\tau \cup \sigma \in \Delta\}$. The {\it join} of two simplicial complexes $\Delta_1$, $\Delta_2$ with disjoint vertex sets is the simplicial complex $\Delta_1 * \Delta_2 := \{\sigma_1 \cup \sigma_2: \sigma_1 \in \Delta_1, \sigma_2 \in \Delta_2\}$.

Let $R = \mathbb{F}[x_1, \dots, x_n]$ be a polynomial on $n$ variables over a field $\mathbb{F}$, and let $\Delta$ be a simplicial complex with vertex set $V = \{v_1, \dots, v_n\}$. The {\it Stanley-Reisner ring} of $\Delta$ (over $\mathbb{F}$) is the ring $\mathbb{F}[\Delta] := R/I_{\Delta}$, where $I_{\Delta}$ is the {\it Stanley-Reisner ideal} of $\Delta$ given by $I_{\Delta} := \big\langle \big\{x_{i_1}\dots x_{i_m}: \{v_{i_1}, \dots, v_{i_m}\} \not\in \Delta\big\}\big\rangle$. We say $\Delta$ is {\it Cohen-Macaulay} (CM) over $\mathbb{F}$ if $\mathbb{F}[\Delta]$ is a Cohen-Macaulay ring. For a detailed treatment of CM complexes and their significance, see \cite{book:StanleyGreenBook}.

Balanced complexes (defined in Section \ref{sec:Intro}) are not necessarily pure, in contrast to the original definition introduced by Stanley~\cite{Stanley1979:BalancedCohenMacaulayComplexes}. Note also that some authors use the notion ``balanced'' to mean what we call completely balanced. We can produce examples of balanced complexes of arbitrary type ${\bf a} \in \mathbb{P}^n$ from a completely balanced complex $(\Delta, \pi)$ satisfying $\dim \Delta = |{\bf a}|-1$ as follows: If $d = |{\bf a}|$ and $\pi = (V_1, \dots, V_d)$ is the ordered partition of some vertex set $V$, then any surjective map $\phi: [d] \to [n]$ induces the ordered partition $\pi^{\prime} = (V_1^{\prime}, \dots, V_n^{\prime})$ of $V$ such that $v\in V_i$ implies $v \in V_{\phi(i)}^{\prime}$, and we check that $(\Delta, \pi^{\prime})$ is an ${\bf a}$-balanced complex. However, as pointed out by Swartz~\cite[Section 3]{Swartz2006:FaceEnumerationSpheresToManifolds}, not all balanced complexes arise from completely balanced complexes in this manner.

Given ${\bf a} \in \mathbb{P}^n$, let $(\Delta, \pi)$ be a $(d-1)$-dimensional ${\bf a}$-colored complex with fine $f$-vector $\{f_{{\bf b}}\}_{{\bf b} \leq {\bf a}}$. The {\it fine $h$-vector} of $(\Delta, \pi)$ is the array of integers $\{h_{{\bf b}}\}_{{\bf b} \leq {\bf a}}$ defined by 
\begin{equation}\label{eqn:fine-h-vector-defn}
h_{{\bf b}} := \sum_{{\bf c} \leq {\bf b}} f_{{\bf c}} \prod_{i=1}^n (-1)^{b_i - c_i} \binom{a_i - c_i}{b_i - c_i}, \ \ \ \ {\bf 0}_n \leq {\bf b} \leq {\bf a},
\end{equation}
and the vector $(h_0, \dots, h_d)$, where $h_i = \sum_{|{\bf b}| = i} h_{{\bf b}}$ for each $i$, is called the {\it $h$-vector} of $\Delta$. If $(\Delta, \pi)$ is completely balanced, i.e. of type ${\bf 1}_d$, we can identify each $d$-tuple ${\bf b} = (b_1, \dots, b_d) \in \mathbb{N}^d$ satisfying ${\bf b} \leq {\bf 1}_d$ with the subset $\{i\in [d]: b_i = 1\}$ of $[d]$. Under this identification, the corresponding arrays $\{f_S\}_{S \subseteq [d]}$ and $\{h_S\}_{S \subseteq [d]}$ are called the {\it flag $f$-vector} and {\it flag $h$-vector} of $(\Delta, \pi)$ respectively.

\subsection{Colored Multicomplexes}
A {\it multicomplex} $M$ on a set of variables $X$ is a collection of monomials in these variables that is closed under divisibility (i.e. $m\in M, m^{\prime}|m \Rightarrow m^{\prime} \in M$). In this paper, we always assume $M$ is non-empty, but we do not require every $x\in X$ to be in $M$. A {\it subcomplex} of $M$ is a subcollection of $M$ that is also a multicomplex. Given $Y \subseteq X$ and any monomial $m = \prod_{x\in X} x^{c(x)}$ in $M$, let $m_Y := \prod_{x\in Y} x^{c(x)}$, let $\deg(m) := \sum_{x\in X} c(x)$ be the degree of $m$, and define the subcomplex $M_Y := \{m_Y: m\in M\}$. By default, $1 \in M_Y$. For each $d\in \mathbb{N}$, let $M^d$ be the collection of monomials in $M$ of degree $d$. The {\it $f$-vector} of $M$ is $(f_0, f_1, \dots)$, where $f_i = |M^i|$ for each $i\in \mathbb{N}$.

Given a set of variables $X = \{x_1, x_2, \dots\}$ and any map $\phi: X \to \overline{\mathbb{P}}$, let ${\bf e} = (e_1, e_2, \dots) := (\phi(x_1), \phi(x_2), \dots)$, and let $\mathcal{M}_X({\bf e})$ be the set of all monomials $x_1^{c_1}x_2^{c_2}\cdots$ such that $0\leq c_i \leq e_i$ for each $i$. For every $d\in \mathbb{N}$, let $\mathcal{M}_X^d({\bf e})$ be the subset of monomials in $\mathcal{M}_X({\bf e})$ of degree $d$. In particular, $\mathcal{M}_X({\bf 1}_n)$ is the set of all squarefree monomials on variables $x_1,\dots, x_n$. Note that every simplicial complex $\Delta$ on a vertex set $V \subseteq X$ corresponds bijectively to a finite multicomplex $M \subseteq \mathcal{M}_X({\bf 1}_n)$ via $\{x_{i_1}, \dots, x_{i_t}\} \in \Delta \Leftrightarrow {x_{i_1}}\cdots {x_{i_t}} \in M$, so multicomplexes can be considered as generalizations of simplicial complexes. Note however that the $f$-vector of $\Delta$ and the $f$-vector of $M$ differ by a shift in the indexing.

Let ${\bf a} = (a_1, \ldots, a_n) \in \mathbb{P}^n$. A {\it colored multicomplex of type ${\bf a}$} is a pair $(M, \pi)$, where $M$ is a multicomplex on set $X$, and $\pi = (X_1, \dots, X_n)$ is an ordered partition of $X$, such that $\deg(m_{X_i}) \leq a_i$ for all $m\in M$ and $i\in [n]$. Given a monomial $m\in M$, the vector $\DEG(m) := (\deg(m_{X_1}), \dots, \deg(m_{X_n})) \in \mathbb{N}^n$ is called the {\it multidegree} of $m$. For each ${\bf b} \in \mathbb{N}^n$, let $f_{{\bf b}}$ be the number of monomials $m$ in $M$ such that $\DEG(m) = {\bf b}$. The array of integers $\{f_{{\bf b}}\}_{{\bf b} \leq {\bf a}}$ is called the {\it fine $f$-vector} of $(M, \pi)$, and it is a refinement of the $f$-vector $(f_0, f_1, \dots)$ of $M$ in the sense that $f_i = \sum_{|{\bf b}| = i} f_{{\bf b}}$ for each $i \in \mathbb{N}$. For type ${\bf a} = {\bf 1}_n$, define the {\it flag $f$-vector} of $(M, \pi)$ analogously as in the case of simplicial complexes.

\subsection{Color Shifting}
Color shifting is a colored analogue of shifting, and it was (to the best of our knowledge) first considered by Babson and Novik~\cite{BabsonNovik2006:FaceNumbersNongenericInitialIdeals} in 2006 in the context of ${\bf a}$-colored complexes. Although usually defined only for colored complexes, color shifting can naturally be extended to colored multicomplexes.

Fix a set of variables $X$, and let $\pi = (X_1, \dots, X_n)$ be an ordered partition of $X$ such that each $X_i = \{x_{i,1}, x_{i,2}, \dots\}$ is linearly ordered by $x_{i,1} < x_{i,2} < \dots$. For each $i\in [n]$, let $\phi_i: X_i \to \overline{\mathbb{P}}$ be any map, and let ${\bf e}_i = (\phi_i(x_{i,1}), \phi_i(x_{i,2}), \dots)$. Let $\mathcal{M}_{\pi}({\bf e}_1, \dots, {\bf e}_n)$ denote the collection of all monomials $m$ in variables $X$ such that $m_{X_i} \in \mathcal{M}_{X_i}({\bf e}_i)$ for every $i\in [n]$. When ${\bf e}_i = (\infty, \infty, \dots)$ for every $i\in [n]$, we simply write $\mathcal{M}_{\pi}({\bf e}_1, \dots, {\bf e}_n)$ as $\mathcal{M}_{\pi}$. If a colored multicomplex $(M, \pi)$ satisfies $M \subseteq \mathcal{M}_{\pi}({\bf e}_1, \dots, {\bf e}_n)$, then by abuse of notation, we say $(M, \pi)$ is in $\mathcal{M}_{\pi}({\bf e}_1, \dots, {\bf e}_n)$. Note that it is implicitly assumed every colored multicomplex in $\mathcal{M}_{\pi}({\bf e}_1, \dots, {\bf e}_n)$ has $\pi$ as its corresponding ordered partition, and the length of each sequence ${\bf e}_i$ is the cardinality of $X_i$.

Let $Y = \{y_1, y_2, \dots\}$ be a set of variables linearly ordered by $y_1 < y_2 < \dots$, let $\phi: Y \to \overline{\mathbb{P}}$ be an arbitrary map, and define ${\bf e} = (e_1, e_2, \dots) := (\phi(y_1), \phi(y_2), \dots)$. A multicomplex $M^{\prime} \in \mathcal{M}_Y({\bf e})$ is called {\it shifted} (in $\mathcal{M}_Y({\bf e})$) if every $m^{\prime}\in M^{\prime}$ satisfies the property: $y_j$ divides $m^{\prime}$ and ${y_i}^{e_i}$ does not divide $m^{\prime}$ for integers $1\leq i < j \Longrightarrow m^{\prime}y_i/y_j \in M^{\prime}$. If $M^{\prime}$ is finite and ${\bf e} = (1, 1, \dots)$, then $M^{\prime}$ can be identified with a simplicial complex, and this notion of `shifted' coincides with the usual notion of a shifted complex; see, e.g., \cite{Kalai:AlgebraicShiftingNotes}. Given ${\bf a}\in \mathbb{P}^n$, an ${\bf a}$-colored multicomplex $(M, \pi)$ in $\mathcal{M}_{\pi}({\bf e}_1, \dots, {\bf e}_n)$ is called {\it color-shifted} if for every $i\in [n]$ and $m\in M_{X-X_i}$, the multicomplex $\{m^{\prime} \in M_{X_i}: mm^{\prime} \in M, \deg(m^{\prime}) = k\}$ is shifted in $\mathcal{M}_{X_i}({\bf e}_i)$ for all $k\geq 0$. A color-shifted ${\bf a}$-colored complex can be analogously defined as follows.

\begin{definition}
Let $(\Delta, \rho)$ be an ${\bf a}$-colored complex for some ${\bf a}\in \mathbb{P}^n$, and let $\rho = (V_1, \dots, V_n)$. For each $i\in [n]$, assume $V_i = \{v_{i,1}, \dots, v_{i,\lambda_i}\}$ has $\lambda_i$ elements linearly ordered by $v_{i,1} < \dots < v_{i,\lambda_i}$. Then we say $(\Delta, \rho)$ is {\it color-shifted} if every $F\in \Delta$ and $i\in [n]$ satisfy the property: $v_{i,j} \in F$ and $v_{i,j^{\prime}} \not\in F$ for some integers $1\leq j^{\prime} < j \leq \lambda_i \Rightarrow (F\backslash\{v_{i,j}\})\cup\{v_{i,j^{\prime}}\} \in \Delta$.
\end{definition}

Babson and Novik~\cite{BabsonNovik2006:FaceNumbersNongenericInitialIdeals} developed a remarkable theory of colored algebraic shifting, which is a colored analogue of symmetric algebraic shifting proposed by Kalai~\cite{Kalai1991:SymmetricAlgebraicShifting}. Recall that $X_i$ is linearly ordered for each $i \in [n]$, thus $X$ as a poset is a disjoint union of $n$ chains. Let $\prec$ be any linear extension of this partial order on $X$. The main idea is that given $\prec$, ${\bf a} \in \mathbb{P}^n$, and any ${\bf a}$-colored complex $(\Gamma, \pi \cap V)$ (where $V \subseteq X$ is the vertex set of $\Gamma$), we can construct another simplicial complex $\widetilde{\Delta}_{\prec}(\Gamma)$ with vertex set $V^{\prime} \subseteq X$, such that $(\widetilde{\Delta}_{\prec}(\Gamma), \pi \cap V^{\prime})$ is a color-shifted ${\bf a}$-colored complex with the same fine $f$-vector as $(\Gamma, \pi \cap V)$. This map $(\Gamma, \pi \cap V) \mapsto (\widetilde{\Delta}_{\prec}(\Gamma), \pi \cap V^{\prime})$ is called {\it colored algebraic shifting}, and the ${\bf a}$-colored complex $(\widetilde{\Delta}_{\prec}(\Gamma), \pi \cap V^{\prime})$ is called the {\it colored algebraic shifting} of $(\Gamma, \pi \cap V)$ (with respect to $\prec$). See \cite{BabsonNovik2006:FaceNumbersNongenericInitialIdeals} for details.

\section{Color Compressions of Colored Complexes}\label{sec:ColorCompressionColoredComplexes}
Color compression, a colored analogue of compression, was first introduced by Bj{\"o}rner, Frankl and Stanley~\cite{BjornerFranklStanley1987:CohenMacaulayComplexes} in 1987, although under the same original name of `compression'. The main goal of this section is to prove that color compression preserves the fine $f$-vectors of colored multicomplexes in $\mathcal{M}_{\pi}({\bf e}_1, \dots, {\bf e}_n)$ (see Theorem \ref{thm:ColorCompressionPreservesFineFVector}). The ideas involved are not new, and our proof follows from a slight modification of the proof of \cite[Theorem 1]{BjornerFranklStanley1987:CohenMacaulayComplexes}, which considered colored multicomplexes in $\mathcal{M}_{\pi}$. Nevertheless, Theorem \ref{thm:ColorCompressionPreservesFineFVector} is analogous to how the Clements-Lindstr\"{o}m theorem~\cite{ClementsLindstrom1969} extends both the Kruskal-Katona theorem~\cite{Katona1968,Kruskal1963,Schutzenberger1959} and the Macaulay theorem~\cite{Macaulay1927}.

Throughout this section, let $X$ be a set of variables, and let $\pi = (X_1, \dots, X_n)$ be an ordered partition of $X$ such that each $X_i = \{x_{i,1}, x_{i,2}, \dots\}$ is linearly ordered by $x_{i,1} < x_{i,2} < \dots$. For every $i\in [n]$, let $\phi_i: X_i \to \overline{\mathbb{P}}$ be an arbitrary map, and define ${\bf e}_i = (e_{i,1}, e_{i,2}, \dots) := (\phi_i(x_{i,1}), \phi_i(x_{i,2}), \dots)$. Recall that an {\it order ideal} of a poset $(P, \leq)$ is a subset $I \subseteq P$ such that if $x\in P$ and $y\leq x$, then $y\in I$. If $\leq$ is a well-ordering on $P$, then the order ideal $I$ is called an {\it initial segment} of $P$ (with respect to $\leq$). Observe that for every $i\in [n]$, $d\in \mathbb{N}$, we can order the monomials in $\mathcal{M}_{X_i}^d({\bf e}_i)$ by treating $\mathcal{M}_{X_i}^d({\bf e}_i)$ as a subposet of $\mathcal{M}_{X_i}^d$ with the induced lex order. This induced lex order is clearly a well-ordering on $\mathcal{M}_{X_i}^d({\bf e}_i)$, and we write $\mathcal{I}_{X_i}^{d; {\bf e}_i}(N)$ to denote the lex initial segment of $\mathcal{M}_{X_i}^d({\bf e}_i)$ of size $N$.

Let $(M, \pi)$ be a colored multicomplex of type ${\bf a} \in \mathbb{P}^n$ in $\mathcal{M}_{\pi}({\bf e}_1, \dots, {\bf e}_n)$, and fix some $t\in [n]$. Note that every monomial $m\in M$ can be factorized as $m = m_{X_t}m_{X - X_t}$, hence we can write $M$ as the disjoint union
\begin{equation}
M = \bigsqcup_{\widetilde{m} \in M_{X - X_t}} \Big(\bigsqcup_{d\in \mathbb{N}} \Big\{m\in M: m_{X - X_t} = \widetilde{m}, \deg(m/\widetilde{m}) = d \Big\}\Big).
\end{equation}
For each $d\in \mathbb{N}$, $\widetilde{m} \in M_{X - X_t}$, let $f_d(\widetilde{m})$ be the number of $m\in M$ such that $m_{X - X_t} = \widetilde{m}$ and $\deg(m/\widetilde{m}) = d$. Define the operation $\mathcal{C}_t$ on $M$ by
\begin{equation}\label{eqn-compression-defn}
\mathcal{C}_t(M) := \bigsqcup_{\widetilde{m} \in M_{X - X_t}} \Big(\bigsqcup_{d\in \mathbb{N}} \Big\{\widetilde{m}m^* : m^* \in \mathcal{I}_{X_t}^{d; {\bf e}_t}\big(f_d(\widetilde{m})\big) \Big\}\Big).
\end{equation}
Note that when $M_{X_t} = M$, this specializes to the usual notion of compression for multicomplexes. The Clements-Lindstr\"{o}m theorem~\cite{ClementsLindstrom1969} is equivalent to the statement that if $e_{t,1} \geq e_{t,2} \geq \dots$, and $M_{X_t} = M$, then $\mathcal{C}_t(M)$ is a multicomplex. Using this version of Clements-Lindstr\"{o}m theorem, we prove the following important lemma.

\begin{lemma}\label{lemma:compression}
If $e_{t,1} \geq e_{t,2} \geq \dots$, then $\mathcal{C}_t(M)$ is a multicomplex.
\end{lemma}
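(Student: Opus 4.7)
The plan is to verify that $\mathcal{C}_t(M)$ is closed under divisibility. Suppose $m' \in \mathcal{C}_t(M)$ and $m'' \mid m'$. Write $m' = \widetilde{m} \cdot m^*$ with $\widetilde{m} := m'_{X - X_t} \in M_{X - X_t}$ and $m^* := m'_{X_t} \in \mathcal{I}_{X_t}^{d; {\bf e}_t}(f_d(\widetilde{m}))$ for $d := \deg(m^*)$; similarly set $\widetilde{m}'' := m''_{X - X_t}$ and $(m^*)'' := m''_{X_t}$, noting $\widetilde{m}'' \mid \widetilde{m}$ and $(m^*)'' \mid m^*$. By transitivity, it suffices to handle two elementary cases: (a) $\widetilde{m}'' = \widetilde{m}$, i.e.\ only the $X_t$-part shrinks; and (b) $(m^*)'' = m^*$, i.e.\ only the outside part shrinks.

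For case (a), observe that the fiber $N := \{m_{X_t} : m \in M, \, m_{X - X_t} = \widetilde{m}\}$ is itself a multicomplex contained in $\mathcal{M}_{X_t}({\bf e}_t)$, since $M$ is closed under divisibility and dividing out a variable in $X_t$ preserves $m_{X - X_t}$. The version of the Clements--Lindstr\"{o}m theorem stated just before the lemma (applicable here thanks to the hypothesis $e_{t,1} \geq e_{t,2} \geq \cdots$) yields that the compression of $N$ is a multicomplex. Since this compression is precisely the collection of $X_t$-parts of monomials of $\mathcal{C}_t(M)$ lying over $\widetilde{m}$, and $(m^*)''$ divides $m^*$ which belongs to this compression, we conclude $\widetilde{m} \cdot (m^*)'' \in \mathcal{C}_t(M)$.

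For case (b), the key ingredient is the monotonicity $f_{d}(\widetilde{m}'') \geq f_{d}(\widetilde{m})$ whenever $\widetilde{m}'' \mid \widetilde{m}$ both lie in $M_{X - X_t}$. This holds because the map $m \mapsto \widetilde{m}'' \cdot m_{X_t}$ is an injection from $\{m \in M : m_{X - X_t} = \widetilde{m}, \, \deg(m_{X_t}) = d\}$ into $\{m \in M : m_{X - X_t} = \widetilde{m}'', \, \deg(m_{X_t}) = d\}$, with image in $M$ by divisibility closure. Consequently $\mathcal{I}_{X_t}^{d; {\bf e}_t}(f_d(\widetilde{m})) \subseteq \mathcal{I}_{X_t}^{d; {\bf e}_t}(f_d(\widetilde{m}''))$ as nested lex initial segments, so $m^* \in \mathcal{I}_{X_t}^{d; {\bf e}_t}(f_d(\widetilde{m}''))$ and hence $\widetilde{m}'' \cdot m^* \in \mathcal{C}_t(M)$.

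The point that requires most care is the correct invocation of Clements--Lindstr\"{o}m in case (a): one must recognize that $M$, restricted to any fiber over a fixed outside monomial $\widetilde{m}$, remains a multicomplex in $\mathcal{M}_{X_t}({\bf e}_t)$ satisfying the decreasing-exponent hypothesis, so that the fiber-wise compression inherits the multicomplex property. Once this structural observation is combined with the elementary injectivity argument underlying case (b), divisibility closure of $\mathcal{C}_t(M)$ follows for all pairs $m'' \mid m'$.
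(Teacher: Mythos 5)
Your proof is correct and follows essentially the same route as the paper's: the paper also splits the divisibility check into the two cases of removing a variable from $X_t$ (handled by applying Clements--Lindstr\"{o}m to the fiber over a fixed $\widetilde{m}$, which is your $N$, denoted $M/q$ there) and removing a variable outside $X_t$ (handled by the monotonicity $f_d(\widetilde{m}'') \geq f_d(\widetilde{m})$ and the resulting nesting of lex initial segments). The only cosmetic difference is that the paper reduces to dividing by a single variable at a time, whereas you treat a general divisor via the two-step factorization, but the two key ingredients are identical.
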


\begin{proof}
Let $p$ be an arbitrary monomial in $\mathcal{M}_{\pi}({\bf e}_1, \dots, {\bf e}_n)$, and suppose there is some $x\in X$ such that $p^{\prime} := px$ is in $\mathcal{C}_t(M)$. To show $\mathcal{C}_t(M)$ is a multicomplex, we have to show that $p \in \mathcal{C}_t(M)$.

Suppose $x\in X-X_t$. Let $q := p_{X - X_t}$, $q^{\prime} := p^{\prime}_{X - X_t}$, $d := \deg(p/q)$, and note that $q^{\prime} = qx$, $\deg(p^{\prime}/q^{\prime}) = d$. Every $m\in M$ satisfying $m_{X-X_t} = q^{\prime}$ must also satisfy $m/x \in M$ and $(m/x)_{X - X_t} = q$, hence $f_d(q^{\prime}) \leq f_d(q)$. This means $\mathcal{I}_{X_t}^{d; {\bf e}_t}(f_d(q^{\prime})) \subseteq \mathcal{I}_{X_t}^{d; {\bf e}_t}(f_d(q))$, therefore $p\in \mathcal{C}_t(M)$.

Suppose instead $x\in X_t$. Again let $q := p_{X - X_t}$, and observe that $p^{\prime}_{X - X_t} = q$. For any subcollection $M^{\prime} \subseteq \mathcal{M}_{\pi}$, define $M^{\prime}/q := \{m/q: m\in M^{\prime}, m_{X-X_t} = q\}$. We check that $M/q$ is a subcomplex of $M$ satisfying $(M/q)_{X_t} = M/q$. Since $\mathcal{C}_t(M/q) = \mathcal{C}_t(M)/q$, it follows from $p^{\prime} \in \mathcal{C}_t(M)$ and $p^{\prime}_{X-X_t} = q$ that $p^{\prime}/q \in \mathcal{C}_t(M/q)$. Now $e_{t,1} \geq e_{t,2} \geq \dots$, so the Clements-Lindstr\"{o}m theorem says $\mathcal{C}_t(M/q)$ is a multicomplex, thus $p/q \in \mathcal{C}_t(M/q)$, which implies $p \in \mathcal{C}_t(M)$.
\end{proof}

Clearly, $(\mathcal{C}_t(M), \pi)$ is an ${\bf a}$-colored multicomplex in $\mathcal{M}_{\pi}({\bf e}_1, \dots, {\bf e}_n)$ with the same fine $f$-vector as $(M, \pi)$. If $\mathcal{C}_t(M) = M$ for all $t\in [n]$, then we say $(M, \pi)$ is {\it color-compressed}. Equivalently, $(M, \pi)$ is {\it color-compressed} if for each $t\in [n]$ and $m\in M_{X-X_t}$, the set $\{m^{\prime} \in M_{X_t}: mm^{\prime}\in M, \deg(m^{\prime}) = k\}$ is a lex initial segment of $\mathcal{M}_{X_t}^k({\bf e}_t)$ for all $k\geq 0$.

\begin{remark}\label{remark:color-shifted=color-compressed-in-1n-case}
It follows from definition that if $(M, \pi)$ is color-compressed, then $(M, \pi)$ is color-shifted. In general, the converse is not true. However, if ${\bf a} = {\bf 1}_n$, then the notions `color-shifted' and `color-compressed' are equivalent, and we leave this easy exercise to the reader.
\end{remark}

\begin{theorem}\label{thm:ColorCompressionPreservesFineFVector}
Let ${\bf a} \in \mathbb{P}^n$, and let $f = \{f_{{\bf b}}\}_{{\bf b} \leq {\bf a}}$ be an array of integers. If $e_{i,1} \geq e_{i,2} \geq \dots$ for every $i\in [n]$, then the following are equivalent:
\begin{enum*}
\item $f$ is the fine $f$-vector of an ${\bf a}$-colored multicomplex in $\mathcal{M}_{\pi}({\bf e}_1, \dots, {\bf e}_n)$.\label{item1:ColorCompressionPreservesFineFVector}
\item $f$ is the fine $f$-vector of a color-compressed ${\bf a}$-colored multicomplex in $\mathcal{M}_{\pi}({\bf e}_1, \dots, {\bf e}_n)$.\label{item2:ColorCompressionPreservesFineFVector}
\end{enum*}
\end{theorem}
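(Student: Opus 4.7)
The direction (ii) $\Rightarrow$ (i) is immediate, since every color-compressed ${\bf a}$-colored multicomplex is in particular an ${\bf a}$-colored multicomplex. For the reverse direction, the plan is the standard monovariant iteration: start from an ${\bf a}$-colored multicomplex $(M,\pi) \in \mathcal{M}_{\pi}({\bf e}_1, \ldots, {\bf e}_n)$ with fine $f$-vector $f$, and cyclically apply the operators $\mathcal{C}_1, \ldots, \mathcal{C}_n$. Lemma \ref{lemma:compression} (where the hypothesis $e_{i,1} \geq e_{i,2} \geq \cdots$ is crucially invoked) guarantees that each application returns a multicomplex in $\mathcal{M}_{\pi}({\bf e}_1, \ldots, {\bf e}_n)$, and the defining formula \eqref{eqn-compression-defn} preserves the fine $f$-vector bucket by bucket. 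At a simultaneous fixed point $M^*$ of every $\mathcal{C}_t$, the reformulation of color-compressibility given immediately after Lemma \ref{lemma:compression} (each slice $\{m^{\prime} \in M^*_{X_t} : \widetilde{m}m^{\prime} \in M^*, \, \deg(m^{\prime}) = k\}$ is a lex initial segment of $\mathcal{M}_{X_t}^k({\bf e}_t)$) shows that $(M^*, \pi)$ is color-compressed with the required fine $f$-vector.

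To prove termination, I would introduce the following monovariant. For each $i \in [n]$ and each $d \in \mathbb{N}$, the induced lex order well-orders $\mathcal{M}_{X_i}^d({\bf e}_i)$, so every monomial $u \in \mathcal{M}_{X_i}^d({\bf e}_i)$ has a rank $\text{rk}_i(u) \in \mathbb{N}$. Set
\[
W(M) := \sum_{m \in M} \sum_{i=1}^n \text{rk}_i(m_{X_i}).
\]
Unpacking \eqref{eqn-compression-defn}, the multiset $\{m_{X - X_t} : m \in M\}$ (counted with multiplicity) is preserved by $\mathcal{C}_t$, and hence so is the multiset $\{m_{X_i} : m \in M\}$ for every $i \neq t$; thus the $i$-th summand of $W$ is unchanged for $i \neq t$. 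Within each bucket indexed by $(\widetilde{m}, d)$, the $X_t$-parts are replaced by the lex initial segment of the same size in $\mathcal{M}_{X_t}^d({\bf e}_t)$, which minimizes the sum of ranks. Therefore $W(\mathcal{C}_t(M)) \leq W(M)$, with equality if and only if $\mathcal{C}_t(M) = M$. Since $W$ is a nonnegative integer and $M$ is finite (the fine $f$-vector has finite support inside $\{{\bf b} \leq {\bf a}\}$), the iteration must stabilize at a simultaneous fixed point $M^*$ of all $\mathcal{C}_t$.

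I do not expect a serious obstacle; the only genuinely delicate point is the verification that the coordinate multisets $\{m_{X_i} : m \in M\}$ for $i \neq t$ are unchanged by $\mathcal{C}_t$, which requires tracking the double disjoint union in \eqref{eqn-compression-defn} carefully (this follows from the fact that for each $\widetilde{m} \in M_{X-X_t}$, the total multiplicity of $\widetilde{m}$ is $\sum_d f_d(\widetilde{m}) = \sum_d \bigl|\mathcal{I}_{X_t}^{d; {\bf e}_t}(f_d(\widetilde{m}))\bigr|$). After that, the remainder is the familiar observation that a lex initial segment of a fixed size has the smallest sum of ranks, exactly in the spirit of the classical compression arguments of Bj\"ornerーFrankl--Stanley and Clements--Lindstr\"om.
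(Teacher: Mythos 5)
Your proposal is correct and follows essentially the same route as the paper: the trivial direction, iterated application of the operators $\mathcal{C}_t$ (justified by Lemma \ref{lemma:compression}), preservation of the fine $f$-vector by \eqref{eqn-compression-defn}, and termination via a rank-sum monovariant that is, up to an additive constant, exactly the quantity in \eqref{eqn-score-of-multicomplexes}. Your explicit verification that the coordinate multisets $\{m_{X_i}: m\in M\}$ for $i\neq t$ are unchanged is a welcome elaboration of a step the paper leaves implicit.
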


\begin{proof}
Let $(M, \pi)$ be an ${\bf a}$-colored multicomplex in $\mathcal{M}_{\pi}({\bf e}_1, \dots, {\bf e}_n)$ that is not color-compressed. Starting with $M_0 = M$, iteratively construct a sequence $M_0, M_1, M_2, \dots$ of multicomplexes such that $M_i = \mathcal{C}_{t_i}(M_{i-1})$ for each $i\geq 1$, where $t_i \in [n]$ is chosen so that $M_i \neq M_{i-1}$. Given $t\in [n]$ and any ${\bf a}$-colored multicomplex $(M^{\prime}, \pi)$ in $\mathcal{M}_{\pi}({\bf e}_1, \dots, {\bf e}_n)$, we get
\begin{equation}\label{eqn-score-of-multicomplexes}
\sum_{p\in M^{\prime}} \sum_{i\in [n]} |\{q \in \mathcal{M}_{X_i}({\bf e}_i): q \leq_{\ell ex} p_{X_i}\}| \geq \sum_{p\in \mathcal{C}_t(M^{\prime})} \sum_{i\in [n]} |\{q \in \mathcal{M}_{X_i}({\bf e}_i): q \leq_{\ell ex} p_{X_i}\}|,
\end{equation}
with equality holding if and only if $\mathcal{C}_t(M^{\prime}) = M^{\prime}$. Thus, the sequence $M_0, M_1, M_2, \dots$ must terminate, say with last term $M_k$, and $(M_k, \pi)$ is a color-compressed ${\bf a}$-colored multicomplex in $\mathcal{M}_{\pi}({\bf e}_1, \dots, {\bf e}_n)$ with the same fine $f$-vector as $(M, \pi)$, which proves \ref{item1:ColorCompressionPreservesFineFVector} $\Rightarrow$ \ref{item2:ColorCompressionPreservesFineFVector}. The converse \ref{item2:ColorCompressionPreservesFineFVector} $\Rightarrow$ \ref{item1:ColorCompressionPreservesFineFVector} is trivial.
\end{proof}

Let $(\Delta, \rho)$ be an ${\bf a}$-colored complex for some ${\bf a}\in \mathbb{P}^n$, where $\rho = (V_1, \dots, V_n)$ is an ordered partition of the vertex set $V$ of $\Delta$, and assume $V_i$ is linearly ordered for each $i\in [n]$. We say $(\Delta, \rho)$ is {\it color-compressed} if for all $F\in \Delta$ and $t\in [n]$, the set $\big\{ F^{\prime} \cap V_t: F^{\prime} \in \Delta, |F^{\prime} \cap V_t| = |F \cap V_t|, F^{\prime} \cap (V\backslash V_t) = F \cap (V\backslash V_t)\big\}$ is a colex initial segment of $\binom{V_t}{|F \cap V_t|}$. By setting ${\bf e}_i = (1,1, \dots)$ for each $i\in [n]$, Theorem \ref{thm:ColorCompressionPreservesFineFVector} yields the following:

\begin{corollary}\label{cor:ColorCompressionPreservesFineFVector}
Let ${\bf a} \in \mathbb{P}^n$, and let $f = \{f_{{\bf b}}\}_{{\bf b} \leq {\bf a}}$ be an array of integers. The following are equivalent:
\begin{enum*}
\item $f$ is the fine $f$-vector of an ${\bf a}$-colored complex.
\item $f$ is the fine $f$-vector of a color-compressed ${\bf a}$-colored complex.
\end{enum*}
\end{corollary}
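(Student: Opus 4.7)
The plan is to apply Theorem \ref{thm:ColorCompressionPreservesFineFVector} with ${\bf e}_i = (1,1,\dots)$ for every $i\in [n]$, since the authors explicitly signal this reduction. Because the sequence ${\bf e}_i = (1,1,\dots)$ is trivially weakly decreasing, the hypothesis $e_{i,1} \geq e_{i,2} \geq \dots$ of Theorem \ref{thm:ColorCompressionPreservesFineFVector} is automatically satisfied, so the full force of that theorem is available. The only real work is to verify that, under the standard squarefree identification, ${\bf a}$-colored complexes correspond bijectively to ${\bf a}$-colored multicomplexes in $\mathcal{M}_{\pi}({\bf 1},\dots,{\bf 1})$ in a way that matches both fine $f$-vectors and the notion of color-compression on the two sides.

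First I would set things up as follows. Let $\rho = (V_1,\dots,V_n)$ be an ordered partition of a (finite, linearly ordered) vertex set $V$, regarded as a sub-partition of an ambient $\pi = (X_1,\dots,X_n)$ with $V_i \subseteq X_i$. The preliminaries record the standard bijection $\{x_{i_1},\dots,x_{i_t}\} \leftrightarrow x_{i_1}\cdots x_{i_t}$ between a simplicial complex $\Delta$ on $V$ and a multicomplex $M \subseteq \mathcal{M}_{\pi}({\bf 1},\dots,{\bf 1})$; under this bijection, the condition $|F \cap V_i| \leq a_i$ for every face $F \in \Delta$ translates verbatim to $\deg(m_{X_i}) \leq a_i$ for every $m \in M$, and $|F \cap V_i| = b_i$ translates to $\deg(m_{X_i}) = b_i$, so the fine $f$-vectors agree.

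Next I would match the two notions of color-compression. Under the squarefree identification, for any $t \in [n]$ and $F \in \Delta$, writing $m$ for the monomial corresponding to $F$ and $\widetilde{m} = m_{X - X_t}$ for the monomial corresponding to $F \cap (V \backslash V_t)$, the set
\[
\bigl\{F' \cap V_t : F' \in \Delta,\ |F' \cap V_t| = k,\ F' \cap (V\backslash V_t) = F \cap (V\backslash V_t)\bigr\}
\]
corresponds under squarefree identification to $\{m^* \in M_{X_t} : \widetilde{m} m^* \in M,\ \deg(m^*) = k\}$. The preliminaries state that the induced lex order on squarefree monomials in $\mathcal{M}_{X_t}^k$ coincides with the colex order on $\binom{V_t}{k}$; hence a lex initial segment of $\mathcal{M}_{X_t}^k({\bf 1})$ on one side is a colex initial segment of $\binom{V_t}{k}$ on the other. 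This matches the two definitions of color-compression exactly.

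With this dictionary in hand, the biconditional follows immediately from Theorem \ref{thm:ColorCompressionPreservesFineFVector}: the implication \textit{(ii)} $\Rightarrow$ \textit{(i)} is trivial (every color-compressed ${\bf a}$-colored complex is an ${\bf a}$-colored complex), and for \textit{(i)} $\Rightarrow$ \textit{(ii)}, given an ${\bf a}$-colored complex realizing $f$, pass to the corresponding multicomplex in $\mathcal{M}_{\pi}({\bf 1},\dots,{\bf 1})$, apply the theorem to obtain a color-compressed ${\bf a}$-colored multicomplex with the same fine $f$-vector (the output of the iterated compressions $\mathcal{C}_{t_i}$ automatically lives in $\mathcal{M}_\pi({\bf 1},\dots,{\bf 1})$, so it is again squarefree), and convert back to a simplicial complex. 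The only mildly delicate point, which I would verify carefully, is that the color-compression operations $\mathcal{C}_t$ preserve squarefreeness, i.e.\ that they stay inside $\mathcal{M}_{\pi}({\bf 1},\dots,{\bf 1})$; this is immediate since $\mathcal{I}_{X_t}^{d;\,{\bf 1}}(N)$ consists by definition of squarefree monomials of degree $d$ in $X_t$. I do not expect any serious obstacle: the substance of the argument is in Theorem \ref{thm:ColorCompressionPreservesFineFVector} and the lex/colex translation, and the corollary is genuinely a specialization.
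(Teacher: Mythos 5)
Your proposal is correct and is essentially the paper's own proof: the corollary is obtained by specializing Theorem \ref{thm:ColorCompressionPreservesFineFVector} to ${\bf e}_i = (1,1,\dots)$ for each $i\in [n]$. The additional details you supply (the squarefree dictionary, the lex/colex match, and the preservation of squarefreeness by $\mathcal{C}_t$) are exactly the routine verifications the paper leaves implicit.
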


In the proof of Theorem \ref{thm:ColorCompressionPreservesFineFVector}, we started with an ${\bf a}$-colored multicomplex $(M, \pi)$ in $\mathcal{M}_{\pi}({\bf e}_1, \dots, {\bf e}_n)$ that is not color-compressed, and we constructed a finite sequence of multicomplexes in $\mathcal{M}_{\pi}({\bf e}_1, \dots, {\bf e}_n)$:
\begin{equation}\label{sequence-of-multicomplexes}
M, \ \mathcal{C}_{t_1}(M), \ \mathcal{C}_{t_2}(\mathcal{C}_{t_1}(M)), \ \dots \ , \ \mathcal{C}_{t_k}(\mathcal{C}_{t_{k-1}}(\cdots \mathcal{C}_{t_1}(M))).
\end{equation}
All the terms in this sequence are distinct, and the last term corresponds to a color-compressed colored multicomplex. Clearly if $n=1$ (i.e. in the ``uncolored'' case), the last term is uniquely determined by $M$, known as the {\it compression} of $M$. However, if $n>1$, then in general, different choices of $t_1, \dots, t_k$ determine different color-compressed colored multicomplexes, as the following example shows.

\begin{example}
Let $X = \{x_1, x_2, x_3\}$ and $Y = \{y_1, y_2, y_3\}$ be sets of variables, each linearly ordered by $x_1 < x_2 < x_3$ and $y_1 < y_2 < y_3$ respectively, and set $\pi = (X,Y)$. Consider the colored multicomplex $(M, \pi)$ of type $(1,1)$ in $\mathcal{M}_{\pi}({\bf 1}_3, {\bf 1}_3)$, given by $M = \{x_1y_1, x_1y_2, x_1y_3, x_2y_1, x_2y_2, x_3y_1, x_3y_3\}$. Note that $(\mathcal{C}_1(M), \pi)$ and $(\mathcal{C}_2(M), \pi)$ are both color-compressed, yet $\mathcal{C}_1(M) \neq \mathcal{C}_2(M)$.
\end{example}

Let $(M,\pi)$ and $(M^{\prime}, \pi)$ be colored multicomplexes in $\mathcal{M}_{\pi}({\bf e}_1, \dots, {\bf e}_n)$, and suppose $(M^{\prime}, \pi)$ is color-compressed. If $M^{\prime} = \mathcal{C}_{t_k}(\mathcal{C}_{t_{k-1}}(\cdots \mathcal{C}_{t_1}(M)))$ for some finite sequence $t_1, \dots, t_k$ of integers in $[n]$, then $(M^{\prime}, \pi)$ is called a {\it color compression} of $(M, \pi)$. It is easy to see that the integers $t_1, \dots, t_k$ in \eqref{sequence-of-multicomplexes} are distinct, hence every colored multicomplex in $\mathcal{M}_{\pi}({\bf e}_1, \dots, {\bf e}_n)$ has at most $n!$ color compressions. We leave the reader to verify that if each $X_i$ is infinite, then in fact almost every colored multicomplex (in the probabilistic sense) in $\mathcal{M}_{\pi}({\bf e}_1, \dots, {\bf e}_n)$ has $n!$ color compressions.

For $n\neq 1$, the non-uniqueness of color compressions of colored multicomplexes and the non-uniqueness of color-compressed multicomplexes with a given fine $f$-vector~\cite{BjornerFranklStanley1987:CohenMacaulayComplexes} suggest the usual definition of the $i$-th Macaulay representation of a positive integer $N$ (which is uniquely determined given $N$ and $i$) is inadequate for the task of numerically characterizing the fine $f$-vectors of color-compressed ${\bf a}$-colored multicomplexes in $\mathcal{M}_{\pi}({\bf e}_1, \dots, {\bf e}_n)$. Later in Section \ref{sec:GeneralizedMacaulayRepresentations}, we will introduce generalized Macaulay representations to address these non-uniqueness issues.

\section{Macaulay Decomposability}\label{sec:MacaulayDecomposability}
In this section, we introduce the notion of ${\bf a}$-Macaulay decomposability for simplicial complexes and show that ${\bf a}$-Macaulay decomposable simplicial complexes are vertex-decomposable. As the main result of this section, we prove that pure color-shifted ${\bf a}$-balanced complexes are ${\bf a}$-Macaulay decomposable and hence vertex-decomposable.

Note that vertex-decomposability was originally defined for pure complexes by Provan and Billera~\cite{ProvanBillera1980:Decompositions} and subsequently generalized to non-pure complexes by Bj\"{o}rner and Wachs~\cite[Section 11]{BjornerWachs1997:NonpureShellableII} as follows. 
\begin{definition}
A simplicial complex $\Delta$ on a vertex set $V$ is {\it vertex-decomposable} if
\begin{enum*}
\item $\Delta$ is a simplex or $\Delta = \{\emptyset\}$; or
\item there exists a vertex $x \in V$, called a {\it shedding vertex} of $\Delta$, such that
\begin{enum*}
\item $\dl_{\Delta}(x)$ and $\lk_{\Delta}(x)$ are vertex-decomposable, and
\item no facet of $\lk_{\Delta}(x)$ is a facet of $\dl_{\Delta}(x)$.
\end{enum*}
\end{enum*}
\end{definition}

In this paper, we always use this generalized definition for vertex-decomposability. If $\Delta$ is pure, then this definition specializes to the original definition given by Provan and Billera~\cite{ProvanBillera1980:Decompositions}. 

\begin{definition}
Let ${\bf a} = (a_1, \dots, a_n) \in \mathbb{N}^n$, and let $\Delta$ be a simplicial complex on a vertex set $V$. A subcomplex $\Delta^{\prime}$ of $\Delta$ is called an {\it ${\bf a}$-\bone} of $\Delta$ if there exists an ordered partition $(V_1, \dots, V_n)$ of $V$ such that
\[\Delta^{\prime} = \Big\langle\Big\{ F_1 \cup \dots \cup F_n \in \Delta: F_i \in \binom{V_i}{a_i} \text{ for each }i\in [n]\Big\}\Big\rangle.\]
\end{definition}

For example, if $p,q\in \mathbb{P}$, then the complete bipartite graph $K_{p,q}$ is a $(1,1)$-{\bone} of a simplex with $p+q$ vertices. If $\Delta$ is a simplex, then an ${\bf a}$-{\bone} of $\Delta$ is the subcomplex $\langle \binom{V_1}{a_1} \rangle * \dots * \langle \binom{V_n}{a_n} \rangle$ for some ordered partition $(V_1, \dots, V_n)$. By definition, an ${\bf a}$-{\bone} of any simplicial complex $\Delta$ is always pure of dimension $|{\bf a}|-1$, and if $\Delta$ is pure, then for a fixed ${\bf a} \in \mathbb{P}^n$, the union of all ${\bf a}$-{\bone}s of $\Delta$ (over all ordered partitions of $V$) forms the $(|{\bf a}| - 1)$-skeleton of $\Delta$. For the case $n=1$, the ${\bf a}$-{\bone} of every $\Delta$ is unique.

\begin{lemma}\label{lemma:a-bone-vertex-decomposable}
Any ${\bf a}$-{\bone} of a simplex is vertex-decomposable.
\end{lemma}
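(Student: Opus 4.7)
The plan is a direct induction on $|V|$, using the vertex-decomposability of a simplex as the base case. Without loss of generality assume every $a_i > 0$, since any block with $a_i = 0$ contributes the trivial factor $\langle \binom{V_i}{0}\rangle = \{\emptyset\}$ to the join
\[\Delta \;=\; \langle \tbinom{V_1}{a_1}\rangle * \dots * \langle \tbinom{V_n}{a_n}\rangle\]
and so can be dropped. If $|V_i| = a_i$ for every $i$, then $\Delta$ is the full simplex on $V$, which is vertex-decomposable by definition.

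For the inductive step, suppose some block satisfies $|V_i| > a_i$, and fix any $v \in V_i$ as a candidate shedding vertex. Since deletion and link commute with join at a vertex of one factor, a direct computation yields
\begin{align*}
\dl_\Delta(v) &= \langle \tbinom{V_1}{a_1}\rangle * \dots * \langle \tbinom{V_i\backslash\{v\}}{a_i}\rangle * \dots * \langle \tbinom{V_n}{a_n}\rangle,\\
\lk_\Delta(v) &= \langle \tbinom{V_1}{a_1}\rangle * \dots * \langle \tbinom{V_i\backslash\{v\}}{a_i-1}\rangle * \dots * \langle \tbinom{V_n}{a_n}\rangle,
\end{align*}
so $\dl_\Delta(v)$ is the ${\bf a}$-{\bone} of the simplex on $V\backslash\{v\}$ and $\lk_\Delta(v)$ is the $({\bf a} - \boldsymbol{\delta}_{i,n})$-{\bone} of the simplex on $V\backslash\{v\}$. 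Both have a strictly smaller vertex set, so both are vertex-decomposable by the induction hypothesis. The edge case $a_i = 1$ causes no trouble: then $\langle \binom{V_i\backslash\{v\}}{0}\rangle = \{\emptyset\}$ is the join identity, and $\lk_\Delta(v)$ is simply a {\bone} with one fewer nonzero block, which still matches the inductive setup.

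To close the argument, verify the Bj\"orner-Wachs shedding condition. Each factor $\langle \binom{V_j}{a_j}\rangle$ is pure of dimension $a_j-1$, so $\Delta$ is pure of dimension $|{\bf a}|-1$; since $|V_i\backslash\{v\}| \geq a_i$, the deletion $\dl_\Delta(v)$ remains pure of dimension $|{\bf a}|-1$, whereas $\lk_\Delta(v)$ is pure of dimension $|{\bf a}|-2$. The facets of the two subcomplexes therefore have different cardinalities, so no facet of $\lk_\Delta(v)$ can be a facet of $\dl_\Delta(v)$. There is no serious obstacle in the proof: the crux is the structural observation that {\bone}s of simplices are closed under taking deletions and links at any vertex, which is exactly what makes the induction drive through.
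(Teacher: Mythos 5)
Your proof is correct, but it takes a genuinely different route from the paper. The paper invokes \cite[Proposition 2.4]{ProvanBillera1980:Decompositions} — that a join $\Sigma_1 * \Sigma_2$ is vertex-decomposable if and only if both factors are — to reduce the claim to the vertex-decomposability of a single skeleton $\langle \binom{V_i}{a_i}\rangle$ of a simplex, which it then dismisses as straightforward. You instead run a single self-contained induction on $|V|$ across all blocks at once: you pick any vertex $v$ in an oversized block $V_i$, compute $\dl_\Delta(v)$ and $\lk_\Delta(v)$ explicitly as the ${\bf a}$-{\bone} and $({\bf a}-\boldsymbol{\delta}_{i,n})$-{\bone} of the smaller simplex, and verify the shedding condition by comparing dimensions of the two pure subcomplexes. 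What the paper's route buys is brevity (the join reduction outsources all the work); what yours buys is self-containedness and, more importantly, an explicit identification of the shedding vertices — any vertex of any block with $|V_i|>a_i$ — together with the closure of {\bone}s under deletion and link, which is precisely the computation the paper redoes later in the proofs of Theorem \ref{thm:shedding-tree=>Macaulay-tree} and in the spirit of Corollary \ref{cor:Macaulay-shedding-vertices}. Your handling of the edge cases ($a_i=0$ blocks dropped up front, $a_i=1$ making the link lose a block) is also correct and consistent with the inductive setup.
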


\begin{proof}
Let $\Delta$ be a simplex on a vertex set $V$, and let $\Gamma$ be the ${\bf a}$-{\bone} of $\Delta$ corresponding to the ordered partition $(V_1, \dots, V_n)$ of $V$. Given simplicial complexes $\Sigma_1, \Sigma_2$ with disjoint vertex sets, \cite[Proposition 2.4]{ProvanBillera1980:Decompositions} says $\Sigma_1 * \Sigma_2$ is vertex-decomposable if and only if $\Sigma_1$ and $\Sigma_2$ are both vertex-decomposable.
Since $\Gamma = \langle \binom{V_1}{a_1} \rangle * \dots * \langle \binom{V_n}{a_n} \rangle$, it then suffices to show that the $k$-skeleton (any $k\in \mathbb{N}$) of a simplex is vertex-decomposable, which is straightforward.
\end{proof}

For ${\bf x}, {\bf y} \in \mathbb{Z}^n$, recall that ${\bf x} < {\bf y}$ if ${\bf x} \leq {\bf y}$ and ${\bf x} \neq {\bf y}$. Also, recall that ${\bf x} \lessdot {\bf y}$ if ${\bf x} < {\bf y}$ and $|{\bf y}| = |{\bf x}| + 1$.

\begin{definition}\label{defn:MacDecomp}
Let ${\bf a} \in \mathbb{N}^n$. A simplicial complex $\Delta$ on a vertex set $V$ is called {\it ${\bf a}$-Macaulay decomposable} if
\begin{enum*}
\item $\Delta$ is an ${\bf a}$-{\bone} of a simplex; or
\item\label{defn:MacDecompCondii} there exists a vertex $x \in V$ and some ${\bf a}^{\prime} \in \mathbb{N}^n$, ${\bf a}^{\prime} < {\bf a}$, such that\label{shedding-Mac}
\begin{enum*}
\item $\dl_{\Delta}(x)$ is ${\bf a}$-Macaulay decomposable,
\item $\lk_{\Delta}(x)$ is ${\bf a}^{\prime}$-Macaulay decomposable, and
\item\label{defn:MacDecompCondiic} no facet of $\lk_{\Delta}(x)$ is a facet of $\dl_{\Delta}(x)$.
\end{enum*}
\end{enum*}
Such a vertex $x$ in \ref{shedding-Mac} is called a {\it Macaulay shedding vertex} of $\Delta$. As the following proposition shows, a Macaulay shedding vertex is a shedding vertex of a vertex-decomposable simplicial complex.
\end{definition}

\begin{proposition}\label{prop:Macaulay-decomposable=>vertex-decomposable}
Let ${\bf a} \in \mathbb{N}^n$, and let $\Delta$ be a simplicial complex. If $\Delta$ is ${\bf a}$-Macaulay decomposable, then $\Delta$ is vertex-decomposable, and a Macaulay shedding vertex of $\Delta$ is a shedding vertex of $\Delta$.
\end{proposition}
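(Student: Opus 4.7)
The plan is to prove the proposition by strong induction on the parameter $|{\bf a}| + |V|$, where $V$ is the vertex set of $\Delta$. This parameter is chosen because it strictly decreases along both recursive branches in Definition \ref{defn:MacDecomp}\ref{defn:MacDecompCondii}: along the deletion branch, $|V|$ drops by one while $|{\bf a}|$ stays fixed, and along the link branch, $|{\bf a}'| \leq |{\bf a}| - 1$ (since ${\bf a}' \leq {\bf a}$ coordinate-wise with ${\bf a}' \neq {\bf a}$) while the vertex set of $\lk_{\Delta}(x)$ sits inside $V \setminus \{x\}$. The overall strategy is to show that the Macaulay shedding vertex $x$ supplied by Definition \ref{defn:MacDecomp} already satisfies the three conditions for being a shedding vertex in the vertex-decomposability sense.

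First I would dispatch the base case. If $\Delta$ satisfies condition (i) of Definition \ref{defn:MacDecomp}, that is, $\Delta$ is an ${\bf a}$-{\bone} of a simplex, then Lemma \ref{lemma:a-bone-vertex-decomposable} gives vertex-decomposability directly, and there is no shedding vertex to produce.

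Next, for the inductive step, I would suppose $\Delta$ satisfies condition \ref{defn:MacDecompCondii} of Definition \ref{defn:MacDecomp} with Macaulay shedding vertex $x$ and witnessing vector ${\bf a}' < {\bf a}$. Applying the inductive hypothesis to $\dl_{\Delta}(x)$, which is ${\bf a}$-Macaulay decomposable on vertex set $V \setminus \{x\}$, and to $\lk_{\Delta}(x)$, which is ${\bf a}'$-Macaulay decomposable with $|{\bf a}'| < |{\bf a}|$ on a vertex set contained in $V \setminus \{x\}$, shows that both subcomplexes are vertex-decomposable. Condition \ref{defn:MacDecompCondiic} of Definition \ref{defn:MacDecomp} matches verbatim the facet-disjointness required of a shedding vertex, so $x$ is a shedding vertex of $\Delta$ and $\Delta$ is vertex-decomposable.

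I do not expect a substantive obstacle here. The definition of ${\bf a}$-Macaulay decomposability has been engineered so that its recursive clauses mirror those of vertex-decomposability almost line by line; the only non-trivial content was already absorbed into Lemma \ref{lemma:a-bone-vertex-decomposable}. What remains is bookkeeping: confirming that the chosen induction parameter strictly drops on both the deletion and link branches, which is immediate from the definitions.
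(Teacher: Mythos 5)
Your proposal is correct and follows essentially the same route as the paper: the paper also reduces the ribcage case to Lemma \ref{lemma:a-bone-vertex-decomposable} and runs an induction (stated there as a double induction on $|{\bf a}|$ and $|V|$, equivalent to your single induction on $|{\bf a}|+|V|$) applying the hypothesis to $\dl_{\Delta}(x)$ and $\lk_{\Delta}(x)$ and invoking condition \ref{defn:MacDecompCondiic} for the shedding property. No gaps.
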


\begin{proof} We prove by double induction on $|{\bf a}|$ and $|V|$, where $V$ denotes the vertex set of $\Delta$. The cases $|{\bf a}| = 0$ and $|V| \leq 2$ are trivial, and in view of Lemma \ref{lemma:a-bone-vertex-decomposable}, we assume $\Delta$ is not an ${\bf a}$-{\bone} of any simplex. Let $x$ be a Macaulay shedding vertex of $\Delta$. Note that $\dl_{\Delta}(x)$ is an ${\bf a}$-Macaulay decomposable simplicial complex on a vertex set contained in $V\backslash \{x\}$, while $\lk_{\Delta}(x)$ is ${\bf a}^{\prime}$-Macaulay decomposable for some ${\bf a}^{\prime} < {\bf a}$, hence $\dl_{\Delta}(x)$ and $\lk_{\Delta}(x)$ are both vertex-decomposable by induction hypothesis. Also, no facet of $\lk_{\Delta}(x)$ is a facet of $\dl_{\Delta}(x)$ by definition, hence $\Delta$ is vertex-decomposable, and we easily conclude that $x$ is also a shedding vertex.
\end{proof}

\begin{corollary}\label{cor:pure-vertex-decomposable-iff-d-Macaulay-decomp}
Let $\Delta$ be a $(d-1)$-dimensional simplicial complex. Then $\Delta$ is vertex-decomposable if and only if $\Delta$ is $(d)$-Macaulay decomposable.
\end{corollary}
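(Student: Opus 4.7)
The plan is to handle the two directions separately. The $(d)$-Macaulay decomposable $\Rightarrow$ vertex-decomposable direction is immediate from Proposition~\ref{prop:Macaulay-decomposable=>vertex-decomposable}, so the substance is in proving that every $(d-1)$-dimensional vertex-decomposable $\Delta$ is $(d)$-Macaulay decomposable. I would proceed by induction on $|V|$, where $V$ is the vertex set of $\Delta$, stating the inductive hypothesis uniformly over dimensions: every vertex-decomposable simplicial complex $\Delta^\prime$ with $|V(\Delta^\prime)| < |V|$ and $\dim\Delta^\prime = e-1$ is $(e)$-Macaulay decomposable.

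For the base case, when $\Delta$ is a simplex of dimension $d-1$, we have $\Delta = \langle\{V\}\rangle$ with $|V|=d$, which is precisely the $(d)$-rib of itself under the trivial ordered partition $(V)$, so the first clause of Definition~\ref{defn:MacDecomp} is satisfied. For the inductive step, I would take a shedding vertex $x$ of $\Delta$ and argue that $x$ is a Macaulay shedding vertex witnessing $(d)$-Macaulay decomposability. The non-facet condition is inherited verbatim from the shedding-vertex definition. For the link, the observation that $F \cup \{x\} \in \Delta$ for every $F \in \lk_\Delta(x)$ gives $\dim\lk_\Delta(x) \leq d-2$; setting $d^\prime := \dim\lk_\Delta(x) + 1$ forces $d^\prime < d$, and the inductive hypothesis applied to the vertex-decomposable complex $\lk_\Delta(x)$ on strictly fewer vertices yields $(d^\prime)$-Macaulay decomposability, supplying the required ${\bf a}^\prime = (d^\prime) < (d)$.

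The main obstacle is showing $\dim\dl_\Delta(x) = d-1$, which is needed so that invoking the inductive hypothesis on $\dl_\Delta(x)$ yields $(d)$-Macaulay decomposability rather than decomposability for some strictly smaller parameter. I would fix a facet $F$ of $\Delta$ with $\dim F = d-1$ and split on whether $x \in F$. If $x \notin F$, then $F \in \dl_\Delta(x)$ directly and the dimension claim is immediate. If $x \in F$, then $F \setminus \{x\}$ is a facet of $\lk_\Delta(x)$ of dimension $d-2$, via the standard bijection between facets of $\Delta$ containing $x$ and facets of $\lk_\Delta(x)$; the shedding-vertex condition then forbids $F \setminus \{x\}$ from being a facet of $\dl_\Delta(x)$, so there exists some $G \in \dl_\Delta(x)$ with $G \supsetneq F \setminus \{x\}$, whence $\dim G \geq d-1$, and since $G \in \Delta$ forces $\dim G \leq d-1$, we obtain $\dim G = d-1$ and thus $\dim \dl_\Delta(x) = d-1$. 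With this dimension equality in hand, the inductive hypothesis yields that $\dl_\Delta(x)$ is $(d)$-Macaulay decomposable, completing the verification of the second clause of Definition~\ref{defn:MacDecomp} and closing the induction.
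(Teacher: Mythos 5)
Your proposal is correct and takes essentially the same approach as the paper: the paper's own proof of this corollary consists of the observation that the backward direction is Proposition~\ref{prop:Macaulay-decomposable=>vertex-decomposable} plus the one-line claim that the forward direction ``easily follows by induction on $|V|$.'' Your write-up is a valid fleshing-out of exactly that induction, correctly identifying the one nontrivial point (using the shedding condition to force $\dim \dl_{\Delta}(x) = d-1$) that the paper's ``easily'' hides.
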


\begin{proof}
Let $\Delta$ be vertex-decomposable with vertex set $V$. In view of Proposition \ref{prop:Macaulay-decomposable=>vertex-decomposable}, we only need to show that $\Delta$ is $(d)$-Macaulay decomposable, which easily follows by induction on $|V|$.
\end{proof}

A simplicial complex is {\it shellable} if its facets can be arranged in a linear order $F_1, \dots, F_t$ so that the subcomplex $\langle \{F_{k+1}\} \rangle \cap \langle \{F_1, \dots, F_k\} \rangle$ is pure of dimension $\dim F_{k+1} - 1$ for all $k \in [t-1]$. Similar to vertex-decomposability, the notion of shellability was originally defined for pure complexes. Here, we use the generalized notion of shellability introduced by Bj\"{o}rner and Wachs~\cite{BjornerWachs1996:NonpureShellableI}, whereby shellable simplicial complexes are not necessarily pure. In this generalized setting, vertex-decomposable simplicial complexes are shellable (\cite{ProvanBillera1980:Decompositions}~\cite[Theorem 11.3]{BjornerWachs1997:NonpureShellableII}), thus Proposition \ref{prop:Macaulay-decomposable=>vertex-decomposable} yields the following implications.

\begin{corollary}\label{cor:MacDecomp=>vertx-decomp=>shellable}
Let ${\bf a} \in \mathbb{N}^n$. Then ${\bf a}$-Macaulay decomposable $\Longrightarrow$ vertex-decomposable $\Longrightarrow$ shellable.
\end{corollary}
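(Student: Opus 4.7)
The plan is to observe that this corollary is essentially a concatenation of two already-established facts, so the proof is a short chain of citations rather than a new argument. The statement asserts two implications, and I will handle them one at a time.

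For the first implication, ${\bf a}$-Macaulay decomposable $\Longrightarrow$ vertex-decomposable, I would simply invoke Proposition \ref{prop:Macaulay-decomposable=>vertex-decomposable}, which has already been established in this section by double induction on $|{\bf a}|$ and $|V|$. That proposition gives exactly what is needed, and in fact says the stronger statement that every Macaulay shedding vertex is a shedding vertex. No further work is required here.

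For the second implication, vertex-decomposable $\Longrightarrow$ shellable, I would appeal to the generalized theory of shellability developed in \cite{BjornerWachs1996:NonpureShellableI,BjornerWachs1997:NonpureShellableII}. Specifically, since this paper uses the non-pure generalization of vertex-decomposability due to Bj\"orner--Wachs, the corresponding implication to (non-pure) shellability is provided by \cite[Theorem 11.3]{BjornerWachs1997:NonpureShellableII}, which extends the classical pure-case result of Provan--Billera~\cite{ProvanBillera1980:Decompositions}. Composing these two implications yields the chain claimed in the corollary.

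I do not anticipate any genuine obstacle: both arrows are either proved one paragraph earlier (the first) or are standard in the literature under the conventions already adopted in Section \ref{sec:MacaulayDecomposability} (the second). The only point that deserves a brief mention in the writeup is that the notions of vertex-decomposability and shellability used throughout are the generalized (not-necessarily-pure) ones, so that the Bj\"orner--Wachs version of the Provan--Billera implication applies without modification.
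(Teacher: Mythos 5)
Your proposal is correct and matches the paper's argument exactly: the first arrow is Proposition \ref{prop:Macaulay-decomposable=>vertex-decomposable}, and the second is the Bj\"orner--Wachs generalization of the Provan--Billera result (\cite{ProvanBillera1980:Decompositions}, \cite[Theorem 11.3]{BjornerWachs1997:NonpureShellableII}). Nothing further is needed.
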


Note that the second implication is strict, even for pure complexes~\cite[Remark 3.4.5]{ProvanBillera1980:Decompositions}. Corollary \ref{cor:pure-vertex-decomposable-iff-d-Macaulay-decomp} gives a partial converse to the first implication when $n=1$. However for $n>1$, the first implication is strict; see Proposition \ref{prop:MacDecompSeriesOfImplications}.

\begin{proposition}\label{prop:join-Mac-decomp}
Let $n,m\in \mathbb{P}$, let ${\bf a} \in \mathbb{N}^n$, ${\bf b} \in \mathbb{N}^m$, and let $\Delta$, $\Delta^{\prime}$ be simplicial complexes with disjoint vertex sets $V, V^{\prime}$ respectively. If $\Delta$ is ${\bf a}$-Macaulay decomposable and $\Delta^{\prime}$ is ${\bf b}$-Macaulay decomposable, then $\Delta * \Delta^{\prime}$ is $({\bf a}, {\bf b})$-Macaulay decomposable.
\end{proposition}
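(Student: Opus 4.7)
The plan is to proceed by strong induction on $|{\bf a}| + |{\bf b}| + |V| + |V'|$. The structure mirrors Definition \ref{defn:MacDecomp}: there is a base case in which both $\Delta$ and $\Delta'$ are ribs of simplices, and an inductive step in which at least one of them admits a Macaulay shedding vertex.

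For the base case, suppose $\Delta$ is an ${\bf a}$-{\bone} of a simplex on $V$ and $\Delta'$ is a ${\bf b}$-{\bone} of a simplex on $V'$, corresponding to ordered partitions $(V_1,\dots,V_n)$ and $(V'_1,\dots,V'_m)$ respectively. Because the ambient complexes are simplices, every face of the form $F_1 \cup \dots \cup F_n$ with $F_i \in \binom{V_i}{a_i}$ (resp.\ $F'_1 \cup \dots \cup F'_m$ with $F'_j \in \binom{V'_j}{b_j}$) lies in the ambient simplex, so
\[\Delta \;=\; \langle\tbinom{V_1}{a_1}\rangle * \dots * \langle\tbinom{V_n}{a_n}\rangle, \qquad \Delta' \;=\; \langle\tbinom{V'_1}{b_1}\rangle * \dots * \langle\tbinom{V'_m}{b_m}\rangle.\]
Their join is then exactly the $({\bf a},{\bf b})$-{\bone} of the simplex on $V\cup V'$ with respect to the concatenated partition $(V_1,\dots,V_n,V'_1,\dots,V'_m)$, which lies in case (i) of Definition \ref{defn:MacDecomp}.

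For the inductive step, if $\Delta$ and $\Delta'$ are both ribs we are in the base case; otherwise, after possibly swapping $\Delta$ and $\Delta'$, we may assume $\Delta$ satisfies condition (ii) of Definition \ref{defn:MacDecomp}. Let $x\in V$ be a Macaulay shedding vertex of $\Delta$ and let ${\bf a}'<{\bf a}$ be such that $\lk_\Delta(x)$ is ${\bf a}'$-Macaulay decomposable while $\dl_\Delta(x)$ is ${\bf a}$-Macaulay decomposable. I plan to show that the \emph{same} vertex $x$ is a Macaulay shedding vertex of $\Delta * \Delta'$, with the auxiliary tuple $({\bf a}',{\bf b})<({\bf a},{\bf b})$. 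Using the standard identities
\[\dl_{\Delta * \Delta'}(x) \;=\; \dl_\Delta(x) * \Delta', \qquad \lk_{\Delta * \Delta'}(x) \;=\; \lk_\Delta(x) * \Delta',\]
the inductive hypothesis applied to the pairs $(\dl_\Delta(x),\Delta')$ and $(\lk_\Delta(x),\Delta')$ (each of which has strictly smaller total size, in the first case because the vertex set of $\dl_\Delta(x)$ is smaller, and in the second because $|{\bf a}'|<|{\bf a}|$) yields the required Macaulay decomposability of the deletion and link of $\Delta * \Delta'$ at $x$.

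The remaining point is condition \ref{defn:MacDecompCondiic}: no facet of $\lk_{\Delta * \Delta'}(x)$ is a facet of $\dl_{\Delta * \Delta'}(x)$. Since $V$ and $V'$ are disjoint, every facet of a join decomposes uniquely as a disjoint union of a facet from each factor. Thus any facet of $\lk_\Delta(x) * \Delta'$ has the form $F\cup F'$ with $F$ a facet of $\lk_\Delta(x)$ and $F'$ a facet of $\Delta'$; if this face were also a facet of $\dl_\Delta(x) * \Delta'$, then intersecting with $V$ would force $F$ to be a facet of $\dl_\Delta(x)$, contradicting the hypothesis that $x$ is a Macaulay shedding vertex of $\Delta$. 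I expect this facet-condition transfer to be the one delicate step: it is the only place where we use disjointness of the vertex sets to split facets across the join, and it must be set up carefully so that the ${\bf a}$-shedding property of $x$ in $\Delta$ survives after joining with $\Delta'$. Everything else reduces to bookkeeping on the partition and a routine application of the inductive hypothesis.
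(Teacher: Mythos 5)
Your proposal is correct and follows essentially the same route as the paper's proof: the same join identities for deletion and link, the same induction (the paper uses a double induction on $|{\bf a}|+|{\bf b}|$ and $|V|+|V'|$, which your combined measure replicates), and the same facet-splitting argument for condition \ref{defn:MacDecompCondiic}. The only cosmetic difference is that instead of ``swapping'' $\Delta$ and $\Delta'$ (which would literally yield $({\bf b},{\bf a})$-decomposability of $\Delta'*\Delta$ rather than the stated conclusion), it is cleaner to handle the case where $\Delta'$ carries the shedding vertex by the symmetric argument with auxiliary tuple $({\bf a},{\bf b}')$, as the paper does.
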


\begin{proof}
We prove by double induction on $|{\bf a}| + |{\bf b}|$ and $|V| + |V^{\prime}|$, where the cases $|{\bf a}| + |{\bf b}| \leq 2$ or $|V| + |V^{\prime}| \leq 2$ are trivial. Suppose $\Delta$ is not an ${\bf a}$-{\bone} of a simplex, and let $x$ be a Macaulay shedding vertex of $\Delta$. Since $\dl_{\Delta}(x)$ is ${\bf a}$-Macaulay decomposable with its vertex set contained in $V\backslash \{x\}$, while $\lk_{\Delta}(x)$ is ${\bf a}^{\prime}$-Macaulay decomposable for some ${\bf a}^{\prime} < {\bf a}$, the induction hypothesis yields $\dl_{\Delta}(x) * \Delta^{\prime}$ is $({\bf a}, {\bf b})$-Macaulay decomposable and $\lk_{\Delta}(x) * \Delta^{\prime}$ is $({\bf a}^{\prime}, {\bf b})$-Macaulay decomposable. It is easy to show that $\dl_{\Delta}(x) * \Delta^{\prime} = \dl_{\Delta * \Delta^{\prime}}(x)$ and $\lk_{\Delta}(x) * \Delta^{\prime} = \lk_{\Delta * \Delta^{\prime}}(x)$. Also, if a facet $F$ of $\lk_{\Delta * \Delta^{\prime}}(x)$ is a facet of $\dl_{\Delta * \Delta^{\prime}}(x)$, then $F \cap V$ is a facet of both $\dl_{\Delta}(x)$ and $\lk_{\Delta}(x)$, which is a contradiction. Consequently, $\Delta * \Delta^{\prime}$ is $({\bf a}, {\bf b})$-Macaulay decomposable by definition in this case. The case when $\Delta^{\prime}$ is not a ${\bf b}$-{\bone} of a simplex is similar.

Finally, if $\Delta$ is an ${\bf a}$-{\bone} of a simplex $\Sigma$ with vertex set $X$, and $\Delta^{\prime}$ is a ${\bf b}$-{\bone} of a simplex $\Sigma^{\prime}$ with vertex set $X^{\prime}$, then $\Delta * \Delta^{\prime} = \langle\binom{X_1}{a_1} \rangle * \dots * \langle\binom{X_n}{a_n} \rangle * \langle\binom{X_1^{\prime}}{b_1} \rangle * \dots * \langle\binom{X_m^{\prime}}{b_m} \rangle$, where ${\bf a} = (a_1, \dots, a_n)$, ${\bf b} = (b_1, \dots, b_m)$, and $(X_1, \dots, X_n)$, $(X_1^{\prime}, \dots, X_m^{\prime})$ are the corresponding ordered partitions of $X$, $X^{\prime}$ respectively. Thus, $\Delta * \Delta^{\prime}$ is an $({\bf a}, {\bf b})$-{\bone} of a simplex with vertex set $X \cup X^{\prime}$ and hence $({\bf a}, {\bf b})$-Macaulay decomposable.
\end{proof}

Let ${\bf a} = (a_1, \dots, a_n) \in \mathbb{N}^n$ and ${\bf b} = (b_1, \dots, b_m) \in \mathbb{N}^m$, where $m\leq n$ are positive integers. If there are integers $0 = i_0 < i_1 < \dots < i_{m-1} < i_m = n$ such that $b_t = a_{i_{t-1}+1} + \dots + a_{i_t}$ for all $t\in [m]$, then we say ${\bf a}$ is an {\it ordered refinement} of ${\bf b}$. In particular, ${\bf a}$ is an ordered refinement of ${\bf b}$ implies $|{\bf a}| = |{\bf b}|$. If there exists a permutation $\phi$ on $[n]$ such that $(a_{\phi(1)}, \dots, a_{\phi(n)})$ is an ordered refinement of ${\bf b}$, then we say ${\bf a}$ is a {\it permuted refinement} of ${\bf b}$, and we denote this by ${\bf a} \leq_{pr} {\bf b}$. Let $\Sigma$ be the simplex with vertex set $V = \{(i,j) \in \mathbb{P}^2: j\in [n], i\in [a_j+1]\}$. For each $t\in [n]$, define $V_t := \{(i,t) \in \mathbb{P}^2: i\in [a_t+1]\} \subseteq V$. Then the unique ${\bf a}$-{\bone} of $\Sigma$ corresponding to the ordered partition $(V_1, \dots, V_n)$ of $V$ is called the {\it ${\bf a}$-ribcage}. For example, the $(1,1)$-ribcage is isomorphic to the complete bipartite graph $K_{2,2}$, while the $(2)$-ribcage is isomorphic to the empty triangle $\big\langle \big\{\{1,2\}, \{2,3\}, \{3,1\} \big\}\big\rangle$.

\begin{lemma}\label{lemma:ribcage}
Let ${\bf b} \in \mathbb{P}^m$ and let $\Delta$ be the ${\bf b}$-ribcage. If ${\bf a} \in \mathbb{P}^n$ is an ordered refinement of ${\bf b}$ such that $\Delta$ is ${\bf a}$-Macaulay decomposable, then ${\bf a} = {\bf b}$.
\end{lemma}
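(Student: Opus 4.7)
The plan is to proceed by induction on $|{\bf b}|$, where the base case $|{\bf b}| = 1$ is immediate since the only ordered refinement of $(1)$ in $\mathbb{P}^n$ is $(1)$ itself. For the inductive step, I would split according to the two cases in Definition~\ref{defn:MacDecomp}: either $\Delta$ is an ${\bf a}$-rib of a simplex (Case (i)), or $\Delta$ admits a Macaulay shedding vertex $x$ (Case (ii)). I would also prove, as an auxiliary step used in the recursion, that if a complex is ${\bf a}'$-Macaulay decomposable and $a'_k = 0$ for some $k$, then it is $\overline{{\bf a}'}$-Macaulay decomposable (this follows by induction on the Macaulay-decomp data, using that the zero entry contributes only the trivial factor $\{\emptyset\}$ in any rib).

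In Case (i), I would write $\Delta = \langle\binom{W_1}{a_1}\rangle * \cdots * \langle\binom{W_n}{a_n}\rangle$ for an ordered partition $(W_1, \dots, W_n)$ of $V(\Delta)$ with all $a_i \geq 1$, and compare minimal non-faces in the two descriptions of $\Delta$. As the ${\bf b}$-ribcage, $\Delta$ has the $m$ pairwise disjoint minimal non-faces $V_1, \dots, V_m$ of sizes $b_1+1, \dots, b_m+1$; as an ${\bf a}$-rib, its minimal non-faces are the $(a_i+1)$-subsets of those $W_i$ with $|W_i| \geq a_i+1$. Pairwise disjointness forces $|W_i| \in \{a_i, a_i+1\}$ for every $i$. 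The case $|W_i| = a_i$ produces a universal vertex in $W_i$ (one lying in every facet), but the ribcage has no such vertex, so $|W_i| = a_i+1$ for all $i$. Matching these non-faces with $V_1, \dots, V_m$ then gives $n = m$ and $a_i = b_{t(i)}$ under a bijection, and since ${\bf a}$ is an ordered refinement of ${\bf b}$ with $n = m$, this forces ${\bf a} = {\bf b}$.

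In Case (ii), a direct computation with $x \in V_{t'}$ shows that $\lk_\Delta(x)$ equals the $\tilde{{\bf b}}$-ribcage, where $\tilde{{\bf b}}$ is obtained from ${\bf b}$ by replacing $b_{t'}$ with $b_{t'} - 1$ if $b_{t'} \geq 2$ or by deleting $b_{t'}$ if $b_{t'} = 1$; in particular $|\tilde{{\bf b}}| = |{\bf b}| - 1$. Since the ribcage is pure, one checks that the admissible ${\bf a}' < {\bf a}$ satisfies $|{\bf a}'| = |{\bf a}| - 1$, so ${\bf a}' = {\bf a} - \boldsymbol{\delta}_{j,n}$ for some $j \in [n]$. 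Let $t$ be the block of $j$ in the ordered refinement of ${\bf a}$ into ${\bf b}$. If $a_j \geq 2$, then ${\bf a}' \in \mathbb{P}^n$ and is an ordered refinement of $(b_1, \dots, b_t-1, \dots, b_m)$, forcing $t = t'$ and, by the inductive hypothesis applied to the smaller $\tilde{{\bf b}}$-ribcage, ${\bf a}' = \tilde{{\bf b}}$, whence $n = m$ and ${\bf a} = {\bf b}$. If $a_j = 1$ while the block of $j$ has size one (so $b_t = 1$), the zero-dropping lemma gives that $\lk_\Delta(x)$ is $\overline{{\bf a}'}$-Macaulay decomposable with $\overline{{\bf a}'} \in \mathbb{P}^{n-1}$ an ordered refinement of $\tilde{{\bf b}}$ of length $m-1$, so by induction $\overline{{\bf a}'} = \tilde{{\bf b}}$ and again ${\bf a} = {\bf b}$.

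The remaining subcase, when $a_j = 1$ but the block of $j$ in ${\bf a}$ has size at least two (so $b_t \geq 2$), is the main obstacle: here naive induction only recovers $\overline{{\bf a}'} = \tilde{{\bf b}}$ with $n = m+1$, consistent with a strict refinement of ${\bf b}$. To rule this configuration out, I would argue that ${\bf a}'$-Macaulay decomposability of the non-empty complex $\lk_\Delta(x)$ with $a'_j = 0$ is impossible: every vector appearing in the recursive invocation of Definition~\ref{defn:MacDecomp} inherits a zero at coordinate $j$ (since the admissible vectors only decrease), yet terminating at case (i) would require a partition of $V(\Delta^{(\ell)})$ all of whose parts contribute to the rib, which fails as soon as some coordinate is zero. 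Thus the recursion cannot produce a valid rib, contradicting Macaulay decomposability of $\lk_\Delta(x)$, and this subcase cannot occur. The subtlety in formalizing this last argument is that one must pin down the precise convention on the ambient simplex's vertex set in Definition~\ref{defn:MacDecomp} — a point I expect will require a short technical digression to settle cleanly before assembling the three cases into the final conclusion ${\bf a} = {\bf b}$.
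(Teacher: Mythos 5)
Your Case (i) is, in essence, the paper's entire proof: the paper only analyzes the situation where $\Delta$ is an ${\bf a}$-{\bone} of a simplex, counts vertices ($|V| = |{\bf a}|+m$ forces $|V_j'| = a_j$ for some $j$ when $n>m$), and derives a cone-point contradiction; your minimal-non-face argument is an equivalent packaging of the same idea. The difficulty lies entirely in your Case (ii), which the paper's proof does not address, and there your argument has genuine gaps. First, the step ``forcing $t = t'$'' is unjustified: the inductive hypothesis applies only if ${\bf a}'$ is an ordered refinement of $\tilde{{\bf b}}$, and when $t \neq t'$ you have only shown that ${\bf a}'$ refines ${\bf b} - \boldsymbol{\delta}_{t,m}$, not that it refines $\tilde{{\bf b}} = {\bf b} - \boldsymbol{\delta}_{t',m}$, so the induction does not engage.

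Second, and more seriously, your resolution of the ``main obstacle'' subcase rests on the claim that a non-empty complex can never be ${\bf a}'$-Macaulay decomposable when some coordinate $a'_j$ is zero. That claim is inconsistent with your own zero-dropping lemma (which presupposes that such complexes exist), and with the paper itself: the proof of Theorem~\ref{thm:color-shifted=>Macaulay-decomposable} explicitly concludes that $\lk_{\Delta}(v_{t,\lambda_t})$ is $({\bf a}-\boldsymbol{\delta}_{t,n})$-Macaulay decomposable even when $a_t = 1$. Under any reading of Definition~\ref{defn:MacDecomp} compatible with that theorem (for instance, allowing the ambient simplex to carry vertices that do not appear in the {\bone}), the configuration you are trying to exclude actually materializes: for ${\bf b} = (2)$ and ${\bf a} = (1,1)$, the empty triangle with shedding vertex $(3,1)$ has deletion equal to a single edge, which is a $(1,1)$-{\bone}, and link equal to two isolated points, which is a $(1,0)$-{\bone} of a $2$-dimensional simplex, with the no-common-facet condition satisfied. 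So the subcase is not vacuous, and excluding it requires fixing the vertex-set convention precisely (the partition must be of $\Delta$'s own vertex set, as the paper's proof tacitly assumes), rather than the recursion-with-zeros argument you sketch. As written, the critical subcase of your proof does not go through.
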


\begin{proof}
Write ${\bf a} = (a_1, \dots, a_n)$ and suppose $\Delta$ is ${\bf a}$-Macaulay decomposable corresponding to the ordered partition $(V_1^{\prime}, \dots, V_n^{\prime})$ of its vertex set $V$. Note that $|V| = |b| + m = |a| + m$, so if $n>m$, then $|V_j^{\prime}| = a_j$ for some $j\in [n]$, which implies $V_j^{\prime}$ is a non-empty set of cone-points of $\Delta$. (Recall: $v \in V$ is a {\it cone-point} of $\Delta$ if $v\in F$ for all facets $F$ of $\Delta$.) However, by the definition of the ${\bf b}$-ribcage, $\Delta$ has no cone-points, thus $n=m$, which then forces ${\bf a} = {\bf b}$.
\end{proof}

\begin{proposition}\label{prop:MacDecompSeriesOfImplications}
Let $m,n \in \mathbb{P}$, let ${\bf a} \in \mathbb{N}^n$, ${\bf b} \in \mathbb{N}^m$, and let $\Delta$ be a simplicial complex. If ${\bf a} \leq_{pr} {\bf b}$, then $\Delta$ is ${\bf a}$-Macaulay decomposable implies $\Delta$ is ${\bf b}$-Macaulay decomposable. Furthermore, if ${\bf a} \in \mathbb{P}^n$, ${\bf b} \in \mathbb{P}^m$ and $n \neq m$, then this implication is strict.
\end{proposition}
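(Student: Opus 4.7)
The plan is to prove the implication by induction on $|V(\Delta)|$, after first reducing to the case where ${\bf a}$ is an \emph{ordered} refinement of ${\bf b}$. Macaulay decomposability is invariant under permutation of the entries of the type tuple: permuting ${\bf a}$ and correspondingly permuting the ordered partition $(V_1, \dots, V_n)$ in any ${\bf a}$-{\bone} yields the same subcomplex, and the recursive clauses of Definition \ref{defn:MacDecomp} transfer through the same permutation by a routine induction. Thus, after relabeling, I may assume ${\bf a}$ is an ordered refinement of ${\bf b}$ with indices $0 = i_0 < i_1 < \dots < i_m = n$ witnessing $b_t = a_{i_{t-1}+1} + \dots + a_{i_t}$ for each $t \in [m]$.

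For the base case, suppose $\Delta$ is an ${\bf a}$-{\bone} of a simplex, so $\Delta = \langle \binom{V_1}{a_1} \rangle * \dots * \langle \binom{V_n}{a_n} \rangle$ for some ordered partition $(V_1, \dots, V_n)$ of $V$. Regroup along the refinement by setting $\Delta_t := \langle \binom{V_{i_{t-1}+1}}{a_{i_{t-1}+1}} \rangle * \dots * \langle \binom{V_{i_t}}{a_{i_t}} \rangle$, so $\Delta = \Delta_1 * \dots * \Delta_m$. Each $\Delta_t$ is an $(a_{i_{t-1}+1}, \dots, a_{i_t})$-{\bone} of a simplex, hence pure of dimension $b_t - 1$ and vertex-decomposable by Lemma \ref{lemma:a-bone-vertex-decomposable}. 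Corollary \ref{cor:pure-vertex-decomposable-iff-d-Macaulay-decomp} then gives that $\Delta_t$ is $(b_t)$-Macaulay decomposable, and iterating Proposition \ref{prop:join-Mac-decomp} shows that $\Delta = \Delta_1 * \dots * \Delta_m$ is $(b_1, \dots, b_m) = {\bf b}$-Macaulay decomposable. For the inductive step, let $x$ be a Macaulay shedding vertex of $\Delta$ with corresponding ${\bf a}' < {\bf a}$ as in Definition \ref{defn:MacDecomp}. Define ${\bf b}' = (b_1', \dots, b_m')$ by the same block-sum rule, $b_t' := a_{i_{t-1}+1}' + \dots + a_{i_t}'$. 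Then ${\bf a}'$ is an ordered refinement of ${\bf b}'$, and ${\bf b}' < {\bf b}$, since ${\bf a}' < {\bf a}$ forces strict inequality in at least one block-sum. Applying the induction hypothesis to $\dl_\Delta(x)$ with the pair $({\bf a}, {\bf b})$ and to $\lk_\Delta(x)$ with the pair $({\bf a}', {\bf b}')$, these complexes are ${\bf b}$- and ${\bf b}'$-Macaulay decomposable respectively. The facet-avoidance condition of Definition \ref{defn:MacDecomp} is independent of the type, so $x$ serves as a Macaulay shedding vertex witnessing ${\bf b}$-Macaulay decomposability of $\Delta$.

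For the strictness claim, assume ${\bf a} \in \mathbb{P}^n$, ${\bf b} \in \mathbb{P}^m$ with $n \neq m$; since ${\bf a} \leq_{pr} {\bf b}$ with positive entries forces $n \geq m$, we actually have $n > m$. Take $\Delta$ to be the ${\bf b}$-ribcage, which is ${\bf b}$-Macaulay decomposable by definition as a ${\bf b}$-{\bone} of a simplex. If it were also ${\bf a}$-Macaulay decomposable, the permutation-invariance observation above would make it $\widetilde{{\bf a}}$-Macaulay decomposable for the ordered-refinement rearrangement $\widetilde{{\bf a}}$ of ${\bf a}$, and Lemma \ref{lemma:ribcage} would then force $\widetilde{{\bf a}} = {\bf b}$, contradicting $n > m$. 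The main obstacle is the bookkeeping of the refinement when transferring the shedding data from ${\bf a}$ to ${\bf b}$; once this, together with the permutation invariance, is set up, the rest of the argument is routine.
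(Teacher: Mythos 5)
Your proposal is correct and follows essentially the same route as the paper: reduce to an ordered refinement, handle the ${\bf a}$-\bone{} base case by regrouping into blocks and combining Lemma \ref{lemma:a-bone-vertex-decomposable}, Corollary \ref{cor:pure-vertex-decomposable-iff-d-Macaulay-decomp} and Proposition \ref{prop:join-Mac-decomp}, run the induction through the shedding vertex, and derive strictness from Lemma \ref{lemma:ribcage}. You spell out the inductive step and the permutation invariance that the paper leaves implicit, but the substance is identical.
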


\begin{proof}
Observe that if we can show every ${\bf a}$-{\bone} of a simplex is ${\bf b}$-Macaulay decomposable for all ${\bf b} \geq_{pr} {\bf a}$, then by the definition of Macaulay decomposability, we can prove the first assertion using double induction on $|{\bf a}|$ and the number of vertices of $\Delta$. Write ${\bf a} = (a_1, \dots, a_n)$, let $\Delta^{\prime}$ be a simplex with vertex set $V$, and let $\Gamma$ be an ${\bf a}$-{\bone} of $\Delta^{\prime}$ corresponding to the ordered partition $(V_1, \dots, V_n)$ of $V$. Choose some ${\bf b} = (b_1, \dots, b_m) \geq_{pr} {\bf a}$. Without loss of generality, assume ${\bf a}$ is an ordered refinement of ${\bf b}$, and let $0 = i_0 < i_1 < \dots < i_{m-1} < i_m = n$ such that $b_t = a_{i_{t-1}+1} + \dots + a_{i_t}$ for all $t\in [m]$. For each $t\in [m]$, define $\Gamma_t := \big\langle \bigl(\begin{smallmatrix} V_{i_{t-1}+1}\\ a_{i_{t-1}+1} \end{smallmatrix} \bigr) \big\rangle * \dots * \big\langle
\bigl(\begin{smallmatrix} V_{i_t}\\ a_{i_t} \end{smallmatrix} \bigr) \big\rangle$, and note that $\Gamma_t$ is an $(a_{i_{t-1}+1}, \dots, a_{i_t})$-{\bone} of the simplex with vertex set $V_{i_{t-1}+1} \cup \dots \cup V_{i_t}$, so $\Gamma_t$ is vertex-decomposable by Lemma \ref{lemma:a-bone-vertex-decomposable} and hence $(b_t)$-Macaulay decomposable by Corollary \ref{cor:pure-vertex-decomposable-iff-d-Macaulay-decomp}. Now since $\Gamma = \Gamma_1 * \dots * \Gamma_m$, Proposition \ref{prop:join-Mac-decomp} yields $\Gamma$ is ${\bf b}$-Macaulay decomposable, thus proving the first assertion. Finally, the second assertion follows immediately from Lemma \ref{lemma:ribcage}.
\end{proof}

\begin{proposition}\label{prop:dimMacDecomp}
Let ${\bf a} \in \mathbb{N}^n$. If $\Delta$ is an ${\bf a}$-Macaulay decomposable simplicial complex, then $\dim \Delta = |{\bf a}| - 1$.
\end{proposition}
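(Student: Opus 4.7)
The plan is to proceed by induction on $|{\bf a}| + |V|$, where $V$ is the vertex set of $\Delta$. The base case is when $\Delta$ is an ${\bf a}$-{\bone} of a simplex: the paper already observes (in the discussion immediately following the definition of ${\bf a}$-{\bone}) that every ${\bf a}$-{\bone} of a simplicial complex is pure of dimension $|{\bf a}|-1$, so the conclusion is immediate. For the inductive step I will apply Definition \ref{defn:MacDecomp}\ref{defn:MacDecompCondii}: there is a Macaulay shedding vertex $x$ and some ${\bf a}^{\prime} \in \mathbb{N}^n$ with ${\bf a}^{\prime} < {\bf a}$, such that $\dl_\Delta(x)$ is ${\bf a}$-Macaulay decomposable on a vertex set contained in $V\setminus\{x\}$, and $\lk_\Delta(x)$ is ${\bf a}^{\prime}$-Macaulay decomposable. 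By the induction hypothesis, $\dim \dl_\Delta(x) = |{\bf a}|-1$ and $\dim \lk_\Delta(x) = |{\bf a}^{\prime}|-1$.

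The key numerical input is that ${\bf a}^{\prime} < {\bf a}$ in $\mathbb{N}^n$ forces $|{\bf a}^{\prime}| \leq |{\bf a}|-1$, since at least one coordinate of ${\bf a}^{\prime}$ is strictly smaller. With this in hand, I will establish the upper bound $\dim \Delta \leq |{\bf a}|-1$ by classifying facets of $\Delta$ according to whether they contain $x$. If a facet $F$ of $\Delta$ does not contain $x$, then $F \in \dl_\Delta(x)$, so $\dim F \leq |{\bf a}|-1$. If $F$ contains $x$, then $F\setminus\{x\}$ is a facet of $\lk_\Delta(x)$ (any strict enlargement in $\lk_\Delta(x)$ would, after adjoining $x$, give a strict enlargement of $F$ in $\Delta$), so $\dim F = \dim(F\setminus\{x\}) + 1 \leq |{\bf a}^{\prime}| \leq |{\bf a}|-1$.

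For the matching lower bound, I will exhibit a facet of $\Delta$ of dimension $|{\bf a}|-1$. Pick any facet $F$ of $\dl_\Delta(x)$ with $\dim F = |{\bf a}|-1$, which exists by the induction hypothesis. I claim $F$ is also a facet of $\Delta$: if not, then some strictly larger face of $\Delta$ must contain $F$, and since $F$ is maximal in $\dl_\Delta(x)$ the only possibility is $F \cup \{x\} \in \Delta$, i.e. $F \in \lk_\Delta(x)$. But then $\dim F \leq |{\bf a}^{\prime}|-1 \leq |{\bf a}|-2$, contradicting $\dim F = |{\bf a}|-1$. This completes the induction.

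There is no real obstacle here; the whole argument hinges on the single observation that ${\bf a}^{\prime} < {\bf a}$ in $\mathbb{N}^n$ implies $|{\bf a}^{\prime}| \leq |{\bf a}|-1$, which gives enough room for facets contributed by $\lk_\Delta(x)$ to never exceed the dimension contributed by $\dl_\Delta(x)$. Notably, Definition \ref{defn:MacDecomp}\ref{defn:MacDecompCondiic} plays no role in this particular proof.
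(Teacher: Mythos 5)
Your proof is correct and follows the same route as the paper, which simply states that the result follows by double induction on $|{\bf a}|$ and $|V|$; you have filled in the details of exactly that induction (the base case being the ${\bf a}$-{\bone} of a simplex, and the inductive step splitting faces according to whether they contain the Macaulay shedding vertex, using $|{\bf a}^{\prime}| \leq |{\bf a}|-1$). The only superfluous part is the lower-bound argument that $F$ is a facet of $\Delta$: since $\dim\Delta$ is the maximum dimension over all faces, the mere membership $F \in \dl_{\Delta}(x) \subseteq \Delta$ with $\dim F = |{\bf a}|-1$ already gives $\dim\Delta \geq |{\bf a}|-1$.
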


\begin{proof}
The assertion follows by double induction on $|{\bf a}|$ and $|V|$, where $V$ denotes the vertex set of $\Delta$.
\end{proof}

An ${\bf a}$-Macaulay decomposable simplicial complex is not necessarily pure. For example, the non-pure simplicial complex $\big\langle\big\{ \{1,2,3\}, \{1,2,4\}, \{1,5\}, \{2,5\} \big\}\big\rangle$ is $(2,1)$-Macaulay decomposable with $5$ as a Macaulay shedding vertex. If the simplicial complex $\Delta$ in Definition \ref{defn:MacDecomp} is pure, then condition \ref{defn:MacDecompCondiic} of the definition can be omitted. Furthermore, if condition \ref{defn:MacDecompCondii} holds, then ${\bf a}^{\prime} \lessdot {\bf a}$, and both $\dl_{\Delta}(x)$ and $\lk_{\Delta}(x)$ are pure. Conversely, if there exists some vertex $x^{\prime}$ of a simplicial complex $\Delta^{\prime}$ such that $\dl_{\Delta^{\prime}}(x^{\prime})$ is pure and ${\bf a}$-Macaulay decomposable; and $\lk_{\Delta^{\prime}}(x^{\prime})$ is pure and ${\bf a}^{\prime}$-Macaulay decomposable for some ${\bf a}^{\prime} \in \mathbb{N}^n$, ${\bf a}^{\prime} \lessdot {\bf a}$, then $\Delta^{\prime}$ is pure and ${\bf a}$-Macaulay decomposable.

\begin{theorem}\label{thm:color-shifted=>Macaulay-decomposable}
Let ${\bf a} \in \mathbb{P}^n$ and let $(\Delta, \pi)$ be a pure ${\bf a}$-balanced complex. If $(\Delta, \pi)$ is color-shifted, then $\Delta$ is ${\bf a}$-Macaulay decomposable.
\end{theorem}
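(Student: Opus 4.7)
The plan is to induct lexicographically on $(|{\bf a}|, |V|)$, where $V$ is the vertex set of $\Delta$. Write $\pi = (V_1, \dots, V_n)$ with each $V_i = \{v_{i,1} < \dots < v_{i,\lambda_i}\}$. Since $\Delta$ is pure of dimension $|{\bf a}|-1$ and every face $F$ satisfies $|F \cap V_i| \leq a_i$, every facet $G$ of $\Delta$ in fact satisfies $|G \cap V_i| = a_i$, which in particular forces $\lambda_i \geq a_i$ for every $i$.

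If $\lambda_i = a_i$ for every $i$, then $|V| = |{\bf a}|$ and $\Delta$ must be the full simplex on $V$. Since $\binom{V_i}{a_i} = \{V_i\}$ for each $i$, this simplex coincides with $\langle\binom{V_1}{a_1}\rangle * \cdots * \langle\binom{V_n}{a_n}\rangle$, so $\Delta$ is itself an ${\bf a}$-\bone\ of a simplex and the base case of Definition \ref{defn:MacDecomp} holds.

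Otherwise I pick any index $i$ with $\lambda_i > a_i$ and set $x := v_{i,\lambda_i}$ and ${\bf a}' := {\bf a} - \boldsymbol{\delta}_{i,n} < {\bf a}$. The aim is to show that $\dl_{\Delta}(x)$ is a pure color-shifted ${\bf a}$-balanced complex on $V \backslash \{x\}$ and that $\lk_{\Delta}(x)$ is a pure color-shifted ${\bf a}'$-balanced complex. Granted this, the induction hypothesis (with smaller $|V|$ in the first case and with smaller $|{\bf a}|$ in the second) yields that $\dl_{\Delta}(x)$ is ${\bf a}$-Macaulay decomposable and $\lk_{\Delta}(x)$ is ${\bf a}'$-Macaulay decomposable, while condition \ref{defn:MacDecompCondiic} of Definition \ref{defn:MacDecomp} is automatic because the two subcomplexes are pure of the different dimensions $|{\bf a}|-1$ and $|{\bf a}|-2$. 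Consequently $x$ is a Macaulay shedding vertex witnessing ${\bf a}$-Macaulay decomposability of $\Delta$.

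The color-shiftedness of $\dl_{\Delta}(x)$, and the color-shiftedness, type, and purity of $\lk_{\Delta}(x)$, all follow routinely from the fact that $x$ is the \emph{maximum} vertex of its color class: any color-shifting swap $v_{j,k} \mapsto v_{j,k'}$ with $k' < k$ cannot produce $x$, and any face $F \in \lk_{\Delta}(x)$ extends via $F \cup \{x\}$ to a facet of $\Delta$ containing $x$, from which removing $x$ yields a facet of $\lk_{\Delta}(x)$ extending $F$. The main obstacle --- and the step that invokes the color-shifting axiom essentially --- is the purity of $\dl_{\Delta}(x)$: given $F \in \Delta$ with $x \notin F$, one extends $F$ to a facet $G$ of $\Delta$, and when $x \in G$ the inequality $|G \cap V_i| = a_i < \lambda_i$ produces some $j' < \lambda_i$ with $v_{i,j'} \notin G$; color-shifting applied to the pair $(x, v_{i,j'})$ inside $G$ then delivers $G' := (G \backslash \{x\}) \cup \{v_{i,j'}\} \in \Delta$, a face of the same cardinality as $G$ (hence a facet, by purity) that contains $F$ but avoids $x$.
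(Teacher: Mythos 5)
Your proof is correct and follows essentially the same route as the paper: induct on $(|{\bf a}|,|V|)$, shed the maximal vertex $x=v_{i,\lambda_i}$ of a color class with $\lambda_i>a_i$, and use a color-shifting swap to establish purity of $\dl_{\Delta}(x)$ (your base case $\boldsymbol{\lambda}={\bf a}$ versus the paper's ``$\Delta$ is an ${\bf a}$-\bone'' is an immaterial difference, since your inductive step also covers \bone s). The one point you gloss over is that when $a_i=1$ the link avoids $V_i$ entirely, so ${\bf a}'$ has a zero entry and $\pi$ restricted to the link has an empty part; the induction hypothesis (stated for types in $\mathbb{P}^m$) must therefore be applied to $\overline{{\bf a}'}$ with the empty parts dropped, and one then converts $\overline{{\bf a}'}$-Macaulay decomposability into the required ${\bf a}'$-Macaulay decomposability with ${\bf a}'<{\bf a}$ — exactly the step the paper handles via Proposition \ref{prop:MacDecompSeriesOfImplications}.
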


\begin{proof}
Write ${\bf a} = (a_1, \dots, a_n)$, $|{\bf a}| = k$, and let $\pi = (V_1, \dots, V_n)$ be the ordered partition of the vertex set $V$ of $\Delta$. For each $i\in [n]$, assume $V_i = \{v_{i,1}, \dots, v_{i,\lambda_i}\}$ has $\lambda_i$ elements linearly ordered by $v_{i,1} < \dots < v_{i,\lambda_i}$. Without loss of generality, assume $\Delta$ is not an ${\bf a}$-{\bone} of a simplex, which implies $\lambda_t > a_t$ for some $t\in [n]$. Define ${\bf a}^{\prime} := {\bf a} - \boldsymbol{\delta}_{t,n}$, and let $W$, $W^{\prime}$ be the vertex sets of $\dl_{\Delta}(v_{t,\lambda_t})$, $\lk_{\Delta}(v_{t,\lambda_t})$ respectively. Since $(\Delta, \pi)$ is color-shifted, it follows that $(\dl_{\Delta}(v_{t,\lambda_t}), \pi \cap W)$ is a color-shifted ${\bf a}$-colored complex, while $(\lk_{\Delta}(v_{t,\lambda_t}), \overline{\pi \cap W^{\prime}})$ is a color-shifted $\overline{{\bf a}^{\prime}}$-colored complex. We now prove this theorem by double induction on $k$ and $|V|$.

Suppose there is a facet $F$ of $\dl_{\Delta}(v_{t,\lambda_t})$ such that $\dim F < k-1$. Since $\Delta$ is pure of dimension $k-1$, we have $F \cup \{v_{t,\lambda_t}\} \in \Delta$, so $(\Delta, \pi)$ is color-shifted implies $F \cup \{v_{t,j}\} \in \Delta$ for every $j\in [\lambda_t-1]$ satisfying $v_{t,j} \not\in F$. Such a $j$ exists since $\lambda_t > a_t$, and in particular, $F \cup \{v_{t,j}\} \in \dl_{\Delta}(v_{t,\lambda_t})$, which contradicts the assumption that $F$ is a facet of $\dl_{\Delta}(v_{t,\lambda_t})$. This means $(\dl_{\Delta}(v_{t,\lambda_t}), \pi \cap W)$ is a pure color-shifted ${\bf a}$-balanced complex, hence $\dl_{\Delta}(v_{t,\lambda_t})$ is ${\bf a}$-Macaulay decomposable by induction hypothesis. A similar argument shows $\lk_{\Delta}(v_{t,\lambda_t})$ is pure of dimension $k-2$ and hence ${\bf a}^{\prime}$-Macaulay decomposable by induction hypothesis and Proposition \ref{prop:MacDecompSeriesOfImplications}, therefore $\Delta$ is ${\bf a}$-Macaulay decomposable.
\end{proof}

\begin{remark}
Both the hypotheses in Theorem \ref{thm:color-shifted=>Macaulay-decomposable}, i.e. that $(\Delta, \pi)$ is pure and ${\bf a}$-balanced (instead of just ${\bf a}$-colored), are necessary. Murai~\cite{Murai2008:BettiNumbersSquarefreeStronglyColorStable} gave an example of a non-pure ${\bf 1}_5$-balanced color-shifted complex that is not shellable, so since all ${\bf 1}_5$-Macaulay decomposable simplicial complexes are shellable (Corollary \ref{cor:MacDecomp=>vertx-decomp=>shellable}), this same example fails to be ${\bf 1}_5$-Macaulay decomposable. Also, the pure simplicial complex $\big\langle\big\{ \{1,2\}, \{3,4\} \big\}\big\rangle$, together with the ordered partition $(\{1\}, \{2\}, \{3\}, \{4\})$ of its vertex set $[4]$, is clearly color-shifted and ${\bf 1}_4$-colored, although it is not ${\bf 1}_4$-balanced. However, it fails to be ${\bf 1}_4$-Macaulay decomposable. In general, Proposition \ref{prop:dimMacDecomp} says an ${\bf a}$-colored complex that is not ${\bf a}$-balanced can never be ${\bf a}$-Macaulay decomposable.
\end{remark}

Theorem \ref{thm:color-shifted=>Macaulay-decomposable} generalizes Murai's result~\cite[Proposition 4.2]{Murai2008:BettiNumbersSquarefreeStronglyColorStable}, which states that a pure ${\bf a}$-balanced color-shifted complex is shellable. From Theorem \ref{thm:color-shifted=>Macaulay-decomposable} and its proof, we get the following useful corollaries.
\begin{corollary}\label{cor:Macaulay-shedding-vertices}
Let ${\bf a} = (a_1, \dots, a_n) \in \mathbb{P}^n$, let $(\Delta, \pi)$ be a pure color-shifted ${\bf a}$-balanced complex, and let $\pi = (V_1, \dots, V_n)$ be the corresponding ordered partition of its vertex set. If $\Delta$ is not an ${\bf a}$-{\bone} of a simplex, then the maximal element of $V_i$ is a Macaulay shedding vertex of $\Delta$ for every $i\in [n]$ satisfying $|V_i| > a_i$.
\end{corollary}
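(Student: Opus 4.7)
The plan is to observe that the argument already carried out in the proof of Theorem \ref{thm:color-shifted=>Macaulay-decomposable} does not use any special property of the particular index $t$ it picked, beyond $|V_t| > a_t$. So essentially the corollary just reads off a statement that was proved in passing.

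More concretely, fix any $i \in [n]$ with $\lambda_i := |V_i| > a_i$, set $x := v_{i, \lambda_i}$ (the maximal element of $V_i$), and put ${\bf a}^{\prime} := {\bf a} - \boldsymbol{\delta}_{i,n}$. I would verify the three conditions of Definition \ref{defn:MacDecomp}(ii) in turn. First, $(\dl_{\Delta}(x), \pi \cap W)$ (where $W$ is its vertex set) is a pure color-shifted ${\bf a}$-balanced complex: color-shiftedness is immediate from the definition, and purity is exactly the argument reproduced in the proof of Theorem \ref{thm:color-shifted=>Macaulay-decomposable} — any facet $F$ of $\dl_{\Delta}(x)$ with $\dim F < |{\bf a}|-1$ would, by purity of $\Delta$, satisfy $F \cup \{x\} \in \Delta$; since $\lambda_i > a_i$ there is some $j < \lambda_i$ with $v_{i,j} \not\in F$, and color-shiftedness then forces $F \cup \{v_{i,j}\} \in \dl_{\Delta}(x)$, contradicting maximality. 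Hence by the inductive hypothesis in the proof of Theorem \ref{thm:color-shifted=>Macaulay-decomposable} (or directly by that theorem), $\dl_{\Delta}(x)$ is ${\bf a}$-Macaulay decomposable.

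Second, $(\lk_{\Delta}(x), \overline{\pi \cap W^{\prime}})$ (where $W^{\prime}$ is its vertex set) is a pure color-shifted $\overline{{\bf a}^{\prime}}$-balanced complex by the same argument applied to the link, so by Theorem \ref{thm:color-shifted=>Macaulay-decomposable} it is $\overline{{\bf a}^{\prime}}$-Macaulay decomposable, and Proposition \ref{prop:MacDecompSeriesOfImplications} upgrades this to ${\bf a}^{\prime}$-Macaulay decomposability. Third, the facet-comparison condition \ref{defn:MacDecompCondiic} is automatic because $\Delta$ is pure, as noted in the paragraph preceding Theorem \ref{thm:color-shifted=>Macaulay-decomposable}. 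Assembling these three points shows $x$ is a Macaulay shedding vertex of $\Delta$.

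There is no real obstacle here: the only thing to be checked is that the choice of $t$ in the proof of Theorem \ref{thm:color-shifted=>Macaulay-decomposable} was a free parameter among the indices with $\lambda_t > a_t$, which is clear from inspection. The corollary is therefore essentially a bookkeeping consequence of that proof.
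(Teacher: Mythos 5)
Your proposal is correct and matches the paper exactly: the paper's proof of this corollary is literally ``See the proof of Theorem \ref{thm:color-shifted=>Macaulay-decomposable}'', i.e.\ the observation that the index $t$ chosen there was an arbitrary index with $|V_t| > a_t$. Your spelled-out verification of the three conditions of Definition \ref{defn:MacDecomp}\ref{defn:MacDecompCondii} (purity of the deletion and link via color-shiftedness, and condition \ref{defn:MacDecompCondiic} being automatic for pure complexes) is precisely the intended reading.
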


\begin{proof}
See the proof of Theorem \ref{thm:color-shifted=>Macaulay-decomposable}.
\end{proof}

\begin{corollary}\label{cor:color-shifted-balanced-equiv-to-pure}
Let ${\bf a}\in \mathbb{P}^n$, and let $(\Delta, \pi)$ be a color-shifted ${\bf a}$-balanced complex. Then the following are equivalent:
\begin{enum*}
\item $\Delta$ is pure. \label{cor-color-shifted-pure-equivalences:pure}
\item $\Delta$ is pure ${\bf a}$-Macaulay decomposable. \label{cor-color-shifted-pure-equivalences:pure-a-MacDecomp}
\item $\Delta$ is pure vertex-decomposable. \label{cor-color-shifted-pure-equivalences:pure-vertex-decomp}
\item $\Delta$ is pure shellable. \label{cor-color-shifted-pure-equivalences:pure-shellable}
\item $\Delta$ is Cohen-Macaulay (over any field). \label{cor-color-shifted-pure-equivalences:CM}
\end{enum*}
\end{corollary}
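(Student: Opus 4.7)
The plan is to prove the equivalences in the cyclic order \ref{cor-color-shifted-pure-equivalences:pure} $\Rightarrow$ \ref{cor-color-shifted-pure-equivalences:pure-a-MacDecomp} $\Rightarrow$ \ref{cor-color-shifted-pure-equivalences:pure-vertex-decomp} $\Rightarrow$ \ref{cor-color-shifted-pure-equivalences:pure-shellable} $\Rightarrow$ \ref{cor-color-shifted-pure-equivalences:CM} $\Rightarrow$ \ref{cor-color-shifted-pure-equivalences:pure}, since each of these implications is either already proved in the excerpt or is a classical fact. The work is almost entirely bookkeeping: the heavy lifting has been done by Theorem \ref{thm:color-shifted=>Macaulay-decomposable}, so the remaining issue is just to stitch together standard implications while tracking purity.

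First, \ref{cor-color-shifted-pure-equivalences:pure} $\Rightarrow$ \ref{cor-color-shifted-pure-equivalences:pure-a-MacDecomp} is immediate from Theorem \ref{thm:color-shifted=>Macaulay-decomposable}: if $\Delta$ is pure and color-shifted, then $\Delta$ is ${\bf a}$-Macaulay decomposable, and purity is given. Next, \ref{cor-color-shifted-pure-equivalences:pure-a-MacDecomp} $\Rightarrow$ \ref{cor-color-shifted-pure-equivalences:pure-vertex-decomp} follows from Proposition \ref{prop:Macaulay-decomposable=>vertex-decomposable}, since ${\bf a}$-Macaulay decomposability implies vertex-decomposability and purity is preserved. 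Then \ref{cor-color-shifted-pure-equivalences:pure-vertex-decomp} $\Rightarrow$ \ref{cor-color-shifted-pure-equivalences:pure-shellable} is (the pure case of) Corollary \ref{cor:MacDecomp=>vertx-decomp=>shellable}, i.e. the classical Provan-Billera implication that vertex-decomposable complexes are shellable, again preserving purity.

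The implication \ref{cor-color-shifted-pure-equivalences:pure-shellable} $\Rightarrow$ \ref{cor-color-shifted-pure-equivalences:CM} is the classical Reisner/Stanley result that every pure shellable simplicial complex is Cohen-Macaulay over every field (see, e.g., \cite{book:StanleyGreenBook}). Finally, \ref{cor-color-shifted-pure-equivalences:CM} $\Rightarrow$ \ref{cor-color-shifted-pure-equivalences:pure} is the classical observation that a Cohen-Macaulay simplicial complex is pure, since $\mathbb{F}[\Delta]$ being Cohen-Macaulay forces all associated primes of $I_\Delta$ to have the same height, which corresponds to all facets of $\Delta$ having the same dimension.

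There is no real obstacle here: every step appeals to a result already in the excerpt or a standard textbook fact, and purity transfers through each link in the chain. The only mild subtlety is making sure at the very first step that the hypothesis of Theorem \ref{thm:color-shifted=>Macaulay-decomposable} (pure color-shifted ${\bf a}$-balanced) matches what \ref{cor-color-shifted-pure-equivalences:pure} gives; since $(\Delta, \pi)$ is assumed color-shifted and ${\bf a}$-balanced throughout the corollary, adding purity from \ref{cor-color-shifted-pure-equivalences:pure} indeed supplies exactly the required hypotheses.
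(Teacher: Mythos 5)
Your proposal is correct and follows essentially the same route as the paper: the paper also proves the cycle \ref{cor-color-shifted-pure-equivalences:pure} $\Rightarrow$ \ref{cor-color-shifted-pure-equivalences:pure-a-MacDecomp} via Theorem \ref{thm:color-shifted=>Macaulay-decomposable}, \ref{cor-color-shifted-pure-equivalences:pure-a-MacDecomp} $\Rightarrow$ \ref{cor-color-shifted-pure-equivalences:pure-vertex-decomp} via Proposition \ref{prop:Macaulay-decomposable=>vertex-decomposable}, and closes the loop by citing the well-known implications \ref{cor-color-shifted-pure-equivalences:pure-vertex-decomp} $\Rightarrow$ \ref{cor-color-shifted-pure-equivalences:pure-shellable} $\Rightarrow$ \ref{cor-color-shifted-pure-equivalences:CM} $\Rightarrow$ \ref{cor-color-shifted-pure-equivalences:pure}.
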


\begin{proof} Theorem \ref{thm:color-shifted=>Macaulay-decomposable} yields \ref{cor-color-shifted-pure-equivalences:pure} $\Rightarrow$ \ref{cor-color-shifted-pure-equivalences:pure-a-MacDecomp}, Proposition \ref{prop:Macaulay-decomposable=>vertex-decomposable} yields \ref{cor-color-shifted-pure-equivalences:pure-a-MacDecomp} $\Rightarrow$ \ref{cor-color-shifted-pure-equivalences:pure-vertex-decomp}, while the implications \ref{cor-color-shifted-pure-equivalences:pure-vertex-decomp} $\Rightarrow$ \ref{cor-color-shifted-pure-equivalences:pure-shellable} $\Rightarrow$ \ref{cor-color-shifted-pure-equivalences:CM} $\Rightarrow$ \ref{cor-color-shifted-pure-equivalences:pure} are well-known (see \cite{book:StanleyGreenBook}).
\end{proof}

\begin{corollary}\label{cor:BalancedCMListOfEquiv}
Let ${\bf a} \in \mathbb{P}^n$, and let $f = \{f_{{\bf b}}\}_{{\bf b} \leq {\bf a}}$ be an array of integers. Then the following are equivalent:
\begin{enum*}
\item $f$ is the fine $f$-vector of an ${\bf a}$-balanced CM complex.\label{cond-equiv-CM}
\item $f$ is the fine $f$-vector of an ${\bf a}$-balanced pure vertex-decomposable complex.\label{cond-equiv-vertex-decomp}
\item $f$ is the fine $f$-vector of an ${\bf a}$-balanced pure color-shifted complex.\label{cond-equiv-pure-CS}
\item $f$ is the fine $f$-vector of an ${\bf a}$-balanced color-shifted CM complex.\label{cond-equiv-CS-CM}
\end{enum*}
\end{corollary}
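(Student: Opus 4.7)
The plan is to close the cycle of implications \ref{cond-equiv-CM} $\Rightarrow$ \ref{cond-equiv-CS-CM} $\Rightarrow$ \ref{cond-equiv-pure-CS} $\Rightarrow$ \ref{cond-equiv-vertex-decomp} $\Rightarrow$ \ref{cond-equiv-CM}, which gives the mutual equivalence of all four conditions.

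For \ref{cond-equiv-CM} $\Rightarrow$ \ref{cond-equiv-CS-CM}, I would invoke Babson and Novik's theorem on colored algebraic shifting (discussed in Section \ref{sec:Intro}). Starting from any ${\bf a}$-balanced CM complex $(\Delta, \pi)$ realizing $f$, applying colored algebraic shifting with respect to any linear extension $\prec$ produces a color-shifted ${\bf a}$-balanced complex $(\widetilde{\Delta}_{\prec}(\Delta), \pi \cap V')$ which, by the construction reviewed in the preliminaries, has the same fine $f$-vector as $(\Delta,\pi)$. Babson and Novik's statement is phrased in terms of fine $h$-vectors, but since the transformation \eqref{eqn:fine-h-vector-defn} between fine $f$-vectors and fine $h$-vectors is invertible once ${\bf a}$ is fixed, it immediately yields the fine $f$-vector version: one obtains an ${\bf a}$-balanced color-shifted CM complex with fine $f$-vector equal to $f$.

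The implications \ref{cond-equiv-CS-CM} $\Rightarrow$ \ref{cond-equiv-pure-CS} and \ref{cond-equiv-pure-CS} $\Rightarrow$ \ref{cond-equiv-vertex-decomp} then fall out of results already established in this section. For \ref{cond-equiv-CS-CM} $\Rightarrow$ \ref{cond-equiv-pure-CS}, every Cohen-Macaulay complex is pure (see \cite{book:StanleyGreenBook}), so the witnessing complex is automatically pure color-shifted. For \ref{cond-equiv-pure-CS} $\Rightarrow$ \ref{cond-equiv-vertex-decomp}, Theorem \ref{thm:color-shifted=>Macaulay-decomposable} says any pure color-shifted ${\bf a}$-balanced complex is ${\bf a}$-Macaulay decomposable, and Proposition \ref{prop:Macaulay-decomposable=>vertex-decomposable} upgrades ${\bf a}$-Macaulay decomposability to vertex-decomposability, so the very same complex witnesses \ref{cond-equiv-vertex-decomp}. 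Finally, \ref{cond-equiv-vertex-decomp} $\Rightarrow$ \ref{cond-equiv-CM} is the classical chain pure vertex-decomposable $\Rightarrow$ pure shellable $\Rightarrow$ Cohen-Macaulay (\cite{ProvanBillera1980:Decompositions}, \cite{book:StanleyGreenBook}), already invoked in Corollary \ref{cor:color-shifted-balanced-equiv-to-pure}.

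The main obstacle is the step \ref{cond-equiv-CM} $\Rightarrow$ \ref{cond-equiv-CS-CM}: the remaining implications are either formal consequences of results in this section or standard facts, but this one genuinely rests on the deep result of Babson and Novik that colored algebraic shifting preserves the Cohen-Macaulay property among ${\bf a}$-balanced complexes. No additional work beyond translating that statement from fine $h$-vectors to fine $f$-vectors should be needed, so the proof is short once that external input is cited.
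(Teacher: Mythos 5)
Your proposal is correct and follows essentially the same chain of implications as the paper: \ref{cond-equiv-CM} $\Rightarrow$ \ref{cond-equiv-CS-CM} via Babson--Novik colored algebraic shifting, \ref{cond-equiv-CS-CM} $\Rightarrow$ \ref{cond-equiv-pure-CS} $\Rightarrow$ \ref{cond-equiv-vertex-decomp} via Theorem \ref{thm:color-shifted=>Macaulay-decomposable} and Proposition \ref{prop:Macaulay-decomposable=>vertex-decomposable}, and \ref{cond-equiv-vertex-decomp} $\Rightarrow$ \ref{cond-equiv-CM} by the standard vertex-decomposable $\Rightarrow$ shellable $\Rightarrow$ CM chain. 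The only cosmetic difference is that for \ref{cond-equiv-CS-CM} $\Rightarrow$ \ref{cond-equiv-pure-CS} you use the general fact that CM complexes are pure, while the paper cites its Corollary \ref{cor:color-shifted-balanced-equiv-to-pure}; both are valid.
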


\begin{proof}
Let $(\Gamma, \pi)$ be an ${\bf a}$-balanced CM complex, and assume each $V_i$ in the ordered partition $\pi = (V_1, \dots, V_n)$ is linearly ordered, so $V := \bigcup_{i\in [n]} V_i$ as a poset is a disjoint union of $n$ chains. By \cite[Theorem 6.5]{BabsonNovik2006:FaceNumbersNongenericInitialIdeals}, there is a linear extension $\prec$ of this partial order on $V$ such that the colored algebraic shifting $(\widetilde{\Delta}_{\prec}(\Gamma), \pi^{\prime})$ is a color-shifted ${\bf a}$-balanced CM complex with the same fine $f$-vector as $(\Gamma, \pi)$, hence \ref{cond-equiv-CM} $\Rightarrow$ \ref{cond-equiv-CS-CM}. The implications \ref{cond-equiv-CS-CM} $\Rightarrow$ \ref{cond-equiv-pure-CS} $\Rightarrow$ \ref{cond-equiv-vertex-decomp} follow from Corollary \ref{cor:color-shifted-balanced-equiv-to-pure}, Theorem \ref{thm:color-shifted=>Macaulay-decomposable} and Proposition \ref{prop:Macaulay-decomposable=>vertex-decomposable}, while \ref{cond-equiv-vertex-decomp} $\Rightarrow$ \ref{cond-equiv-CM} is trivial.
\end{proof}

\begin{remark}
Corollary \ref{cor:color-shifted-balanced-equiv-to-pure} generalizes a result by Babson and Novik~\cite[Theorem 6.2]{BabsonNovik2006:FaceNumbersNongenericInitialIdeals}, which states that given $(\Delta, \pi)$ a color-shifted ${\bf a}$-balanced complex, $\Delta$ is Cohen-Macaulay (over any field) if and only if $\Delta$ is pure. Also, Corollary \ref{cor:BalancedCMListOfEquiv} generalizes a result by Biermann and Van Tuyl~\cite{BiermannVanTuyl2013:BalancedVDSimplicialComplexesAndHVectors}, which deals with $f$-vectors instead of the more refined fine $f$-vectors, and is equivalent to the statement that the $f$-vector of a completely balanced CM complex is the $f$-vector of a completely balanced pure vertex-decomposable complex. The proof by Biermann and Van Tuyl uses a generalization of the ``whiskering'' construction of graphs introduced by Villarreal~\cite{Villarreal1990:CMGraphs}, and Cook and Nagel~\cite{CookNagel2012:CMGraphsFlagComplexes}.
\end{remark}

\section{Generalized Macaulay Representations}\label{sec:GeneralizedMacaulayRepresentations}
In this section, we introduce the notion of generalized Macaulay representations and use them to obtain numerical characterizations of the fine $f$-vectors of ${\bf a}$-colored complexes and the flag $f$-vectors of completely balanced CM complexes. Because graph theory terminology varies widely in the literature, we first describe in Section \ref{subsec:GraphTheoryTermi} the particular terminology that we use, which will be pertinent when we define generalized Macaulay representations. Sections \ref{subsec:ConstructionOfSheddingTrees}--\ref{subsec:GenMacReps} contain a unified proof of our two characterizations, divided into four main steps.

In Section \ref{subsec:ConstructionOfSheddingTrees}, we start with a pure color-shifted ${\bf a}$-balanced complex $(\Delta, \pi)$ and construct a labeled trivalent planted binary tree $(T, \phi)$, which we call the shedding tree of $(\Delta, \pi)$. Next, in Section \ref{subsec:MacaulayTrees}, we define a purely numerical notion of an ${\bf a}$-Macaulay tree of $N$ for any $n,N\in \mathbb{P}$, ${\bf a} \in \mathbb{N}^n\backslash\{{\bf 0}_n\}$. If ${\bf a}\in \mathbb{P}^n$, then every pure color-shifted ${\bf a}$-balanced complex with $N$ facets corresponds to an ${\bf a}$-Macaulay tree of $N$, while conversely, every ${\bf a}$-Macaulay tree of $N$ corresponds to a pure ${\bf a}$-balanced complex, although not necessarily color-shifted.

In Section \ref{subsec:CompressedLikeCompatibleMacTrees}, we define what it means for an ${\bf a}$-Macaulay tree to be compressed-like and compatible, and characterize pure color-compressed ${\bf a}$-balanced complexes in terms of certain compressed-like compatible ${\bf a}$-Macaulay trees. Finally, in Section \ref{subsec:GenMacReps}, we define the notion of a generalized ${\bf a}$-Macaulay representation of $N$ for any $n\in \mathbb{P}$, $N\in \mathbb{N}$, ${\bf a} \in \mathbb{N}^n\backslash\{{\bf 0}_n\}$, and we get our two desired numerical characterizations.

Throughout Sections \ref{subsec:ConstructionOfSheddingTrees}--\ref{subsec:GenMacReps} unless otherwise stated, let ${\bf a} = (a_1, \dots, a_n)$, let $V$ be the vertex set of $\Delta$, and let $\pi = (V_1, \dots, V_n)$ be the corresponding ordered partition of $V$. For each $i\in [n]$, write $\lambda_i = |V_i|$, and let $V_i = \{v_{i,1}, v_{i,2}, \dots, v_{i,\lambda_i}\}$ be linearly ordered by $v_{i,1} < v_{i,2} < \dots < v_{i,\lambda_i}$. Set $\boldsymbol{\lambda} = (\lambda_1, \dots, \lambda_n)$. For every ${\bf b} = (b_1, \dots, b_n) \in \mathbb{Z}^n$ and every subcomplex $\Delta^{\prime}$ of $\Delta$ with vertex set $V^{\prime}$, let $\mathcal{F}_{\bf b}(\Delta^{\prime})$ be the set of all faces $F\in \Delta^{\prime}$ such that $|F \cap V_i| = b_i$ for all $i\in [n]$, and define $f_{\bf b}(\Delta^{\prime}) = |\mathcal{F}_{\bf b}(\Delta^{\prime})|$. By default, set $f_{{\bf b}}(\Delta) = 0$ if ${\bf b} \not\in \mathbb{N}^n$. Observe that $\{f_{\bf b}(\Delta^{\prime})\}_{{\bf b} \leq {\bf a}}$ is the fine $f$-vector of $(\Delta^{\prime}, \pi \cap V^{\prime})$. For ${\bf x} = (x_1, \dots, x_n), {\bf y} = (y_1, \dots, y_n)$ in $\mathbb{Z}^n$, define $\binom{{\bf x}}{{\bf y}} := \binom{x_1}{y_1}\binom{x_2}{y_2}\cdots\binom{x_n}{y_n}$, and recall that ${\bf x} < {\bf y}$ if ${\bf x} \leq {\bf y}$ and ${\bf x} \neq {\bf y}$. By convention, $\binom{x_i}{y_i}$ equals $0$ if $y_i<0$. Note that ${\bf x} \lessdot {\bf y}$ if and only if ${\bf x} = {\bf y} - \boldsymbol{\delta}_{i,n}$ for some $i\in [n]$. If $\varphi$ is a function whose image is contained in $\mathbb{Z}^n$ (for some $n\in \mathbb{P}$), then $\varphi$ has $n$ component functions, and for each $t\in [n]$, let $\varphi^{[t]}$ be the $t$-th component function of $\varphi$.

\subsection{Graph Theory Terminology}\label{subsec:GraphTheoryTermi}
A {\it graph} is a pair $G = (V,E)$ such that $V$ is a finite set called the {\it vertex set}, and $E \subseteq \binom{V}{2}$. Elements of $V$ and $E$ are called {\it vertices} and {\it edges} respectively, and a vertex $v \in V$ is {\it incident} to an edge $e \in E$ if $e = \{u,v\}$ for some vertex $u\neq v$. The {\it degree} of $v\in V$, denoted by $\deg(v)$, is the number of edges incident to $v$. We say $u,v \in V$ are {\it adjacent} if $\{u,v\} \in E$. A {\it labeling} of a subset $U\subseteq V$ is a map $\phi: U \to L$, where $L$ is an arbitrary set whose elements are called {\it $\phi$-labels}, or simply {\it labels} if the context is clear. A {\it vertex labeling} of $G $ is a labeling of $V$. A {\it subgraph} of $G$ is a graph $G^{\prime} = (V^{\prime}, E^{\prime})$ such that $V^{\prime} \subseteq V$ and $E^{\prime} \subseteq E$. Given a subset $S \subseteq V$, the subgraph $G^{\prime} = (S, E \cap \binom{S}{2})$ is called the {\it subgraph of $G$ induced by $S$}. Given an edge $e = \{u,v\}$ of $G$, the graph $G\backslash e := (V, E\backslash\{e\})$ is said to be obtained from $G$ by {\it deleting} the edge $e$, and the graph constructed from $G$ by identifying the vertices $u$ and $v$ and then deleting edge $e$, which we denote by $G/e$, is said to be obtained from $G$ by {\it contracting} $e$. If $\phi$ is a vertex labeling of $G$ such that $\phi(u) = \phi(v)$, then $\phi$ induces a vertex labeling of $G/e$ in the obvious way, and we denote this vertex labeling of $G/e$ by $\phi/e$.

Given $u,v\in V$, a {\it path} from $u$ to $v$ is a sequence $v_0, v_1, \dots, v_n$ of (not necessarily distinct) vertices such that $v_0 = u$, $v_n = v$, and $\{v_0, v_1\}, \dots, \{v_{n-1}, v_n\}$ are $n$ distinct edges in $E$. The {\it length} of this walk is $n$. If $n\geq 3$ and $u=v$, then we call this path a {\it cycle}. We say $G$ is {\it connected} if there is a path from $x$ to $y$ for every $x,y\in V$. A connected graph with no cycles is called a {\it tree}, and for any vertices $x,y$ of a tree, there exists a unique path from $x$ to $y$.

Given a tree $G = (V,E)$, a vertex $v\in V$ is called a {\it leaf} if $\deg(v) = 1$. If $\deg(u) = 3$ for every non-leaf $u\in V$, then $G$ is called {\it trivalent}. We say $G$ is {\it rooted} if it has a distinguished vertex $r_0$, which we call the {\it root}. Given this root $r_0$ and any vertices $x,y\in V$ (possibly $x=y$), if $x$ is contained in the path from $r_0$ to $y$, then we say $x$ is an {\it ancestor} of $y$, and $y$ is a {\it descendant} of $x$. If in addition $\{x,y\} \in E$, then we say $x$ is a {\it parent} of $y$, and $y$ is a {\it child} of $x$. Note that in a rooted tree, the root has no parents, and every other vertex has a unique parent. A {\it planted} tree is a rooted tree whose root is a leaf, i.e. the root has a unique child. A rooted tree is called {\it binary} if every vertex has $\leq 2$ children. In this paper, we assume all rooted trees are {\it planar}, i.e. we specify an ordering of the children (if any) of every vertex in rooted trees.

Let $G = (V, E)$ be a trivalent planted binary tree with root $r_0$. Let $Y$ be the set of all trivalent vertices in $V$, and let $L$ be the set of all leaves in $V$ distinct from $r_0$, whose elements are called {\it terminal vertices}. For convenience, define the {\it $(\text{root},1,3)$-triple} of $G$ as the triple $(r_0, L, Y)$, and for any vertex $x$ of $G$, let $\mathcal{D}_G(x)$ denote the set of all descendants of $x$ in $G$. By definition, every $y\in Y$ has exactly $2$ children, which we call the {\it left child} and {\it right child}. The distinction between the left and right children of every $y\in Y$ is well-defined by the assumption of planarity, and we always attach subscripts `left' and 'right' to $y$ (i.e. $y_{\text{left}}$ and $y_{\text{right}}$) to denote the left and right children respectively of $y$. The {\it depth-first order} on $V$ is the linear order $\leq$ on $V$ such that $r_0$ is the unique minimal element, and $y < y_{\text{left}} < y_{\text{right}}$ for every $y\in Y$. An enumeration $x_1, \dots, x_k$ of a subset $X \subseteq V$ of size $k$ is said to be {\it arranged in depth-first order} if $x_1 < \dots < x_k$ (with respect to the depth-first order). For every $v\in Y \cup L$, it is easy to show there is a unique sequence of vertices $v_1, \dots, v_m$ in $V$ ($m\in \mathbb{P}$) such that $v_1 = v$, $v_m \in L$, and $v_i$ is the left child of $v_{i-1}$ whenever $i\in [m]$ and $i>1$. We call this sequence $v_1, \dots, v_m$ the {\it left relative sequence} of $v$ in $G$, and we say $v_m$ is the {\it left-most relative} of $v$ in $G$. Define {\it right relative sequence} and {\it right-most relative} analogously.

Suppose $T = (V, E)$ and $T^{\prime} = (V^{\prime}, E^{\prime})$ are two trivalent planted binary trees with $(\text{root},1,3)$-triples $(r_0, L, Y)$ and $(r_0^{\prime}, L^{\prime}, Y^{\prime})$ respectively. Let $r_1$ be the unique child of $r_0$ in $T$, and let $r_1^{\prime}$ be the unique child of $r_0^{\prime}$ in $T^{\prime}$. Then, we say $T, T^{\prime}$ are {\it isomorphic as rooted trees} if there exists a bijection $\beta: V \to V^{\prime}$ such that $\beta(r_0) = r_0^{\prime}$, $\beta(r_1) = r_1^{\prime}$, and every $y\in Y$ satisfies $\beta(y) \in Y^{\prime}$, $\beta(y_{\text{left}}) = \beta(y)_{\text{left}}$ and $\beta(y_{\text{right}}) = \beta(y)_{\text{right}}$.

\subsection{Construction of Shedding Trees}\label{subsec:ConstructionOfSheddingTrees}
Let ${\bf a} \in \mathbb{P}^n$, and let $(\Delta, \pi)$ be a pure color-shifted ${\bf a}$-balanced complex. If $\Delta$ is an ${\bf a}$-{\bone} of a simplex, then the fine $f$-vector of $(\Delta, \pi)$ is easy to compute: Since $\Delta = \langle \binom{V_1}{a_1} \rangle * \dots * \langle \binom{V_n}{a_n} \rangle$, we get $f_{{\bf b}}(\Delta) = \binom{\boldsymbol{\lambda}}{{\bf b}}$ for every ${\bf b} \in \mathbb{N}^n$ satisfying ${\bf b} \leq {\bf a}$. If $\Delta$ is not an ${\bf a}$-{\bone} of a simplex, then by Corollary \ref{cor:Macaulay-shedding-vertices}, there is some $i\in [n]$ such that $\lambda_i > a_i$, and $v_{i,\lambda_i}$ is a Macaulay shedding vertex of $\Delta$. Note that $\Delta = \dl_{\Delta}(v) \sqcup \big\{F \cup \{v\}: F \in \lk_{\Delta}(v)\big\}$ for every vertex $v$ of $\Delta$, hence
\begin{equation}
\mathcal{F}_{{\bf b}}(\Delta) = \mathcal{F}_{{\bf b}}\big(\dl_{\Delta}(v_{i,\lambda_i})\big) \sqcup \Big\{F \cup \{v_{i,\lambda_i}\}: F \in \mathcal{F}_{{\bf b} - \boldsymbol{\delta}_{i,n}} \big(\lk_{\Delta}(v_{i,\lambda_i})\big)\Big\}
\end{equation}
for all ${\bf b} \in \mathbb{N}^n$ satisfying ${\bf b} \leq {\bf a}$, which yields
\begin{equation}\label{eqn:del-link-split}
f_{{\bf b}}(\Delta) = f_{{\bf b}}\big(\dl_{\Delta}(v_{i,\lambda_i})\big) + f_{{\bf b} - \boldsymbol{\delta}_{i,n}}\big(\lk_{\Delta}(v_{i,\lambda_i})\big)
\end{equation}
for all ${\bf b} \in \mathbb{N}^n$ satisfying ${\bf b} \leq {\bf a}$. Furthermore, if we let $W$, $W^{\prime}$ be the vertex sets of $\dl_{\Delta}(v_{i,\lambda_i})$, $\lk_{\Delta}(v_{i,\lambda_i})$ respectively, then by Corollary \ref{cor:Macaulay-shedding-vertices}, $(\dl_{\Delta}(v_{i,\lambda_i}), \pi \cap W)$ is a pure color-shifted ${\bf a}$-balanced complex, while $(\lk_{\Delta}(v_{i,\lambda_i}), \overline{\pi \cap W^{\prime}})$ is a pure color-shifted $(\overline{{\bf a} - \boldsymbol{\delta}_{i,n}})$-balanced complex, so these two balanced subcomplexes are ${\bf a}$-Macaulay decomposable and $({\bf a} - \boldsymbol{\delta}_{i,n})$-Macaulay decomposable respectively by Theorem \ref{thm:color-shifted=>Macaulay-decomposable} and Proposition \ref{prop:MacDecompSeriesOfImplications}. 

The main idea of this subsection is to start with the pure color-shifted ${\bf a}$-balanced complex $(\Delta, \pi)$, and for every pure color-shifted ${\bf a}^{\prime}$-balanced subcomplex $(\Delta^{\prime}, \pi^{\prime})$ of $(\Delta, \pi)$ that is not an ${\bf a}^{\prime}$-{\bone} of a simplex (where $\pi^{\prime} = (V_1^{\prime}, \dots, V_m^{\prime})$, ${\bf a}^{\prime} = (a_1^{\prime}, \dots, a_m^{\prime}) \in \mathbb{P}^m$ and $m\leq n$), choose some $t\in [m]$ such that $|V_t^{\prime}| > a_t^{\prime}$, replace $(\Delta^{\prime}, \pi^{\prime})$ with two pure color-shifted balanced subcomplexes $(\dl_{\Delta^{\prime}}(v_{t,\max}^{\prime}), \pi^{\prime} \cap W)$ and $(\lk_{\Delta^{\prime}}(v_{t,\max}^{\prime}), \overline{\pi^{\prime} \cap W^{\prime}})$ (where $v_{t,\max}^{\prime}$ is the maximal element of $V_t^{\prime}$, and $W, W^{\prime}$ are the vertex sets of $\dl_{\Delta^{\prime}}(v_{t,\max}^{\prime})$, $\lk_{\Delta^{\prime}}(v_{t,\max}^{\prime})$ respectively), and repeat the process until every remaining pure color-shifted balanced subcomplex, say of type ${\bf a}^{\prime\prime}$, is an ${\bf a}^{\prime\prime}$-{\bone} of a simplex. By repeatedly using \eqref{eqn:del-link-split} on these subcomplexes of $\Delta$, we can then compute $f_{\bf b}(\Delta)$ for each ${\bf b} \in \mathbb{N}^n$ satisfying ${\bf b} \leq {\bf a}$. We now make this idea rigorous.

\begin{definition}
Let $S = (S_1, \dots, S_n)$ be an $n$-tuple of sets, let ${\bf x} = (x_1, \dots, x_n) \in \mathbb{N}^n$, and let $t\in [n]$. A simplicial complex $\Sigma$ is called {\it $t$-factorizable} with respect to $(S, {\bf x})$ if $\Sigma = \big\langle \binom{S_t}{x_t} \big\rangle * \Sigma^{\prime}$ for some (non-empty) simplicial complex $\Sigma^{\prime}$. If ${\bf x} \in \mathbb{P}^n$, then let $\kappa(S, {\bf x})$ be the $n$-tuple whose $i$-th entry is $|S_i|$ if $S_i$ is non-empty, and $x_i$ otherwise.
\end{definition}

Let $T_0$ be the planted tree with $2$ vertices, where $r_0$ is the root, and $r_1$ is the unique child of $r_0$. Define the vertex labeling $\phi_0$ of $T_0$ by $\phi_0(r_0) = \phi_0(r_1) = ({\bf a}, \Delta, \pi, \boldsymbol{\lambda})$, i.e. the $\phi_0$-labels are $4$-tuples, where the first entry is in $\mathbb{N}^n$, the second entry is a simplicial complex, the third entry is an $n$-tuple of subsets of $V$, and the last entry is in $\mathbb{P}^n$. Starting with the pair $(T_0, \phi_0)$, we shall construct a sequence `seq' of pairs $(T_0, \phi_0), (T_1, \phi_1), (T_2, \phi_2), \dots$ via the following algorithm:
\begin{center}
\begin{algorithmic}
\small
\State $\text{seq} \gets (T_0, \phi_0)$.
\State $t \gets n$.
\While {$t>0$}
    \State $(T^{\prime}, \phi^{\prime}) \gets \text{last pair in seq}$.
    \State $\mathcal{L} \gets \text{set of terminal vertices of }T^{\prime}$.
    \If {$\exists v \in \mathcal{L}$ with $\phi^{\prime}(v) = ({\bf a}^{\prime}, \Delta^{\prime}, \pi^{\prime}, \boldsymbol{\lambda}^{\prime})$ such that $\Delta^{\prime}$ is not $t$-factorizable with respect to $(\pi^{\prime}, {\bf a}^{\prime})$}
        \State Choose any such $v$ with $\phi^{\prime}(v) = ({\bf a}^{\prime}, \Delta^{\prime}, \pi^{\prime}, \boldsymbol{\lambda}^{\prime})$.
        \State Write ${\bf a}^{\prime} = (a_1^{\prime}, \dots, a_n^{\prime})$, $\pi^{\prime} = (V_1^{\prime}, \dots, V_n^{\prime})$, and let $v_{t,\max}^{\prime}$ be the maximal element of $V_t^{\prime}$.
        \State Let $W, W^{\prime}$ denote the vertex sets of $\dl_{\Delta^{\prime}}(v_{t,\max}^{\prime})$, $\lk_{\Delta^{\prime}}(v_{t,\max}^{\prime})$ respectively.
        \State Construct a binary tree $T$ from $T^{\prime}$ by adding two children $v_{\text{left}}$ and $v_{\text{right}}$ to $v$.
        \State Treat $T^{\prime}$ as a subgraph of $T$. Let $V^{\prime}$ denote the vertex set of $T^{\prime}$.
        \State Define the vertex labeling $\phi$ of $T$ by
        \[\phi(u) = \begin{cases} \phi^{\prime}(u), & \text{ if }u\in V^{\prime}\backslash\{v\};\\
                                    t, & \text{ if }u=v;\\
                                    ({\bf a}^{\prime}, \dl_{\Delta^{\prime}}(v_{t,\max}^{\prime}), \pi^{\prime} \cap W, \kappa(\pi^{\prime} \cap W, \boldsymbol{\lambda}^{\prime} - \boldsymbol{\delta}_{t,n})), & \text{ if }u=v_{\text{left}};\\
                                    ({\bf a}^{\prime} - \boldsymbol{\delta}_{t,n}, \lk_{\Delta^{\prime}}(v_{t,\max}^{\prime}), \pi^{\prime} \cap W^{\prime}, \kappa(\pi^{\prime} \cap W^{\prime}, \boldsymbol{\lambda}^{\prime} - \boldsymbol{\delta}_{t,n})), & \text{ if }u=v_{\text{right}}.\end{cases}\]
        \State $\text{seq} \gets (\text{seq}, (T, \phi))$ (i.e. seq with the term $(T, \phi)$ appended).
    \Else
        \State $t \gets t-1$.
    \EndIf
\EndWhile
\end{algorithmic}
\end{center}

Clearly $|V_t^{\prime}| = a_t^{\prime}$ or $a_t^{\prime} = 0$ implies $\Delta^{\prime}$ is $t$-factorizable with respect to $(\pi^{\prime}, {\bf a}^{\prime})$, so if $\Delta^{\prime}$ is not $t$-factorizable with respect to $(\pi^{\prime}, {\bf a}^{\prime})$, then $|V_t^{\prime}| > a_t^{\prime} > 0$, and Corollary \ref{cor:Macaulay-shedding-vertices} implies $v_{t,\max}^{\prime}$ is a Macaulay shedding vertex of $\Delta^{\prime}$. Since $\Delta$ is ${\bf a}$-Macaulay decomposable, this algorithm must eventually terminate, independent of the choices of $v$ made. Let $(T_0, \phi_0), (T_1, \phi_1), \dots, (T_k, \phi_k)$ be such a sequence constructed from $(\Delta, \pi)$. It is easy to see that the last pair $(T_k, \phi_k)$ (possibly $(T_0, \phi_0)$) is uniquely determined by $(\Delta, \pi)$, independent of the choices of $v$ made. Note however that the other pairs in the sequence do depend on the choices of $v$ at each iteration in the algorithm. We call any such sequence a {\it shedding sequence} of $(\Delta, \pi)$, and we say the last pair $(T_k, \phi_k)$ is the {\it shedding tree} of $(\Delta, \pi)$.

By construction, each $T_i$ is a trivalent planted binary tree with $T_{i-1}$ as a subgraph, and each $\phi_i$ is a vertex labeling of $T_i$, thus each pair $(T_i, \phi_i)$ is a labeled trivalent planted binary tree. The trivalent vertices of $T_i$ are labeled with integers in $[n]$, such that $\phi_i(y) \geq \phi_i(y^{\prime})$ for every pair $y, y^{\prime}$ of trivalent vertices in $T_i$ satisfying $y^{\prime} \in \mathcal{D}_{T_i}(y)$, i.e. $y^{\prime}$ is a descendant of $y$. Also, every terminal vertex of $T_i$ is labeled with a $4$-tuple $({\bf a}^{\prime}, \Delta^{\prime}, \pi^{\prime}, \boldsymbol{\lambda}^{\prime})$, where ${\bf a}^{\prime} = (a_1^{\prime}, \dots, a_n^{\prime}) \in \mathbb{N}^n$, $\Delta^{\prime}$ is a (non-empty) simplicial complex, $\pi^{\prime} = (V_1^{\prime}, \dots, V_n^{\prime})$ is an $n$-tuple of subsets of $V$, and $\boldsymbol{\lambda}^{\prime} \in \mathbb{P}^n$. We can check that $(\Delta^{\prime}, \overline{{\pi}^{\prime}})$ is a pure $\overline{{\bf a}^{\prime}}$-balanced color-shifted complex, and $\Delta^{\prime}$ is ${\bf a}^{\prime}$-Macaulay decomposable. Furthermore, if $(T_i, \phi_i) = (T_k, \phi_k)$ is the shedding tree of $(\Delta, \pi)$, then $\Delta^{\prime}$ is $t$-factorizable with respect to $(\pi^{\prime}, {\bf a}^{\prime})$ for every $t\in [n]$, hence $\Delta^{\prime}$ is the $\overline{{\bf a}^{\prime}}$-{\bone} of a simplex corresponding to the ordered partition $\overline{\pi^{\prime}}$ in this case.

\begin{figure}[h!t]
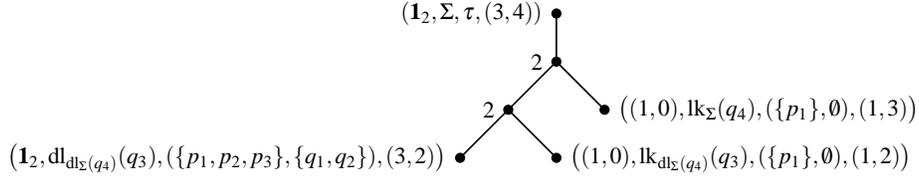

\centering
\scalebox{0.8}{\tikz[thick,scale=0.8]{
    \draw {
    (2,3) node[circle, draw, fill=black, inner sep=0pt, minimum width=4pt, label=left:{$({\bf 1}_2, \Sigma, \tau, (3,4))$}] {} -- (2,2) node[circle, draw, fill=black, inner sep=0pt, minimum width=4pt, label=left:{$2$}] {} -- (1,1) node[circle, draw, fill=black, inner sep=0pt, minimum width=4pt, label=left:{$2$}] {} -- (0,0) node[circle, draw, fill=black, inner sep=0pt, minimum width=4pt, label=left:{$\big({\bf 1}_2, \dl_{\dl_{\Sigma}(q_4)}(q_3), (\{p_1, p_2, p_3\}, \{q_1, q_2\}), (3,2)\big)$}] {}
    (2,2) -- (3,1) node[circle, draw, fill=black, inner sep=0pt, minimum width=4pt, label=right:{$\big((1,0), \lk_{\Sigma}(q_4), (\{p_1\}, \emptyset), (1,3)\big)$}] {}
    (1,1) -- (2,0) node[circle, draw, fill=black, inner sep=0pt, minimum width=4pt, label=right:{$\big((1,0), \lk_{\dl_{\Sigma}(q_4)}(q_3), (\{p_1\}, \emptyset), (1,2)\big)$}] {}
    };
}}
\caption{The shedding tree of the ${\bf 1}_2$-balanced complex $(\Sigma, \tau)$ given in Example \ref{example:shedding-tree}.}
\label{Fig:example-recursion}
\end{figure}

\begin{example}\label{example:shedding-tree}
Let $\tau = (P, Q)$ such that $P = \{p_1, p_2, p_3\}$ and $Q = \{q_1, q_2, q_3, q_4\}$ satisfy $p_1 < p_2 < p_3$ and $q_1 < q_2 < q_3 < q_4$. Consider the pure color-shifted $(1,1)$-balanced complex $(\Sigma, \tau)$, where
\[\Sigma = \Big\langle \Big\{\{p_1, q_1\}, \{p_2, q_1\}, \{p_3, q_1\}, \{p_1, q_2\}, \{p_2, q_2\}, \{p_3, q_2\}, \{p_1, q_3\}, \{p_1, q_4\}\Big\}\Big\rangle.\]
Then its shedding tree is given in Figure \ref{Fig:example-recursion}.
\end{example}

Given any term $(T_i, \phi_i)$ of a shedding sequence of $(\Delta, \pi)$, the restriction of $\phi_i$ to the terminal vertices of $T_i$ has four component functions, which we denote by $\phi_i^{[\mathcal{L}, {\bf a}]}$, $\phi_i^{[\mathcal{L}, \Delta]}$, $\phi_i^{[\mathcal{L}, \pi]}$, $\phi_i^{[\mathcal{L}, \boldsymbol{\lambda}]}$ respectively. Suppose $(T, \phi)$ is the shedding tree of $(\Delta, \pi)$, and let $(r_0, L, Y)$ be the $(\text{root}, 1, 3)$-triple of $T$. Note that $\phi^{[\mathcal{L}, {\bf a}]}(u) \in \mathbb{N}^n$ and $\phi^{[\mathcal{L}, \boldsymbol{\lambda}]}(u) \in \mathbb{P}^n$ for all $u\in L$. By the construction of $(T, \phi)$, it is easy to show that ${\bf 0}_n < \phi^{[\mathcal{L}, {\bf a}]}(u) \leq \phi^{[\mathcal{L}, \boldsymbol{\lambda}]}(u)$ for all $u\in L$. Also, by the repeated use of \eqref{eqn:del-link-split}, we have the following.

\begin{proposition}\label{prop:shedding-tree-fine-f-vector} For each ${\bf b} \in \mathbb{N}^n$ satisfying ${\bf b} \leq {\bf a}$,
\begin{equation}
f_{{\bf b}}(\Delta) = \sum_{u\in L} \binom{\phi^{[\mathcal{L}, \boldsymbol{\lambda}]}(u)}{\phi^{[\mathcal{L}, {\bf a}]}(u) + {\bf b} - {\bf a}}.
\end{equation}
\end{proposition}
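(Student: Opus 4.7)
The plan is to prove the formula by induction on $(|{\bf a}|, |V|)$ ordered lexicographically, or equivalently on the number of trivalent vertices in $T$. The base case is when $\Delta$ is the ${\bf a}$-\bone\ of a simplex corresponding to $\pi$, which is precisely when the algorithm performs no splits and $T = T_0$. Here $L = \{r_1\}$ with $\phi^{[\mathcal{L}, {\bf a}]}(r_1) = {\bf a}$ and $\phi^{[\mathcal{L}, \boldsymbol{\lambda}]}(r_1) = \boldsymbol{\lambda}$. Since $\Delta = \langle \binom{V_1}{a_1}\rangle \ast \dots \ast \langle \binom{V_n}{a_n}\rangle$, a direct count gives $f_{{\bf b}}(\Delta) = \binom{\boldsymbol{\lambda}}{{\bf b}}$, which matches the right-hand side of the claim.

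For the inductive step, assume $\Delta$ is not an ${\bf a}$-\bone\ of a simplex, so $r_1$ must be trivalent in $T$. Let $t := \phi(r_1)$, so that $\lambda_t > a_t$ by construction, and set $v := v_{t, \lambda_t}$. By Corollary~\ref{cor:Macaulay-shedding-vertices}, $v$ is a Macaulay shedding vertex of $\Delta$. The key structural step I would verify is that the subtree of $T$ rooted at $(r_1)_{\text{left}}$, viewed as a planted binary tree by prepending a new root, is the shedding tree of $(\dl_\Delta(v), \pi \cap W)$, and similarly that the subtree rooted at $(r_1)_{\text{right}}$ is the shedding tree of $(\lk_\Delta(v), \overline{\pi \cap W^{\prime}})$. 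This identification must be shown to hold both as labeled trees and with respect to the terminal-vertex labels, and it is justified by the fact (noted in Section~\ref{subsec:ConstructionOfSheddingTrees}) that the shedding tree of a pure color-shifted balanced complex is uniquely determined by its input pair, independent of the order in which splits are performed.

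Granting this, one applies \eqref{eqn:del-link-split} together with the induction hypothesis on $\dl_\Delta(v)$ (which has strictly fewer vertices) and on $\lk_\Delta(v)$ (whose ${\bf a}$-value is ${\bf a} - \boldsymbol{\delta}_{t,n}$, of strictly smaller total). Writing $L = L_{\text{left}} \sqcup L_{\text{right}}$ for the terminal vertices in the two subtrees, the contribution from $L_{\text{right}}$ comes with ${\bf b}$ replaced by ${\bf b} - \boldsymbol{\delta}_{t,n}$ and ${\bf a}$ replaced by ${\bf a} - \boldsymbol{\delta}_{t,n}$; the two $\boldsymbol{\delta}_{t,n}$ terms inside the binomial coefficient cancel, and the two sums combine to produce exactly the claimed expression over $L$.

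The main obstacle is the subtree-identification step. The algorithm in Section~\ref{subsec:ConstructionOfSheddingTrees} does not literally recurse on the deletion and link one piece at a time; rather, it processes all terminal vertices at a given value of $t$ before decrementing $t$. One must therefore argue that, despite this ``breadth-first'' order of operations, the subtrees of $T$ rooted at the children of $r_1$ agree with the outputs of the algorithm applied to $\dl_\Delta(v)$ and $\lk_\Delta(v)$ respectively. Once this is in place via the asserted uniqueness of the shedding tree, everything else is a direct application of \eqref{eqn:del-link-split} and binomial bookkeeping.
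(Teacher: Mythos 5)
Your proposal is correct, but it organizes the induction differently from the paper. The paper's argument (stated in one line, ``by the repeated use of \eqref{eqn:del-link-split}'') is a forward induction along the shedding sequence $(T_0,\phi_0),\dots,(T_k,\phi_k)$: one maintains the invariant $f_{{\bf b}}(\Delta) = \sum_{u\in L_i} f_{{\bf b}-{\bf a}+\phi_i^{[\mathcal{L},{\bf a}]}(u)}\big(\phi_i^{[\mathcal{L},\Delta]}(u)\big)$, which holds trivially at $i=0$ and is preserved at each step because splitting a single terminal vertex replaces one summand by the two summands supplied by \eqref{eqn:del-link-split}; at $i=k$ every terminal complex is a rib of a simplex, so each summand is the stated binomial coefficient. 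This route never needs to know that the subtrees hanging off $r_1$ are themselves shedding trees. Your top-down structural induction instead decomposes $\Delta$ at the first shedding vertex and identifies the two subtrees of $T$ with the shedding trees of $\dl_\Delta(v)$ and $\lk_\Delta(v)$ — a true statement, and you correctly flag it as the crux, but it rests on the order-independence of the algorithm (which the paper asserts without proof) plus the padding/reduction bookkeeping between the $n$-tuple labels in $T$ and the reduced tuple $\overline{{\bf a}-\boldsymbol{\delta}_{t,n}}$ used for the link. So your version is sound but imports an extra compositionality lemma that the sequence-invariant argument avoids; the degree-shift cancellation in your last step is exactly right, including the convention $\binom{x}{y}=0$ for $y<0$ when $a_t=1$.
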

For example, the balanced complex $(\Sigma, \tau)$ in Example \ref{example:shedding-tree} satisfies $f_{(1,1)}(\Sigma) = 8$, $f_{(1,0)}(\Sigma) = 3$, $f_{(0,1)}(\Sigma) = 4$, and we check that indeed $8 = \binom{(3,2)}{(1,1)} + \binom{(1,2)}{(1,0)} + \binom{(1,3)}{(1,0)}$, $3 = \binom{(3,2)}{(1,0)} + \binom{(1,2)}{(1,-1)} + \binom{(1,3)}{(1,-1)}$, and $4 = \binom{(3,2)}{(0,1)} + \binom{(1,2)}{(0,0)} + \binom{(1,3)}{(0,0)}$.

\begin{remark}
The entire Section \ref{subsec:ConstructionOfSheddingTrees} is still true if we replace every instance of ``color-shifted'' with ``color-compressed''.%In particular, if $(\Delta, \pi)$ is color-compressed and $(T_i, \phi_i)$ is a term in any shedding sequence of $(\Delta, \pi)$, then the colored multicomplex $\big(\phi^{[\mathcal{L}, \Delta]}_i(u), \overline{\phi^{[\mathcal{L}, \pi]}_i(u)}\big)$ is also color-compressed for every terminal vertex $u$ of $T_i$.
\end{remark}

\subsection{Macaulay Trees}\label{subsec:MacaulayTrees}
Let ${\bf a} \in \mathbb{N}^n\backslash\{{\bf 0}_n\}$. Given any pure color-shifted $\overline{{\bf a}}$-balanced complex $(\Delta, \pi)$, it is clear from Proposition \ref{prop:shedding-tree-fine-f-vector} that the shedding tree $(T, \phi)$ of $(\Delta, \pi)$ encodes superfluous information for computing the fine $f$-vector of $\Delta$. Motivated by the desire to retain only the necessary numerical information needed to compute the fine $f$-vector of $\Delta$, we shall define the notion of an ${\bf a}$-Macaulay tree of $N$ for any $N\in \mathbb{P}$, which does not depend on the existence of any $\overline{{\bf a}}$-balanced complexes. First, we introduce a series of related definitions.

\begin{definition}
Let ${\bf a} \in \mathbb{N}^n$. An {\it ${\bf a}$-splitting tree} is a triple $(T, \mu, \nu)$ such that
\begin{enum*}
\item $T = (V, E)$ is a trivalent planted binary tree with $(\text{root},1,3)$-triple $(r_0, L, Y)$;
\item $\mu: Y \to [n]$ is a labeling of the trivalent vertices of $T$; and
\item $\nu: V \to \mathbb{Z}^n$ is a vertex labeling of $T$ recursively defined as follows.
\begin{enum*}
\item Define $\nu(r_0) = \nu(r_1) = {\bf a}$, where $r_1$ is the unique child of the root $r_0$.
\item For every $y\in Y$, if $\nu(y)$ was already defined, while $\nu(y_{\text{left}})$, $\nu(y_{\text{right}})$ are both not yet defined, then define $\nu(y_{\text{left}}) = \nu(y)$, $\nu(y_{\text{right}}) = \nu(y) - \boldsymbol{\delta}_{\mu(y),n}$.
\end{enum*}
\end{enum*}
Note that $\nu: V \to \mathbb{Z}^n$ is completely determined by $T, \mu, {\bf a}$, and we say $(T, \mu, \nu)$ is the ${\bf a}$-splitting tree {\it induced} by $(T, \mu)$.
\end{definition}

\begin{definition}
Let $T$ be a trivalent planted binary tree with $(\text{root},1,3)$-triple $(r_0, L, Y)$, and let $\varphi$ be any vertex labeling of $T$ such that $\varphi(Y) \subseteq [n]$ and $\varphi(L) \subseteq \mathbb{P}^n$. The {\it left-weight labeling} of $(T, \varphi)$ is the map $\omega: Y \cup L \to \mathbb{P}^n$ defined by
\begin{equation*}
\omega(v) := \Big(\varphi(v_m) + \sum_{i=1}^{m-1} \boldsymbol{\delta}_{\varphi(v_i), n}\Big),
\end{equation*}
where $v_1, \dots, v_m$ is the left relative sequence of $v$ in $T$, and we say $\omega(v)$ is the {\it left-weight} of $v$ in $(T, \varphi)$. In particular, if $v\in L$, then $\omega(v) = \varphi(v)$. We say $\omega$ is {\it proper} if $\omega(y_{\text{left}}) \geq \omega(y_{\text{right}})$ for every $y\in Y$.
\end{definition}

\begin{definition}
Let $n, N\in \mathbb{P}$, ${\bf a} \in \mathbb{N}^n\backslash\{{\bf 0}_n\}$. An {\it ${\bf a}$-Macaulay tree of $N$} is any pair $(T, \varphi)$ such that
\begin{enum*}
\item $T$ is a trivalent planted binary tree with $(\text{root},1,3)$-triple $(r_0, L, Y)$; and
\item $\varphi$ is a vertex labeling of $T$ satisfying all of the following conditions.
\begin{enum*}
\item $\varphi(r_0) = {\bf a}$, $\varphi(Y) \subseteq [n]$ and $\varphi(L) \subseteq \mathbb{P}^n$.
\item If $y, y^{\prime} \in Y$ satisfies $y^{\prime} \in \mathcal{D}_T(y)$, i.e. $y^{\prime}$ is a descendant of $y$, then $\varphi(y) \geq \varphi(y^{\prime})$.
\item If $y\in Y$ satisfies $\varphi(y) < n$, then $\varphi^{[t]}(u) = \varphi^{[t]}(u^{\prime})$ for all $u, u^{\prime} \in L \cap \mathcal{D}_T(y)$ and all $t\in [n]\backslash [\varphi(y)]$.
\item The left-weight labeling of $(T, \varphi)$ is proper.
\item The ${\bf a}$-splitting tree $(T, \varphi|_Y, \nu)$ induced by $(T, \varphi|_Y)$ satisfies $\varphi(u) \geq \nu(u) > {\bf 0}_n$ for all $u\in L$.
\item If $y\in Y$ satisfies $\nu^{[\varphi(y)]}(y) = 1$, then $\omega^{[\varphi(y)]}(x) = \omega^{[\varphi(y)]}(y) - 1$ for all $x\in \mathcal{D}_T(y_{\text{right}})$.
\item $N$ can be written as the sum $N = \sum_{u\in L} \binom{\varphi(u)}{\nu(u)}$.
\end{enum*}
\end{enum*}
\end{definition}

An {\it ${\bf a}$-Macaulay tree} is an ${\bf a}$-Macaulay tree of $N$ for some $N\in \mathbb{P}$, and a {\it Macaulay tree} is an ${\bf a}$-Macaulay tree for some ${\bf a} \in \mathbb{N}^n\backslash\{{\bf 0}_n\}$. Suppose $(T, \varphi)$ is a Macaulay tree, and let $(r_0, L, Y)$ be the $(\text{root},1,3)$-triple of $T$. The {\it weight} of $(T, \varphi)$ is the left-weight of the unique child of $r_0$. Given any $t\in \{0,1,\dots, n\}$, we say $x\in Y \cup L$ is a {\it $t$-leading vertex} of $(T, \varphi)$ if it satisfies the following two conditions (i): If $x\in Y$, then $\varphi(x) \leq t$; and (ii): If $\widetilde{x}$ is the parent of $x$ and $\widetilde{x} \neq r_0$, then $\varphi(\widetilde{x}) > t$. Let $\mathcal{L}_{(T, \varphi)}(t)$ be the set of all $t$-leading vertices of $(T, \varphi)$. Note that a $t$-leading vertex could possibly be a $t^{\prime}$-leading vertex for distinct $t, t^{\prime} \in \{0,1,\dots, n\}$, while not every vertex in $Y\cup L$ is necessarily a $t$-leading vertex for some $t\in \{0,1,\dots, n\}$. It is easy to show that for every $t\in \{0,1,\dots, n\}$ and every path $v_0, v_1, \dots, v_{\ell}$ from $v_0 = r_0$ to some terminal vertex $v_{\ell} \in L$, there exists a unique $i_t \in [\ell]$ such that $v_{i_t}$ is a $t$-leading vertex. In particular, the unique child of $r_0$ is always an $n$-leading vertex, and it is the only $n$-leading vertex in $Y\cup L$, while every terminal vertex is a $0$-leading vertex, i.e. $\mathcal{L}_{(T, \varphi)}(0) = L$.

\begin{definition}
Let $(T, \varphi)$ be a Macaulay tree, and let $(r_0, L, Y)$ be the $(\text{root}, 1,3)$-triple of $T$. A trivalent vertex $y\in Y$ is called a {\it cloning vertex} of $(T, \varphi)$ if it satisfies the following two conditions:
\begin{enum*}
\item The two subgraphs of $T$ induced by $\mathcal{D}_T(y_{\text{left}})$ and $\mathcal{D}_T(y_{\text{right}})$ are isomorphic as rooted trees, and they have the same corresponding $\varphi$-labels, i.e. if $\beta: \mathcal{D}_T(y_{\text{left}}) \to \mathcal{D}_T(y_{\text{right}})$ is the corresponding isomorphism, then $\varphi(\beta(x)) = \varphi(x)$ for all $x\in \mathcal{D}_T(y_{\text{left}})$.
\item Both $y_{\text{left}}$ and $y_{\text{right}}$ are $(\varphi(y)-1)$-leading vertices of $(T, \varphi)$.
\end{enum*}
If $(T, \varphi)$ has no cloning vertices, then we say $(T, \varphi)$ is {\it condensed}. In particular, $(T, \varphi)$ is condensed implies $\varphi(y_{\text{left}}) > \varphi(y_{\text{right}})$ for every $y\in Y$ such that $y_{\text{left}}$ and $y_{\text{right}}$ are both leaves.
\end{definition}

\begin{definition}
Let $(T, \varphi)$ be an ${\bf a}$-Macaulay tree of $N$ for some ${\bf a}\in \mathbb{N}^n\backslash \{{\bf 0}_n\}$, $N \in \mathbb{P}$. The {\it condensation} of $(T, \varphi)$ is a condensed ${\bf a}$-Macaulay tree $(\widetilde{T}, \widetilde{\varphi})$ of $N$ that is constructed from $(T, \varphi)$ via the following algorithm:
\begin{center}
\begin{algorithmic}
\small
\State $(\widetilde{T}, \widetilde{\varphi}) \gets (T, \varphi)$.
\While {$\exists$ a cloning vertex of $(\widetilde{T}, \widetilde{\varphi})$}
    \State Choose any cloning vertex $y$ of $(\widetilde{T}, \widetilde{\varphi})$.
    \State $\widetilde{\varphi}(y) \gets \widetilde{\varphi}(y_{\text{left}})$.
    \State $\widetilde{\varphi} \gets \widetilde{\varphi}|_{\widetilde{V}\backslash \mathcal{D}_{\widetilde{T}}(y_{\text{right}})}$ (where $\widetilde{V}$ is the vertex set of $\widetilde{T}$).
    \State $\widetilde{T} \gets$ subgraph of $\widetilde{T}$ induced by $\widetilde{V}\backslash \mathcal{D}_{\widetilde{T}}(y_{\text{right}})$.
    \State $(\widetilde{T}, \widetilde{\varphi}) \gets (\widetilde{T}/\{y, y_{\text{left}}\}, \widetilde{\varphi}/\{y, y_{\text{left}}\})$.
\EndWhile
\end{algorithmic}
\end{center}
\end{definition}

In defining the condensation $(\widetilde{T}, \widetilde{\varphi})$ of $(T, \varphi)$, the number of vertices of $\widetilde{T}$ becomes strictly smaller in each iteration of the while loop, hence the while loop is not an infinite loop. It is also easy to see that the pair $(\widetilde{T}, \widetilde{\varphi})$ obtained from the above algorithm is uniquely determined by $(T, \varphi)$, independent of the choice of the cloning vertex in each iteration of the while loop, hence $(\widetilde{T}, \widetilde{\varphi})$ is well-defined, and in particular, the condensation of a condensed Macaulay tree is itself. The fact that $(\widetilde{T}, \widetilde{\varphi})$ is a condensed ${\bf a}$-Macaulay tree of $N$ is straightforward and left to the reader as an exercise.

For the rest of this subsection, let ${\bf a} \in \mathbb{P}^n$, let $(\Delta, \pi)$ be a pure color-shifted ${\bf a}$-balanced complex, and let $(T_0, \phi_0)$, $(T_1, \phi_1)$, $\dots$, $(T_k, \phi_k)$ be a shedding sequence of $(\Delta, \pi)$, where $(r_0, L_i, Y_i)$ denotes the $(\text{root}, 1, 3)$-triple of each $T_i$. Clearly $r_0$ is the common root of $T_0, T_1, \dots, T_k$, and observe that
\begin{equation}\label{eqn:YuL-containment}
Y_0 \subsetneq Y_1 \subsetneq \cdots \subsetneq Y_k; \ \ \ \ \ Y_0 \cup L_0 \subsetneq Y_1 \cup L_1 \subsetneq \cdots \subsetneq Y_k \cup L_k.
\end{equation}
For every $y\in Y_k$, let $\alpha(y)$ be the largest integer $<k$ such that $y\in L_{\alpha(y)}$, while for every $u\in L_k$, set $\alpha(u) = k$. Also, for each $i\in \{0,1,\dots, k\}$, define the vertex labeling $\varphi_i$ of $T_i$ as follows.
\begin{equation*}
\varphi_i(u) = \begin{cases}
{\bf a}, & \text{ if }u = r_0;\\
\phi_i(u), & \text{ if }u \in Y_i;\\
\phi_i^{[\mathcal{L}, \boldsymbol{\lambda}]}(u), & \text{ if }u \in L_i.
\end{cases}
\end{equation*}
By construction, every $u\in Y_i$ is labeled with an integer in $[n]$, while every $u\in L_i$ is labeled with an $n$-tuple in $\mathbb{P}^n$, hence the left-weight labeling of $(T_i, \varphi_i)$ is well-defined, and we denote this left-weight labeling by $\omega_i$.

\begin{lemma}\label{lemma:left-weight}
If $v\in Y_k$, then $\omega_{\alpha(v)}(v) = \omega_{\alpha(v)+1}(v)$.
\end{lemma}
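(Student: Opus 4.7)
My plan is to unpack both sides from their definitions and observe that they agree after a clean two-term calculation, with the $\boldsymbol{\delta}_{t,n}$ contribution picked up at $v$ exactly cancelling the $-\boldsymbol{\delta}_{t,n}$ shift built into the algorithm's $\kappa$ update at the left child. Set $j = \alpha(v)$ and let $t \in [n]$ be the loop variable when the algorithm expanded $v$ (so $\phi_{j+1}(v) = t$), and let $(\mathbf{a}^{\prime}, \Delta^{\prime}, \pi^{\prime}, \boldsymbol{\lambda}^{\prime})$ denote the $\phi_j$-label at the terminal vertex $v \in L_j$. Since $v$ is a leaf of $T_j$, its left relative sequence in $T_j$ is just $v$, so $\omega_j(v) = \varphi_j(v) = \boldsymbol{\lambda}^{\prime}$.

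For the right-hand side, the children $v_{\text{left}}, v_{\text{right}}$ are freshly created leaves of $T_{j+1}$. Hence the left relative sequence of $v$ in $T_{j+1}$ is the two-term sequence $v, v_{\text{left}}$, and $\omega_{j+1}(v) = \varphi_{j+1}(v_{\text{left}}) + \boldsymbol{\delta}_{t,n}$. By the algorithm, $\varphi_{j+1}(v_{\text{left}}) = \kappa(\pi^{\prime} \cap W, \boldsymbol{\lambda}^{\prime} - \boldsymbol{\delta}_{t,n})$, where $W$ is the vertex set of $\dl_{\Delta^{\prime}}(v_{t,\max}^{\prime})$. The whole problem then reduces to showing $\kappa(\pi^{\prime} \cap W, \boldsymbol{\lambda}^{\prime} - \boldsymbol{\delta}_{t,n}) = \boldsymbol{\lambda}^{\prime} - \boldsymbol{\delta}_{t,n}$, since this will yield $\omega_{j+1}(v) = (\boldsymbol{\lambda}^{\prime} - \boldsymbol{\delta}_{t,n}) + \boldsymbol{\delta}_{t,n} = \boldsymbol{\lambda}^{\prime} = \omega_j(v)$.

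To evaluate this $\kappa$-tuple I would first establish, by a short induction on the step index, the running invariant that every $\boldsymbol{\lambda}$-coordinate of any terminal-vertex label throughout the shedding sequence lies in $\mathbb{P}$. This uses only the guard $\lambda_t^{\prime} > a_t^{\prime} \geq 1$ enforced by the algorithm at every expansion, which forces $\lambda_t^{\prime} \geq 2$. Once positivity is in hand, the $i$-th component of $\pi^{\prime} \cap W$ is $V_i^{\prime}$ for $i \neq t$ (non-empty since $\lambda_i^{\prime} \geq 1$) and $V_t^{\prime} \setminus \{v_{t,\max}^{\prime}\}$ for $i = t$ (non-empty since $\lambda_t^{\prime} \geq 2$), so every coordinate of $\pi^{\prime} \cap W$ is non-empty and $\kappa$ simply records the actual sizes, giving $\boldsymbol{\lambda}^{\prime} - \boldsymbol{\delta}_{t,n}$ directly. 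The only non-obvious ingredient is this positivity invariant: without it, $\kappa$ could invoke its fallback rule on the $t$-th coordinate and break the cancellation, and this is the step I expect to require the most care in the full write-up.
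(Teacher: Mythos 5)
Your reduction is exactly the paper's: $\omega_{\alpha(v)}(v)=\varphi_{\alpha(v)}(v)=\boldsymbol{\lambda}^{\prime}$, $\omega_{\alpha(v)+1}(v)=\varphi_{\alpha(v)+1}(v_{\text{left}})+\boldsymbol{\delta}_{t,n}$, and everything hinges on the identity $\kappa(\pi^{\prime}\cap W,\boldsymbol{\lambda}^{\prime}-\boldsymbol{\delta}_{t,n})=\boldsymbol{\lambda}^{\prime}-\boldsymbol{\delta}_{t,n}$. However, your justification of that identity rests on a false claim. Positivity of the coordinates of $\boldsymbol{\lambda}^{\prime}$ does \emph{not} imply that the components of $\pi^{\prime}$ are non-empty: the sets $V_i^{\prime}$ for $i\neq t$ routinely become empty after links are taken (see the labels $(\{p_1\},\emptyset)$ in Example \ref{example:shedding-tree}), while the corresponding entry $\lambda_i^{\prime}$ stays positive --- that is precisely why $\kappa$ carries a fallback rule at all. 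So the assertion ``every coordinate of $\pi^{\prime}\cap W$ is non-empty and $\kappa$ simply records the actual sizes'' is wrong, and the invariant you propose to prove (positivity of the $\boldsymbol{\lambda}$-coordinates) is not the one that makes the computation work; it only guarantees that $\kappa$ is defined. You also tacitly conflate $\lambda_i^{\prime}$ with $|V_i^{\prime}|$, which is a separate invariant that needs stating.

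The computation is still salvageable, and the correct bookkeeping is: (i) the non-$t$-factorizability guard forces $|V_t^{\prime}|>a_t^{\prime}>0$, and the invariant ``$\lambda_i^{\prime}=|V_i^{\prime}|$ whenever $V_i^{\prime}\neq\emptyset$'' (maintained by $\kappa$ from the root down, since $\kappa$ overwrites $\lambda_i$ with the true size exactly when the component is non-empty) gives $|V_t^{\prime}|=\lambda_t^{\prime}\geq 2$; hence the $t$-th component of $\pi^{\prime}\cap W$ is the non-empty set $V_t^{\prime}\backslash\{v_{t,\max}^{\prime}\}$ of size $\lambda_t^{\prime}-1$, with no fallback; (ii) for $i\neq t$, either $V_i^{\prime}\neq\emptyset$, in which case $V_i^{\prime}\cap W=V_i^{\prime}$ has size $\lambda_i^{\prime}$ by the same invariant, or $V_i^{\prime}=\emptyset$, in which case the fallback returns $\lambda_i^{\prime}$ anyway. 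In both cases the $i$-th entry is $\lambda_i^{\prime}$, so the fallback, far from ``breaking the cancellation,'' is exactly what rescues the coordinates $i\neq t$ when $V_i^{\prime}$ is empty. With (i) and (ii) in place the identity, and hence the lemma, follows as in the paper.
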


\begin{proof}
Write $\phi_{\alpha(v)}(v) = ({\bf a}^{\prime}, \Delta^{\prime}, \pi^{\prime}, \boldsymbol{\lambda}^{\prime})$, where ${\bf a}^{\prime} = (a_1^{\prime}, \dots, a_n^{\prime})$ and $\pi^{\prime} = (V_1^{\prime}, \dots, V_n^{\prime})$ is an ordered partition of the vertex set $V^{\prime}$ of $\Delta^{\prime}$. Let $t = \phi_{\alpha(v)+1}(v)$, let $v^{\prime}_{t,\max}$ be the maximal element of $V_t^{\prime}$, and denote the vertex set of $\dl_{\Delta^{\prime}}(v^{\prime}_{t, \max})$ by $W$. Also, write $\pi^{\prime} \cap W = (V_1^{\prime\prime}, \dots, V_n^{\prime\prime})$. Since $\Delta^{\prime}$ is color-shifted, every vertex $v^{\prime\prime} < v^{\prime}_{t, \max}$ in $V^{\prime}_t$ must be in $W$, hence $|V^{\prime\prime}_t| = |V^{\prime}_t|-1$. Note that $|V^{\prime}_t| > a_t^{\prime} > 0$ yields $|V^{\prime\prime}_t| > 0$, while every vertex in $V^{\prime}\backslash V_t^{\prime}$ is in $W$, thus $\kappa(\pi^{\prime} \cap W, \boldsymbol{\lambda}^{\prime} - \boldsymbol{\delta}_{t,n}) = \boldsymbol{\lambda}^{\prime} - \boldsymbol{\delta}_{t,n}$. Now, $\varphi_{\alpha(v)+1}(v_{\text{left}}) = \phi_{\alpha(v)+1}^{[\mathcal{L}, \boldsymbol{\lambda}]}(v_{\text{left}}) = \kappa(\pi^{\prime} \cap W, \boldsymbol{\lambda}^{\prime} - \boldsymbol{\delta}_{t,n})$ by construction, and $v, v_{\text{left}}$ is the left relative sequence of $v$ in $T_{\alpha(v)+1}$, therefore $\omega_{\alpha(v)+1}(v) = \varphi_{\alpha(v)+1}(v_{\text{left}}) + \boldsymbol{\delta}_{t,n} = \boldsymbol{\lambda}^{\prime} = \phi_{\alpha(v)}^{[\mathcal{L}, \boldsymbol{\lambda}]}(v) = \varphi_{\alpha(v)}(v) = \omega_{\alpha(v)}(v)$.
\end{proof}

\begin{proposition}\label{prop:left-weight-shedding-sequence}
For every $i\in \{0,1,\dots, k\}$, we have the following:
\begin{enum*}
\item $\omega_i(v) = \omega_j(v)$ for all $v\in Y_i \cup L_i$ and all integers $j$ satisfying $i\leq j\leq k$.\label{condition-left-weight-labeling-invariant}
\item $\omega_i$ is proper.\label{condition-left-weight-labeling-proper}
\item If the children of some $y\in Y_i$ are both in $L_k$, then $\varphi_i(y_{\text{left}}) > \varphi_i(y_{\text{right}})$.\label{condition-left-weight-labeling-both-leaves}
\item If $\varphi_i(x) < n$ for some $x\in Y_i$, then $\omega_i^{[t]}(x^{\prime}) = \omega_i^{[t]}(x)$ for all $x^{\prime} \in \mathcal{D}_{T_i}(x)$ and all $t\in [n]\backslash [\varphi_i(x)]$.\label{condition-orderly}
\end{enum*}
\end{proposition}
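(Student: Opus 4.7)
The plan is to prove the four parts in the order stated, with parts (i) and (ii) providing the bookkeeping needed for (iii) and (iv). Part (i) I would prove by induction on $j-i$; the one-step case $j = i+1$ is the crux. The step $i \to i+1$ adds two new leaves beneath a single $v \in L_i$, and for any $w \in Y_i \cup L_i$ whose left relative sequence in $T_i$ does not end at $v$, both the sequence and all $\varphi_i$-labels along it are unchanged, so $\omega_{i+1}(w) = \omega_i(w)$ trivially. For those $w$ whose left relative sequence terminates at $v$, Lemma~\ref{lemma:left-weight} applied to $v$ gives $\omega_{i+1}(v) = \omega_i(v)$, and this equality then propagates up the left relative sequence of $w$. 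Part (ii) then follows by induction on $i$: for any $y \in Y_i \subset Y_{i+1}$, properness at $y$ is inherited from $\omega_i$ to $\omega_{i+1}$ via part (i), and for the newly trivalent $y = v$ (split at coordinate $t$) both children are leaves of $T_{i+1}$ with $\omega_{i+1}(v_{\text{left}}) = \boldsymbol{\lambda}' - \boldsymbol{\delta}_{t,n}$ and $\omega_{i+1}(v_{\text{right}}) = \kappa(\pi' \cap W', \boldsymbol{\lambda}' - \boldsymbol{\delta}_{t,n})$; componentwise comparison, using $W' \subseteq W$ and the padding convention in $\kappa$, shows the left value dominates the right.

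For part (iii), I would argue by contradiction. If both children of $y$ lie in $L_k$, they were never subsequently split, which means $\dl_{\Delta'}(v'_{t,\max})$ and $\lk_{\Delta'}(v'_{t,\max})$ are each $s$-factorizable for every $s \in [n]$, hence each is a bone of a simplex. Suppose $\varphi_i(y_{\text{left}}) = \varphi_i(y_{\text{right}})$; comparing the two $\boldsymbol{\lambda}$-labels coordinate by coordinate, combined with purity of the link (which forces $a'_i = 0$ in any color where the link has no vertices, hence $V'_i$ is effectively absent from $\Delta'$), pins down $V'_i \cap W' = V'_i$ for every color $i \neq t$ with $V'_i$ non-empty, and $V'_t \cap W' = V'_t \setminus \{v'_{t,\max}\}$. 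Splicing the two bone decompositions via $\Delta' = \dl_{\Delta'}(v'_{t,\max}) \sqcup \{F \cup \{v'_{t,\max}\} : F \in \lk_{\Delta'}(v'_{t,\max})\}$ then exhibits $\Delta'$ as the join $\langle\binom{V'_1}{a'_1}\rangle * \cdots * \langle\binom{V'_n}{a'_n}\rangle$, i.e.\ as an ${\bf a}'$-bone of a simplex. This contradicts the fact that the algorithm subdivided $v$ at the step in question.

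For part (iv), write $t_0 = \varphi_i(x)$. Because the algorithm sweeps coordinates in strictly decreasing order and $x$ was subdivided at coordinate $t_0$, every coordinate $s > t_0$ had already been factored out of $\Delta'_x$ at the time of $x$'s split, so $\Delta'_x$ admits a join factor $\langle\binom{V'_t}{a'_t}\rangle$ for each $t > t_0$. All subsequent splits beneath $x$ occur at coordinates $\leq t_0$ (by the non-increasing behaviour of $\varphi$ on trivalent labels), and deletion or link at a vertex of any such color $s$ preserves each of these join factors, so every leaf descendant $u$ of $x$ in $T_i$ has $\varphi_i^{[t]}(u) = \lambda'_t$ for all $t > t_0$. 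Since no trivalent descendant of $x$ carries a $\varphi_i$-label equal to $t > t_0$, the counting term in the definition of $\omega_i$ vanishes at coordinate $t$, and so $\omega_i^{[t]}$ is constant, equal to $\lambda'_t$, on $\{x\} \cup \mathcal{D}_{T_i}(x)$.

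The main obstacle is part (iii): the reassembly argument that turns the two bone decompositions of the deletion and link into a bone decomposition of $\Delta'$ requires careful tracking of every coordinate, and purity of the link must be invoked to rule out the degenerate possibility that $V'_i \cap W' = \emptyset$ while $a'_i > 0$. Parts (i), (ii), and (iv) are essentially structural bookkeeping once the correct inductive setup and the join-preserving behaviour of deletion and link are in hand.
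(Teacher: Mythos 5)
Your proof follows essentially the same route as the paper's: part (i) via Lemma~\ref{lemma:left-weight} and propagation up left relative sequences, parts (ii)--(iii) via the componentwise comparison of $\kappa(\pi'\cap W,\cdot)$ with $\kappa(\pi'\cap W',\cdot)$ and the splicing of the two rib decompositions of $\dl_{\Delta'}(v'_{t,\max})$ and $\lk_{\Delta'}(v'_{t,\max})$ into one for $\Delta'$, and part (iv) via the persistence of the join factors $\langle\binom{V'_t}{a'_t}\rangle$ for $t>\varphi_i(x)$ under deletions and links at vertices of lower color. The only imprecision is in (iii): when $a'_t=1$ the link has no color-$t$ vertices, so $V'_t\cap W'=\emptyset$ rather than $V'_t\setminus\{v'_{t,\max}\}$ (this degenerate case cannot be ruled out, only handled), but the splicing still exhibits $\Delta'$ as the $\overline{{\bf a}'}$-rib and the contradiction goes through.
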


\begin{proof}
First of all, \ref{condition-left-weight-labeling-invariant} follows from Lemma \ref{lemma:left-weight} by the algorithmic construction of a shedding sequence. Next, we prove \ref{condition-left-weight-labeling-proper} and \ref{condition-left-weight-labeling-both-leaves} concurrently by induction on $i$, with the base case $i=0$ being vacuously true since $Y_0 = \emptyset$.

Choose an arbitrary $y\in Y_i$ and let $q := \alpha(y) < i$. If $q < i-1$, then both $y_{\text{left}}$ and $y_{\text{right}}$ are vertices in $T_{q+1}$, and $\omega_{q+1}$ is proper by induction hypothesis, hence $\omega_{q+1}(y_{\text{left}}) \geq \omega_{q+1}(y_{\text{right}})$. Using statement \ref{condition-left-weight-labeling-invariant}, we thus get $\omega_i(y_{\text{left}}) \geq \omega_i(y_{\text{right}})$. Furthermore, if $y_{\text{left}}$ and $y_{\text{right}}$ are both in $L_k$, then the induction hypothesis also yields $\varphi_{q+1}(y_{\text{left}}) > \varphi_{q+1}(y_{\text{right}})$, which implies $\varphi_i(y_{\text{left}}) > \varphi_i(y_{\text{right}})$.

Suppose $q = i-1$. Let $\phi_{q}(y) = ({\bf a}^{\prime}, \Delta^{\prime}, \pi^{\prime}, \boldsymbol{\lambda}^{\prime})$, and write ${\bf a}^{\prime} = (a_1^{\prime}, \dots, a_n^{\prime})$, $\pi^{\prime} = (V_1^{\prime}, \dots, V_n^{\prime})$. Let $t = \phi_i(y)$, let $v^{\prime}_{t,\max}$ be the maximal element of $V_t^{\prime}$, and let $W, W^{\prime}$, be the vertex sets of $\dl_{\Delta^{\prime}}(v^{\prime}_{t, \max})$, $\lk_{\Delta^{\prime}}(v^{\prime}_{t, \max})$ respectively. The maximality of $q$ implies $\omega_i(y_{\text{left}}) = \varphi_i(y_{\text{left}}) = \kappa(\pi^{\prime} \cap W, \boldsymbol{\lambda}^{\prime} - \boldsymbol{\delta}_{t,n})$ and $\omega_i(y_{\text{right}}) = \varphi_i(y_{\text{right}}) = \kappa(\pi^{\prime} \cap W^{\prime}, \boldsymbol{\lambda}^{\prime} - \boldsymbol{\delta}_{t,n})$. Note that $\lk_{\Delta^{\prime}}(v^{\prime}_{t, \max}) \subseteq \dl_{\Delta^{\prime}}(v^{\prime}_{t, \max})$ implies $W^{\prime} \subseteq W$. Note also that $\dl_{\Delta^{\prime}}(v^{\prime}_{t, \max})$ is $\overline{{\bf a}^{\prime}}$-balanced, while $\lk_{\Delta^{\prime}}(v^{\prime}_{t, \max})$ is $\overline{({\bf a}^{\prime} - \boldsymbol{\delta}_{t,n})}$-balanced, so $|V_t^{\prime}| > a_t^{\prime} > 0$ implies the $\emptyset$ entries of $\pi^{\prime} \cap W$ and $\pi^{\prime} \cap W^{\prime}$ are identical, thus $\kappa(\pi^{\prime} \cap W, \boldsymbol{\lambda}^{\prime} - \boldsymbol{\delta}_{t,n}) \geq \kappa(\pi^{\prime} \cap W^{\prime}, \boldsymbol{\lambda}^{\prime} - \boldsymbol{\delta}_{t,n})$, i.e. $\omega_i(y_{\text{left}}) \geq \omega_i(y_{\text{right}})$.

Suppose further that $y_{\text{left}}$ and $y_{\text{right}}$ are both in $L_k$, and recall we already have $\varphi_i(y_{\text{left}}) = \omega_i(y_{\text{left}}) \geq \omega_i(y_{\text{right}}) = \varphi_i(y_{\text{right}})$. If $\varphi_i(y_{\text{left}}) = \varphi_i(y_{\text{right}})$, then $\pi^{\prime} \cap W = \pi^{\prime} \cap W^{\prime}$. Since $y_{\text{left}}, y_{\text{right}} \in L_k$ implies $\dl_{\Delta^{\prime}}(v^{\prime}_{t, \max})$ is an $\overline{{\bf a}^{\prime}}$-rib of a simplex and $\lk_{\Delta^{\prime}}(v^{\prime}_{t, \max})$ is an $\overline{({\bf a}^{\prime} - \boldsymbol{\delta}_{t,n})}$-rib of a simplex, it follows that $\Delta^{\prime}$ is an $\overline{{\bf a}^{\prime}}$-rib of a simplex, which contradicts the fact that $y$ is not a terminal vertex in $T_k$. Hence, we must have $\varphi_i(y_{\text{left}}) > \varphi_i(y_{\text{right}})$ in this case, therefore completing the induction step for statements \ref{condition-left-weight-labeling-proper} and \ref{condition-left-weight-labeling-both-leaves}.

Finally, we prove statement \ref{condition-orderly}. Fix some $i>0$, assume $\varphi_i(x) < n$ for some $x\in Y_i$, and write $m := \varphi_i(x)$. Let $\phi_{\alpha(x)}(x) = (\widetilde{{\bf a}}, \widetilde{\Delta}, \widetilde{\pi}, \widetilde{\boldsymbol{\lambda}})$, write $\widetilde{{\bf a}} = (\widetilde{a}_1, \dots, \widetilde{a}_n)$, $\widetilde{\pi} = (\widetilde{V}_1, \dots, \widetilde{V}_n)$, $\widetilde{\boldsymbol{\lambda}} = (\widetilde{\lambda}_1, \dots, \widetilde{\lambda}_n)$, and let $\widetilde{V} := \bigcup_{i\in [n]} \widetilde{V}_i$ be the vertex set of $\widetilde{\Delta}$. From the construction of the shedding sequence, $\widetilde{\Delta}$ is $t$-factorizable with respect to $(\widetilde{\pi}, \widetilde{{\bf a}})$ for all $t\in [n]\backslash [m]$, hence $\widetilde{\Delta} = \big\langle \binom{\widetilde{V}_n}{\widetilde{a}_n} \big\rangle * \big\langle \binom{\widetilde{V}_{n-1}}{\widetilde{a}_{n-1}} \big\rangle * \cdots * \big\langle \binom{\widetilde{V}_{m+1}}{\widetilde{a}_{m+1}} \big\rangle * \Sigma$ for some (non-empty) simplicial complex $\Sigma$ with vertex set $\widetilde{V}_1 \cup \dots \cup \widetilde{V}_m$, which yields $\dl_{\widetilde{\Delta}}(v) = \big\langle \binom{\widetilde{V}_n}{\widetilde{a}_n} \big\rangle * \big\langle \binom{\widetilde{V}_{n-1}}{\widetilde{a}_{n-1}} \big\rangle * \cdots * \big\langle \binom{\widetilde{V}_{m+1}}{\widetilde{a}_{m+1}} \big\rangle * \dl_{\Sigma}(v)$, and $\lk_{\widetilde{\Delta}}(v) = \big\langle \binom{\widetilde{V}_n}{\widetilde{a}_n} \big\rangle * \big\langle \binom{\widetilde{V}_{n-1}}{\widetilde{a}_{n-1}} \big\rangle * \cdots * \big\langle \binom{\widetilde{V}_{m+1}}{\widetilde{a}_{m+1}} \big\rangle * \lk_{\Sigma}(v)$, for all $v\in \widetilde{V}_1 \cup \dots \cup \widetilde{V}_m$. For every $y\in \mathcal{D}_{T_i}(x) \cap Y$, since $\varphi_i(y) \leq \varphi_i(x) = m$ implies the Macaulay shedding vertex used to construct the children of $y$ in $T_{\alpha(y)+1}$ is chosen from $\widetilde{V}_1 \cup \dots \cup \widetilde{V}_m$, it then follows inductively that for every $x^{\prime} \in \mathcal{D}_{T_i}(x)$, the simplicial complex $\phi_{\alpha(x^{\prime})}^{[\mathcal{L}, \Delta]}(x^{\prime})$ satisfies $\phi_{\alpha(x^{\prime})}^{[\mathcal{L}, \Delta]}(x^{\prime}) = \big\langle \binom{\widetilde{V}_n}{\widetilde{a}_n} \big\rangle * \big\langle \binom{\widetilde{V}_{n-1}}{\widetilde{a}_{n-1}} \big\rangle * \cdots * \big\langle \binom{\widetilde{V}_{m+1}}{\widetilde{a}_{m+1}} \big\rangle * \Sigma^{\prime}$ for some (non-empty) simplicial complex $\Sigma^{\prime}$ (dependent on the choice of $x^{\prime}$) with its vertex set contained in $\widetilde{V}_1 \cup \dots \cup \widetilde{V}_m$. Consequently, statement \ref{condition-left-weight-labeling-invariant} yields $\omega_i^{[t]}(x^{\prime}) = \omega_{\alpha(x^{\prime})}^{[t]}(x^{\prime}) = \phi_{\alpha(x^{\prime})}^{[\mathcal{L}, \boldsymbol{\lambda}]}(x^{\prime}) = \widetilde{\lambda}_t = \omega_i^{[t]}(x)$ for all $x^{\prime} \in \mathcal{D}_{T_i}(x)$ and all $t\in [n]\backslash [m]$.
\end{proof}

\begin{theorem}\label{thm:shedding-tree=>Macaulay-tree}
The pair $(T_i, \varphi_i)$ is an ${\bf a}$-Macaulay tree for every $i \in \{0, 1, \dots, k\}$. Furthermore, if $\Delta$ has $N$ facets, then $(T_k, \varphi_k)$ is a condensed ${\bf a}$-Macaulay tree of $N$.
\end{theorem}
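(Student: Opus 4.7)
The plan is to verify the seven defining conditions of an ${\bf a}$-Macaulay tree for $(T_i,\varphi_i)$ in order, by unpacking the algorithm of Section~\ref{subsec:ConstructionOfSheddingTrees} together with Proposition~\ref{prop:left-weight-shedding-sequence}. Conditions~(i) and (ii) read off directly: the root carries label ${\bf a}$, trivalent vertices carry the current loop value $t\in[n]$, terminal labels lie in $\mathbb{P}^n$ by the $\kappa$-rule, and the outer loop running $t$ from $n$ downward yields non-increasing labels along root-to-leaf paths among trivalent vertices. Condition~(iii) is the leaf specialisation of Proposition~\ref{prop:left-weight-shedding-sequence}(iv), since $\omega_i$ and $\varphi_i$ coincide on $L_i$; condition~(iv) is Proposition~\ref{prop:left-weight-shedding-sequence}(ii); and condition~(vii) simply defines the integer $N_i:=\sum_{u\in L_i}\binom{\varphi_i(u)}{\nu(u)}$, which is positive once (v) is in hand.

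For condition~(v), an induction on algorithm steps shows that at any current leaf $u$ the first entry ${\bf a}^{\prime}$ of $\phi_i(u)$ equals $\nu(u)$: the right-child construction subtracts $\boldsymbol{\delta}_{t,n}$, matching the recursion defining $\nu$. The inequality $\varphi_i(u)\ge\nu(u)$ reduces to $\boldsymbol{\lambda}^{\prime}\ge{\bf a}^{\prime}$, which is built into the $\kappa$ convention together with the fact that each $(\Delta^{\prime},\overline{\pi^{\prime}})$ in the shedding sequence is a non-empty pure $\overline{{\bf a}^{\prime}}$-balanced complex; and $\nu(u)>{\bf 0}_n$ holds because a hypothetical vertex with ${\bf a}^{\prime}={\bf 0}_n$ would correspond to the trivial complex $\{\emptyset\}$, which is $t$-factorizable for every $t$ and so never arises as the output of a split. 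Condition~(vi) unfolds as follows: if $\nu^{[\varphi(y)]}(y)=a^{\prime}_t=1$, then every face of $\lk_{\Delta^{\prime}}(v^{\prime}_{t,\max})$ avoids $V^{\prime}_t$, so the $\kappa$-rule freezes the $t$-th coordinate of $\boldsymbol{\lambda}$ at $\lambda^{\prime}_t-1=\omega_i^{[t]}(y)-1$ at $y_{\text{right}}$ and, by Proposition~\ref{prop:left-weight-shedding-sequence}(iv), throughout $\mathcal{D}_{T_i}(y_{\text{right}})$.

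For the second half of the theorem, the identity $N_k=N$ is Proposition~\ref{prop:shedding-tree-fine-f-vector} evaluated at ${\bf b}={\bf a}$, since the $N$ facets of the pure $(|{\bf a}|-1)$-dimensional complex $\Delta$ are exactly the faces of multidegree ${\bf a}$. To rule out cloning vertices in $(T_k,\varphi_k)$, I argue by contradiction: if $y$ is a cloning vertex with $t=\varphi_k(y)$, then both $y_{\text{left}}$ and $y_{\text{right}}$ are $(t-1)$-leading, so neither is split further at label $t$. Propagating the required subtree isomorphism downward while using that the gap ${\bf a}^{\prime}_{\text{left}}-{\bf a}^{\prime}_{\text{right}}=\boldsymbol{\delta}_{t,n}$ is preserved by every subsequent split (which only modifies lower-indexed coordinates), I descend to a pair of corresponding trivalent vertices each of whose children are leaves; there, Proposition~\ref{prop:left-weight-shedding-sequence}(iii) forces a strict inequality between the corresponding left and right $\varphi_k$-labels that contradicts the cloning hypothesis.

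The main obstacle is this last step. Conditions~(i)--(vii) reduce to bookkeeping powered by Proposition~\ref{prop:left-weight-shedding-sequence}, whereas excluding cloning vertices requires carefully tracing how the single-unit discrepancy between ${\bf a}^{\prime}$ on the two sides of $y$ interacts with the $\kappa$-rule through every subsequent split, until Proposition~\ref{prop:left-weight-shedding-sequence}(iii) can expose that discrepancy as a strict label gap near the leaves.
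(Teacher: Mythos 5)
Your verification of conditions (i)--(vii) follows the paper's route (read the labels off the algorithm, invoke Proposition~\ref{prop:left-weight-shedding-sequence} for properness and for the ``orderly'' condition, identify $\nu(u)$ with the first component of $\phi_i(u)$, and evaluate Proposition~\ref{prop:shedding-tree-fine-f-vector} at ${\bf b}={\bf a}$ for the count of facets). That half is fine.

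The argument that $(T_k,\varphi_k)$ has no cloning vertex, however, has a genuine gap. Your plan is to propagate the subtree isomorphism down to a corresponding pair of trivalent vertices $w$, $\beta(w)$ whose children are all leaves and then let Proposition~\ref{prop:left-weight-shedding-sequence}(iii) produce a contradiction. But (iii) compares \emph{siblings within one subtree}: it gives $\varphi_k(w_{\text{left}})>\varphi_k(w_{\text{right}})$ and $\varphi_k(\beta(w)_{\text{left}})>\varphi_k(\beta(w)_{\text{right}})$, whereas the cloning hypothesis only asserts equality \emph{across} the two subtrees, $\varphi_k(w_{\text{left}})=\varphi_k(\beta(w)_{\text{left}})$ and $\varphi_k(w_{\text{right}})=\varphi_k(\beta(w)_{\text{right}})$. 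These four relations are mutually consistent, so no contradiction appears. The single-unit gap ${\bf a}'_{\text{left}}-{\bf a}'_{\text{right}}=\boldsymbol{\delta}_{t,n}$ that you track lives in the $\nu$-labels, which are not among the data compared by the cloning condition, so it never surfaces as a label discrepancy either. In effect your argument only disposes of the base case where $z_{\text{left}}$ and $z_{\text{right}}$ are themselves leaves (which is exactly where the paper also uses (iii)). Note that no purely label-theoretic descent can work here: Figure~\ref{Fig:distinct-Mac-trees-yield-same-complex}(b) exhibits a legitimate Macaulay tree, satisfying all the label axioms including properness, that has a cloning vertex; what rules out cloning vertices in a shedding tree is the algorithm's refusal to split a $t$-factorizable complex. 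The paper's proof goes back to the geometry: writing $q=\varphi_k(z)$ and $\phi_{\alpha(z)}(z)=({\bf a}',\Delta',\pi',\boldsymbol{\lambda}')$, it uses the subtree isomorphism together with Proposition~\ref{prop:left-weight-shedding-sequence}(iv) to pin $\omega_k^{[q]}$ at $\lambda_q'-1$ throughout $\mathcal{D}_{T_k}(z)\setminus\{z\}$, deduces that $\dl_{\Delta'}(v'_{q,\lambda_q'})$ and $\lk_{\Delta'}(v'_{q,\lambda_q'})$ are joins of $\bigl\langle\binom{V_q'\setminus\{v'_{q,\lambda_q'}\}}{a_q'}\bigr\rangle$ and $\bigl\langle\binom{V_q'\setminus\{v'_{q,\lambda_q'}\}}{a_q'-1}\bigr\rangle$ with a \emph{common} complex $\Sigma$, and concludes that $\Delta'$ is $q$-factorizable --- contradicting the fact that the algorithm performed a split of color $q$ at $z$. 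You need some version of this factorizability argument to close the proof.
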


\begin{proof}
Choose an arbitrary $i\in \{0,1,\dots, k\}$, and let $(T_i, \varphi|_{Y_i}, \nu_i)$ be the ${\bf a}$-splitting tree induced by $(T_i, \varphi|_{Y_i})$. Clearly $\varphi_i(r_0) = {\bf a}$, $\varphi_i(Y_i) \subseteq [n]$ and $\varphi_i(L_i) \subseteq \mathbb{P}^n$ by the definition of $\varphi_i$, while the construction of a shedding sequence yields $\varphi_i(y) \geq \varphi_i(y^{\prime})$ for every $y, y^{\prime} \in Y_i$ satisfying $y^{\prime} \in \mathcal{D}_{T_i}(y)$. It is also easy to show that $\varphi_i(u) = \phi_i^{[\mathcal{L}, \boldsymbol{\lambda}]}(u) \geq \phi_i^{[\mathcal{L}, {\bf a}]}(u) = \nu_i(u) > {\bf 0}_n$ for all $u\in L_i$. Proposition \ref{prop:left-weight-shedding-sequence} says $\omega_i$ is proper, and if $\varphi_i(x) < n$ for some $x\in Y_i$, then $\omega_i^{[t]}(x^{\prime}) = \omega_i^{[t]}(x)$ for all $x^{\prime} \in \mathcal{D}_{T_i}(x)$ and all $t\in [n]\backslash [\varphi_i(x)]$. This implies $\varphi_i^{[t]}(u) = \varphi_i^{[t]}(u^{\prime})$ for all $u, u^{\prime} \in L_i \cap \mathcal{D}_{T_i}(x)$ and all $t\in [n]\backslash [\varphi_i(x)]$. Furthermore, if $y\in Y_i$ satisfies $\nu_i^{[\varphi_i(y)]}(y) = 1$, then $\nu_i^{[\varphi_i(y)]}(y_{\text{right}}) = 0$, so by the definitions of $\phi_{\alpha(y)+1}^{[\mathcal{L}, \boldsymbol{\lambda}]}$ and $\kappa$, we get that $\omega_i^{[\varphi_i(y)]}(y_{\text{right}}) = \varphi_{\alpha(y)+1}^{[\varphi_i(y)]}(y_{\text{right}})$ is the $\text{$\varphi_i(y)$-th entry of }\phi_{\alpha(y)+1}^{[\mathcal{L}, \boldsymbol{\lambda}]}(y_{\text{right}})$, which equals $\varphi_{\alpha(y)}^{[\varphi_i(y)]}(y) - 1 = \omega_i^{[\varphi_i(y)]}(y) - 1$. Hence, $\omega_i^{[\varphi_i(y)]}(x) = \omega_i^{[\varphi_i(y)]}(y) - 1$ for all $x\in \mathcal{D}_{T_i}(y_{\text{right}})$, and we conclude that $(T_i, \varphi_i)$ is an ${\bf a}$-Macaulay tree.

Next, suppose $(T_k, \varphi_k)$ is not condensed, let $z$ be a cloning vertex of $(T_k, \varphi_k)$, and let $\varphi_k(z) = q$. Also, let $\phi_{\alpha(z)}(z) = ({\bf a}^{\prime}, \Delta^{\prime}, \pi^{\prime}, \boldsymbol{\lambda}^{\prime})$, and write ${\bf a}^{\prime} = (a_1^{\prime}, \dots, a_n^{\prime})$, $\pi^{\prime} = (V_1^{\prime}, \dots, V_n^{\prime})$, $\boldsymbol{\lambda}^{\prime} = (\lambda_1^{\prime}, \dots, \lambda_n^{\prime})$. If $z_{\text{left}}$ and $z_{\text{right}}$ are both in $L$, then Proposition \ref{prop:left-weight-shedding-sequence}\ref{condition-left-weight-labeling-both-leaves} says $\varphi_k(z_{\text{left}}) > \varphi_k(z_{\text{right}})$, which contradicts the assumption that $z$ is a cloning vertex. This forces $z_{\text{left}}, z_{\text{right}} \in Y$ and $\varphi_k(z_{\text{left}}) = \varphi_k(z_{\text{right}}) < q$. By assumption, the two subgraphs of $T_k$ induced by $\mathcal{D}_{T_k}(z_{\text{left}})$ and $\mathcal{D}_{T_k}(z_{\text{right}})$ are isomorphic as rooted trees, and they have the same corresponding $\varphi_k$-labels, hence Proposition \ref{prop:left-weight-shedding-sequence}\ref{condition-orderly} implies $\omega_k^{[q]}(x) = \omega_k^{[q]}(z_{\text{left}}) = \lambda_q^{\prime} - 1$ for all $x\in \big(\mathcal{D}_{T_k}(z)\big)\backslash \{z\}$. Now, let $v_{q,\lambda_q}^{\prime}$ be the maximal element in $V_q^{\prime}$. Then $\dl_{\Delta^{\prime}}(v_{q,\lambda_q^{\prime}}^{\prime}) = \big\langle \binom{V_n^{\prime}}{a_n^{\prime}}\big\rangle * \dots * \big\langle \binom{V_{q+1}^{\prime}}{a_{q+1}^{\prime}} \big\rangle * \big\langle \binom{V_q^{\prime}\backslash \{v_{q,\lambda_q^{\prime}}^{\prime}\}}{a_q^{\prime}} \big\rangle * \Sigma$, and $\lk_{\Delta^{\prime}}(v_{q,\lambda_q^{\prime}}^{\prime}) = \big\langle \binom{V_n^{\prime}}{a_n^{\prime}}\big\rangle * \dots * \big\langle \binom{V_{q+1}^{\prime}}{a_{q+1}^{\prime}} \big\rangle * \big\langle \binom{V_q^{\prime}\backslash \{v_{q,\lambda_q^{\prime}}^{\prime}\}}{a_q^{\prime}-1} \big\rangle * \Sigma$, for some common simplicial complex $\Sigma$ with vertex set $V_1^{\prime} \cup \dots \cup V_{q-1}^{\prime}$. This implies $\Delta^{\prime} = \Big\langle \binom{V_n^{\prime}}{a_n^{\prime}}\Big\rangle * \dots * \Big\langle \binom{V_{q+1}^{\prime}}{a_{q+1}^{\prime}} \Big\rangle * \Big\langle \binom{V_q^{\prime}}{a_q^{\prime}} \Big\rangle * \Sigma$, thus $\Delta^{\prime}$ is $q$-factorizable with respect to $(\pi^{\prime}, {\bf a}^{\prime})$. However, the construction of the shedding sequence would then force $\varphi(z) \neq q$, which is a contradiction, therefore $(T_k, \varphi_k)$ must be condensed. Finally, if $\Delta$ has $N$ facets, then Proposition \ref{prop:shedding-tree-fine-f-vector} yields $N = f_{{\bf a}}(\Delta) = \sum_{u\in L_k} \binom{\phi_k^{[\mathcal{L}, \boldsymbol{\lambda}]}(u)}{\phi_k^{[\mathcal{L}, {\bf a}]}(u)} = \sum_{u\in L_k} \binom{\varphi_k(u)}{\nu_k(u)}$.
\end{proof}

\begin{remark}
In view of Theorem \ref{thm:shedding-tree=>Macaulay-tree}, we say $(T_i, \varphi_i)$ is the Macaulay tree {\it induced by} $(T_i, \phi_i)$.
\end{remark}

For $m\in \mathbb{P}$, $t\in [n]$, define $[m]_t := \{(1, t), (2, t), \dots, (m,t)\} \subseteq \mathbb{P} \times [n]$. If ${\bf x} = (x_1, \dots, x_n) \in \mathbb{P}^n$ and ${\bf y} = (y_1, \dots, y_n) \in \mathbb{N}^n$ satisfy ${\bf x} \geq {\bf y}$, define the simplicial complex $\big\langle \begin{smallmatrix} {\bf x} \\ {\bf y} \end{smallmatrix} \big\rangle := \big\langle \binom{[x_1]_1}{y_1} \big\rangle * \big\langle \binom{[x_2]_2}{y_2} \big\rangle * \dots * \big\langle \binom{[x_n]_n}{y_n} \big\rangle$.

\begin{definition}\label{defn:Psi-complex}
Let ${\bf a} \in \mathbb{N}^n\backslash\{{\bf 0}_n\}$, let $(T, \varphi)$ be an ${\bf a}$-Macaulay tree, and denote the $(\text{root}, 1, 3)$-triple of $T$ by $(r_0, L, Y)$. Also, let $(T, \varphi|_Y, \nu)$ be the ${\bf a}$-splitting tree induced by $(T, \varphi|_Y)$, and let $\omega$ be the left-weight labeling of $(T, \varphi)$. Recall that $\omega^{[j]}$ denotes the $j$-th component function of $\omega$ for any $j\in [n]$. For each $u\in L$, define the set
\begin{equation*}
\psi_{(T, \varphi)}(u) := \Big\{ \big(\omega^{[\varphi(v)]}(v), \varphi(v)\big): v\in Y, \text{ both }v \text{ and its right child are ancestors of }u\Big\},
\end{equation*}
and define the simplicial complex $\Psi_{(T, \varphi)}(u) := \big\langle \begin{smallmatrix} \varphi(u)\\ \nu(u) \end{smallmatrix} \big\rangle * \langle \{\psi_{(T, \varphi)}(u)\} \rangle$, whose vertex set is a subset of $\mathbb{P} \times [n]$. The complexes $\big\langle \begin{smallmatrix} \varphi(u)\\ \nu(u) \end{smallmatrix} \big\rangle$, $\langle \{\psi_{(T, \varphi)}(u)\} \rangle$ have disjoint vertex sets, so $\Psi_{(T, \varphi)}(u)$ is well-defined. Let $\Delta_{(T, \varphi)}$ be the union of all simplicial complexes $\Psi_{(T, \varphi)}(u)$ over all possible terminal vertices $u\in L$. Let $(s_1, \dots, s_n) \in \mathbb{P}^n$ be the weight of $(T, \varphi)$ (i.e. the left-weight of the unique child of the root of $T$), and define the ordered partition $\pi_{(T, \varphi)} := ([s_1]_1, \dots, [s_n]_n)$.
\end{definition}

\begin{proposition}\label{prop:Macaulay-tree=>shedding-tree}
Let $n, N\in \mathbb{P}$, ${\bf a} \in \mathbb{P}^n$, and let $(T, \varphi)$ be an ${\bf a}$-Macaulay tree of $N$. Then $(\Delta_{(T, \varphi)}, \pi_{(T, \varphi)})$ is a pure ${\bf a}$-balanced complex with $N$ facets.
\end{proposition}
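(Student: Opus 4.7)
The plan is to verify that $\Delta_{(T,\varphi)}$ is pure of dimension $|{\bf a}|-1$, that $\pi_{(T,\varphi)}$ is an ordered partition of its vertex set making $(\Delta_{(T,\varphi)}, \pi_{(T,\varphi)})$ an ${\bf a}$-balanced complex, and that its facet count equals $N$. The first step is a monotonicity lemma for the left-weight labeling: for any vertex $v$ and any ancestor $p$ of $v$ in $T$, one has $\omega(p) \geq \omega(v)$ coordinatewise. When $v = p_{\text{left}}$, the recursive description of the left relative sequence gives $\omega(p) = \omega(v) + \boldsymbol{\delta}_{\varphi(p),n}$; when $v = p_{\text{right}}$, properness yields $\omega(v) \leq \omega(p_{\text{left}}) = \omega(p) - \boldsymbol{\delta}_{\varphi(p),n}$. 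Iterating, $(s_1, \ldots, s_n) = \omega(r_1)$ dominates $\omega(v)$ for every vertex $v$ of $T$, so each element of $\psi_{(T,\varphi)}(u)$ does lie in the appropriate color class $[s_i]_i$.

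Next, I would verify that each $\Psi_{(T,\varphi)}(u)$ is a well-defined pure $(|{\bf a}|-1)$-dimensional simplicial complex whose facets meet $[s_i]_i$ in exactly $a_i$ vertices. The two join factors are supported on disjoint vertex sets: for any $(\omega^{[b]}(v), b) \in \psi_{(T,\varphi)}(u)$, the terminal vertex $u$ is a descendant of $v_{\text{right}}$, so by properness and the monotonicity lemma,
\[
\varphi^{[b]}(u) = \omega^{[b]}(u) \leq \omega^{[b]}(v_{\text{right}}) \leq \omega^{[b]}(v_{\text{left}}) = \omega^{[b]}(v) - 1,
\]
placing $(\omega^{[b]}(v), b)$ outside $[\varphi^{[b]}(u)]_b$. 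Each facet of $\Psi_{(T,\varphi)}(u)$ therefore has size $|\nu(u)| + |\psi_{(T,\varphi)}(u)|$, and the splitting recursion $\nu(y_{\text{right}}) = \nu(y) - \boldsymbol{\delta}_{\varphi(y),n}$ applied along the root-to-$u$ path gives $\nu^{[i]}(u) + \#\{v \in Y : \varphi(v) = i,\ u \in \mathcal{D}_T(v_{\text{right}})\} = a_i$ for every $i \in [n]$; summing gives facet size $|{\bf a}|$. A short separate check (considering the leftmost-relative path of $r_1$ and the $\varphi(v)=i$ vertices encountered along it) shows every $(k, i) \in [s_i]_i$ is a vertex of some $\Psi_{(T,\varphi)}(u)$, so $\pi_{(T,\varphi)}$ is indeed an ordered partition of the vertex set of $\Delta_{(T,\varphi)}$.

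The main obstacle is showing that for distinct $u, u' \in L$, no facet of $\Psi_{(T,\varphi)}(u)$ equals a facet of $\Psi_{(T,\varphi)}(u')$. Let $v \in Y$ be the lowest common ancestor of $u$ and $u'$, and relabel so that $u$ is a descendant of $v_{\text{left}}$ and $u'$ a descendant of $v_{\text{right}}$. Put $q = \varphi(v)$ and $e = (\omega^{[q]}(v), q)$. Then $e \in \psi_{(T,\varphi)}(u')$, so every facet of $\Psi_{(T,\varphi)}(u')$ contains $e$. I will then show that $e$ lies in no facet of $\Psi_{(T,\varphi)}(u)$: first, $e \notin [\varphi^{[q]}(u)]_q$ by the strict inequality above; second, $e \notin \psi_{(T,\varphi)}(u)$ because any $v'' \in Y$ with $\varphi(v'') = q$ and $u \in \mathcal{D}_T(v''_{\text{right}})$ is either a descendant of $v_{\text{left}}$, in which case $\omega^{[q]}(v'') \leq \omega^{[q]}(v_{\text{left}}) = \omega^{[q]}(v) - 1$; or a strict ancestor of $v$, in which case $v$ is a descendant of $v''_{\text{right}}$ and properness at $v''$ combined with $\varphi(v'') = q$ gives $\omega^{[q]}(v) \leq \omega^{[q]}(v''_{\text{right}}) \leq \omega^{[q]}(v''_{\text{left}}) = \omega^{[q]}(v'') - 1$. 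Either way $\omega^{[q]}(v'') \neq \omega^{[q]}(v)$, so $(\omega^{[q]}(v''), q) \neq e$. This strict-decrease argument at a trivalent vertex with $\varphi$-label $q$ is the crux.

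Assembling the pieces, $\Delta_{(T,\varphi)}$ is a union of pure $(|{\bf a}|-1)$-dimensional ${\bf a}$-balanced complexes sharing the ordered partition $\pi_{(T,\varphi)}$, so $(\Delta_{(T,\varphi)}, \pi_{(T,\varphi)})$ is itself a pure ${\bf a}$-balanced complex. The disjointness of the facet collections of distinct $\Psi_{(T,\varphi)}(u)$ yields a total facet count of $\sum_{u \in L} \binom{\varphi(u)}{\nu(u)}$, which equals $N$ by the last defining condition of an ${\bf a}$-Macaulay tree, completing the proof.
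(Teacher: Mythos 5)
Your proposal is correct and follows essentially the same route as the paper: decompose $\Delta_{(T,\varphi)}$ as the disjoint union of the complexes $\Psi_{(T,\varphi)}(u)$, verify each is pure and ${\bf a}$-balanced, and count facets via $\sum_{u\in L}\binom{\varphi(u)}{\nu(u)} = N$. The only difference is one of detail: where the paper simply asserts that $\psi_{(T,\varphi)}(u) = \psi_{(T,\varphi)}(u')$ forces $u = u'$ and hence the union is disjoint, you supply the full lowest-common-ancestor argument showing the label $(\omega^{[q]}(v),q)$ separates the facet sets, which is a legitimate (and welcome) filling-in of the same argument rather than a different approach.
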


\begin{proof}
Follow the notation as above. Choose an arbitrary $u\in L$, and let $V(u)$ be the vertex set of $\Psi_{(T, \varphi)}(u)$. For each $t\in [n]$, define $b_t(u) := \{(i,j) \in \psi_{(T, \varphi)}(u) : j=t\}$ and $B_t(u) := \{(i,j) \in \Psi_{(T, \varphi)}(u) : j=t\}$. Observe that $(|b_1(u)|, \dots, |b_n(u)|) = {\bf a} - \nu(u)$ by the construction of an ${\bf a}$-splitting tree, while $\big\langle \begin{smallmatrix} \varphi(u)\\ \nu(u) \end{smallmatrix} \big\rangle$ is the $\nu(u)$-{\bone} of a simplex corresponding to the ordered partition $([\varphi^{[1]}(u)]_1, [\varphi^{[2]}(u)]_2, \dots, [\varphi^{[n]}(u)]_n)$. Since $B_i(u) \subseteq [s_i]_i$ for each $i\in [n]$ implies $\pi_{(T, \varphi)} \cap V(u) = (B_1(u), \dots, B_n(u))$, it follows that $\big(\Psi_{(T, \varphi)}(u), \pi_{(T, \varphi)} \cap V(u)\big)$ is an ${\bf a}$-balanced complex. Also, the $\nu(u)$-{\bone} $\big\langle \begin{smallmatrix} \varphi(u)\\ \nu(u) \end{smallmatrix} \big\rangle$ and the simplex $\langle \{\psi_{(T, \varphi)}(u)\} \rangle$ are both pure, so $\Psi_{(T, \varphi)}(u)$ is pure. Furthermore, $\psi_{(T, \varphi)}(u) = \psi_{(T, \varphi)}(u^{\prime})$ if and only if $u = u^{\prime}$, thus $\Delta_{(T, \varphi)} = \bigsqcup_{u\in L} \Psi_{(T, \varphi)}(u)$, which has $\sum_{u\in L} \binom{\varphi(u)}{\nu(u)} = N$ facets, by the definition of $(T, \varphi)$. Finally, let $r_1$ denote the unique child of $r_0$, let $v_1, \dots, v_m$ be the left relative sequence of $r_1$, and let $u_i \in L$ be the right-most relative of $v_i$ for each $i\in [m]$. Since $\omega(r_1) = (s_1, \dots, s_n)$, the construction of $\omega$ yields $\bigcup_{i\in [n]} [s_i]_i \subseteq \bigcup_{i\in [m]} V(u_i) \subseteq \bigcup_{i\in [n]} [s_i]_i$, therefore $(\Delta_{(T, \varphi)}, \pi_{(T, \varphi)})$ is a pure ${\bf a}$-balanced complex with $N$ facets.
\end{proof}

\begin{figure}[h!t]
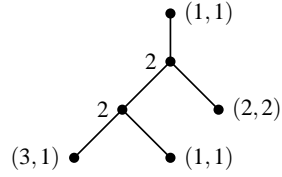

\centering
\scalebox{0.8}{\tikz[thick,scale=0.8]{
    \draw {
    (2,3) node[circle, draw, fill=black, inner sep=0pt, minimum width=4pt, label=right:{$(1,1)$}] {} -- (2,2) node[circle, draw, fill=black, inner sep=0pt, minimum width=4pt, label=left:{$2$}] {} -- (1,1) node[circle, draw, fill=black, inner sep=0pt, minimum width=4pt, label=left:{$2$}] {} -- (0,0) node[circle, draw, fill=black, inner sep=0pt, minimum width=4pt, label=left:{$(3,1)$}] {}
    (2,2) -- (3,1) node[circle, draw, fill=black, inner sep=0pt, minimum width=4pt, label=right:{$(2,2)$}] {}
    (1,1) -- (2,0) node[circle, draw, fill=black, inner sep=0pt, minimum width=4pt, label=right:{$(1,1)$}] {}
    };
}}
\caption{A $(1,1)$-Macaulay tree $(T, \varphi)$ where $(\Delta_{(T, \varphi)}, \pi_{(T, \varphi)})$ is not color-shifted.}
\label{Fig:balanced-complex-from-Mac-tree-not-necessarily-color-shifted}
\end{figure}
\begin{figure}[h!t]
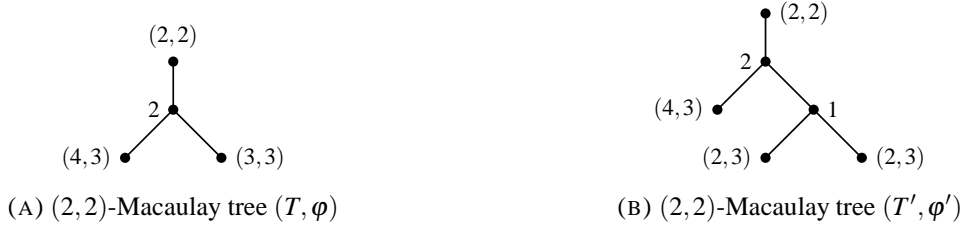

\centering
\begin{subfigure}[b]{0.45\textwidth}
\centering
\scalebox{0.8}{\tikz[thick,scale=0.8]{
    \draw {
    (2,3) node[circle, draw, fill=black, inner sep=0pt, minimum width=4pt, label=above:{$(2,2)$}] {} -- (2,2) node[circle, draw, fill=black, inner sep=0pt, minimum width=4pt, label=left:{$2$}] {} -- (1,1) node[circle, draw, fill=black, inner sep=0pt, minimum width=4pt, label=left:{$(4,3)$}] {}
    (2,2) -- (3,1) node[circle, draw, fill=black, inner sep=0pt, minimum width=4pt, label=right:{$(3,3)$}] {}
    };
}}
\caption{$(2,2)$-Macaulay tree $(T, \varphi)$}
\end{subfigure}
\begin{subfigure}[b]{0.45\textwidth}
\centering
\scalebox{0.8}{\tikz[thick,scale=0.8]{
    \draw {
    (2,3) node[circle, draw, fill=black, inner sep=0pt, minimum width=4pt, label=right:{$(2,2)$}] {} -- (2,2) node[circle, draw, fill=black, inner sep=0pt, minimum width=4pt, label=left:{$2$}] {} -- (3,1) node[circle, draw, fill=black, inner sep=0pt, minimum width=4pt, label=right:{$1$}] {} -- (4,0) node[circle, draw, fill=black, inner sep=0pt, minimum width=4pt, label=right:{$(2,3)$}] {}
    (2,2) -- (1,1) node[circle, draw, fill=black, inner sep=0pt, minimum width=4pt, label=left:{$(4,3)$}] {}
    (3,1) -- (2,0) node[circle, draw, fill=black, inner sep=0pt, minimum width=4pt, label=left:{$(2,3)$}] {}
    };
}}
\caption{$(2,2)$-Macaulay tree $(T^{\prime}, \varphi^{\prime})$}
\end{subfigure}
\caption{Distinct $(2,2)$-Macaulay trees $(T, \varphi)$ and $(T^{\prime}, \varphi^{\prime})$ such that $(\Delta_{(T, \varphi)}, \pi_{(T, \varphi)}) = (\Delta_{(T^{\prime}, \varphi^{\prime})}, \pi_{(T^{\prime}, \varphi^{\prime})})$.}
\label{Fig:distinct-Mac-trees-yield-same-complex}
\end{figure}
\begin{remark}\label{remark:balanced-complex-from-Mac-tree-not-necessarily-color-shifted}
The pure ${\bf a}$-balanced complex $(\Delta_{(T, \varphi)}, \pi_{(T, \varphi)})$ is not necessarily color-shifted. For example, the $(1,1)$-Macaulay tree $(T, \varphi)$ in Figure \ref{Fig:balanced-complex-from-Mac-tree-not-necessarily-color-shifted} yields
\begin{equation*}
\Delta_{(T, \varphi)} = \Big\langle\Big\{ \begin{matrix} \{(1,1), (1,2)\},& \{(2,1), (1,2)\},& \{(3,1), (1,2)\},\\
\{(1,1), (2,2)\},& \{(1,1), (3,2)\},& \{(2,1), (3,2)\} \end{matrix} \Big\}\Big\rangle,
\end{equation*}
yet $\{(2,1), (2,2)\} \not\in \Delta_{(T, \varphi)}$, which implies $(\Delta_{(T, \varphi)}, \pi_{(T, \varphi)})$ is not color-shifted. In this example, $(T, \varphi)$ is condensed, so $(T, \varphi)$ being condensed does not necessarily imply $(\Delta_{(T, \varphi)}, \pi_{(T, \varphi)})$ is color-shifted.
\end{remark}

\begin{remark}\label{remark:distinct-Mac-trees-yield-same-complex}
If $(\Delta, \pi)$ is a pure color-shifted ${\bf a}$-balanced complex such that $(\Delta, \pi) = (\Delta_{(T, \varphi)}, \pi_{(T, \varphi)})$ for some ${\bf a}$-Macaulay tree $(T, \varphi)$, then knowing $(\Delta, \pi)$ does not uniquely determine $(T, \varphi)$. For example, in Figure \ref{Fig:distinct-Mac-trees-yield-same-complex}, $(T, \varphi)$ and $(T^{\prime}, \varphi^{\prime})$ are two distinct $(2,2)$-Macaulay trees such that $(\Delta_{(T, \varphi)}, \pi_{(T, \varphi)}) = (\Delta_{(T^{\prime}, \varphi^{\prime})}, \pi_{(T^{\prime}, \varphi^{\prime})})$. However, the condensations of $(T, \varphi)$ and $(T^{\prime}, \varphi^{\prime})$ are identical, and by the definition of a shedding sequence, it is straightforward to show (e.g. by induction on the number of vertices of $\Delta$) that the condensation of $(T, \varphi)$ is the ${\bf a}$-Macaulay tree induced by the shedding tree of $(\Delta_{(T, \varphi)}, \pi_{(T, \varphi)})$.
\end{remark}

\subsection{Compressed-like Compatible Macaulay Trees}\label{subsec:CompressedLikeCompatibleMacTrees}
Let ${\bf a} \in \mathbb{P}^n$. Given any pure color-shifted ${\bf a}$-balanced complex $(\Delta, \pi)$, we can construct the ${\bf a}$-Macaulay tree induced by the shedding tree of $(\Delta, \pi)$, and we showed this Macaulay tree must be condensed. Conversely, given any ${\bf a}$-Macaulay tree $(T, \varphi)$ that is not necessarily condensed, we can construct the pure ${\bf a}$-balanced complex $(\Delta_{(T, \varphi)}, \pi_{(T, \varphi)})$. As shown in Remark \ref{remark:balanced-complex-from-Mac-tree-not-necessarily-color-shifted}, $(\Delta_{(T, \varphi)}, \pi_{(T, \varphi)})$ is not necessarily color-shifted, independent of whether $(T, \varphi)$ is condensed. In this subsection, we introduce the notions of `compressed-like' and `compatible' for ${\bf a}$-Macaulay trees, and we will establish the bijection
\begin{equation}
\left\{\begin{tabular}{c}\text{isomorphism classes of pure color-compressed}\\ \text{${\bf a}$-balanced complexes with $N$ facets} \end{tabular} \right\} \longleftrightarrow \left\{\begin{tabular}{c}\text{condensed compressed-like compatible}\\ \text{${\bf a}$-Macaulay trees of $N$} \end{tabular} \right\}
\end{equation}
with the maps $(\Delta, \pi) \longmapsto$ ${\bf a}$-Macaulay tree induced by the shedding tree of $(\Delta, \pi)$, and $(\Delta_{(T, \varphi)}, \pi_{(T, \varphi)}) \longmapsfrom (T, \varphi)$.

\begin{definition}\label{defn:compressed-like}
Let ${\bf a} \in \mathbb{N}^n\backslash\{{\bf 0}_n\}$ and let $(T, \varphi)$ be an ${\bf a}$-Macaulay tree. Denote the left-weight labeling of $(T, \varphi)$ by $\omega$, and let $(r_0, L, Y)$ be the $(\text{root}, 1, 3)$-triple of $T$. We say $(T, \varphi)$ is {\it compressed-like} if the following conditions hold for every $t\in [n]$ and every $y\in Y$ satisfying $\varphi(y) = t$:
\begin{enum*}
\item\label{defn:compressed-likeCondi} If $y$ is a descendant of $x_{\text{left}}$ for some $x \in Y$ satisfying $\varphi(x) = t$, and $x_0, x_1, \dots, x_{\ell}$ is the path (of length $\ell\geq 2$) from $x_0 = x$ to $x_{\ell} = y_{\text{right}}$, then $\omega^{[t]}(x_i) = \omega^{[t]}(x) - i$ for all $i\in [\ell]$.
\item\label{defn:compressed-likeCondii} Let $y_1, \dots, y_m$ be the right relative sequence of $y_{\text{left}}$ in $T$. If $k\in [m]$ is the smallest integer such that $y_k \in L$ or $\varphi(y_k) \neq t$, then $\omega^{[t^{\prime}]}(y_k) \geq \omega^{[t^{\prime}]}(y_{\text{right}})$ for all $t^{\prime} \in [n]\backslash \{t\}$.
\end{enum*}
\end{definition}

\begin{lemma}\label{lemma:compressed-like-n=1}
Let $k, N\in \mathbb{P}$, and let $(\Delta, V)$ be a pure compressed $(k-1)$-dimensional simplicial complex. Then the Macaulay tree induced by the shedding tree of $(\Delta, V)$ is compressed-like.
\end{lemma}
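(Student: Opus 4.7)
The plan is to verify that, for $n=1$, both conditions (i) and (ii) of Definition~\ref{defn:compressed-like} are vacuously satisfied by the Macaulay tree $(T,\varphi)$ induced by the shedding tree $(T,\phi)$ of $(\Delta,V)$.

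Condition (ii) is immediate: its quantifier ranges over $t' \in [n]\setminus\{t\}$, which is empty when $n=1$, so (ii) holds vacuously at every trivalent vertex of $T$. For condition (i), the plan is to establish the structural claim that every trivalent vertex $x$ of $T$ has its left child $x_{\text{left}}$ as a terminal vertex (i.e., $x_{\text{left}}\in L$). Granted this, no $y \in Y$ can be a descendant of any $x_{\text{left}}$, so the hypothesis of (i) is never met and the condition again holds vacuously.

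To prove the structural claim I would argue by induction along the construction of the shedding tree: each subcomplex $\Delta'$ attached at a trivalent vertex is pure compressed on its vertex set $V'$ (with the linear order inherited from $V$). The essential ingredient is the classical colex-order fact that if the facets of $\Delta'$ form a colex initial segment of $\binom{V'}{k'}$ of size $N' > \binom{|V'|-1}{k'}$, then $\dl_{\Delta'}(v'_{\max})$ equals $\binom{V'\setminus\{v'_{\max}\}}{k'}$, i.e., the full $(k'-1)$-skeleton of the simplex on $V'\setminus\{v'_{\max}\}$, while $\lk_{\Delta'}(v'_{\max})$ is again pure compressed on a subset of $V'\setminus\{v'_{\max}\}$ (where $v'_{\max}$ denotes the maximum vertex of $V'$). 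The first conclusion identifies $\dl_{\Delta'}(v'_{\max})$ as a $(k')$-{\bone} of the simplex on $V'\setminus\{v'_{\max}\}$, so the shedding algorithm terminates at $x_{\text{left}}$ and leaves it as a terminal vertex; the second conclusion continues the induction on the right child.

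The main (and really the only) work is verifying the two colex-order properties above: that deleting the maximum vertex from a compressed top-dimensional layer yields a full skeleton, and that taking the link at the maximum vertex preserves the colex initial-segment structure. Both are standard consequences of the definition of colex order and are essentially built into any proof of the Kruskal--Katona theorem. With these in hand, the vacuity of both conditions (i) and (ii) completes the proof.
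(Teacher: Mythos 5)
Your proof is correct and follows essentially the same route as the paper: both arguments reduce the lemma to the structural claim that the left child of every trivalent vertex is a terminal vertex, proved by induction using the fact that deleting the maximum vertex from a pure compressed complex yields the full skeleton while the link stays pure compressed. The only point worth making explicit is that the hypothesis $N' > \binom{|V'|-1}{k'}$ in your colex fact is automatic, since $v'_{\max}$ lies in the vertex set of $\Delta'$ and $\Delta'$ is pure, so some facet contains it and the colex initial segment must therefore exhaust all of $\binom{V'\setminus\{v'_{\max}\}}{k'}$ first.
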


\begin{proof} Let $(T, \varphi)$ be the $k$-Macaulay tree induced by the shedding tree of $(\Delta, V)$, let $(r_0, L, Y)$ be the $(\text{root}, 1, 3)$-triple of $T$, and let $r_1$ be the unique child of $r_0$. Without loss of generality, assume $V = [n]$ and $n>k$. By definition, $\dl_{\Delta}(n)$ and $\lk_{\Delta}(n)$ are both pure and compressed. In particular, $\dl_{\Delta}(n) = \langle \binom{n-1}{k} \rangle$, hence the left child of $r_1$ is a terminal vertex. By induction on $n$, the left child of every $y\in Y$ is a terminal vertex, thus $(T, \varphi)$ is trivially compressed-like.
\end{proof}

\begin{proposition}\label{prop:condensed-compressed-case-n=1}
Let $k, N\in \mathbb{P}$. Then there exists a unique condensed compressed-like $k$-Macaulay tree $(T, \varphi)$ of $N$, and the left child of every trivalent vertex of $T$ is a terminal vertex.
\end{proposition}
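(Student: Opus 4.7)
My plan is to separate existence from uniqueness. For existence, I would build the tree from a specific simplicial complex: take $\Delta^\ast$ to be the pure compressed $(k-1)$-dimensional simplicial complex with $N$ facets, i.e.\ the simplicial complex generated by the colex initial segment of $\binom{\mathbb{P}}{k}$ of size $N$, on a sufficiently large finite vertex set $V \subseteq \mathbb{P}$. Applying Lemma~\ref{lemma:compressed-like-n=1} to $(\Delta^\ast, V)$ produces a compressed-like $k$-Macaulay tree, and Theorem~\ref{thm:shedding-tree=>Macaulay-tree} ensures this tree is a condensed $k$-Macaulay tree of $N$. The additional structural property---that the left child of every trivalent vertex is a terminal vertex---is already visible in the proof of Lemma~\ref{lemma:compressed-like-n=1}: because $\dl_{\Delta^\ast}(\max)$ is the full $(k-1)$-skeleton of a smaller simplex, it is $k$-factorizable and hence is recorded as a terminal vertex of the shedding tree, which in turn becomes a leaf of the induced Macaulay tree.

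For uniqueness, let $(T, \varphi)$ be any condensed compressed-like $k$-Macaulay tree of $N$, with $(r_0, L, Y)$ its usual (root,$1$,$3$)-triple. The core of the argument is to show $y_{\text{left}} \in L$ for every $y \in Y$, which I would prove by contradiction. Suppose some trivalent $x$ has $z := x_{\text{left}} \in Y$. Since $n = 1$, every trivalent label equals $1$, so the left-weight formula gives $\omega(y_{\text{left}}) = \omega(y) - 1$ at every trivalent $y$; simultaneously, compressed-like condition~(i) applied with this $x$ and every trivalent descendant of $z$ yields $\omega(y_{\text{right}}) = \omega(x) - d(x, y_{\text{right}})$, where $d$ denotes tree distance. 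Combining these---and tracking left-child leaves via the left-weight formula from their trivalent parents, and right-child leaves directly via condition~(i)---forces the uniform identity $\omega(v) = \omega(z) - d(z, v)$ for every vertex $v$ in the subtree rooted at $z$.

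I expect this propagation step to be the main technical obstacle, since it requires coordinating the recursive left-weight formula with the compressed-like constraint across trivalent descendants, right-child leaves, and left-child leaves separately. Once the identity is in hand, the contradiction is immediate: choose $v^\ast$ to be a trivalent vertex in the subtree rooted at $z$ that maximizes $d(z, v^\ast)$. By maximality, both children of $v^\ast$ are leaves, and both carry $\omega$-value $\omega(z) - d(z, v^\ast) - 1$, hence the same $\varphi$-label. Their single-leaf subtrees are then isomorphic as rooted trees with identical labels, and leaves are $0$-leading, making $v^\ast$ a cloning vertex of $(T, \varphi)$---contradicting condensedness.

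With structural rigidity established, $T$ collapses to a right-spine of trivalent vertices $v_1 = r_1, v_2 = (v_1)_{\text{right}}, \dots, v_m$, each with a leaf left-child $u_i = (v_i)_{\text{left}}$ and terminating in a leaf right-child $u_{m+1} = (v_m)_{\text{right}}$. Reading $\nu$ down the spine yields $\nu(u_i) = k - i + 1$ for $i \in [m]$ and $\nu(u_{m+1}) = k - m$. Properness at each $v_i$ with $i < m$, together with the strict inequality forced by condensedness at $v_m$, gives $\varphi(u_1) > \varphi(u_2) > \cdots > \varphi(u_{m+1})$, and the constraint $\varphi \geq \nu > 0$ from the Macaulay tree axioms gives $\varphi(u_{m+1}) \geq k - m \geq 1$. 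The identity $N = \sum_{u \in L} \binom{\varphi(u)}{\nu(u)}$ then exhibits $N$ as its $k$-th Macaulay representation, whose uniqueness pins down both $m$ and each $\varphi(u_i)$ and completes the proof.
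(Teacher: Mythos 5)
Your proposal is correct and follows essentially the same route as the paper: existence via Lemma \ref{lemma:compressed-like-n=1} and Theorem \ref{thm:shedding-tree=>Macaulay-tree}, structural rigidity by locating a cloning vertex once some trivalent vertex has a trivalent left child, and uniqueness by reading the resulting right-spine against the uniqueness of the $k$-th Macaulay representation. Your version of the rigidity step (propagating $\omega(v)=\omega(z)-d(z,v)$ through the subtree and taking a deepest trivalent vertex, whose two children are then forcibly leaves) is in fact slightly more carefully justified than the paper's, which passes directly to the end of the right relative sequence of $z$.
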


\begin{proof}
First of all, Lemma \ref{lemma:compressed-like-n=1} and Theorem \ref{thm:shedding-tree=>Macaulay-tree} imply the existence of a condensed compressed-like $k$-Macaulay tree $(T, \varphi)$ of $N$. Let $(r_0, L, Y)$ be the $(\text{root}, 1, 3)$-triple of $T$, and note that $\varphi(y) = 1$ for all $y\in Y$. Let $(T, \varphi|_Y, \nu)$ be the $k$-splitting tree induced by $(T, \varphi|_Y)$, let $\omega$ be the left-weight labeling of $(T, \varphi)$, and let $u_1, \dots, u_m$ be all the terminal vertices (i.e. $|L| = m$), arranged in depth-first order. Clearly $|Y| = m-1$, and we let $y_1, \dots, y_{m-1}$ be all the trivalent vertices arranged in depth-first order.

Next, we show that the left child of every trivalent vertex in $T$ is a terminal vertex. Suppose not, and say $z := y_{\text{left}} \not\in L$ for some $y\in Y$. Let $z = z_1, \dots, z_{\ell}$ be the right relative sequence of $z$ in $T$ for some $\ell\geq 2$, and let $z^{\prime}$ be the left child of $z_{\ell-1}$. Note that $z_{\ell}$ is the right child of $z_{\ell-1}$, so $(T, \varphi)$ is compressed-like implies $\omega(z_{\ell}) = \omega(z_{\ell-1})-1$, yet the construction of $\omega$ yields $\omega(z^{\prime}) = \omega(z_{\ell-1})-1$, which contradicts the assumption that $(T, \varphi)$ is condensed. Therefore, the left child of every $y\in Y$ is in $L$ as claimed.

Consequently, $y_i$ is the parent of $u_i$ for every $i\in [m-1]$, while $y_{m-1}$ is the parent of $u_m$. If we define $y_m := u_m$, then the right relative sequence of $y_1$ is precisely $y_1, \dots, y_m$. Thus, $T$ is uniquely determined by $m$, and $\nu(u_i) = k+1-i$ for every $i\in [m]$. Since $\omega$ is proper implies $\omega(u_i) \geq \omega(y_{i+1})$ for all $i\in [m-1]$, we get $\varphi(u_i) = \omega(u_i) \geq \omega(y_{i+1}) > \omega(y_{i+1}) - 1 = \omega(u_{i+1}) = \varphi(u_{i+1})$ for all $i\in [m-2]$. Also, $(T, \varphi)$ is condensed, so $\varphi(u_{m-1}) = \omega(u_{m-1}) > \omega(u_m) = \varphi(u_m)$. This means $N = \sum_{i=1}^m \binom{\varphi(u_i)}{\nu(u_i)} = \sum_{i=1}^m \binom{\varphi(u_i)}{k+1-i}$, where $\varphi(u_1) > \dots > \varphi(u_m)\geq \nu(u_m) \geq 1$, therefore the uniqueness of $(T, \varphi)$ follows from the uniqueness of the $k$-th Macaulay representation of $N$.
\end{proof}

\begin{theorem}\label{thm:color-compressed=>compressed-like}
If $(\Delta, \pi)$ is a pure color-compressed ${\bf a}$-balanced complex for some ${\bf a} \in \mathbb{P}^n$, then the Macaulay tree induced by the shedding tree of $(\Delta, \pi)$ is compressed-like.
\end{theorem}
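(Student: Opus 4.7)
The plan is to reduce the statement to a direct verification on the shedding tree of $(\Delta, \pi)$, whose construction applies verbatim to the color-compressed setting by the remark concluding Section~\ref{subsec:ConstructionOfSheddingTrees}. First I would establish the stability of color-compression under the shedding operations: if $(\Delta^\prime, \pi^\prime)$ is a pure color-compressed ${\bf a}^\prime$-balanced complex and $t\in[n]$ with $\lambda_t^\prime > a_t^\prime > 0$, then both $\dl_{\Delta^\prime}(v_{t,\lambda_t^\prime}^\prime)$ and $\lk_{\Delta^\prime}(v_{t,\lambda_t^\prime}^\prime)$ are again pure color-compressed balanced complexes (of types ${\bf a}^\prime$ and ${\bf a}^\prime - \boldsymbol{\delta}_{t,n}$ after discarding empty partition classes). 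The deletion case uses that a colex initial segment of $\binom{V_t^\prime}{k}$ intersected with $\binom{V_t^\prime \setminus \{v_{t,\lambda_t^\prime}^\prime\}}{k}$ is again a colex initial segment; the link case uses the analogous slicing together with the purity of links in color-shifted complexes from the proof of Theorem~\ref{thm:color-shifted=>Macaulay-decomposable}. Induction along the shedding sequence then gives that every labeled subcomplex $\phi_i^{[\mathcal{L}, \Delta]}(u)$ is pure color-compressed balanced.

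For condition~(i), monotonicity of $\varphi$ along root-to-leaf paths forces every trivalent vertex on $x_0=x, x_1, \ldots, x_{\ell-1}=y, x_\ell = y_{\text{right}}$ to carry the label $t$, so each step is a deletion or link by the current maximal color-$t$ vertex. Deletion drops $\omega^{[t]}$ by one trivially, and for the link: when $a_t^\prime \ge 2$, purity produces a facet and hence an edge $\{v_{t,j}, v_{t,\lambda_t^\prime}^\prime\}$ in $\Delta^\prime$, which by color-compression on the $2$-element $V_t$-slice with empty non-$V_t$-part forces every such pair to be an edge, leaving $\lambda_t^\prime - 1$ color-$t$ vertices in the link; when $a_t^\prime = 1$, the link has no color-$t$ vertex but the $\kappa$-substitution in the right child's label produces the same value $\lambda_t^\prime - 1$. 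Either way $\omega^{[t]}(x_{i+1}) = \omega^{[t]}(x_i) - 1$, giving $\omega^{[t]}(x_i) = \omega^{[t]}(x) - i$ for all $i \in [\ell]$.

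For condition~(ii), set $\sigma := \{v_{t,\lambda_t - 1}, v_{t,\lambda_t - 2}, \ldots, v_{t,\lambda_t - k + 1}\}$ (all quantities relative to $\Delta(y)$, and note $k - 1 \leq a_t^y$ by construction of the right relative sequence). Using iterated-link $=$ link-of-union, $\Delta(y_k) = \dl_{\lk_{\Delta(y)}(\sigma)}(v_{t,\lambda_t})$ while $\Delta(y_{\text{right}}) = \lk_{\Delta(y)}(v_{t,\lambda_t})$. Fix $t^\prime \neq t$ and suppose $v_{t^\prime, j} \in V_{t^\prime}^{y_{\text{right}}}$, i.e., $\{v_{t^\prime, j}, v_{t,\lambda_t}\} \in \Delta(y)$. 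By purity this edge lies in a facet $F$ with $|F \cap V_t| = a_t^y$ and $v_{t,\lambda_t} \in F$; color-compression at the non-$V_t$ slice $F \setminus V_t \ni v_{t^\prime, j}$ places $F \cap V_t$ inside a colex initial segment of $\binom{V_t}{a_t^y}$, which therefore also contains the $a_t^y$-set $\tau := \{v_{t,\lambda_t - a_t^y}, \ldots, v_{t,\lambda_t - 1}\}$ (strictly colex-smaller than $F \cap V_t$, since $v_{t,\lambda_t} \in (F \cap V_t) \setminus \tau$). Realising $\tau$ back produces a face containing $(F \setminus V_t) \cup \tau$, and since $\sigma \subseteq \tau$, downward closure gives $\{v_{t^\prime, j}\} \cup \sigma \in \Delta(y)$, so $v_{t^\prime, j} \in V_{t^\prime}^{y_k}$; this establishes $V_{t^\prime}^{y_k} \supseteq V_{t^\prime}^{y_{\text{right}}}$. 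A short case analysis on the $\kappa$-substitution completes the inequality $\omega^{[t^\prime]}(y_k) \geq \omega^{[t^\prime]}(y_{\text{right}})$: direct size comparison when both classes are non-empty; equality via the $\kappa$-cascade to the common ancestral value $\lambda_{t^\prime}^y$ when $V_{t^\prime}^y = \emptyset$; and the subcase where $V_{t^\prime}^{y_{\text{right}}}$ is empty but $V_{t^\prime}^{y_k}$ is not is excluded, since $a_{t^\prime}^y > 0$ combined with the purity of $\Delta(y)$ forces the link by $v_{t,\lambda_t}$ to retain some color-$t^\prime$ vertex.

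The main obstacle I anticipate is the $\kappa$-bookkeeping in condition~(ii): tracking left-weight values when intermediate partition classes become empty along the right relative sequence, together with executing cleanly the colex-initial-segment argument at the facet level that extracts $\{v_{t^\prime, j}\} \cup \sigma \in \Delta(y)$ from the single-edge hypothesis $\{v_{t^\prime, j}, v_{t,\lambda_t}\} \in \Delta(y)$.
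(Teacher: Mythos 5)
Your treatment of condition (ii) is essentially sound and close in spirit to the paper's: both arguments take a facet of $\lk_{\Delta(y)}(v_{t,\lambda_t})$ witnessing a color-$t^{\prime}$ vertex and use the colex-initial-segment property on the $V_t$-slice to swap the color-$t$ part down to the consecutive block $\tau$ just below $v_{t,\lambda_t}$, thereby landing the vertex inside $\Delta(y_k)$. (You do this for every color-$t^{\prime}$ vertex of the link to get containment of vertex sets, where the paper only tracks the maximal one; either works, and the remaining $\kappa$-bookkeeping you flag is routine.)

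The proof of condition (i), however, has a genuine gap. You claim that for a pure color-compressed complex, one edge $\{v_{t,j}, v_{t,\lambda_t^{\prime}}\}$ together with the colex-initial-segment property on $2$-element $V_t$-slices ``forces every such pair to be an edge,'' so that the link of the maximal color-$t$ vertex retains all $\lambda_t^{\prime}-1$ remaining color-$t$ vertices. This is false: a colex initial segment is closed only \emph{downward}, and $\{v_{t,j^{\prime}}, v_{t,\lambda_t^{\prime}}\} >_{c\ell} \{v_{t,j}, v_{t,\lambda_t^{\prime}}\}$ whenever $j^{\prime}>j$, so nothing forces those larger pairs to be faces. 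Concretely, the pure compressed graph on $[5]$ whose edges are the first $7$ two-sets in colex order has $\lk(5) = \{\emptyset,\{1\}\}$, so the link step drops the color-$t$ vertex count from $5$ to $1$, not to $4$; the same phenomenon occurs at the root of the tree in Figure~\ref{Fig:example-intro}. The theorem survives only because condition \ref{defn:compressed-likeCondi} of Definition~\ref{defn:compressed-like} is restricted to trivalent vertices lying below $x_{\text{left}}$, i.e.\ to links performed \emph{after} a deletion at color $t$: the deletion $\dl_{\Delta^{\prime}}(v_{t,\lambda_t^{\prime}})$ is ``saturated'' in the sense that every $a_t^{\prime}$-subset of $[\lambda_t^{\prime}-1]_t$ occurs in one of its facets (this follows from the colex property applied to a facet containing $v_{t,\lambda_t^{\prime}}$), and it is this saturation — propagated inductively along the right relative sequence of $x_{\text{left}}$, as in the paper's proof — that forces each subsequent link to drop $\omega^{[t]}$ by exactly one. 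Your argument never establishes or uses this saturation, and as stated it would ``prove'' the (false) unit drop already at the first link, before any deletion. Since your condition (ii) argument also relies on identifying the vertices linked along $y_1,\dots,y_k$ as the consecutive maxima $v_{t,\lambda_t-1},\dots,v_{t,\lambda_t-k+1}$, this gap must be repaired for the whole proof to stand.
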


\begin{proof}
For convenience, identify each vertex $v_{i,j}$ of $\Delta$ with the pair $(j,i)$, so that each $V_i$ in $\pi = (V_1, \dots, V_n)$ is identified with $[\lambda_i]_i$. Choose any shedding sequence $(T_0, \phi_0), (T_1, \phi_1), \dots, (T_k, \phi_k)$ of $(\Delta, \pi)$, let $(r_0, L_i, Y_i)$ be the $(\text{root}, 1,3)$-triple of each $T_i$, let $(T, \varphi)$ be the Macaulay tree induced by $(T, \phi) := (T_k, \phi_k)$, and denote the left-weight labeling of $(T, \varphi)$ by $\omega$. For every $y\in Y_k$, let $\alpha(y)$ be the largest integer $< k$ such that $y \in L_{\alpha(y)}$, while for every $u\in L_k$, set $\alpha(u) = k$. 

Clearly $(T, \varphi)$ is compressed-like when $k=0$, so assume $k\geq 1$. Fix $t\in [n]$, choose some $x\in Y_k$ such that $\varphi(x) = t$, and let $x_1, \dots, x_{\ell}$ be the right relative sequence of $x_{\text{left}}$. If $x_{\text{left}} \not\in L_k$, then choose any $\ell^{\prime} \in [\ell-1]$ such that $\varphi(x_{\ell^{\prime}}) = t$, while if $x_{\text{left}} \in L_k$, then set $\ell^{\prime} = 0$. In either case, we claim that $\omega^{[t]}(x_i) = \omega^{[t]}(x) - i$ for all $i\in [\ell^{\prime}+1]$.

The claim is trivially true if $x_{\text{left}} \in L_k$, so assume $x_{\text{left}} \not\in L_k$. The construction of a shedding sequence yields $\varphi(x_j) = t$ for every $j\in [\ell^{\prime}]$. Let $\phi_{\alpha(x)}(x) = ({\bf a}^{\prime}, \Delta^{\prime}, \pi^{\prime}, \boldsymbol{\lambda}^{\prime})$, where ${\bf a}^{\prime} = (a_1^{\prime}, \dots, a_n^{\prime})$ and $\boldsymbol{\lambda}^{\prime} = (\lambda_1^{\prime}, \dots, \lambda_n^{\prime})$. Define $\Delta_i := \phi_{\alpha(x_i)}^{[\mathcal{L}, \Delta]}(x_i)$ for each $i\in [\ell]$, and note that $\Delta_1 = \dl_{\Delta^{\prime}}\big((\lambda_t^{\prime},t)\big)$. Since $(\Delta^{\prime}, \pi^{\prime})$ is color-compressed, every $a_t^{\prime}$-set in $\binom{[\lambda_t^{\prime}-1]_t}{a_t^{\prime}}$ is contained in (some facet of) $\Delta_1$, thus by induction on $i \in [\ell^{\prime}+1]$, every $(a_t^{\prime}+1-i)$-set in $\binom{[\lambda_t^{\prime}-i]_t}{a_t^{\prime}+1-i}$ is contained in (some facet of) $\Delta_i$. This implies $\omega^{[t]}(x_i) = \text{$t$-th entry of }\phi_{\alpha(x_i)}^{[\mathcal{L}, \boldsymbol{\lambda}]}(x_i) = \lambda_t^{\prime} - i = \omega^{[t]}(x) - i$ for all $i\in [\ell^{\prime}+1]$, so our claim is true in both cases. Condition \ref{defn:compressed-likeCondi} in Definition \ref{defn:compressed-like} holds by repeatedly using this claim and the definition of $\omega$.

Condition \ref{defn:compressed-likeCondii} in Definition \ref{defn:compressed-like} is vacuously true if $n=1$, so assume $n>1$. Fix some $t^{\prime} \in [n]\backslash\{t\}$, choose any $y\in Y$ such that $\varphi(y) = t$, let $y_1, \dots, y_m$ be the right relative sequence of $y_{\text{left}}$ in $T$, and let $q\in [m]$ be the smallest integer such that $y_q \in L$ or $\varphi(y_q) \neq t$. For this condition to hold, we need to show that $\omega^{[t^{\prime}]}(y_q) \geq \omega^{[t^{\prime}]}(y_{\text{right}})$. If $q=1$, then $\omega$ is proper implies $\omega^{[t^{\prime}]}(y_q) = \omega^{[t^{\prime}]}(y_{\text{left}}) \geq \omega^{[t^{\prime}]}(y_{\text{right}})$, and we are done, so assume $q\geq 2$. For the rest of this proof, let $\phi_{\alpha(y_{\text{right}})} = ({\bf a}^{\prime\prime}, \Delta^{\prime\prime}, \pi^{\prime\prime}, \boldsymbol{\lambda}^{\prime\prime})$ and let $\phi_{\alpha(y)} = (\widetilde{{\bf a}}, \widetilde{\Delta}, \widetilde{\pi}, \widetilde{\boldsymbol{\lambda}})$, where ${\bf a}^{\prime\prime} = (a_1^{\prime\prime}, \dots, a_n^{\prime\prime})$, $\widetilde{{\bf a}} = (\widetilde{a}_1, \dots, \widetilde{a}_n)$, $\pi^{\prime\prime} = (V_1^{\prime\prime}, \dots, V_n^{\prime\prime})$, $\widetilde{\pi} = (\widetilde{V}_1, \dots, \widetilde{V}_n)$, $\boldsymbol{\lambda}^{\prime\prime} = (\lambda_1^{\prime\prime}, \dots, \lambda_n^{\prime\prime})$, and $\widetilde{\boldsymbol{\lambda}} = (\widetilde{\lambda}_1, \dots, \widetilde{\lambda}_n)$.

If $V_{t^{\prime}}^{\prime\prime} = \emptyset$, then $a_{t^{\prime}}^{\prime\prime} = 0$. Since $\varphi(y) \neq t^{\prime}$ implies $\widetilde{a}_{t^{\prime}} = 0$, we have $\widetilde{V}_{t^{\prime}} = \emptyset$, so for all descendants $u$ of $y$ in $T$, the $t^{\prime}$-th entry in $\phi^{[\mathcal{L}, \pi]}_{\alpha(u)}(u)$ must be the empty set. By the construction of a shedding sequence, we get $\omega^{[t^{\prime}]}(u) = \text{$t^{\prime}$-th entry of }\phi_{\alpha(u)}^{[\mathcal{L}, \boldsymbol{\lambda}]}(u) = \widetilde{\lambda}_{t^{\prime}}$ for all descendants $u$ of $y$. In particular, $\omega^{[t^{\prime}]}(y_q) = \omega^{[t^{\prime}]}(y_{\text{right}})$, and we are done.

If instead $V_{t^{\prime}}^{\prime\prime} \neq \emptyset$, then $(\lambda^{\prime\prime}_{t^{\prime}}, t^{\prime})$ is the maximal element of $V^{\prime\prime}_{t^{\prime}}$, and we choose a facet $F$ of $\Delta^{\prime\prime}$ that contains this vertex $(\lambda^{\prime\prime}_{t^{\prime}}, t^{\prime})$. Note that $\Delta^{\prime\prime} = \lk_{\widetilde{\Delta}}((\omega^{[t]}(y),t))$ is $(\widetilde{{\bf a}}-\boldsymbol{\delta}_{t,n})$-balanced, which implies $F \cup \{(\omega^{[t]}(y),t)\}$ is a facet of $\widetilde{\Delta}$, thus the definition of $q$ and the construction of a shedding sequence together yield $|F \cap V_t| \geq q-2$. Let $F^{\prime}$ be the set of the $q-2$ largest elements in $F \cap V_t$, and let $u_i = (\omega^{[t]}(y_i),t)$ for each $i\in [q-1]$. Since $\widetilde{\Delta}$ is color-compressed, $(F\backslash F^{\prime}) \cup \{u_1, \dots, u_{q-1}\}$ must be a facet of $\widetilde{\Delta}$, thus $F\backslash F^{\prime}$ is a facet of $\phi_{\alpha(y_q)}^{[\mathcal{L}, \Delta]}(y_q)$. In particular, $(\lambda^{\prime\prime}_{t^{\prime}}, t^{\prime})$ is contained in $F\backslash F^{\prime}$, so $\phi_{\alpha(y_q)}^{[\mathcal{L}, \Delta]}(y_q)$ is color-compressed implies the $t^{\prime}$-entry of $\phi_{\alpha(y_q)}^{[\mathcal{L}, \pi]}(y_q)$ is a set containing $[\lambda_{t^{\prime}}^{\prime\prime}]_{t^{\prime}}$. Consequently, $\omega^{[t^{\prime}]}(y_q) = \text{$t^{\prime}$-th entry of }\phi_{\alpha(y_q)}^{[\mathcal{L}, \boldsymbol{\lambda}]}(y_q) \geq \lambda_{t^{\prime}}^{\prime\prime} = \omega^{[t^{\prime}]}(y_{\text{right}})$.
\end{proof}

For the rest of this subsection, let $(T, \varphi)$ be an ${\bf a}$-Macaulay tree for some ${\bf a} = (a_1, \dots, a_n) \in \mathbb{N}^n\backslash\{{\bf 0}_n\}$. Let $(r_0, L, Y)$ be the $(\text{root},1,3)$-triple of $T$, denote the left-weight labeling of $(T, \varphi)$ by $\omega$, and let $(T, \varphi|_Y, \nu)$ be the ${\bf a}$-splitting tree induced by $(T, \varphi|_Y)$. For any $x\in Y \cup L$, recall that $\mathcal{D}_T(x)$ is the set of all descendants of $x$ in $T$.

\begin{definition}
Suppose $t\in [n]$ and $x\in Y \cup L$. Let $x_1, \dots, x_{\ell}$ be the right relative sequence of $x$ in $T$, and let $k$ be the smallest integer in $[\ell]$ such that $x_k\in L$ or $\varphi(x_k) \neq t$. We then define the following sets:
\begin{align*}
\psi_{(T, \varphi)}(x) &:= \Big\{ \big(\omega^{[\varphi(v)]}(v), \varphi(v)\big): v\in Y, \text{ both }v \text{ and its right child are ancestors of }x\Big\};\\
\psi_{(T, \varphi)}^{(t)}(x) &:= \Big\{s\in \mathbb{P}: (s,t) \in \psi_{(T, \varphi)}(x)\Big\};\\
\hat{\psi}_{(T, \varphi)}^{(t)}(x) &:= \Big\{\omega^{[t]}(x_i): i\in [k] \Big\};\\
\hat{\xi}_{(T, \varphi)}^{(t)}(x) &:= \text{largest $\big(a_t-\big|\psi_{(T, \varphi)}^{(t)}(x)\big| - k\big)$-subset of $\big[\omega^{[t]}(x_k)-1\big]$ w.r.t. colex order};\\
\xi_{(T, \varphi)}^{(t)}(x) &:= \psi_{(T, \varphi)}^{(t)}(x) \cup \hat{\psi}_{(T, \varphi)}^{(t)}(x) \cup \hat{\xi}_{(T, \varphi)}^{(t)}(x).
\end{align*}
Observe that the sets $\psi_{(T, \varphi)}^{(t)}(x)$, $\hat{\psi}_{(T, \varphi)}^{(t)}(x)$, $\hat{\xi}_{(T, \varphi)}^{(t)}(x)$ are pairwise disjoint, so $\xi_{(T, \varphi)}^{(t)}(x)$ is an $a_t$-set, and we call this $a_t$-set $\xi_{(T, \varphi)}^{(t)}(x)$ the {\it $t$-signature} of $x$ in $(T, \varphi)$. In particular, if $x$ is a $t$-leading vertex, then $\psi_{(T, \varphi)}^{(t)}(x) = \emptyset$.
\end{definition}

\begin{definition}
Let $t\in \{0,1,\dots, n-1\}$, and let $u_1, \dots, u_m$ (for some $m\in \mathbb{P}$) be all the $t$-leading vertices in $\mathcal{L}_{(T, \varphi)}(t)$ arranged in depth-first order. For every $u\in \mathcal{L}_{(T, \varphi)}(t)$ such that the $(t+1)$-th entry of ${\bf a} - \nu(u)$ is positive, let $j$ be the unique integer in $[m]$ satisfying $u = u_j$, and define $\zeta^{t+1}_t(u) := u_{j-1}$. In particular, since $\nu(u_1) = {\bf a}$, we necessarily have $j>1$, hence $\zeta^{t+1}_t(u)$ is well-defined.
\end{definition}

Let $v$ be the unique common ancestor of $u_{j-1}$ and $u_j$ in $Y\cup L$ such that neither child of $v$ is a common ancestor of $u_{j-1}$ and $u_j$. The path from $r_0$ to the right-most relative of $v_{\text{left}}$ and the path from $r_0$ to the left-most relative of $v_{\text{right}}$ share the common vertex $v$, and these two paths must each contain some $t$-leading vertex. The definition of $t$-leading vertices implies $\varphi(v) > t$, so by the definition of the depth-first order, $u_{j-1}$ is contained in the right relative sequence of $v_{\text{left}}$ in $T$, and $u_j$ is contained in the left relative sequence of $v_{\text{right}}$ in $T$. Furthermore, since the $(t+1)$-th entry of ${\bf a} - \nu(u_j)$ is positive, the definition of a Macaulay tree forces $\varphi(v) = t+1$. Consequently, if $(T, \varphi)$ is compressed-like, then Condition \ref{defn:compressed-likeCondii} in Definition \ref{defn:compressed-like} yields $\xi_{(T, \varphi)}^{(t)}(u_j) \leq_{c\ell} \xi_{(T, \varphi)}^{(t)}(u_{j-1})$, or equivalently, $\xi_{(T, \varphi)}^{(t)}(u) \leq_{c\ell} \xi_{(T, \varphi)}^{(t)}(\zeta^{t+1}_t(u))$.

\begin{definition}
Let $i,j\in \mathbb{N}$ satisfy $j+1<i\leq n$. For every $u\in \mathcal{L}_{(T, \varphi)}(j+1)$ such that the $i$-th entry of ${\bf a} - \nu(u)$ is positive, assume $\zeta^i_{j+1}(u)$ has already been defined, and suppose $\xi_{(T, \varphi)}^{(j+1)}(u) \leq_{c\ell} \xi_{(T, \varphi)}^{(j+1)}(\zeta^i_{j+1}(u))$. Then for any $x\in \mathcal{L}_{(T, \varphi)}(j)$, let $\widetilde{x}$ be the unique ancestor of $x$ that is a $(j+1)$-leading vertex, set $\widetilde{y} := \zeta^i_{j+1}(\widetilde{x})$, and define $\zeta^i_j(x)$ as the smallest vertex in $U := \big\{z\in \mathcal{L}_{(T, \varphi)}(j) \cap \mathcal{D}_T(\widetilde{y}): \xi_{(T, \varphi)}^{(j+1)}(z) \geq_{c\ell} \xi_{(T, \varphi)}^{(j+1)}(x)\big\}$ with respect to the depth-first order. Note that if $y$ is the unique $j$-leading vertex in the right relative sequence of $\widetilde{y}$, then the definition of a $(j+1)$-signature yields $\xi_{(T, \varphi)}^{(j+1)}(x) \leq_{c\ell} \xi_{(T, \varphi)}^{(j+1)}(\widetilde{x}) \leq_{c\ell} \xi_{(T, \varphi)}^{(j+1)}(\widetilde{y}) = \xi_{(T, \varphi)}^{(j+1)}(y)$, thus $y \in U$ (i.e. $U$ is non-empty), so $\zeta^i_j(x)$ is well-defined.
\end{definition}

\begin{remark}\label{remark:algorithm-zeta}
Let $i,j\in \mathbb{N}$ satisfy $j<i\leq n$, and let $x\in \mathcal{L}_{(T, \varphi)}(j)$ such that the $i$-th entry of ${\bf a} - \nu(x)$ is positive. If $\zeta^i_j(x)$ is well-defined, then $\zeta^i_j(x)$ can be determined from $x$ via the following algorithm.
\begin{center}
\begin{algorithmic}
\small
\State $(x_0, x_1, \dots, x_{\ell}) \gets$ path from $r_0$ to $x$ (which has length $\ell$).
\State $q \gets$ largest integer in $[\ell-1]$ such that $\varphi(x_q) = i$ and $x_{q+1}$ is the right child of $x_q$.
\State $z \gets$ left child of $x_q$.
\State $(y_1, \dots, y_{\ell^{\prime}}) \gets$ right relative sequence of $z$ (which has $\ell^{\prime}$ terms).
\State $k \gets$ smallest integer in $[\ell^{\prime}]$ such that $y_k \in L$ or $\varphi(y_k) \neq i$.
\If {$y_k \in L$}
    \State $\zeta^i_j(x) \gets y_k$.
\Else
    \State $s \gets \varphi(y_k)$.
    \State $y\gets y_k$.
    \While {$s>t$}
        \State $U \gets \Big\{u\in \mathcal{L}_{(T, \varphi)}(s-1) \cap \mathcal{D}_T(y): \xi_{(T, \varphi)}^{(s)}(u) \geq_{c\ell} \xi_{(T, \varphi)}^{(s)}(x)\Big\}$.
        \State $y \gets$ smallest vertex in $U$ with respect to the depth-first order.
        \If {$y\in L$}
            \State $s \gets t$.
        \Else
            \State $s \gets \varphi(y)$.
        \EndIf
    \EndWhile
    \State $\zeta^i_j(x) \gets y$.
\EndIf
\end{algorithmic}
\end{center}
Notice that $\zeta^i_j(x)$ is well-defined only if the set $U$ in each iteration of the while loop is non-empty.
\end{remark}

\begin{proposition}\label{prop:compressed-like-properties}
Let $(T, \varphi)$ be compressed-like, let $s,t\in \mathbb{N}$ satisfy $t<s\leq n$, and let $x\in \mathcal{L}_{(T, \varphi)}(t)$ such that the $s$-th entry of ${\bf a} - \nu(x)$ is positive. If $\zeta^s_t(x)$ is well-defined, then we have the following:
\begin{enum*}
\item\label{cond:compressed-like-zeta-signature-q1} $\psi^{(s)}_{T, \varphi}(x) = \{q_1, \dots, q_m\}$ for some $m\in [a_s]$ such that $a_s -m + 1 < q_1 < \dots < q_m$.
\item\label{cond:compressed-like-zeta-explicit-signature} $\xi^{(s)}_{(T, \varphi)}(u) = \big\{q_1-i: i\in [a_s-m+1]\big\} \cup \{q_2, \dots, q_m\}$ for every $u\in \mathcal{D}_T\big(\zeta^s_t(x)\big)$.
\item\label{cond:compressed-like-zeta-signature>s} If $i$ is an integer satisfying $s<i\leq n$, then $\xi^{(i)}_{(T, \varphi)}(u) = \xi^{(i)}_{(T, \varphi)}(x)$ for every $u\in \mathcal{D}_T\big(\zeta^s_t(x)\big)$.
\end{enum*}
\end{proposition}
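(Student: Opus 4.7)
The plan is to fix the structure of the path from $r_0$ to $x$ and then compute each signature by tracking right-turns at $s$-labeled vertices. Let $v^{(1)}, \ldots, v^{(m)}$ be the $s$-labeled ancestors $v$ of $x$ (enumerated root-to-leaf) such that both $v$ and $v_{\text{right}}$ are ancestors of $x$. Since the $s$-th entry of ${\bf a} - \nu(x)$ counts precisely these right-turns, $m = a_s - \nu^{[s]}(x) \in [a_s]$ and $\psi^{(s)}_{(T,\varphi)}(x) = \{\omega^{[s]}(v^{(i)}) : i \in [m]\}$.

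For part (i), the recursive definition of $\omega$ gives $\omega^{[s]}(v_{\text{left}}) = \omega^{[s]}(v)$ if $\varphi(v) \neq s$ and $\omega^{[s]}(v_{\text{left}}) = \omega^{[s]}(v) - 1$ if $\varphi(v) = s$, while properness of $\omega$ gives $\omega^{[s]}(v_{\text{right}}) \leq \omega^{[s]}(v_{\text{left}})$. Hence $\omega^{[s]}$ is weakly decreasing along descendants and strictly decreases when passing through any $s$-labeled vertex; tracing from $v^{(i)}$ down to $v^{(i+1)}$ therefore yields $\omega^{[s]}(v^{(1)}) > \cdots > \omega^{[s]}(v^{(m)})$, so setting $q_i := \omega^{[s]}(v^{(m+1-i)})$ makes $q_1 < \cdots < q_m$. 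For the lower bound $q_1 > a_s - m + 1$, follow the leftmost path from $v^{(m)}_{\text{left}}$ to its left-most relative leaf $u_0$: since only left-moves are taken, $\nu^{[s]}(u_0) = \nu^{[s]}(v^{(m)}_{\text{left}}) = \nu^{[s]}(v^{(m)}) = a_s - m + 1$; axiom (e) of Macaulay trees gives $\varphi^{[s]}(u_0) \geq \nu^{[s]}(u_0)$; and the definition of $\omega$ yields $\omega^{[s]}(v^{(m)}_{\text{left}}) \geq \varphi^{[s]}(u_0)$. Hence $q_1 = \omega^{[s]}(v^{(m)}) = \omega^{[s]}(v^{(m)}_{\text{left}}) + 1 \geq a_s - m + 2$.

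Parts (ii) and (iii) are read off the algorithm of Remark \ref{remark:algorithm-zeta}. Its initial step locates $v^{(m)}$ and replaces the right-turn there by a left-turn, so $\zeta^s_t(x) \in \mathcal{D}_T(v^{(m)}_{\text{left}})$; every subsequent recursive call stays inside this subtree because all $\varphi$-labels of its trivalent vertices are at most $s$ (by the monotonicity of $\varphi$ along descendants). Consequently, for every $u \in \mathcal{D}_T(\zeta^s_t(x))$ and every $i > s$, the $i$-labeled right-turn ancestors of $u$, their $\omega^{[i]}$-values, the right relative sequence of $u$ relevant to $\hat{\psi}^{(i)}(u)$, and the colex-maximal filler $\hat{\xi}^{(i)}(u)$ all evaluate identically at $u$ and at $x$, giving (iii).

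For (ii), the $s$-labeled right-turn ancestors of $u$ split into $v^{(1)}, \ldots, v^{(m-1)}$ (above $v^{(m)}$, contributing $q_m, \ldots, q_2$) together with those lying along the path from $v^{(m)}_{\text{left}}$ down to $u$. Applying compressed-like condition \ref{defn:compressed-likeCondi} with $x = v^{(m)}$ to each $s$-labeled vertex on this path forces $\omega^{[s]}$ to drop by exactly $1$ at every step, so the $s$-labeled right-turn ancestors inside $\mathcal{D}_T(v^{(m)}_{\text{left}})$ contribute $\omega^{[s]}$-values $q_1 - 1, q_1 - 2, \ldots, q_1 - (k-1)$ for some $k$ determined by where the algorithm terminates (either at a leaf $y_k$ or via a recursion into a lower level after reaching a non-$s$ vertex $y_k$). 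The component $\hat{\psi}^{(s)}(u)$ then contributes the next value $q_1 - k$ from the first non-$s$ vertex on the right relative sequence of $u$, and the colex-maximal filler $\hat{\xi}^{(s)}(u)$ supplies the remaining consecutive integers down to $q_1 - (a_s - m + 1)$. Assembling these pieces yields the claimed set $\{q_2, \ldots, q_m\} \cup \{q_1 - i : i \in [a_s - m + 1]\}$. The hardest step is this last assembly: one must verify that the three pieces $\psi^{(s)}(u)$, $\hat{\psi}^{(s)}(u)$, and $\hat{\xi}^{(s)}(u)$ dovetail into an unbroken run of consecutive integers below $q_1$ regardless of how deep the recursion of $\zeta$ descends, which requires careful tracking of the cascade of left-moves forced by the compressed-like condition at each intermediate level.
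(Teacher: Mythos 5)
Your proposal is correct and takes essentially the same route as the paper's proof: part (i) via the monotone decrease of $\omega^{[s]}$ and axiom (e) applied to the left-most relative of $v^{(m)}_{\text{left}}$ (the paper phrases this as a contradiction rather than a direct bound), part (ii) via the $\zeta$-algorithm of Remark \ref{remark:algorithm-zeta} together with compressed-like condition \ref{defn:compressed-likeCondi}, and part (iii) via label monotonicity below $v^{(m)}$. The one step you describe rather than verify — that $\hat{\psi}^{(s)}(u)=\{q_1-k\}$ uniformly for all $u\in \mathcal{D}_T\big(\zeta^s_t(x)\big)$ — is exactly where the paper invokes the Macaulay-tree axiom forcing $\varphi^{[s]}$ to be constant on the leaves below the first non-$s$-labelled vertex $z_k$ of the right relative sequence, the same fact you already use implicitly in part (iii).
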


\begin{proof}
First of all, since the $s$-th entry of ${\bf a} - \nu(x)$ is positive, $\psi^{(s)}_{(T, \varphi)}(x)$ must be non-empty, so we can write $\psi^{(s)}_{T, \varphi}(x)$ as the set $\{q_1, \dots, q_m\}$ for some $m\in [a_s]$, such that $q_1 < \dots < q_m$. For each $i\in [m]$, let $y_i \in Y$ be the unique ancestor of $x$ such that $(\omega^{[\varphi(y_i)]}(y_i), \varphi(y_i)) = (q_i,s)$. Also, let $z$ be the left child of $y_1$. The subset consisting of the $m$ largest elements in $\xi^{(s)}_{(T, \varphi)}(x) \in \binom{\mathbb{P}}{a_s}$ is precisely $\psi^{(s)}_{(T, \varphi)}(x)$, hence $q_1 \geq a_s-m+1$. If $q_1 = a_s-m+1$, then the definition of the left-weight labeling $\omega$ yields $\omega^{[s]}(z) = a_s-m$, while the construction of the ${\bf a}$-splitting tree $(T, \varphi|_Y, \nu)$ yields $\nu^{[s]}(z) = a_s-m+1$. This means the left-most relative of $z$, which we denote by $z_{\text{left-most}}$, satisfies $\varphi(z_{\text{left-most}}) \not\geq \nu(z_{\text{left-most}})$, hence contradicting the definition of a Macaulay tree. Consequently, $q_1 > a_s-m+1$, i.e. statement \ref{cond:compressed-like-zeta-signature-q1} is true.

Let $z_1, \dots, z_{\ell}$ be the right relative sequence of $z$, and let $k$ be the smallest integer in $[\ell]$ such that $z_k\in L$ or $\varphi(z_k) \neq s$. Since $(T, \varphi)$ is compressed-like, Condition \ref{defn:compressed-likeCondi} in Definition \ref{defn:compressed-like} yields
\begin{equation}
\psi^{(s)}_{(T, \varphi)}\big(\zeta^s_t(x)\big) = \{q_2, \dots, q_m\} \cup \Big\{\omega^{[s]}(z_i): i\in [k-1]\Big\} = \{q_2, \dots, q_m\} \cup \Big\{q_1-i: i\in [k-1]\Big\}.
\end{equation}
Next, let $z^{\prime}$ be the left-most relative of $z_k$. The vertex $z^{\prime}$ and all vertices in $\mathcal{D}_T(\zeta^s_t(x)) \cap L$ are contained $\mathcal{D}_T(z_k) \cap L$, so it follows from the definition of a Macaulay tree that $\varphi^{[s]}(u) = \varphi^{[s]}(z^{\prime}) = \omega^{[s]}(z_k) = q_1-k$ for every $u\in \mathcal{D}_T(\zeta^s_t(x)) \cap L$. Since $t<s$ and $\zeta^s_t(x)$ is by construction a $t$-leading vertex of $(T, \varphi)$, we get $\xi^{(s)}_{(T, \varphi)}(u) = \big\{q_1-i: i\in [a_s-m+1]\big\} \cup \{q_2, \dots, q_m\}$ for every $u\in \mathcal{D}_T\big(\zeta^s_t(x)\big)$, thus proving statement \ref{cond:compressed-like-zeta-explicit-signature}.

Finally, if $s<i\leq n$ ($i\in \mathbb{P}$), then since both $\zeta^s_t(x)$ and $x$ are in $\mathcal{D}_T(y_1)$, it follows from the definition of a Macaulay tree and $\varphi(y_1) = s < i$ that $\psi^{(i)}_{(T, \varphi)}(u) = \psi^{(i)}_{(T, \varphi)}(x)$ and $\omega^{[i]}(u) = \omega^{[i]}(x)$ for all $u\in \mathcal{D}_T\big(\zeta^s_t(x)\big)$, which proves \ref{cond:compressed-like-zeta-signature>s}.
\end{proof}

\begin{definition}
Define $(T, \varphi)$ to be always {\it $n$-compatible}. Suppose $n>1$, $t\in [n-1]$, and $(T, \varphi)$ is compressed-like. Then we say $(T, \varphi)$ is {\it $t$-compatible} if (i): $(T, \varphi)$ is $s$-compatible for all $s\in [n]\backslash [t]$; and (ii): $\xi_{(T, \varphi)}^{(t)}(x) \leq_{c\ell} \xi_{(T, \varphi)}^{(t)}\big(\zeta^i_t(x)\big)$ for every $x\in \mathcal{L}_{(T, \varphi)}(t)$, $i\in [n]\backslash [t]$ such that the $i$-th entry of ${\bf a} - \nu(x)$ is positive. In particular, condition (i) ensures $\zeta^i_t(x)$ is well-defined. If $(T, \varphi)$ is $1$-compatible, which is identical to $(T, \varphi)$ being $j$-compatible for all $j\in [n]$, then we say $(T, \varphi)$ is {\it compatible}. Equivalently, $(T, \varphi)$ is {\it compatible} if $\xi_{(T, \varphi)}^{(t)}(x) \leq_{c\ell} \xi_{(T, \varphi)}^{(t)}\big(\zeta^i_t(x)\big)$ for every $i,t\in [n]$ satisfying $t<i$, and every $x\in \mathcal{L}_{(T, \varphi)}(t)$ such that the $i$-th entry of ${\bf a} - \nu(x)$ is positive.
\end{definition}

\begin{lemma}\label{lemma:compressed-like-t+1-compatible}
Let $0\leq t< s\leq n$ be integers, and let $x\in \mathcal{L}_{(T, \varphi)}(t)$ such that ${\bf a} - \nu(x)$ has a positive $s$-th entry. Suppose $(T, \varphi)$ is compressed-like and $(t+1)$-compatible. Then $\zeta^s_t(x)$ is well-defined, and for each integer $k$ such that $t<k<s$,
\begin{enum*}
\item\label{cond:psi-xi} $\psi^{(k)}_{(T, \varphi)}(u) \subseteq \xi^{(k)}_{(T, \varphi)}(x)$ for all $u\in \mathcal{D}_T\big(\zeta^s_t(x)\big)$; and
\item\label{cond:xi-xi} $\xi^{(k)}_{(T, \varphi)}(u) \geq_{c\ell} \xi^{(k)}_{(T, \varphi)}(x)$ for all $u\in \mathcal{D}_T\big(\zeta^s_t(x)\big)$.
\end{enum*}
\end{lemma}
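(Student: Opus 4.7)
The plan is to induct on the gap $s-t$, with Proposition \ref{prop:compressed-like-properties} and the Macaulay tree axioms supplying the structural input.

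\emph{Base case} ($s-t=1$). Both \ref{cond:psi-xi} and \ref{cond:xi-xi} are vacuous since no integer $k$ satisfies $t<k<s$. The vertex $\zeta^{t+1}_t(x)$ is well-defined directly from its definition: the positivity of the $(t+1)$-st coordinate of ${\bf a}-\nu(x)$ forces $\nu(x)\neq{\bf a}$, so $x$ is not the first $t$-leading vertex in depth-first order and therefore has a predecessor.

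\emph{Inductive step} ($s-t\geq 2$). Let $\widetilde{x}$ be the ancestor of $x$ that is a $(t+1)$-leading vertex and set $\widetilde{y}:=\zeta^s_{t+1}(\widetilde{x})$. Since the $\varphi$-labels on the path from $\widetilde{x}$ to $x$ are all at most $t+1<s$, we have $\nu^{[s]}(\widetilde{x})=\nu^{[s]}(x)$, so the $s$-th entry of ${\bf a}-\nu(\widetilde{x})$ is positive and $\widetilde{y}$ is well-defined by $(t+1)$-compatibility; the definition of $\zeta^s_t(x)$ then produces $z:=\zeta^s_t(x)$ as a well-defined vertex of $\mathcal{D}_T(\widetilde{y})$. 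The next step in my plan is to establish two reductions that localize the required comparisons: (A) for every $u\in\mathcal{D}_T(z)$ and every $k$ with $t<k<s$, $\psi^{(k)}_{(T,\varphi)}(u)=\psi^{(k)}_{(T,\varphi)}(z)$ and $\xi^{(k)}_{(T,\varphi)}(u)=\xi^{(k)}_{(T,\varphi)}(z)$; (B) for every $k>t+1$, $\psi^{(k)}_{(T,\varphi)}(x)=\psi^{(k)}_{(T,\varphi)}(\widetilde{x})$ and $\xi^{(k)}_{(T,\varphi)}(x)=\xi^{(k)}_{(T,\varphi)}(\widetilde{x})$. Both reductions follow from the same mechanism: internal vertices on the relevant subpath carry $\varphi$-labels strictly less than $k$, so no new right-child $k$-ancestors are accumulated, and the Macaulay tree axiom (valid since $k<s\leq n$) forces $\varphi^{[k]}$ to be constant on the relevant leaves, whence $\omega^{[k]}$, $\hat{\psi}^{(k)}_{(T,\varphi)}$, and $\hat{\xi}^{(k)}_{(T,\varphi)}$ are constant as well.

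With (A) and (B) in hand, the range $t+1<k<s$ is handled by applying the induction hypothesis to $\widetilde{x}$ with parameters $(t+1,s)$ (a strictly smaller gap): this yields $\psi^{(k)}_{(T,\varphi)}(z)\subseteq\xi^{(k)}_{(T,\varphi)}(\widetilde{x})=\xi^{(k)}_{(T,\varphi)}(x)$ and $\xi^{(k)}_{(T,\varphi)}(z)\geq_{c\ell}\xi^{(k)}_{(T,\varphi)}(\widetilde{x})=\xi^{(k)}_{(T,\varphi)}(x)$. What remains is the case $k=t+1$. The inequality $\xi^{(t+1)}_{(T,\varphi)}(z)\geq_{c\ell}\xi^{(t+1)}_{(T,\varphi)}(x)$ is built into the definition of $z$ through its membership in $U$. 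The containment $\psi^{(t+1)}_{(T,\varphi)}(z)\subseteq\xi^{(t+1)}_{(T,\varphi)}(x)$ is where I expect the hard work to be: I would decompose $\psi^{(t+1)}_{(T,\varphi)}(z)=\psi^{(t+1)}_{(T,\varphi)}(\widetilde{y})\sqcup P$, where $P$ consists of the $(t+1)$-labeled left-weights collected along the path from $\widetilde{y}$ to $z$, and Condition \ref{defn:compressed-likeCondi} of Definition \ref{defn:compressed-like} should force $P$ to be a block of consecutive integers sitting immediately below $\min\psi^{(t+1)}_{(T,\varphi)}(\widetilde{y})$ (or below $\omega^{[t+1]}(\widetilde{y})$ if this set is empty). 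Combining this structural description of $P$ with the explicit form $\xi^{(t+1)}_{(T,\varphi)}(x)=\psi^{(t+1)}_{(T,\varphi)}(x)\cup\{\omega^{[t+1]}(x)\}\cup\hat{\xi}^{(t+1)}_{(T,\varphi)}(x)$ and with the position of $\widetilde{y}$ dictated by $\xi^{(t+1)}_{(T,\varphi)}(\widetilde{y})\geq_{c\ell}\xi^{(t+1)}_{(T,\varphi)}(\widetilde{x})$ should then allow one to match each element of $\psi^{(t+1)}_{(T,\varphi)}(z)$ with an element of $\xi^{(t+1)}_{(T,\varphi)}(x)$. The main obstacle is precisely this combinatorial matching, which hinges on the interplay between the compressed-like ``carry'' rule and the colex comparison at level $t+1$.
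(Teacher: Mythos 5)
Your inductive skeleton (induction on $s-t$, with the reductions (A) and (B) localizing everything to the $(t+1)$-leading ancestor $\widetilde{x}$ and to $z:=\zeta^s_t(x)$) is sound and genuinely different from the paper's argument, which instead descends level by level along the path from the left child of the last $s$-labeled right-turn down to $\zeta^s_t(x)$, identifying the intermediate $k$-leading vertices as $\zeta^s_k(x_{q_k})$ via the algorithm of Remark \ref{remark:algorithm-zeta}. Your reductions are correct: below a $t$-leading (resp.\ $(t+1)$-leading) vertex all trivalent labels are $\leq t$ (resp.\ $\leq t+1$), so for $k>t$ (resp.\ $k>t+1$) no new elements enter $\psi^{(k)}_{(T,\varphi)}$ and the Macaulay-tree axioms keep $\omega^{[k]}$ constant on the subtree; the induction hypothesis applied to $(\widetilde{x};t+1,s)$ then disposes of all $k$ with $t+1<k<s$, and membership of $z$ in the set $U$ gives \ref{cond:xi-xi} for $k=t+1$.

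However, the proof is not complete: the containment $\psi^{(t+1)}_{(T,\varphi)}(z)\subseteq\xi^{(t+1)}_{(T,\varphi)}(x)$, which is all that remains of \ref{cond:psi-xi} after your reductions, is exactly the hard content of the lemma, and you only sketch a plan for it while flagging the ``combinatorial matching'' as the main obstacle. The sketch as stated does not close: knowing that $P$ (the $(t+1)$-weights accumulated between $\widetilde{y}$ and $z$) is a consecutive block below $\omega^{[t+1]}(\widetilde{y})$ is not enough, because one must also show that this block lands inside $\xi^{(t+1)}_{(T,\varphi)}(x)$, and Condition \ref{defn:compressed-likeCondi} of Definition \ref{defn:compressed-like} does not by itself dictate which left/right turns the path from $\widetilde{y}$ to the \emph{minimal} element of $U$ takes. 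The paper resolves precisely this point by an explicit walk: starting at the $(t+1)$-leading vertex $\widetilde{y}$ (where $\psi^{(t+1)}_{(T,\varphi)}$ is empty), descend by taking the right child exactly when the current $\omega^{[t+1]}$-value lies in $\xi^{(t+1)}_{(T,\varphi)}(x)$, and prove — using Remark \ref{remark:algorithm-zeta} together with Condition \ref{defn:compressed-likeCondi} — that this walk terminates at the $t$-leading vertex $z$ itself; the containment is then automatic because every element added to $\psi^{(t+1)}_{(T,\varphi)}$ along the walk was chosen from $\xi^{(t+1)}_{(T,\varphi)}(x)$. Some argument of this kind (identifying $z$ as the endpoint of a walk steered by $\xi^{(t+1)}_{(T,\varphi)}(x)$) is needed to finish your case $k=t+1$; without it the proof has a genuine gap.
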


\begin{proof}
First of all, $\zeta^s_t(x)$ is well-defined by the definition of $(t+1)$-compatible. Let $x_0, x_1, \dots, x_{\ell}$ be the path from $r_0$ to $x$ in $T$, and let $q$ be the largest integer in $[\ell-1]$ such that $\varphi(x_q) = s$ and $x_{q+1}$ is the right child of $x_q$. For each integer $i$ satisfying $t\leq i\leq n$, let $q_i$ be the unique integer in $[\ell]$ such that $x_{q_i}$ is an $i$-leading vertex. Also, denote the left child of $x_q$ by $z$, let $z_1, \dots, z_m$ be the path in $T$ from $z$ to $\zeta^s_t(x)$, and for each integer $i$ satisfying $t\leq i < s$, let $j_i$ be the unique integer in $[m]$ such that $z_{j_i}$ is an $i$-leading vertex. For example, $q_t = \ell$ and $j_t = m$, i.e. $x_{q_t} = x$ and $z_{j_t} = \zeta^s_t(x)$.

Suppose $k \in \mathbb{P}$ satisfies $t<k<s$. Then $(T, \varphi)$ is $(k+1)$-compatible implies $\zeta^s_k(x_{q_k})$ is well-defined, and Remark \ref{remark:algorithm-zeta} yields $\zeta^s_k(x_{q_k}) = z_{j_k}$. Note also that $\xi^{(k)}_{(T, \varphi)}(z_{j_k}) = \xi^{(k)}_{(T, \varphi)}\big(\zeta^s_k(x_{q_k})\big) \geq_{c\ell} \xi^{(k)}_{(T, \varphi)}(x_{q_k}) \geq_{c\ell} \xi^{(k)}_{(T, \varphi)}(x)$, since $(T, \varphi)$ is $k$-compatible. Now, define vertex $z^{\prime}$ algorithmically as follows: Starting with $z^{\prime} = z_{j_k}$, as long as $z^{\prime} \not\in \mathcal{L}_{(T, \varphi)}(k-1)$, replace $z^{\prime}$ with $z^{\prime}_{\text{right}}$ if $\omega^{[k]}(z^{\prime}) \in \xi^{(k)}_{(T, \varphi)}(x)$, and replace $z^{\prime}$ with $z^{\prime}_{\text{left}}$ otherwise. Repeat process until $z^{\prime} \in \mathcal{L}_{(T, \varphi)}(k-1)$; the resulting $z^{\prime}$ is the vertex we want. Using Remark \ref{remark:algorithm-zeta} and condition \ref{defn:compressed-likeCondi} of Definition \ref{defn:compressed-like}, we then get $z^{\prime} = z_{j_{k-1}}$, thus $\psi^{(k)}_{(T, \varphi)}(z_{j_{k-1}}) = \psi^{(k)}_{(T, \varphi)}(z^{\prime}) \subseteq \xi^{(k)}_{(T, \varphi)}(x)$. Since $z_{j_{k-1}}$ is a $(k-1)$-leading vertex, we have $\psi^{(k)}_{(T, \varphi)}(u) \subseteq \xi^{(k)}_{(T, \varphi)}(x)$ for all $u\in \mathcal{D}_T(z_{j_{k-1}})$, so \ref{cond:psi-xi} follows from the fact that $\zeta^s_t(x) \in \mathcal{D}_T(z_{j_{k-1}})$.

Next, we prove \ref{cond:xi-xi}. If $\big|\psi^{(k)}_{(T, \varphi)}\big(\zeta^s_t(x)\big)\big| = a_k$, then it follows from \ref{cond:psi-xi} that $\xi^{(k)}_{(T, \varphi)}(u) = \psi^{(k)}_{(T, \varphi)}(u) = \xi^{(k)}_{(T, \varphi)}(x)$ for all $u\in \mathcal{D}_T\big(\zeta^s_t(x)\big)$ and we are done. Assume $\big|\psi^{(k)}_{(T, \varphi)}\big(\zeta^s_t(x)\big)\big| < a_k$, and define $p:= \max\big(\xi^{(k)}_{(T, \varphi)}(x) \backslash \psi^{(k)}_{(T, \varphi)}\big(\zeta^s_t(x)\big)\big)$. The construction of $\zeta^s_t(x)$ yields $\xi^{(k)}_{(T, \varphi)}(z_{j_{k-1}}) \geq_{c\ell} \xi^{(k)}_{(T, \varphi)}(x)$, so in particular, $\omega^{[k]}(z_{j_{k-1}}) \geq p$. Since $z_{j_{k-1}}$ is a $(k-1)$-leading vertex, the definition of a Macaulay tree tells us that the left-most relative of $z_{j_{k-1}}$, which we denote by $z^{\prime\prime}$, satisfies $\omega^{[k]}(z^{\prime\prime}) = \omega^{[k]}(z_{j_{k-1}}) \geq p$. Consequently, $\omega^{[k]}(u) \geq p$ for all $u\in \mathcal{D}_T(z_{j_{k-1}})$, therefore the definition of a $k$-signature, together with statement \ref{cond:psi-xi}, yields $\xi^{(k)}_{(T, \varphi)}(u) \geq_{c\ell} \xi^{(k)}_{(T, \varphi)}(x)$ for all $u\in \mathcal{D}_T\big(\zeta^s_t(x)\big) \subseteq \mathcal{D}_T(z_{j_{k-1}})$.
\end{proof}

\begin{theorem}\label{thm:color-compressed=>compatible}
Let ${\bf a}\in \mathbb{P}^n$. If $(\Delta, \pi)$ is a pure color-compressed ${\bf a}$-balanced complex, then the ${\bf a}$-Macaulay tree induced by the shedding tree of $(\Delta, \pi)$ is compatible.
\end{theorem}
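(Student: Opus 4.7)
I would prove this by downward induction on $t \in \{1, \ldots, n\}$; since Theorem \ref{thm:color-compressed=>compressed-like} already gives compressed-likeness of the induced Macaulay tree $(T, \varphi)$, only the colex inequality on $t$-signatures remains to be checked at each level. The case $t = n$ is $n$-compatibility, which holds by definition. Assume $(T, \varphi)$ is $(t+1)$-compatible; fix $x \in \mathcal{L}_{(T,\varphi)}(t)$ and an integer $s$ with $t < s \leq n$ such that the $s$-th coordinate of ${\bf a} - \nu(x)$ is positive, and set $y := \zeta^s_t(x)$. The task reduces to showing $\xi^{(t)}_{(T,\varphi)}(x) \leq_{c\ell} \xi^{(t)}_{(T,\varphi)}(y)$.

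The strategy is to translate the comparison into a statement about explicit facets of $\Delta$. To each $t$-leading vertex $u$ I would associate a canonical facet $F(u) \in \Delta$ by reading off, along each right-child step on the path from $r_0$ to $u$, the maximal color-$k$ vertex peeled off by the shedding algorithm at that step, and then completing to a facet by choosing the colex-smallest admissible face inside the rib factor of the terminal descendant of $u$. Under the identification $v_{i,j} \leftrightarrow (j,i)$ already used in the proof of Theorem \ref{thm:color-compressed=>compressed-like}, a direct unravelling of the definitions shows that $F(u) \cap V_t$ corresponds to $\xi^{(t)}_{(T,\varphi)}(u)$ (using that $\psi^{(t)}(u) = \emptyset$ for $t$-leading $u$), and moreover that $F(u)$ is colex-smallest in color $t$ among all facets of $\Delta$ sharing its non-$t$ coordinates.

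To compare $F(x)$ and $F(y)$, Proposition \ref{prop:compressed-like-properties}\ref{cond:compressed-like-zeta-signature>s} gives agreement in every color $k > s$, and Lemma \ref{lemma:compressed-like-t+1-compatible}\ref{cond:xi-xi} (invoked via the inductive hypothesis) gives $\xi^{(k)}(y) \geq_{c\ell} \xi^{(k)}(x)$ for each $t < k < s$. I would then iteratively apply the color-compressedness of $\Delta$: starting from $F(y)$, successively compress in colors $k = s, s{-}1, \dots, t{+}1$, at each stage replacing the color-$k$ part by the colex-smaller color-$k$ part of $F(x)$ (which is permitted by the colex inequalities just noted) while keeping the color-$t$ coordinates fixed. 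The result is a facet $F^* \in \Delta$ that agrees with $F(x)$ in every color $k > t$ and whose color-$t$ part equals $F(y) \cap V_t$. The colex-minimality of $F(x)$ in color $t$ among facets with those non-$t$ coordinates then forces $F(x) \cap V_t \leq_{c\ell} F^* \cap V_t = F(y) \cap V_t$, i.e., $\xi^{(t)}(x) \leq_{c\ell} \xi^{(t)}(y)$, completing the induction.

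The main obstacle will be the careful identification $F(u) \cap V_t \longleftrightarrow \xi^{(t)}_{(T,\varphi)}(u)$: the three constituents $\psi^{(t)}$, $\hat\psi^{(t)}$, $\hat\xi^{(t)}$ each track a different slice of the color-$t$ content of the canonical facet (ancestor maxima, right-relative-sequence weights, and the colex padding from the rib), and reconciling them with the shedding-tree bookkeeping is delicate. A secondary subtlety is verifying that each successive color-compression step preserves extendability to a facet of $\Delta$ without disturbing the color-$t$ coordinates, which is essentially a colex-monotonicity of shadows in the multi-graded color-compressed setting. Once both points are in place, the colex comparison of $t$-signatures becomes a direct consequence of the colex-minimality built into the construction of $F(x)$.
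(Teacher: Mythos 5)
Your reduction (downward induction on $t$, Theorem \ref{thm:color-compressed=>compressed-like} for compressed-likeness, Proposition \ref{prop:compressed-like-properties} and Lemma \ref{lemma:compressed-like-t+1-compatible} for the colors between $t$ and $s$) matches the paper's, and your canonical-facet dictionary $F(u) \cap V_t \leftrightarrow \xi^{(t)}_{(T,\varphi)}(u)$ is essentially the face $F' \cup F_t$ that the paper constructs. However, the exchange step at the heart of your argument runs in the wrong direction, in two places. First, your list of colex inequalities covers $k>s$ and $t<k<s$ but skips $k=s$ itself, and there Proposition \ref{prop:compressed-like-properties}\ref{cond:compressed-like-zeta-explicit-signature} gives $\xi^{(s)}(\zeta^s_t(x)) <_{c\ell} \xi^{(s)}(x)$: the color-$s$ part of $F(x)$ is colex-\emph{larger} than that of $F(y)$, so color-compressedness does not let you pass from $F(y)$ to a facet whose color-$s$ part is $\xi^{(s)}(x)$. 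Compression only moves a color class colex-downward, so the only legal move is the one the paper makes: start from a facet carrying $x$'s signatures and replace its color-$s$ part by the colex-smaller $\xi^{(s)}(\zeta^s_t(x))$.

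Second, the extremality you attribute to $F(x)$ is reversed. In a color-compressed complex the colex-smallest admissible color-$t$ part for any fixed non-$t$ part is the initial segment $[a_t]$; the signature $\xi^{(t)}(x)$, built from $\hat{\psi}^{(t)}(x)$ together with the colex-\emph{largest} padding $\hat{\xi}^{(t)}(x)$, is instead the colex-largest color-$t$ set that extends to a facet of the subcomplex attached to $x$ (this is the content of Lemma \ref{lemma:face-containment-criterion}). Even if your $F^*$ could be produced, maximality of $\xi^{(t)}(x)$ would give $\xi^{(t)}(y) \leq_{c\ell} \xi^{(t)}(x)$, the opposite of the required inequality. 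The fix is to apply maximality at $y = \zeta^s_t(x)$ rather than at $x$: after the single color-$s$ compression, strip off $\psi_{(T,\varphi)}(\zeta^s_t(x))$ (which is legitimate by Lemma \ref{lemma:compressed-like-t+1-compatible}\ref{cond:psi-xi} and Proposition \ref{prop:compressed-like-properties}\ref{cond:compressed-like-zeta-signature>s}) to exhibit a face of the subcomplex at $\zeta^s_t(x)$ whose color-$t$ part is $\xi^{(t)}(x)$; maximality of $\xi^{(t)}(\zeta^s_t(x))$ there then yields $\xi^{(t)}(x) \leq_{c\ell} \xi^{(t)}(\zeta^s_t(x))$, as required.
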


\begin{proof}
For convenience, identify each vertex $v_{i,j}$ of $\Delta$ with the pair $(j,i)$, so that each $V_i$ in $\pi = (V_1, \dots, V_n)$ is identified with $[\lambda_i]_i$. Let $(T_0, \phi_0), (T_1, \phi_1), \dots, (T_k, \phi_k)$ be a shedding sequence of $(\Delta, \pi)$. For each $i\in [k]$, let $(r_0, L_i, Y_i)$ denote the $(\text{root}, 1, 3)$-triple of $T_i$, let $(T_i, \varphi_i)$ be the Macaulay tree induced by $(T_i, \phi_i)$, and let $\omega_i$ be the left-weight labeling of $(T_i, \varphi_i)$. Let $(T_k, \varphi_k|_{Y_k}, \nu)$ be the ${\bf a}$-splitting tree induced by $(T_k, \varphi_k|_{Y_k})$. Also, for every $y\in Y_k$, let $\alpha(y)$ be the largest integer $<k$ such that $y\in L_{\alpha(y)}$, while for every $u\in L_k$, set $\alpha(u) = k$. Note that for $(T_k, \varphi_k)$ to be compatible, there is an implicit assumption of $(T_k, \varphi_k)$ being compressed-like, which we can assume by Theorem \ref{thm:color-compressed=>compressed-like}.

Suppose $(T_k, \varphi_k)$ is not compatible, and let $t\in [n-1]$ be maximal such that $(T_k, \varphi_k)$ is not $t$-compatible. Choose $x\in \mathcal{L}_{(T_k, \varphi_k)}(t)$, $s\in [n]\backslash [t]$ such that ${\bf a} - \nu(x)$ has a positive $s$-th entry, and $\xi_{(T_k, \varphi_k)}^{(t)}(x) \not\leq_{c\ell} \xi_{(T_k, \varphi_k)}^{(t)}(\zeta^s_t(x))$. In particular, $\zeta^s_t(x)$ is well-defined since $(T_k, \varphi_k)$ is $(t+1)$-compatible. Next, define $F^{\prime} := \bigcup_{j \in [n]\backslash [t]} \big\{(i, j) : i \in \xi_{(T_k, \varphi_k)}^{(j)}(x)\big\}$. By Proposition \ref{prop:left-weight-shedding-sequence}\ref{condition-orderly}, $\omega_k^{[j]}(u) = \omega_k^{[j]}(x)$ for all $j \in [n]\backslash [t]$ and $u\in \mathcal{D}_{T_k}(x)$, hence $F \cup F^{\prime} \in \Delta$ for all $u \in \mathcal{D}_{T_k}(x)$ and $F\in \phi_{\alpha(u)}^{[\mathcal{L}, \Delta]}(u)$ satisfying $F \subseteq \bigcup_{i\in [t]} V_i$. Note that $F_t := \big\{(i,t): i\in \xi_{(T_k, \varphi_k)}^{(t)}(x)\big\} \subseteq V_t$ is a face of $\phi_{\alpha(x)}^{[\mathcal{L}, \Delta]}(x)$, so $F^{\prime} \cup F_t \in \Delta$.

Recall the definition of the operation $C_j$ (for any $j\in [n]$) on colored multicomplexes defined in Section \ref{sec:ColorCompressionColoredComplexes}, and define $C_j$ on colored complexes analogously. Proposition \ref{prop:compressed-like-properties}\ref{cond:compressed-like-zeta-explicit-signature} yields $\xi_{(T_k, \varphi_k)}^{(s)}(\zeta^s_t(x)) <_{c\ell} \xi_{(T_k, \varphi_k)}^{(s)}(x)$, so since $(\Delta, \pi)$ is color-compressed implies $C_s(\Delta) = \Delta$, it then follows from $F^{\prime} \cup F_t \in \Delta$ that $\widetilde{F} \cup F_t \in \Delta$, where
\begin{equation*}
\widetilde{F} := \Big(F^{\prime}\backslash\Big\{(i,s): i\in \xi_{(T_k, \varphi_k)}^{(s)}(x)\Big\}\Big) \cup \Big\{(i,s): i\in \xi_{(T_k, \varphi_k)}^{(s)}\big(\zeta^s_t(x)\big)\Big\}.
\end{equation*}

Now, Lemma \ref{lemma:compressed-like-t+1-compatible}\ref{cond:psi-xi} says $\psi^{(i)}_{(T_k, \varphi_k)}\big(\zeta^s_t(x)\big) \subseteq \xi^{(i)}_{(T_k, \varphi_k)}(x)$ for all integers $t<i<s$ (if any), while Proposition \ref{prop:compressed-like-properties}\ref{cond:compressed-like-zeta-signature>s} says $\psi^{(i)}_{(T_k, \varphi_k)}\big(\zeta^s_t(x)\big) = \psi^{(i)}_{(T_k, \varphi_k)}(x)$ for all integers $s<i\leq n$ (if any). Consequently, $\psi_{(T_k, \varphi_k)}\big(\zeta^s_t(x)\big) \subseteq \widetilde{F}$, hence it follows from $\widetilde{F} \cup F_t \in \Delta$ that $\big(\widetilde{F}\backslash \psi_{(T_k, \varphi_k)}\big(\zeta^s_t(x)\big)\big) \cup F_t \in \phi_{\alpha(\zeta^s_t(x))}^{[\mathcal{L}, \Delta]}(\zeta^s_t(x))$. Looking at vertices in $V_t$, this means $\xi^{(t)}_{(T_k, \varphi_k)}(\zeta^s_t(x)) \geq_{c\ell} \xi^{(t)}_{(T_k, \varphi_k)}(x)$, which contradicts our original assumption that $\xi^{(t)}_{(T_k, \varphi_k)}(\zeta^s_t(x)) \not\geq_{c\ell} \xi^{(t)}_{(T_k, \varphi_k)}(x)$.
\end{proof}

\begin{lemma}\label{lemma:face-containment-criterion}
Let $(T, \varphi)$ have weight $(s_1, \dots, s_n) \in \mathbb{P}^n$. For each $t\in [n]$, let $G_t \in \binom{[s_t]}{a_t}$ and define $F_t := \{(i,t): i\in G_t\}$. If there exists some $u\in L$ such that $\psi^{(t)}_{(T, \varphi)}(u) \subseteq G_t$ for all $t\in [n]$, then $F_1 \cup \dots \cup F_n$ is contained in $\Psi_{(T, \varphi)}(u)$ if and only if $\xi^{(t)}_{(T, \varphi)}(u) \geq_{c\ell} G_t$ for all $t\in [n]$.
\end{lemma}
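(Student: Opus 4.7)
The plan is to unpack the combinatorial structure of $\Psi_{(T,\varphi)}(u)$ and reduce the containment $F_1 \cup \dots \cup F_n \subseteq \Psi_{(T,\varphi)}(u)$ to a purely set-theoretic condition on each color class separately, then match that condition to the colex condition $\xi^{(t)}_{(T,\varphi)}(u) \geq_{c\ell} G_t$. Since $\Psi_{(T,\varphi)}(u) = \big\langle\begin{smallmatrix}\varphi(u)\\\nu(u)\end{smallmatrix}\big\rangle * \langle\{\psi_{(T,\varphi)}(u)\}\rangle$, its faces are precisely the sets of the form $A \cup B$ with $A$ a face of the $\nu(u)$-rib on vertex set $\bigcup_t [\varphi^{[t]}(u)]_t$ and $B \subseteq \psi_{(T,\varphi)}(u)$. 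Projecting onto color $t$, a face meets color $t$ inside $[\varphi^{[t]}(u)]_t \cup \{(i,t): i \in \psi^{(t)}_{(T,\varphi)}(u)\}$, in at most $\nu^{[t]}(u) + |\psi^{(t)}_{(T,\varphi)}(u)|$ vertices.

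The first step I would carry out is to establish two bookkeeping facts. Counting along the path from $r_0$ to $u$ in the induced ${\bf a}$-splitting tree, every edge from a vertex $v$ with $\varphi(v)=t$ to its right child decreases $\nu^{[t]}$ by $1$ and contributes one element to $\psi^{(t)}_{(T,\varphi)}(u)$; combined with injectivity of $v \mapsto \omega^{[t]}(v)$ on the relevant ancestors (which follows because $\omega^{[t]}(v_{\text{right}}) \leq \omega^{[t]}(v_{\text{left}}) = \omega^{[t]}(v)-1$ whenever $\varphi(v)=t$, by properness of $\omega$), this yields $|\psi^{(t)}_{(T,\varphi)}(u)| = a_t - \nu^{[t]}(u)$. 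The same inequality applied along the path from $v$ to $u$ (weak monotonicity of $\omega^{[t]}$ downward, strict at $v$ itself since $\varphi(v)=t$) gives $\varphi^{[t]}(u) = \omega^{[t]}(u) < \omega^{[t]}(v)$ for each $v$ contributing to $\psi^{(t)}_{(T,\varphi)}(u)$, so $\psi^{(t)}_{(T,\varphi)}(u) \cap [\varphi^{[t]}(u)] = \emptyset$. Under the hypothesis $\psi^{(t)}_{(T,\varphi)}(u) \subseteq G_t$, these two facts force $|G_t \setminus \psi^{(t)}_{(T,\varphi)}(u)| = \nu^{[t]}(u)$, so $F_1 \cup \dots \cup F_n \subseteq \Psi_{(T,\varphi)}(u)$ is equivalent to $G_t \setminus \psi^{(t)}_{(T,\varphi)}(u) \subseteq [\varphi^{[t]}(u)]$ for every $t$, or equivalently $G_t \subseteq \psi^{(t)}_{(T,\varphi)}(u) \cup [\varphi^{[t]}(u)]$ for every $t$.

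Next I would identify $\xi^{(t)}_{(T,\varphi)}(u)$ as the colex-maximum $a_t$-subset of $\psi^{(t)}_{(T,\varphi)}(u) \cup [\varphi^{[t]}(u)]$. Since $u \in L$, the right relative sequence of $u$ is just $(u)$, so $k=1$ in the definition, giving $\hat{\psi}^{(t)}_{(T,\varphi)}(u) = \{\varphi^{[t]}(u)\}$ and $\hat{\xi}^{(t)}_{(T,\varphi)}(u) = \{\varphi^{[t]}(u)-1,\dots,\varphi^{[t]}(u)-\nu^{[t]}(u)+1\}$. Thus $\xi^{(t)}_{(T,\varphi)}(u)$ consists of all of $\psi^{(t)}_{(T,\varphi)}(u)$ together with the top $\nu^{[t]}(u)$ elements of $[\varphi^{[t]}(u)]$, and because $\psi^{(t)}_{(T,\varphi)}(u)$ lies strictly above $[\varphi^{[t]}(u)]$, this is precisely the $a_t$ largest elements of $\psi^{(t)}_{(T,\varphi)}(u) \cup [\varphi^{[t]}(u)]$.

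The equivalence then follows quickly. The forward direction is immediate: if $G_t \subseteq \psi^{(t)}_{(T,\varphi)}(u) \cup [\varphi^{[t]}(u)]$, then $G_t$ is an $a_t$-subset of that ground set, so $G_t \leq_{c\ell} \xi^{(t)}_{(T,\varphi)}(u)$. For the reverse, I would argue by contradiction: suppose some $g \in G_t$ lies outside $\psi^{(t)}_{(T,\varphi)}(u) \cup [\varphi^{[t]}(u)]$ and take the largest such $g$; then every element of $G_t$ exceeding $g$ lies in $\psi^{(t)}_{(T,\varphi)}(u)$ (hence in $\xi^{(t)}_{(T,\varphi)}(u)$ as well, by the hypothesis $\psi^{(t)}_{(T,\varphi)}(u) \subseteq G_t$), while every element of $\xi^{(t)}_{(T,\varphi)}(u)$ exceeding $g$ also lies in $\psi^{(t)}_{(T,\varphi)}(u) \subseteq G_t$. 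So $G_t$ and $\xi^{(t)}_{(T,\varphi)}(u)$ agree strictly above $g$, yet $g \in G_t \setminus \xi^{(t)}_{(T,\varphi)}(u)$, forcing $\max(G_t \triangle \xi^{(t)}_{(T,\varphi)}(u)) = g \in G_t$ and thus $G_t >_{c\ell} \xi^{(t)}_{(T,\varphi)}(u)$, a contradiction. The main obstacle is purely bookkeeping: verifying disjointness of $\psi^{(t)}_{(T,\varphi)}(u)$ from $[\varphi^{[t]}(u)]$ and the cardinality identity $|\psi^{(t)}_{(T,\varphi)}(u)| = a_t - \nu^{[t]}(u)$ using properness of the left-weight labeling — once these are in place, the rest is a short colex comparison.
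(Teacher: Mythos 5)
Your proof is correct and follows essentially the same route as the paper's: reduce the containment via the join structure to the per-color condition $G_t\setminus\psi^{(t)}_{(T,\varphi)}(u)\subseteq[\varphi^{[t]}(u)]$ using $|\psi^{(t)}_{(T,\varphi)}(u)|=a_t-\nu^{[t]}(u)$, and then identify this with the colex inequality $G_t\leq_{c\ell}\xi^{(t)}_{(T,\varphi)}(u)$. You merely make explicit the bookkeeping (disjointness of $\psi^{(t)}_{(T,\varphi)}(u)$ from $[\varphi^{[t]}(u)]$, and the explicit form of $\xi^{(t)}_{(T,\varphi)}(u)$ at a terminal vertex) that the paper's proof takes for granted.
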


\begin{proof}
Suppose there exists some $u\in L$ as described. Definition \ref{defn:Psi-complex} yields $\Psi_{(T, \varphi)}(u) = \big\langle \begin{smallmatrix} \varphi(u)\\ \nu(u) \end{smallmatrix} \big\rangle * \langle \{\psi_{(T, \varphi)}(u)\} \rangle$, so $F_1 \cup \dots \cup F_n \in \Psi_{(T, \varphi)}(u)$ if and only if $F_t\backslash \{(i,t): i\in \psi^{(t)}_{(T, \varphi)}(u)\} \in \big\langle \binom{[\varphi^{[t]}(u)]_t}{\nu^{[t]}(u)}\big\rangle$ for all $t\in [n]$. Note that $|\psi^{(t)}_{(T, \varphi)}(u)| = a_t - \nu^{[t]}(u)$, thus it suffices to show that $\max\big(G_t\backslash \psi^{(t)}_{(T, \varphi)}(u)\big) \leq \varphi^{[t]}(u) = \omega^{[t]}(u)$ for all $t\in [n]$ satisfying $|\psi^{(t)}_{(T, \varphi)}(u)| < a_t$ (since the case $|\psi^{(t)}_{(T, \varphi)}(u)| = a_t$ is trivial), which is equivalent to $G_t \leq_{c\ell} \xi^{(t)}_{(T, \varphi)}(u)$.
\end{proof}

\begin{theorem}\label{thm:compressed-like-compatible=>color-compressed}
Let $(T, \varphi)$ be a compressed-like, compatible ${\bf a}$-Macaulay tree. Then $(\Delta_{(T, \varphi)}, \pi_{(T, \varphi)})$ is a pure color-compressed ${\bf a}$-balanced complex.
\end{theorem}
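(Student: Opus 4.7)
The plan is to build on Proposition \ref{prop:Macaulay-tree=>shedding-tree}, which already gives that $(\Delta, \pi) := (\Delta_{(T, \varphi)}, \pi_{(T, \varphi)})$ is pure and ${\bf a}$-balanced, so the remaining task is color-compressedness. I would first reduce this to a statement about facets: for every facet $F = F_1 \cup \cdots \cup F_n$ (with $|F_i| = a_i$), every $t \in [n]$, and every $G_t \in \binom{V_t}{a_t}$ with $G_t \leq_{c\ell} F_t$, the set $G_t \cup \bigcup_{i \neq t} F_i$ is a facet. The general face-level color-compressed condition follows by padding: an arbitrary face $F$ extends by purity to a facet $\hat{F}$, and if $\hat{G}_t := G_t \cup (\hat{F}_t \setminus F_t)$ then $\hat{G}_t \triangle \hat{F}_t = G_t \triangle F_t$, so $\hat{G}_t \leq_{c\ell} \hat{F}_t$, and the facet-level statement suffices.

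To prove the facet-level statement, I would use Lemma \ref{lemma:face-containment-criterion} to locate $u \in L$ with $F \in \Psi_{(T, \varphi)}(u)$, characterized by $\psi^{(i)}_{(T, \varphi)}(u) \subseteq F_i^*$ and $\xi^{(i)}_{(T, \varphi)}(u) \geq_{c\ell} F_i^*$ for every $i \in [n]$, where $F_i^* \in \binom{[s_i]}{a_i}$ is the index set of $F_i$. The goal then becomes to exhibit a $u' \in L$ such that $G := G_t \cup \bigcup_{i \neq t} F_i$ satisfies the criterion of Lemma \ref{lemma:face-containment-criterion} with respect to $u'$: namely $\psi^{(t)}(u') \subseteq G_t^*$, $\xi^{(t)}(u') \geq_{c\ell} G_t^*$, and the conditions $\psi^{(i)}(u') \subseteq F_i^*$, $\xi^{(i)}(u') \geq_{c\ell} F_i^*$ for $i \neq t$. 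If $\psi^{(t)}_{(T, \varphi)}(u) \subseteq G_t^*$ already, then $u' := u$ works since $\xi^{(t)}(u) \geq_{c\ell} F_t^* \geq_{c\ell} G_t^*$.

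The difficult case is when there exists $p \in \psi^{(t)}_{(T, \varphi)}(u) \setminus G_t^*$. Such a $p$ corresponds to an ancestor $v \in Y$ of $u$ with $\varphi(v) = t$, $\omega^{[t]}(v) = p$, and $v_{\text{right}}$ lying on the path from $v$ to $u$. To eliminate this forced element, I would navigate to a $t$-leading vertex rooted in the left subtree of $v$, which is precisely what the $\zeta^s_t$ function accomplishes (for the appropriate $s > t$ capturing the branch that forced $p$). Working inductively on the colex-distance between $F_t^*$ and $G_t^*$, and reducing to cover steps of the colex order, each cover step should be handled by a single application of $\zeta^s_t$ (with $s$ chosen as the component where the colex-decrease occurs), producing the next candidate $u'$.

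The main obstacle will be verifying that the $\zeta$-navigation simultaneously preserves all four subset/colex conditions. Here the compressed-like and compatibility hypotheses enter in an essential way: Proposition \ref{prop:compressed-like-properties}(iii) ensures that $\xi^{(i)}$ is unchanged for indices $i > s$, so the $u'$-conditions for these indices are inherited from those of $u$; Lemma \ref{lemma:compressed-like-t+1-compatible}(i)--(ii) controls $\psi^{(i)}$ and $\xi^{(i)}$ for the intermediate indices $t < i < s$ (the former keeps $\psi^{(i)}(u') \subseteq \xi^{(i)}(u) \geq_{c\ell} F_i^*$, and the latter keeps $\xi^{(i)}(u') \geq_{c\ell} F_i^*$); and the defining inequality of compatibility, $\xi^{(t)}(u) \leq_{c\ell} \xi^{(t)}(\zeta^s_t(u))$, together with Proposition \ref{prop:compressed-like-properties}(ii), shows that $\xi^{(t)}(u') \geq_{c\ell} G_t^*$ and $\psi^{(t)}(u')$ has lost the element $p$. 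Iterating until all forbidden elements are removed yields the desired $u'$ and completes the proof.
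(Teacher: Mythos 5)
Your overall strategy is the paper's: reduce to facets, use Lemma \ref{lemma:face-containment-criterion} to translate membership into the two conditions $\psi^{(i)} \subseteq$ and $\xi^{(i)} \geq_{c\ell}$, and use a $\zeta$-navigation together with Proposition \ref{prop:compressed-like-properties} and Lemma \ref{lemma:compressed-like-t+1-compatible}. But there is a genuine gap at the core step. You propose to exhibit a terminal vertex $u'$ such that the \emph{exact} face $G_t \cup \bigcup_{i\neq t} F_i$ lies in $\Psi_{(T,\varphi)}(u')$, which requires $\psi^{(i)}_{(T,\varphi)}(u') \subseteq F_i^*$ for every $i\neq t$. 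For the colors $i$ on the ``low'' side of the navigation, the only control available is Lemma \ref{lemma:compressed-like-t+1-compatible}(i), which gives $\psi^{(i)}(u') \subseteq \xi^{(i)}(u)$ — and $\xi^{(i)}(u)$ is an $a_i$-set that colex-dominates $F_i^*$ but is in general neither equal to nor contained in $F_i^*$. Your chain ``$\psi^{(i)}(u') \subseteq \xi^{(i)}(u) \geq_{c\ell} F_i^*$'' therefore does not yield $\psi^{(i)}(u') \subseteq F_i^*$, and the target face need not lie in $\Psi_{(T,\varphi)}(u')$ at all: the navigation can introduce new forced elements in other colors. This is precisely why the paper does \emph{not} prove the target face is in $\Psi$ of the navigated vertex. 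Instead it proves that the auxiliary facet $G := F'_t \cup \bigcup_{i\neq t}\{(s,i): s\in \xi^{(i)}_{(T,\varphi)}(\zeta^t_0(u_F))\}$ — whose coordinates for $i\neq t$ are the signatures of the new vertex, not $F_i^*$ — lies in $\Psi_{(T,\varphi)}(\zeta^t_0(u_F))$, checks that $G$ colex-dominates $F'_{\langle t\rangle}$ in every coordinate, and then closes the argument with a minimal-counterexample induction on the depth-first position of $u_F$ (choosing a bad facet whose terminal vertex is depth-first minimal, and noting $\zeta^t_0(u_F) < u_F$). Your proposal has no analogue of this second, global induction, and the per-cover-step induction on colex-distance in color $t$ alone cannot repair it.

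A secondary but related problem is the indexing of the navigation maps. The element $p$ you want to discard lives in $\psi^{(t)}(u)$ and is forced by an ancestor labeled $t$, and you start and must end at a terminal (i.e.\ $0$-leading) vertex; the relevant map is therefore $\zeta^t_0$, with superscript equal to the color being decremented and subscript $0$. Writing $\zeta^s_t$ with $s>t$ ``the component where the colex-decrease occurs'' is inconsistent (that component is $t$), and correspondingly the ``intermediate'' indices controlled by Lemma \ref{lemma:compressed-like-t+1-compatible} are $0 < i < t$, while Proposition \ref{prop:compressed-like-properties}(iii) handles $i > t$. Getting these roles right matters because it determines which of the two (incomparable) containment/colex controls is available for each color, which is exactly where the gap above arises.
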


\begin{proof}
Denote the $(\text{root}, 1, 3)$-triple of $T$ by $(r_0, L, Y)$, let $(T, \varphi|_Y, \nu)$ be the ${\bf a}$-splitting tree induced by $(T, \varphi|_Y)$, and let $\omega$ be the left-weight labeling of $(T_, \varphi)$. We know from Proposition \ref{prop:Macaulay-tree=>shedding-tree} that $(\Delta_{(T, \varphi)}, \pi_{(T, \varphi)})$ is a pure ${\bf a}$-balanced complex, so we are left to show that $(\Delta_{(T, \varphi)}, \pi_{(T, \varphi)})$ is color-compressed.

Let $F$ be an arbitrary facet of $\Delta_{(T, \varphi)}$. By construction, there exists a unique terminal vertex in $L$, which we denote by $u_F$, such that $F\in \Psi_{(T, \varphi)}(u_F)$. Recall that $[s]_i := \{(1,i), (2,i), \dots, (s,i)\}$ for any $s\in \mathbb{P}$, $i\in [n]$, and note that $\pi_{(T, \varphi)} = ([s_1]_1, \dots, [s_n]_n)$, where $(s_1, \dots, s_n) \in \mathbb{P}^n$ is the weight of $(T, \varphi)$. For each $i\in [n]$, define $F_i := F \cap [s_i]_i \in \binom{[s_i]_i}{a_i}$, and define $\hat{F}_i := \{s\in \mathbb{P}: (s,i) \in F_i\} \subseteq [s_i]$. If $\hat{F}_i \neq [a_i]$, then let $\hat{F}_i^{\prime} < \hat{F}_i$ denote the immediate predecessor of $\hat{F}_i$ in $\binom{\mathbb{P}}{a_t}$ with respect to the colex order, while if $\hat{F}_i = [a_i]$, then let $\hat{F}_i^{\prime} := [a_i]$. In either case, define $F_i^{\prime} := \{(s,i) : s\in \hat{F}_i^{\prime}\}$ and $F^{\prime}_{\langle i \rangle} := (F\backslash F_i) \cup F_i^{\prime}$. Let $\mathcal{S}_{(T, \varphi)} := \big\{F\in \Delta_{(T, \varphi)}: \text{$F$ is a facet of $\Delta_{(T, \varphi)}$, and $\exists \ i\in [n]$ such that $F^{\prime}_{\langle i\rangle} \not\in \Delta_{(T, \varphi)}$}\big\}$. By the definition of color compression, $(\Delta_{(T, \varphi)}, \pi_{(T, \varphi)})$ is color-compressed if and only if $\mathcal{S}_{(T, \varphi)} = \emptyset$.

Suppose $\mathcal{S}_{(T, \varphi)} \neq \emptyset$. Choose any $F \in \mathcal{S}_{(T, \varphi)}$ such that $u_F$ is minimal with respect to the depth-first order, and let $t\in [n]$ satisfy $F^{\prime}_{\langle t\rangle} \not\in \Delta_{(T, \varphi)}$. If $\psi_{(T, \varphi)}^{(t)}(u_F) = \emptyset$, then $\hat{F}_t \subseteq [\varphi^{[t]}(u_F)]$ by definition, which implies $\hat{F}^{\prime}_t \subseteq [\varphi^{[t]}(u_F)]$, thus $F^{\prime}_{\langle t \rangle} \in \Psi_{(T, \varphi)}(u_F)$, which is a contradiction. Consequently, we can assume $\psi_{(T, \varphi)}^{(t)}(u_F) \neq \emptyset$, so write $\psi_{(T, \varphi)}^{(t)}(u_F) = \{q_1, \dots, q_m\}$ (where $1 \leq m \leq a_t$) such that $q_1 < \dots < q_m$. For each $i\in [m]$, let $y_i\in Y$ be the unique ancestor of $u_F$ such that $(\omega^{[\varphi(y_i)]}(y_i), \varphi(y_i)) = (q_i, t)$. If $m<a_t$ and $\hat{F}_t\backslash \psi_{(T, \varphi)}^{(t)}(u_F) \neq [a_t-m]$, then $\psi_{(T, \varphi)}^{(t)}(u_F) \subseteq \hat{F}_t^{\prime}$ by the definition of the colex order, which means $F^{\prime}_{\langle t\rangle} \in \Psi_{(T, \varphi)}(u_F)$, and again we have a contradiction, hence we can further assume that $\hat{F}_t = \{q_1, \dots, q_m\} \cup [a_t-m]$.

Note that $\psi^{(t)}_{(T, \varphi)}(u_F) \neq \emptyset$ implies the $t$-th entry of ${\bf a} - \nu(u_F)$ is positive, so $\zeta^t_0(u_F)$ is well-defined, and Proposition \ref{prop:compressed-like-properties}\ref{cond:compressed-like-zeta-signature-q1} tells us $q_1 > a_t-m+1$. The definition of colex order yields $\hat{F}_t^{\prime} = \{q_1-i: i\in [a_t-m+1]\} \cup \{q_2, \dots, q_m\}$, thus Proposition \ref{prop:compressed-like-properties}\ref{cond:compressed-like-zeta-explicit-signature} gives $\xi^{(t)}_{(T, \varphi)}\big(\zeta^t_0(u_F)\big) = \hat{F}_t^{\prime}$. Also, Proposition \ref{prop:compressed-like-properties}\ref{cond:compressed-like-zeta-signature>s} says $\xi^{(i)}_{(T, \varphi)}\big(\zeta^t_0(u_F)\big) = \xi^{(i)}_{(T, \varphi)}(u_F)$ for all integers $t<i\leq n$ (if any), while Lemma \ref{lemma:compressed-like-t+1-compatible} says $\psi^{(i)}_{(T, \varphi)}\big(\zeta^t_0(u_F)\big) \subseteq \xi^{(i)}_{(T, \varphi)}(u_F)$ and $\xi^{(i)}_{(T, \varphi)}\big(\zeta^t_0(u_F)\big) \geq_{c\ell} \xi^{(i)}_{(T, \varphi)}(u_F)$ for all $i\in [t-1]$. Now, define $G := F_t^{\prime} \cup \Big(\bigcup_{i\in [n]\backslash\{t\}} \big\{(s,i): s\in \xi^{(i)}_{(T, \varphi)}\big(\zeta^t_0(u_F)\big)\big\}\Big)$, and observe that $\big(\widehat{F_{\langle t\rangle}^{\prime}}\big)_i \leq_{c\ell} \hat{G}_i$ for all $i\in [n]$. However, Lemma \ref{lemma:face-containment-criterion} yields $G\in \Psi_{(T, \varphi)}\big(\zeta^t_0(u_F)\big)$, and $\zeta^t_0(u_F)$ is strictly smaller than $u_F$ in the depth-first order, so the minimality of $u_F$ implies $F_{\langle t\rangle}^{\prime} \in \Delta_{(T, \varphi)}$ (although not necessarily in $\Psi_{(T, \varphi)}\big(\zeta^t_0(u_F)\big)$), which is a contradiction.
\end{proof}

\begin{theorem}\label{thm:numerical-char-color-compressed-complex}
Let $(T, \varphi)$ be an ${\bf a}$-Macaulay tree of $N$ for some ${\bf a} \in \mathbb{P}^n$, $N\in \mathbb{P}$. Then $(T, \varphi)$ is the ${\bf a}$-Macaulay tree induced by the shedding tree of some pure color-compressed ${\bf a}$-balanced complex with $N$ facets if and only if $(T, \varphi)$ is condensed, compressed-like, and compatible.
\end{theorem}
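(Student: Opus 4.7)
The proof will be a direct assembly of the machinery developed in Sections \ref{subsec:ConstructionOfSheddingTrees}--\ref{subsec:CompressedLikeCompatibleMacTrees}, split into the two implications of the biconditional. Throughout, I will freely use that color-compressed implies color-shifted (Remark \ref{remark:color-shifted=color-compressed-in-1n-case} in its general direction), so that results stated for the color-shifted setting apply.

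For the forward direction, suppose $(T, \varphi)$ is the ${\bf a}$-Macaulay tree induced by the shedding tree of a pure color-compressed ${\bf a}$-balanced complex $(\Delta, \pi)$ with $N$ facets. Since $(\Delta,\pi)$ is color-shifted, Theorem \ref{thm:shedding-tree=>Macaulay-tree} already delivers that $(T, \varphi)$ is a condensed ${\bf a}$-Macaulay tree of $N$. The color-compression hypothesis then upgrades this: Theorem \ref{thm:color-compressed=>compressed-like} yields that $(T, \varphi)$ is compressed-like, and Theorem \ref{thm:color-compressed=>compatible} yields that $(T, \varphi)$ is compatible. This handles the forward direction in full.

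For the backward direction, suppose $(T, \varphi)$ is a condensed, compressed-like, compatible ${\bf a}$-Macaulay tree of $N$. Consider the pair $(\Delta_{(T, \varphi)}, \pi_{(T, \varphi)})$ constructed in Definition \ref{defn:Psi-complex}. By Proposition \ref{prop:Macaulay-tree=>shedding-tree}, this is a pure ${\bf a}$-balanced complex with exactly $N$ facets. Since $(T, \varphi)$ is assumed compressed-like and compatible, Theorem \ref{thm:compressed-like-compatible=>color-compressed} tells us that $(\Delta_{(T, \varphi)}, \pi_{(T, \varphi)})$ is in fact color-compressed. It therefore remains only to identify $(T, \varphi)$ with the ${\bf a}$-Macaulay tree induced by the shedding tree of $(\Delta_{(T, \varphi)}, \pi_{(T, \varphi)})$.

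The key observation here, which is the one place requiring genuine attention rather than mere invocation, is that Remark \ref{remark:distinct-Mac-trees-yield-same-complex} asserts the condensation of $(T, \varphi)$ equals the ${\bf a}$-Macaulay tree induced by the shedding tree of $(\Delta_{(T, \varphi)}, \pi_{(T, \varphi)})$. Since $(T, \varphi)$ is by hypothesis already condensed, its condensation is $(T, \varphi)$ itself, so $(T, \varphi)$ is indeed induced by the shedding tree of a pure color-compressed ${\bf a}$-balanced complex with $N$ facets, completing the proof. The main conceptual obstacle has been entirely absorbed into the earlier results: the structural lemmas of Section \ref{subsec:CompressedLikeCompatibleMacTrees} (especially Proposition \ref{prop:compressed-like-properties}, Lemma \ref{lemma:compressed-like-t+1-compatible} and Theorem \ref{thm:compressed-like-compatible=>color-compressed}) do the heavy lifting on the combinatorial side, while the condensed hypothesis is what pins down a unique representative Macaulay tree in each equivalence class of trees producing the same balanced complex, as illustrated in Figure \ref{Fig:distinct-Mac-trees-yield-same-complex}.
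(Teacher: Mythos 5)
Your proposal is correct and follows essentially the same route as the paper's own proof: the forward direction assembles Theorems \ref{thm:shedding-tree=>Macaulay-tree}, \ref{thm:color-compressed=>compressed-like} and \ref{thm:color-compressed=>compatible}, and the backward direction combines Proposition \ref{prop:Macaulay-tree=>shedding-tree}, Theorem \ref{thm:compressed-like-compatible=>color-compressed} and Remark \ref{remark:distinct-Mac-trees-yield-same-complex}, with the condensed hypothesis forcing the condensation to be the tree itself. Your extra note that color-compressed implies color-shifted (so the shedding-tree machinery applies) is a correct and worthwhile clarification that the paper leaves implicit.
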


\begin{proof}
If there exists a pure color-compressed ${\bf a}$-balanced complex $(\Delta, \pi)$ with $N$ facets, such that $(T, \varphi)$ is the ${\bf a}$-Macaulay tree induced by the shedding tree of $(\Delta, \pi)$, then Theorem \ref{thm:shedding-tree=>Macaulay-tree} says $(T, \varphi)$ is condensed, Theorem \ref{thm:color-compressed=>compressed-like} yields $(T, \varphi)$ is compressed-like, and Theorem \ref{thm:color-compressed=>compatible} tells us $(T, \varphi)$ is compatible. Conversely, if instead $(T, \varphi)$ is condensed, compressed-like, and compatible, then Proposition \ref{prop:Macaulay-tree=>shedding-tree} gives the pure ${\bf a}$-balanced complex $(\Delta_{(T, \varphi)}, \pi_{(T, \varphi)})$ with $N$ facets, Theorem \ref{thm:compressed-like-compatible=>color-compressed} tells us $(\Delta_{(T, \varphi)}, \pi_{(T, \varphi)})$ is color-compressed, while by Remark \ref{remark:distinct-Mac-trees-yield-same-complex}, the ${\bf a}$-Macaulay tree induced by the shedding tree of $(\Delta_{(T, \varphi)}, \pi_{(T, \varphi)})$ is precisely $(T, \varphi)$.
\end{proof}

\subsection{Generalized ${\bf a}$-Macaulay Representations}\label{subsec:GenMacReps}
\begin{definition} Let $N\in \mathbb{P}$, ${\bf a} \in \mathbb{N}^n\backslash \{{\bf 0}_n\}$. A {\it generalized ${\bf a}$-Macaulay representation of $N$} is a condensed compressed-like compatible ${\bf a}$-Macaulay tree of $N$. Explicitly, it is a pair $(T, \varphi)$ such that
\begin{enum*}
\item $T$ is a trivalent planted binary tree with $(\text{root},1,3)$-triple $(r_0, L, Y)$.
\item\label{MacRepDefnMacTree} $\varphi$ is a vertex labeling of $T$ satisfying all of the following conditions.
\begin{enum*}
\item $\varphi(r_0) = {\bf a}$, $\varphi(Y) \subseteq [n]$ and $\varphi(L) \subseteq \mathbb{P}^n$.
\item If $y, y^{\prime} \in Y$ satisfies $y^{\prime} \in \mathcal{D}_T(y)$, i.e. $y^{\prime}$ is a descendant of $y$, then $\varphi(y) \geq \varphi(y^{\prime})$.
\item If $y\in Y$ satisfies $\varphi(y) < n$, then $\varphi^{[t]}(u) = \varphi^{[t]}(u^{\prime})$ for all $u, u^{\prime} \in L \cap \mathcal{D}_T(y)$ and all $t\in [n]\backslash [\varphi(y)]$.
\item The left-weight labeling $\omega$ of $(T, \varphi)$ satisfies $\omega(y_{\text{left}}) \geq \omega(y_{\text{right}})$ for every $y\in Y$.
\item The ${\bf a}$-splitting tree $(T, \varphi|_Y, \nu)$ induced by $(T, \varphi|_Y)$ satisfies $\varphi(u) \geq \nu(u) > {\bf 0}_n$ for all $u\in L$.
\item If $y\in Y$ satisfies $\nu^{[\varphi(y)]}(y) = 1$, then $\omega^{[\varphi(y)]}(x) = \omega^{[\varphi(y)]}(y) - 1$ for all $x\in \mathcal{D}_T(y_{\text{right}})$.
\item $N$ can be written as the sum $N = \sum_{u\in L} \binom{\varphi(u)}{\nu(u)}$.
\end{enum*}
\item\label{MacRepDefnCondensed} There does not exist any $y\in Y$ satisfying both of the following two conditions:
\begin{enum*}
\item The two subgraphs of $T$ induced by $\mathcal{D}_T(y_{\text{left}})$ and $\mathcal{D}_T(y_{\text{right}})$ are isomorphic as rooted trees, and they have the same corresponding $\varphi$-labels, i.e. if $\beta: \mathcal{D}_T(y_{\text{left}}) \to \mathcal{D}_T(y_{\text{right}})$ is the corresponding isomorphism, then $\varphi(\beta(x)) = \varphi(x)$ for all $x\in \mathcal{D}_T(y_{\text{left}})$.
\item Both $y_{\text{left}}$ and $y_{\text{right}}$ are $(\varphi(y)-1)$-leading vertices of $(T, \varphi)$.
\end{enum*}
\item\label{MacRepDefnCompressedLike} For every $t\in [n]$ and every $y\in Y$ such that $\varphi(y) = t$, the following conditions hold:
\begin{enum*}
\item If $y$ is a descendant of $x_{\text{left}}$ for some $x \in Y$ satisfying $\varphi(x) = t$, and $x_0, x_1, \dots, x_{\ell}$ is the path (of length $\ell\geq 2$) from $x_0 = x$ to $x_{\ell} = y_{\text{right}}$, then $\omega^{[t]}(x_i) = \omega^{[t]}(x) - i$ for all $i\in [\ell]$.
\item Let $y_1, \dots, y_m$ be the right relative sequence of $y_{\text{left}}$ in $T$. If $k\in [m]$ is the smallest integer such that $y_k \in L$ or $\varphi(y_k) \neq t$, then $\omega^{[t^{\prime}]}(y_k) \geq \omega^{[t^{\prime}]}(y_{\text{right}})$ for all $t^{\prime} \in [n]\backslash \{t\}$.
\end{enum*}
\item\label{MacRepDefnCompatible} $\xi_{(T, \varphi)}^{(t)}(x) \leq_{c\ell} \xi_{(T, \varphi)}^{(t)}\big(\zeta^i_t(x)\big)$ for every $i,t\in [n]$ satisfying $t<i$ and every $t$-leading vertex $x$ of $(T, \varphi)$ such that the $i$-th entry of ${\bf a} - \nu(x)$ is positive.
\end{enum*}
\end{definition}
\begin{remark}
Conditions \ref{MacRepDefnMacTree}, \ref{MacRepDefnCondensed} and \ref{MacRepDefnCompressedLike} can be easily checked by hand (at least when $T$ is not too large), while condition \ref{MacRepDefnCompatible} is somewhat tedious to check. For large $n$, we suggest checking these conditions algorithmically. All conditions involve only comparing the labels and left-weights of certain pairs of vertices in $T$, hence generalized Macaulay representations are purely numerical notions and do not depend on the existence of any colored complexes.
\end{remark}
For $N=0$, the {\it generalized ${\bf a}$-Macaulay representation of $0$} is defined to be the pair $\alpha_{\emptyset} := (T_{\emptyset}, \varphi_{\emptyset})$, where $T_{\emptyset}$ is the null graph (i.e. the unique graph with zero vertices), and $\varphi_{\emptyset}: \emptyset \to \emptyset$ is the (unique) function whose domain and codomain are empty sets. A {\it generalized ${\bf a}$-Macaulay representation} is a generalized ${\bf a}$-Macaulay representation of $N$ for some $N\in \mathbb{N}$, and a {\it generalized Macaulay representation} is a generalized ${\bf a}$-Macaulay representation for some ${\bf a} \in \mathbb{N}^n\backslash \{{\bf 0}_n\}$. The $n$-tuple ${\bf a}$ is called the {\it type} of this generalized Macaulay representation. A generalized Macaulay representation $\alpha$ is called {\it non-trivial} if $\alpha \neq \alpha_{\emptyset}$, and called {\it trivial} if $\alpha = \alpha_{\emptyset}$.

In general, a generalized ${\bf a}$-Macaulay representation of $N$ is not uniquely determined by ${\bf a}$ and $N$. However, for $n=1$, every compressed-like $k$-Macaulay tree is compatible (for any $k\in \mathbb{P}$), so Proposition \ref{prop:condensed-compressed-case-n=1} says the generalized $k$-Macaulay representation of $N$ is unique given $k,N\in \mathbb{P}$. For arbitrary ${\bf a} \in \mathbb{P}^n$, Theorem \ref{thm:numerical-char-color-compressed-complex} gives the bijection
\begin{equation}
\left\{\begin{tabular}{c}\text{isomorphism classes of pure color-compressed}\\ \text{${\bf a}$-balanced complexes with $N$ facets} \end{tabular} \right\} \longleftrightarrow \left\{\begin{tabular}{c}\text{generalized ${\bf a}$-Macaulay}\\ \text{representations of $N$}\end{tabular} \right\}.
\end{equation}

\begin{definition}
Let ${\bf a} \in \mathbb{P}^n$, let $\alpha = (T, \varphi)$ be an ${\bf a}$-Macaulay tree of $N \in \mathbb{P}$, let $(r_0, L, Y)$ be the $(\text{root},1,3)$-triple of $T$, and let $(T, \varphi|_Y, \nu)$ be the ${\bf a}$-splitting tree induced by $(T, \varphi|_Y)$. Then for any ${\bf x}\in \mathbb{Z}^n$, define
\[\partial_{[{\bf x}]}(\alpha) := \sum_{u\in L} \binom{\varphi(u)}{\nu(u) + {\bf x}}.\]
\end{definition}

\begin{theorem}\label{thm:numerical-char-pure-color-shifted-complex}
Let ${\bf a}\in \mathbb{P}^n$, and let $f = \{f_{{\bf b}}\}_{{\bf b} \leq {\bf a}}$ be an array of integers. Then the following are equivalent:
\begin{enum*}
\item $f$ is the fine $f$-vector of a pure color-compressed ${\bf a}$-balanced complex.
\item $f_{{\bf a}} > 0$, and there is a generalized ${\bf a}$-Macaulay representation $\alpha$ of $f_{{\bf a}}$ such that $f_{{\bf b}} = \partial_{[{\bf b} - {\bf a}]}(\alpha)$ for all ${\bf b} \leq {\bf a}$.
\end{enum*}
\end{theorem}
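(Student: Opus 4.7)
The plan is to reduce this theorem almost entirely to the machinery already built up in Section \ref{subsec:CompressedLikeCompatibleMacTrees}, with the key observation being that the differential $\partial_{[{\bf x}]}$ is definitionally engineered so that, when applied to the Macaulay tree induced by the shedding tree of a pure color-compressed complex $(\Delta,\pi)$, it reproduces the summation formula for the fine $f$-vector from Proposition \ref{prop:shedding-tree-fine-f-vector}.

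For the direction (i) $\Rightarrow$ (ii), I would start with a pure color-compressed ${\bf a}$-balanced complex $(\Delta,\pi)$ whose fine $f$-vector is $f$. Since $\Delta$ is pure of dimension $|{\bf a}|-1$ and non-empty (by our standing assumption on simplicial complexes), we immediately get $f_{\bf a} = f_{\bf a}(\Delta) > 0$. By Remark \ref{remark:color-shifted=color-compressed-in-1n-case}, $(\Delta,\pi)$ is also color-shifted, so the results of Section \ref{subsec:ConstructionOfSheddingTrees} apply; let $\alpha = (T,\varphi)$ be the ${\bf a}$-Macaulay tree induced by the shedding tree of $(\Delta,\pi)$. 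Theorem \ref{thm:numerical-char-color-compressed-complex} then tells us $\alpha$ is a generalized ${\bf a}$-Macaulay representation of $f_{\bf a}$. With notation as in Section \ref{subsec:ConstructionOfSheddingTrees}, the proof of Theorem \ref{thm:shedding-tree=>Macaulay-tree} shows that the vertex labels $\varphi(u)$ on terminal vertices coincide with $\phi^{[\mathcal{L},\boldsymbol{\lambda}]}(u)$, while the induced splitting tree labels $\nu(u)$ coincide with $\phi^{[\mathcal{L},{\bf a}]}(u)$. Substituting these identifications into Proposition \ref{prop:shedding-tree-fine-f-vector} gives
\[f_{\bf b} = f_{\bf b}(\Delta) = \sum_{u\in L} \binom{\varphi(u)}{\nu(u) + {\bf b} - {\bf a}} = \partial_{[{\bf b}-{\bf a}]}(\alpha)\]
for every ${\bf b}\leq {\bf a}$, which is exactly the desired identity.

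For the converse (ii) $\Rightarrow$ (i), I would take the given generalized ${\bf a}$-Macaulay representation $\alpha = (T,\varphi)$ of $N := f_{\bf a} > 0$ and apply Theorem \ref{thm:numerical-char-color-compressed-complex} in the other direction to obtain a pure color-compressed ${\bf a}$-balanced complex $(\Delta_{(T,\varphi)}, \pi_{(T,\varphi)})$ with $N$ facets. By Remark \ref{remark:distinct-Mac-trees-yield-same-complex}, the ${\bf a}$-Macaulay tree induced by the shedding tree of this complex is exactly $\alpha$ itself (since $\alpha$ is already condensed). Applying Proposition \ref{prop:shedding-tree-fine-f-vector} once more, using the same identification $\varphi(u) = \phi^{[\mathcal{L},\boldsymbol{\lambda}]}(u)$ and $\nu(u) = \phi^{[\mathcal{L},{\bf a}]}(u)$, I recover
\[f_{\bf b}(\Delta_{(T,\varphi)}) = \sum_{u\in L}\binom{\varphi(u)}{\nu(u)+{\bf b}-{\bf a}} = \partial_{[{\bf b}-{\bf a}]}(\alpha) = f_{\bf b}\]
for every ${\bf b}\leq {\bf a}$, completing the equivalence.

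There is no real obstacle here beyond correctly invoking the earlier results: the hard work is already packaged into Theorem \ref{thm:numerical-char-color-compressed-complex} (the bijection between pure color-compressed ${\bf a}$-balanced complexes and generalized ${\bf a}$-Macaulay representations) and into Proposition \ref{prop:shedding-tree-fine-f-vector} (the summation formula). The only point that warrants an explicit sentence in the write-up is the identification of $\varphi(u), \nu(u)$ with the relevant components of the $\phi$-labels, so that the formula in Proposition \ref{prop:shedding-tree-fine-f-vector} matches the definition of $\partial_{[{\bf b}-{\bf a}]}$ term by term; once this is noted, both directions collapse into a single line of computation. Given that the bulk of the preceding section was devoted to establishing exactly this bijection, the theorem is best presented as a short corollary of Theorem \ref{thm:numerical-char-color-compressed-complex}.
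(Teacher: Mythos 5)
Your proposal is correct and matches the paper's argument exactly: the paper proves this theorem in one line by citing Theorem \ref{thm:numerical-char-color-compressed-complex} and Proposition \ref{prop:shedding-tree-fine-f-vector}, which are precisely the two ingredients you combine. Your expansion, including the identification of $\varphi(u),\nu(u)$ with $\phi^{[\mathcal{L},\boldsymbol{\lambda}]}(u),\phi^{[\mathcal{L},{\bf a}]}(u)$ so that the formula of Proposition \ref{prop:shedding-tree-fine-f-vector} matches the definition of $\partial_{[{\bf b}-{\bf a}]}$, is accurate.
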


\begin{proof}
This immediately follows from Theorem \ref{thm:numerical-char-color-compressed-complex} and Proposition \ref{prop:shedding-tree-fine-f-vector}.
\end{proof}

\begin{theorem}\label{thm:numerical-char-completely-balanced-CM-complex}
Let $d\in \mathbb{P}$, let $f = \{f_S\}_{S \subseteq [d]}$ be an array of integers, and define $\phi: 2^{[d]} \to \{0,1\}^d$ by $S \mapsto (s_1, \dots, s_d)$, where $s_i = 1$ if and only if $i\in S$. Then the following are equivalent:
\begin{enum*}
\item $f$ is the flag $f$-vector of a $(d-1)$-dimensional completely balanced CM complex.
\item $f_{[d]} > 0$, and there is a generalized ${\bf 1}_d$-Macaulay representation $\alpha$ of $f_{[d]}$ such that $f_S = \partial_{[\phi(S) - {\bf 1}_d]}(\alpha)$ for all $S \subseteq [d]$.
\end{enum*}
\end{theorem}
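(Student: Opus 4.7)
The plan is to reduce Theorem \ref{thm:numerical-char-completely-balanced-CM-complex} directly to Theorem \ref{thm:numerical-char-pure-color-shifted-complex} specialized at ${\bf a} = {\bf 1}_d$, using the chain of equivalences already established in the paper. The first step is simply a bookkeeping observation: under the bijection $\phi: 2^{[d]} \to \{0,1\}^d$, the flag $f$-vector $\{f_S\}_{S\subseteq [d]}$ of a $(d-1)$-dimensional completely balanced complex (i.e. of type ${\bf 1}_d$) is identified with its fine $f$-vector $\{f_{{\bf b}}\}_{{\bf b}\leq {\bf 1}_d}$, by the definition of flag $f$-vector in Section \ref{sec:Preliminaries}. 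In particular, the condition $f_{[d]} > 0$ corresponds to $f_{{\bf 1}_d} > 0$, i.e. the complex has at least one facet of dimension $d-1$.

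Next, I would invoke Corollary \ref{cor:BalancedCMListOfEquiv} applied to ${\bf a} = {\bf 1}_d$: an array $f$ is the fine $f$-vector of a ${\bf 1}_d$-balanced CM complex if and only if it is the fine $f$-vector of a pure color-shifted ${\bf 1}_d$-balanced complex. Then, by Remark \ref{remark:color-shifted=color-compressed-in-1n-case}, in the case ${\bf a} = {\bf 1}_n$ the notions of color-shifted and color-compressed coincide, so $f$ is the fine $f$-vector of a pure color-shifted ${\bf 1}_d$-balanced complex if and only if it is the fine $f$-vector of a pure color-compressed ${\bf 1}_d$-balanced complex. (Alternatively, Corollary \ref{cor:ColorCompressionPreservesFineFVector} plus Corollary \ref{cor:color-shifted-balanced-equiv-to-pure} would give the same conclusion.)

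Finally, I would apply Theorem \ref{thm:numerical-char-pure-color-shifted-complex} with ${\bf a} = {\bf 1}_d$ (and $N = f_{{\bf 1}_d}$), which states that an array $\{f_{{\bf b}}\}_{{\bf b}\leq {\bf 1}_d}$ is the fine $f$-vector of a pure color-compressed ${\bf 1}_d$-balanced complex if and only if $f_{{\bf 1}_d} > 0$ and there exists a generalized ${\bf 1}_d$-Macaulay representation $\alpha$ of $f_{{\bf 1}_d}$ with $f_{{\bf b}} = \partial_{[{\bf b} - {\bf 1}_d]}(\alpha)$ for every ${\bf b} \leq {\bf 1}_d$. Translating back through $\phi$, this is precisely the statement that $f_{[d]} > 0$ and $f_S = \partial_{[\phi(S) - {\bf 1}_d]}(\alpha)$ for all $S \subseteq [d]$.

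Since every step is a citation of a result already proved in the paper, there is essentially no obstacle; the only things to check carefully are the type-${\bf 1}_d$ specialization of Corollary \ref{cor:BalancedCMListOfEquiv} (which gives CM $\Leftrightarrow$ pure color-shifted at the level of fine $f$-vectors) and the flag-versus-fine bookkeeping under $\phi$. The proof itself will therefore be a short two or three sentences citing Theorem \ref{thm:numerical-char-pure-color-shifted-complex}, Corollary \ref{cor:BalancedCMListOfEquiv}, and Remark \ref{remark:color-shifted=color-compressed-in-1n-case}, with the $\phi$-identification invoked implicitly.
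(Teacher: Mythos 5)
Your proposal is correct and matches the paper's proof essentially verbatim: the paper also deduces the theorem from Corollary \ref{cor:BalancedCMListOfEquiv}, Remark \ref{remark:color-shifted=color-compressed-in-1n-case}, and Theorem \ref{thm:numerical-char-pure-color-shifted-complex}, with the flag-versus-fine identification under $\phi$ handled implicitly. No gaps.
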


\begin{proof}
As mentioned in Remark \ref{remark:color-shifted=color-compressed-in-1n-case}, the notion of `color-compressed' is equivalent to the notion of `color-shifted' in the specific case of type ${\bf 1}_d$, hence the assertion follows from Corollary \ref{cor:BalancedCMListOfEquiv} and Theorem \ref{thm:numerical-char-pure-color-shifted-complex}.
\end{proof}

\begin{remark}
In view of Theorem \ref{thm:BFSthm}, there is also a different but equivalent numerical characterization of the flag $h$-vectors of completely balanced CM complexes; see Corollary \ref{cor:equiv-flag-h-vector-char}.
\end{remark}

\begin{example}\label{example:non-trivial-non-eg-completely-balanced-CM}
By the Frankl-F\"{u}redi-Kalai theorem~\cite{book:BilleraBjornerHandbookDiscCompGeom}, the vector $(7, 11, 5)$ is the $f$-vector of a completely balanced CM complex, since the corresponding $h$-vector $(1,4,0,0)$ is the $f$-vector (including the $f_{-1}$ term) of a ${\bf 1}_3$-colored complex. One possible refinement of $(7, 11, 5)$ is the array $F = \{F_S\}_{S\subseteq [3]}$, given by
\begin{equation*}
F_{\{1,2,3\}} = 5,\ F_{\{1,2\}} = 5,\ F_{\{1, 3\}} = 3,\ F_{\{2,3\}} = 3,\ F_{\{1\}} = 3,\ F_{\{2\}} = 2,\ F_{\{3\}} = 2,\ F_{\emptyset} = 1,
\end{equation*}
and we would like to know if $F$ is the flag $f$-vector of a completely balanced CM complex. Figure \ref{Fig:non-trivial-eg} gives a list of all $24$ generalized ${\bf 1}_3$-Macaulay representations of $5$. Using Theorem \ref{thm:numerical-char-completely-balanced-CM-complex}, we then conclude that up to permutations of the $3$ colors such that $f_{\{1,2\}} \geq f_{\{1,3\}} \geq f_{\{2,3\}}$, the possible flag $f$-vectors $\{f_S\}_{S \subseteq [3]}$ of a $2$-dimensional completely balanced CM complex satisfying $f_{[3]} = 5$ (and $f_{\emptyset} = 1$) are given by
\begin{equation*}
\begin{bmatrix}f_{\{1,2\}} & f_{\{1,3\}} & f_{\{2,3\}}\\ f_{\{1\}} & f_{\{2\}} & f_{\{3\}} \end{bmatrix} \in \left\{ \begin{bmatrix}5 & 5 & 1\\ 5 & 1 & 1 \end{bmatrix}, \begin{bmatrix}5 & 4 & 2\\ 4 & 2 & 1 \end{bmatrix}, \begin{bmatrix}5 & 3 & 3\\ 3 & 3 & 1 \end{bmatrix}, \begin{bmatrix}5 & 3 & 2\\ 3 & 2 & 1 \end{bmatrix}, \begin{bmatrix}4 & 4 & 3\\ 3 & 2 & 2 \end{bmatrix}, \begin{bmatrix}4 & 3 & 3\\ 2 & 2 & 2 \end{bmatrix} \right\}.
\end{equation*}
Therefore $F$ is not the flag $f$-vector of a completely balanced CM complex.
\end{example}

\begin{figure}[hb!t]
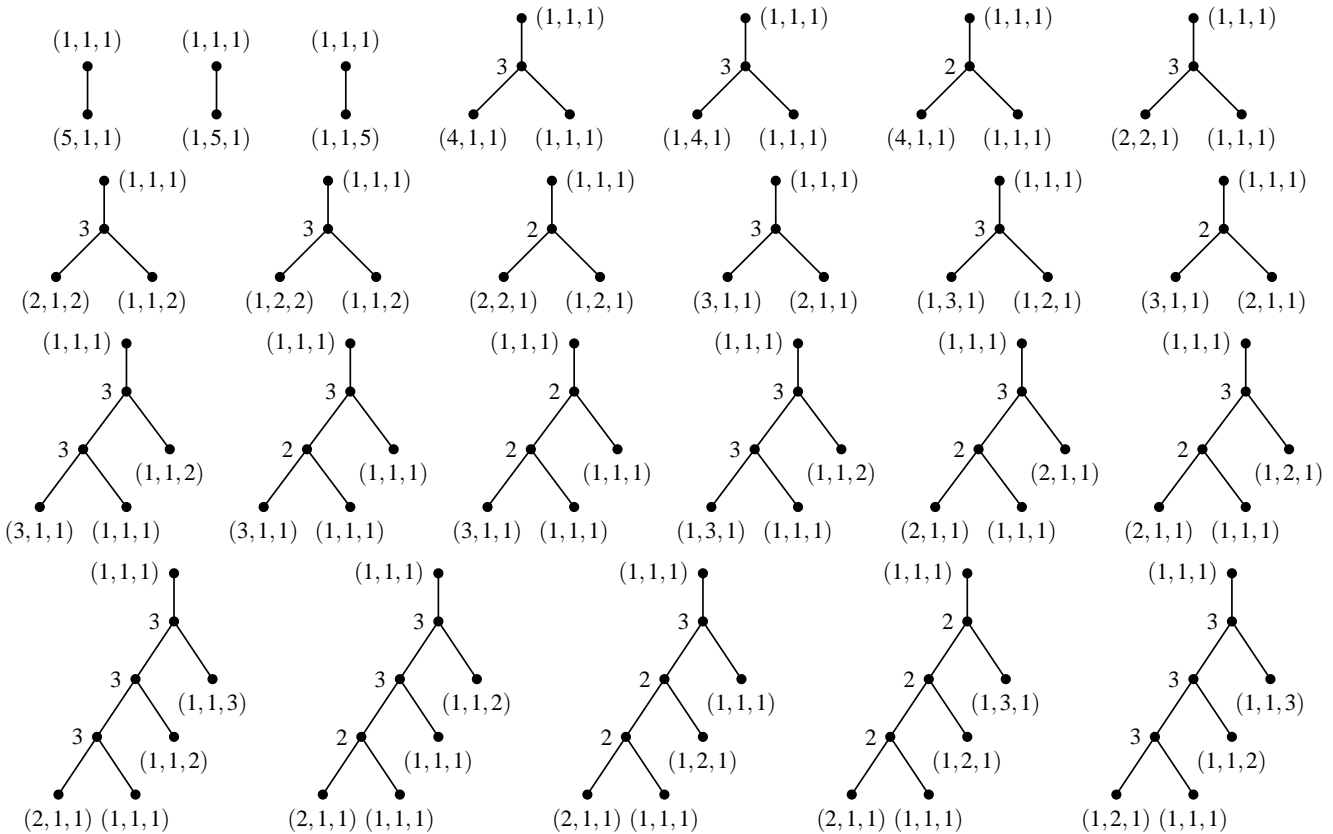

\centering
\begin{subfigure}[b]{0.09\textwidth}
\centering
\scalebox{0.8}{\tikz[thick,scale=0.8]{
    \draw {
    (2,3) node[circle, draw, fill=black, inner sep=0pt, minimum width=4pt, label=above:{$(1,1,1)$}] {} -- (2,2) node[circle, draw, fill=black, inner sep=0pt, minimum width=4pt, label=below:{$(5,1,1)$}] {}
    };
}}
\end{subfigure}
\begin{subfigure}[b]{0.09\textwidth}
\centering
\scalebox{0.8}{\tikz[thick,scale=0.8]{
    \draw {
    (2,3) node[circle, draw, fill=black, inner sep=0pt, minimum width=4pt, label=above:{$(1,1,1)$}] {} -- (2,2) node[circle, draw, fill=black, inner sep=0pt, minimum width=4pt, label=below:{$(1,5,1)$}] {}
    };
}}
\end{subfigure}
\begin{subfigure}[b]{0.09\textwidth}
\centering
\scalebox{0.8}{\tikz[thick,scale=0.8]{
    \draw {
    (2,3) node[circle, draw, fill=black, inner sep=0pt, minimum width=4pt, label=above:{$(1,1,1)$}] {} -- (2,2) node[circle, draw, fill=black, inner sep=0pt, minimum width=4pt, label=below:{$(1,1,5)$}] {}
    };
}}
\end{subfigure}
\begin{subfigure}[b]{0.16\textwidth}
\centering
\scalebox{0.8}{\tikz[thick,scale=0.8]{
    \draw {
    (2,3) node[circle, draw, fill=black, inner sep=0pt, minimum width=4pt, label=right:{$(1,1,1)$}] {} -- (2,2) node[circle, draw, fill=black, inner sep=0pt, minimum width=4pt, label=left:{$3$}] {} -- (3,1) node[circle, draw, fill=black, inner sep=0pt, minimum width=4pt, label=below:{$(1,1,1)$}] {}
    (2,2) -- (1,1) node[circle, draw, fill=black, inner sep=0pt, minimum width=4pt, label=below:{$(4,1,1)$}] {}
    };
}}
\end{subfigure}
\begin{subfigure}[b]{0.16\textwidth}
\centering
\scalebox{0.8}{\tikz[thick,scale=0.8]{
    \draw {
    (2,3) node[circle, draw, fill=black, inner sep=0pt, minimum width=4pt, label=right:{$(1,1,1)$}] {} -- (2,2) node[circle, draw, fill=black, inner sep=0pt, minimum width=4pt, label=left:{$3$}] {} -- (3,1) node[circle, draw, fill=black, inner sep=0pt, minimum width=4pt, label=below:{$(1,1,1)$}] {}
    (2,2) -- (1,1) node[circle, draw, fill=black, inner sep=0pt, minimum width=4pt, label=below:{$(1,4,1)$}] {}
    };
}}
\end{subfigure}
\begin{subfigure}[b]{0.16\textwidth}
\centering
\scalebox{0.8}{\tikz[thick,scale=0.8]{
    \draw {
    (2,3) node[circle, draw, fill=black, inner sep=0pt, minimum width=4pt, label=right:{$(1,1,1)$}] {} -- (2,2) node[circle, draw, fill=black, inner sep=0pt, minimum width=4pt, label=left:{$2$}] {} -- (3,1) node[circle, draw, fill=black, inner sep=0pt, minimum width=4pt, label=below:{$(1,1,1)$}] {}
    (2,2) -- (1,1) node[circle, draw, fill=black, inner sep=0pt, minimum width=4pt, label=below:{$(4,1,1)$}] {}
    };
}}
\end{subfigure}
\begin{subfigure}[b]{0.16\textwidth}
\centering
\scalebox{0.8}{\tikz[thick,scale=0.8]{
    \draw {
    (2,3) node[circle, draw, fill=black, inner sep=0pt, minimum width=4pt, label=right:{$(1,1,1)$}] {} -- (2,2) node[circle, draw, fill=black, inner sep=0pt, minimum width=4pt, label=left:{$3$}] {} -- (3,1) node[circle, draw, fill=black, inner sep=0pt, minimum width=4pt, label=below:{$(1,1,1)$}] {}
    (2,2) -- (1,1) node[circle, draw, fill=black, inner sep=0pt, minimum width=4pt, label=below:{$(2,2,1)$}] {}
    };
}}
\end{subfigure}

\begin{subfigure}[b]{0.16\textwidth}
\centering
\scalebox{0.8}{\tikz[thick,scale=0.8]{
    \draw {
    (2,3) node[circle, draw, fill=black, inner sep=0pt, minimum width=4pt, label=right:{$(1,1,1)$}] {} -- (2,2) node[circle, draw, fill=black, inner sep=0pt, minimum width=4pt, label=left:{$3$}] {} -- (3,1) node[circle, draw, fill=black, inner sep=0pt, minimum width=4pt, label=below:{$(1,1,2)$}] {}
    (2,2) -- (1,1) node[circle, draw, fill=black, inner sep=0pt, minimum width=4pt, label=below:{$(2,1,2)$}] {}
    };
}}
\end{subfigure}
\begin{subfigure}[b]{0.16\textwidth}
\centering
\scalebox{0.8}{\tikz[thick,scale=0.8]{
    \draw {
    (2,3) node[circle, draw, fill=black, inner sep=0pt, minimum width=4pt, label=right:{$(1,1,1)$}] {} -- (2,2) node[circle, draw, fill=black, inner sep=0pt, minimum width=4pt, label=left:{$3$}] {} -- (3,1) node[circle, draw, fill=black, inner sep=0pt, minimum width=4pt, label=below:{$(1,1,2)$}] {}
    (2,2) -- (1,1) node[circle, draw, fill=black, inner sep=0pt, minimum width=4pt, label=below:{$(1,2,2)$}] {}
    };
}}
\end{subfigure}
\begin{subfigure}[b]{0.16\textwidth}
\centering
\scalebox{0.8}{\tikz[thick,scale=0.8]{
    \draw {
    (2,3) node[circle, draw, fill=black, inner sep=0pt, minimum width=4pt, label=right:{$(1,1,1)$}] {} -- (2,2) node[circle, draw, fill=black, inner sep=0pt, minimum width=4pt, label=left:{$2$}] {} -- (3,1) node[circle, draw, fill=black, inner sep=0pt, minimum width=4pt, label=below:{$(1,2,1)$}] {}
    (2,2) -- (1,1) node[circle, draw, fill=black, inner sep=0pt, minimum width=4pt, label=below:{$(2,2,1)$}] {}
    };
}}
\end{subfigure}
\begin{subfigure}[b]{0.16\textwidth}
\centering
\scalebox{0.8}{\tikz[thick,scale=0.8]{
    \draw {
    (2,3) node[circle, draw, fill=black, inner sep=0pt, minimum width=4pt, label=right:{$(1,1,1)$}] {} -- (2,2) node[circle, draw, fill=black, inner sep=0pt, minimum width=4pt, label=left:{$3$}] {} -- (3,1) node[circle, draw, fill=black, inner sep=0pt, minimum width=4pt, label=below:{$(2,1,1)$}] {}
    (2,2) -- (1,1) node[circle, draw, fill=black, inner sep=0pt, minimum width=4pt, label=below:{$(3,1,1)$}] {}
    };
}}
\end{subfigure}
\begin{subfigure}[b]{0.16\textwidth}
\centering
\scalebox{0.8}{\tikz[thick,scale=0.8]{
    \draw {
    (2,3) node[circle, draw, fill=black, inner sep=0pt, minimum width=4pt, label=right:{$(1,1,1)$}] {} -- (2,2) node[circle, draw, fill=black, inner sep=0pt, minimum width=4pt, label=left:{$3$}] {} -- (3,1) node[circle, draw, fill=black, inner sep=0pt, minimum width=4pt, label=below:{$(1,2,1)$}] {}
    (2,2) -- (1,1) node[circle, draw, fill=black, inner sep=0pt, minimum width=4pt, label=below:{$(1,3,1)$}] {}
    };
}}
\end{subfigure}
\begin{subfigure}[b]{0.16\textwidth}
\centering
\scalebox{0.8}{\tikz[thick,scale=0.8]{
    \draw {
    (2,3) node[circle, draw, fill=black, inner sep=0pt, minimum width=4pt, label=right:{$(1,1,1)$}] {} -- (2,2) node[circle, draw, fill=black, inner sep=0pt, minimum width=4pt, label=left:{$2$}] {} -- (3,1) node[circle, draw, fill=black, inner sep=0pt, minimum width=4pt, label=below:{$(2,1,1)$}] {}
    (2,2) -- (1,1) node[circle, draw, fill=black, inner sep=0pt, minimum width=4pt, label=below:{$(3,1,1)$}] {}
    };
}}
\end{subfigure}

\begin{subfigure}[b]{0.16\textwidth}
\centering
\scalebox{0.8}{\tikz[thick,scale=0.8]{
    \draw {
    (1.8,3.4) node[circle, draw, fill=black, inner sep=0pt, minimum width=4pt, label=left:{$(1,1,1)$}] {} -- (1.8,2.4) node[circle, draw, fill=black, inner sep=0pt, minimum width=4pt, label=left:{$3$}] {} -- (0.9,1.2) node[circle, draw, fill=black, inner sep=0pt, minimum width=4pt, label=left:{$3$}] {} -- (0,0) node[circle, draw, fill=black, inner sep=0pt, minimum width=4pt, label=below:{$(3,1,1)$}] {}
    (1.8,2.4) -- (2.7,1.2) node[circle, draw, fill=black, inner sep=0pt, minimum width=4pt, label=below:{$(1,1,2)$}] {}
    (0.9,1.2) -- (1.8,0) node[circle, draw, fill=black, inner sep=0pt, minimum width=4pt, label=below:{$(1,1,1)$}] {}
    };
}}
\end{subfigure}
\begin{subfigure}[b]{0.16\textwidth}
\centering
\scalebox{0.8}{\tikz[thick,scale=0.8]{
    \draw {
    (1.8,3.4) node[circle, draw, fill=black, inner sep=0pt, minimum width=4pt, label=left:{$(1,1,1)$}] {} -- (1.8,2.4) node[circle, draw, fill=black, inner sep=0pt, minimum width=4pt, label=left:{$3$}] {} -- (0.9,1.2) node[circle, draw, fill=black, inner sep=0pt, minimum width=4pt, label=left:{$2$}] {} -- (0,0) node[circle, draw, fill=black, inner sep=0pt, minimum width=4pt, label=below:{$(3,1,1)$}] {}
    (1.8,2.4) -- (2.7,1.2) node[circle, draw, fill=black, inner sep=0pt, minimum width=4pt, label=below:{$(1,1,1)$}] {}
    (0.9,1.2) -- (1.8,0) node[circle, draw, fill=black, inner sep=0pt, minimum width=4pt, label=below:{$(1,1,1)$}] {}
    };
}}
\end{subfigure}
\begin{subfigure}[b]{0.16\textwidth}
\centering
\scalebox{0.8}{\tikz[thick,scale=0.8]{
    \draw {
    (1.8,3.4) node[circle, draw, fill=black, inner sep=0pt, minimum width=4pt, label=left:{$(1,1,1)$}] {} -- (1.8,2.4) node[circle, draw, fill=black, inner sep=0pt, minimum width=4pt, label=left:{$2$}] {} -- (0.9,1.2) node[circle, draw, fill=black, inner sep=0pt, minimum width=4pt, label=left:{$2$}] {} -- (0,0) node[circle, draw, fill=black, inner sep=0pt, minimum width=4pt, label=below:{$(3,1,1)$}] {}
    (1.8,2.4) -- (2.7,1.2) node[circle, draw, fill=black, inner sep=0pt, minimum width=4pt, label=below:{$(1,1,1)$}] {}
    (0.9,1.2) -- (1.8,0) node[circle, draw, fill=black, inner sep=0pt, minimum width=4pt, label=below:{$(1,1,1)$}] {}
    };
}}
\end{subfigure}
\begin{subfigure}[b]{0.16\textwidth}
\centering
\scalebox{0.8}{\tikz[thick,scale=0.8]{
    \draw {
    (1.8,3.4) node[circle, draw, fill=black, inner sep=0pt, minimum width=4pt, label=left:{$(1,1,1)$}] {} -- (1.8,2.4) node[circle, draw, fill=black, inner sep=0pt, minimum width=4pt, label=left:{$3$}] {} -- (0.9,1.2) node[circle, draw, fill=black, inner sep=0pt, minimum width=4pt, label=left:{$3$}] {} -- (0,0) node[circle, draw, fill=black, inner sep=0pt, minimum width=4pt, label=below:{$(1,3,1)$}] {}
    (1.8,2.4) -- (2.7,1.2) node[circle, draw, fill=black, inner sep=0pt, minimum width=4pt, label=below:{$(1,1,2)$}] {}
    (0.9,1.2) -- (1.8,0) node[circle, draw, fill=black, inner sep=0pt, minimum width=4pt, label=below:{$(1,1,1)$}] {}
    };
}}
\end{subfigure}
\begin{subfigure}[b]{0.16\textwidth}
\centering
\scalebox{0.8}{\tikz[thick,scale=0.8]{
    \draw {
    (1.8,3.4) node[circle, draw, fill=black, inner sep=0pt, minimum width=4pt, label=left:{$(1,1,1)$}] {} -- (1.8,2.4) node[circle, draw, fill=black, inner sep=0pt, minimum width=4pt, label=left:{$3$}] {} -- (0.9,1.2) node[circle, draw, fill=black, inner sep=0pt, minimum width=4pt, label=left:{$2$}] {} -- (0,0) node[circle, draw, fill=black, inner sep=0pt, minimum width=4pt, label=below:{$(2,1,1)$}] {}
    (1.8,2.4) -- (2.7,1.2) node[circle, draw, fill=black, inner sep=0pt, minimum width=4pt, label=below:{$(2,1,1)$}] {}
    (0.9,1.2) -- (1.8,0) node[circle, draw, fill=black, inner sep=0pt, minimum width=4pt, label=below:{$(1,1,1)$}] {}
    };
}}
\end{subfigure}
\begin{subfigure}[b]{0.16\textwidth}
\centering
\scalebox{0.8}{\tikz[thick,scale=0.8]{
    \draw {
    (1.8,3.4) node[circle, draw, fill=black, inner sep=0pt, minimum width=4pt, label=left:{$(1,1,1)$}] {} -- (1.8,2.4) node[circle, draw, fill=black, inner sep=0pt, minimum width=4pt, label=left:{$3$}] {} -- (0.9,1.2) node[circle, draw, fill=black, inner sep=0pt, minimum width=4pt, label=left:{$2$}] {} -- (0,0) node[circle, draw, fill=black, inner sep=0pt, minimum width=4pt, label=below:{$(2,1,1)$}] {}
    (1.8,2.4) -- (2.7,1.2) node[circle, draw, fill=black, inner sep=0pt, minimum width=4pt, label=below:{$(1,2,1)$}] {}
    (0.9,1.2) -- (1.8,0) node[circle, draw, fill=black, inner sep=0pt, minimum width=4pt, label=below:{$(1,1,1)$}] {}
    };
}}
\end{subfigure}

\begin{subfigure}[b]{0.19\textwidth}
\centering
\scalebox{0.8}{\tikz[thick,scale=0.8]{
    \draw {
    (1.6,3.4) node[circle, draw, fill=black, inner sep=0pt, minimum width=4pt, label=left:{$(1,1,1)$}] {} -- (1.6,2.4) node[circle, draw, fill=black, inner sep=0pt, minimum width=4pt, label=left:{$3$}] {} -- (0.8,1.2) node[circle, draw, fill=black, inner sep=0pt, minimum width=4pt, label=left:{$3$}] {} -- (0,0) node[circle, draw, fill=black, inner sep=0pt, minimum width=4pt, label=left:{$3$}] {} -- (-0.8,-1.2) node[circle, draw, fill=black, inner sep=0pt, minimum width=4pt, label=below:{$(2,1,1)$}] {}
    (1.6,2.4) -- (2.4,1.2) node[circle, draw, fill=black, inner sep=0pt, minimum width=4pt, label=below:{$(1,1,3)$}] {}
    (0.8,1.2) -- (1.6,0) node[circle, draw, fill=black, inner sep=0pt, minimum width=4pt, label=below:{$(1,1,2)$}] {}
    (0,0) -- (0.8,-1.2) node[circle, draw, fill=black, inner sep=0pt, minimum width=4pt, label=below:{$(1,1,1)$}] {}
    };
}}
\end{subfigure}
\begin{subfigure}[b]{0.19\textwidth}
\centering
\scalebox{0.8}{\tikz[thick,scale=0.8]{
    \draw {
    (1.6,3.4) node[circle, draw, fill=black, inner sep=0pt, minimum width=4pt, label=left:{$(1,1,1)$}] {} -- (1.6,2.4) node[circle, draw, fill=black, inner sep=0pt, minimum width=4pt, label=left:{$3$}] {} -- (0.8,1.2) node[circle, draw, fill=black, inner sep=0pt, minimum width=4pt, label=left:{$3$}] {} -- (0,0) node[circle, draw, fill=black, inner sep=0pt, minimum width=4pt, label=left:{$2$}] {} -- (-0.8,-1.2) node[circle, draw, fill=black, inner sep=0pt, minimum width=4pt, label=below:{$(2,1,1)$}] {}
    (1.6,2.4) -- (2.4,1.2) node[circle, draw, fill=black, inner sep=0pt, minimum width=4pt, label=below:{$(1,1,2)$}] {}
    (0.8,1.2) -- (1.6,0) node[circle, draw, fill=black, inner sep=0pt, minimum width=4pt, label=below:{$(1,1,1)$}] {}
    (0,0) -- (0.8,-1.2) node[circle, draw, fill=black, inner sep=0pt, minimum width=4pt, label=below:{$(1,1,1)$}] {}
    };
}}
\end{subfigure}
\begin{subfigure}[b]{0.19\textwidth}
\centering
\scalebox{0.8}{\tikz[thick,scale=0.8]{
    \draw {
    (1.6,3.4) node[circle, draw, fill=black, inner sep=0pt, minimum width=4pt, label=left:{$(1,1,1)$}] {} -- (1.6,2.4) node[circle, draw, fill=black, inner sep=0pt, minimum width=4pt, label=left:{$3$}] {} -- (0.8,1.2) node[circle, draw, fill=black, inner sep=0pt, minimum width=4pt, label=left:{$2$}] {} -- (0,0) node[circle, draw, fill=black, inner sep=0pt, minimum width=4pt, label=left:{$2$}] {} -- (-0.8,-1.2) node[circle, draw, fill=black, inner sep=0pt, minimum width=4pt, label=below:{$(2,1,1)$}] {}
    (1.6,2.4) -- (2.4,1.2) node[circle, draw, fill=black, inner sep=0pt, minimum width=4pt, label=below:{$(1,1,1)$}] {}
    (0.8,1.2) -- (1.6,0) node[circle, draw, fill=black, inner sep=0pt, minimum width=4pt, label=below:{$(1,2,1)$}] {}
    (0,0) -- (0.8,-1.2) node[circle, draw, fill=black, inner sep=0pt, minimum width=4pt, label=below:{$(1,1,1)$}] {}
    };
}}
\end{subfigure}
\begin{subfigure}[b]{0.19\textwidth}
\centering
\scalebox{0.8}{\tikz[thick,scale=0.8]{
    \draw {
    (1.6,3.4) node[circle, draw, fill=black, inner sep=0pt, minimum width=4pt, label=left:{$(1,1,1)$}] {} -- (1.6,2.4) node[circle, draw, fill=black, inner sep=0pt, minimum width=4pt, label=left:{$2$}] {} -- (0.8,1.2) node[circle, draw, fill=black, inner sep=0pt, minimum width=4pt, label=left:{$2$}] {} -- (0,0) node[circle, draw, fill=black, inner sep=0pt, minimum width=4pt, label=left:{$2$}] {} -- (-0.8,-1.2) node[circle, draw, fill=black, inner sep=0pt, minimum width=4pt, label=below:{$(2,1,1)$}] {}
    (1.6,2.4) -- (2.4,1.2) node[circle, draw, fill=black, inner sep=0pt, minimum width=4pt, label=below:{$(1,3,1)$}] {}
    (0.8,1.2) -- (1.6,0) node[circle, draw, fill=black, inner sep=0pt, minimum width=4pt, label=below:{$(1,2,1)$}] {}
    (0,0) -- (0.8,-1.2) node[circle, draw, fill=black, inner sep=0pt, minimum width=4pt, label=below:{$(1,1,1)$}] {}
    };
}}
\end{subfigure}
\begin{subfigure}[b]{0.19\textwidth}
\centering
\scalebox{0.8}{\tikz[thick,scale=0.8]{
    \draw {
    (1.6,3.4) node[circle, draw, fill=black, inner sep=0pt, minimum width=4pt, label=left:{$(1,1,1)$}] {} -- (1.6,2.4) node[circle, draw, fill=black, inner sep=0pt, minimum width=4pt, label=left:{$3$}] {} -- (0.8,1.2) node[circle, draw, fill=black, inner sep=0pt, minimum width=4pt, label=left:{$3$}] {} -- (0,0) node[circle, draw, fill=black, inner sep=0pt, minimum width=4pt, label=left:{$3$}] {} -- (-0.8,-1.2) node[circle, draw, fill=black, inner sep=0pt, minimum width=4pt, label=below:{$(1,2,1)$}] {}
    (1.6,2.4) -- (2.4,1.2) node[circle, draw, fill=black, inner sep=0pt, minimum width=4pt, label=below:{$(1,1,3)$}] {}
    (0.8,1.2) -- (1.6,0) node[circle, draw, fill=black, inner sep=0pt, minimum width=4pt, label=below:{$(1,1,2)$}] {}
    (0,0) -- (0.8,-1.2) node[circle, draw, fill=black, inner sep=0pt, minimum width=4pt, label=below:{$(1,1,1)$}] {}
    };
}}
\end{subfigure}
\caption{All $24$ generalized ${\bf 1}_3$-Macaulay representations of $5$.}
\label{Fig:non-trivial-eg}
\end{figure}

\begin{definition}\label{defn-twin}
Let ${\bf a}, {\bf a}^{\prime} \in \mathbb{N}^n\backslash\{{\bf 0}_n\}$ satisfy ${\bf a}^{\prime} \leq {\bf a}$, and let $\alpha = (T, \varphi)$ be an ${\bf a}$-Macaulay tree. Denote the $(\text{root},1,3)$-triple of $T$ by $(r_0, L, Y)$, and let $(T, \varphi|_Y, \nu)$ be the ${\bf a}$-splitting tree induced by $(T, \varphi|_Y)$. Then, the {\it ${\bf a}^{\prime}$-twin} of $(T, \varphi)$ is the pair $(T^{\prime}, \varphi^{\prime})$ constructed from $(T, \varphi)$ via the following algorithm:
\begin{center}
\begin{algorithmic}
\small
\State $S \gets \{x\in (Y \cup L \cup \{r_0\}): \nu(x) > {\bf a} - {\bf a}^{\prime}\}$.
\State $S_0 \gets \{y\in Y: \nu(y) > {\bf a} - {\bf a}^{\prime}, \nu(y_{\text{right}}) \not> {\bf a} - {\bf a}^{\prime}\}$.
\State $T^{\prime} \gets$ subgraph of $T$ induced by $S$.
\State $\varphi^{\prime} \gets \varphi|_S$.
\State $\varphi^{\prime}(r_0) \gets {\bf a}^{\prime}$.
\While {$S_0 = \emptyset$}
    \State $z \gets$ largest vertex in $S_0$ with respect to the depth-first order.
    \State $z_{\text{child}} \gets$ unique child of $z$ in $T^{\prime}$.
    \State $S_0 \gets S_0\backslash \{z\}$.
    \State $L^{\prime} \gets$ set of leaves of $T^{\prime}$.
    \If {$z_{\text{child}} \in L^{\prime}$}
        \State $\varphi^{\prime}(z) \gets \varphi^{\prime}(z_{\text{child}}) + \boldsymbol{\delta}_{\varphi^{\prime}(z),n}$.
        \State $\varphi^{\prime}(z_{\text{child}}) = \varphi^{\prime}(z)$.
    \Else
        \State $t^{\prime} \gets \varphi^{\prime}(z)$.
        \State $\varphi^{\prime}(z) \gets \varphi^{\prime}(z_{\text{child}})$.
        \For {$x\in \mathcal{D}_{T^{\prime}}(z_{\text{child}}) \cap L^{\prime}$}
            \State $(\varphi^{\prime})^{[t^{\prime}]}(x) \gets (\varphi^{\prime})^{[t^{\prime}]}(x) + 1$.
        \EndFor
    \EndIf
    \State $(T^{\prime}, \varphi^{\prime}) \gets (T^{\prime}/\{z, z_{\text{child}}\}, \varphi^{\prime}/\{z, z_{\text{child}}\})$.
\EndWhile
\end{algorithmic}
\end{center}
\end{definition}

\begin{remark}
It is easy to verify that $(T^{\prime}, \varphi^{\prime})$ in Definition \ref{defn-twin} is an ${\bf a}^{\prime}$-Macaulay tree of $\partial_{[{\bf a}^{\prime} - {\bf a}]}(\alpha)$.
\end{remark}

For the rest of this subsection, let ${\bf a}, {\bf a}^{\prime} \in \mathbb{N}^n\backslash \{{\bf 0}_n\}$ satisfy ${\bf a}^{\prime} \leq {\bf a}$, let $\alpha = (T, \varphi)$ be an ${\bf a}$-Macaulay tree, let $\alpha^{\prime} = (T^{\prime}, \varphi^{\prime})$ be an ${\bf a}^{\prime}$-Macaulay tree, and let $\alpha^{\prime\prime} = (T^{\prime\prime}, \varphi^{\prime\prime})$ be the condensation of the ${\bf a}^{\prime}$-twin of $\alpha$. Denote the $(\text{root},1,3)$-triples of $T^{\prime}$ and $T^{\prime\prime}$ by $(r_0^{\prime}, L^{\prime}, Y^{\prime})$ and $(r_0^{\prime\prime}, L^{\prime\prime}, Y^{\prime\prime})$ respectively. Next, construct a graph $\widetilde{T}$ as follows: Take the disjoint union of $T^{\prime}$ and $T^{\prime\prime}$, identify the roots $r_0^{\prime}$, $r_0^{\prime\prime}$ as a single vertex $\hat{r}$, then attach a leaf $\widetilde{r}$ to $\hat{r}$. Note that $\widetilde{T}$ is a trivalent planted binary tree with root $\widetilde{r}$, and treat $T^{\prime}$ and $T^{\prime\prime}$ as subgraphs of $\widetilde{T}$. Now, define the vertex labeling $\widetilde{\varphi}$ of $\widetilde{T}$ as follows: $\widetilde{\varphi}(\widetilde{r}) = ({\bf a}^{\prime},2) \in \mathbb{P}^{n+1}$; $\widetilde{\varphi}(\hat{r}) = n+1$; $\widetilde{\varphi}(x) = \varphi^{\prime}(x) \in \mathbb{P}$ if $x\in Y^{\prime}$; $\widetilde{\varphi}(x) = (\varphi^{\prime}(x),2) \in \mathbb{P}^{n+1}$ if $x\in L^{\prime}$; $\widetilde{\varphi}(x) = \varphi^{\prime\prime}(x) \in \mathbb{P}$ if $x\in Y^{\prime\prime}$; and $\widetilde{\varphi}(x) = (\varphi^{\prime\prime}(x),1) \in \mathbb{P}^{n+1}$ if $x\in L^{\prime\prime}$. We check that $(\widetilde{T}, \widetilde{\varphi})$ is an $({\bf a}^{\prime},2)$-Macaulay tree, which we denote by $\alpha \wedge \alpha^{\prime}$. Furthermore, $\alpha \wedge \alpha^{\prime}$ is compressed-like if both $\alpha$ and $\alpha^{\prime}$ are compressed-like, and $\alpha \wedge \alpha^{\prime}$ is condensed if $\alpha^{\prime}$ is condensed.

\begin{figure}[h!t]
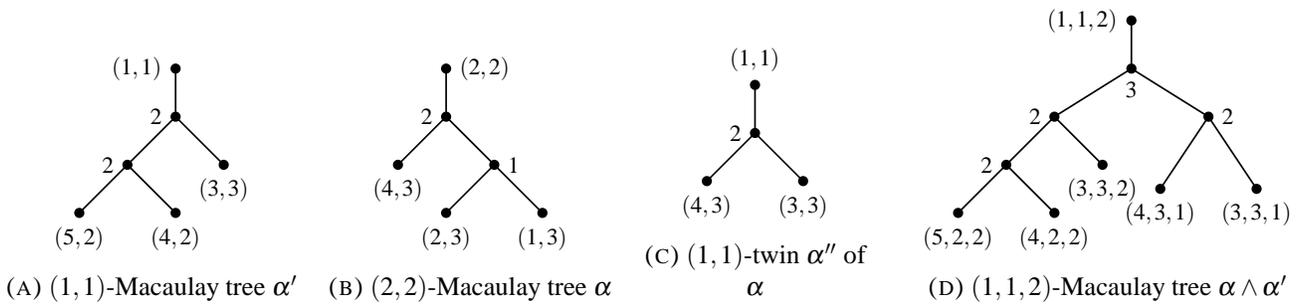

\centering
\begin{subfigure}[b]{0.23\textwidth}
\centering
\scalebox{0.8}{\tikz[thick,scale=0.8]{
    \draw {
    (2,3) node[circle, draw, fill=black, inner sep=0pt, minimum width=4pt, label=left:{$(1,1)$}] {} -- (2,2) node[circle, draw, fill=black, inner sep=0pt, minimum width=4pt, label=left:{$2$}] {} -- (1,1) node[circle, draw, fill=black, inner sep=0pt, minimum width=4pt, label=left:{$2$}] {} -- (0,0) node[circle, draw, fill=black, inner sep=0pt, minimum width=4pt, label=below:{$(5,2)$}] {}
    (2,2) -- (3,1) node[circle, draw, fill=black, inner sep=0pt, minimum width=4pt, label=below:{$(3,3)$}] {}
    (1,1) -- (2,0) node[circle, draw, fill=black, inner sep=0pt, minimum width=4pt, label=below:{$(4,2)$}] {}
    };
}}
\caption{\centering $(1,1)$-Macaulay tree $\alpha^{\prime}$}
\end{subfigure}
\begin{subfigure}[b]{0.23\textwidth}
\centering
\scalebox{0.8}{\tikz[thick,scale=0.8]{
    \draw {
    (2,3) node[circle, draw, fill=black, inner sep=0pt, minimum width=4pt, label=right:{$(2,2)$}] {} -- (2,2) node[circle, draw, fill=black, inner sep=0pt, minimum width=4pt, label=left:{$2$}] {} -- (3,1) node[circle, draw, fill=black, inner sep=0pt, minimum width=4pt, label=right:{$1$}] {} -- (4,0) node[circle, draw, fill=black, inner sep=0pt, minimum width=4pt, label=below:{$(1,3)$}] {}
    (2,2) -- (1,1) node[circle, draw, fill=black, inner sep=0pt, minimum width=4pt, label=below:{$(4,3)$}] {}
    (3,1) -- (2,0) node[circle, draw, fill=black, inner sep=0pt, minimum width=4pt, label=below:{$(2,3)$}] {}
    };
}}
\caption{\centering $(2,2)$-Macaulay tree $\alpha$}
\end{subfigure}
\begin{subfigure}[b]{0.18\textwidth}
\centering
\scalebox{0.8}{\tikz[thick,scale=0.8]{
    \draw {
    (2,3) node[circle, draw, fill=black, inner sep=0pt, minimum width=4pt, label=above:{$(1,1)$}] {} -- (2,2) node[circle, draw, fill=black, inner sep=0pt, minimum width=4pt, label=left:{$2$}] {} -- (3,1) node[circle, draw, fill=black, inner sep=0pt, minimum width=4pt, label=below:{$(3,3)$}] {}
    (2,2) -- (1,1) node[circle, draw, fill=black, inner sep=0pt, minimum width=4pt, label=below:{$(4,3)$}] {}
    };
}}
\caption{\centering $(1,1)$-twin $\alpha^{\prime\prime}$ of $\alpha$}
\end{subfigure}
\begin{subfigure}[b]{0.33\textwidth}
\centering
\scalebox{0.8}{\tikz[thick,scale=0.8]{
    \draw {
    (3.6,4) node[circle, draw, fill=black, inner sep=0pt, minimum width=4pt, label=left:{$(1,1,2)$}] {} -- (3.6,3) node[circle, draw, fill=black, inner sep=0pt, minimum width=4pt, label=below:{$3$}] {} -- (2,2) node[circle, draw, fill=black, inner sep=0pt, minimum width=4pt, label=left:{$2$}] {} -- (1,1) node[circle, draw, fill=black, inner sep=0pt, minimum width=4pt, label=left:{$2$}] {} -- (0,0) node[circle, draw, fill=black, inner sep=0pt, minimum width=4pt, label=below:{$(5,2,2)$}] {}
    (2,2) -- (3,1) node[circle, draw, fill=black, inner sep=0pt, minimum width=4pt, label=below:{$(3,3,2)$}] {}
    (1,1) -- (2,0) node[circle, draw, fill=black, inner sep=0pt, minimum width=4pt, label=below:{$(4,2,2)$}] {}
    (3.6,3) -- (5.2,2) node[circle, draw, fill=black, inner sep=0pt, minimum width=4pt, label=right:{$2$}] {} -- (6.2,0.5) node[circle, draw, fill=black, inner sep=0pt, minimum width=4pt, label=below:{$(3,3,1)$}] {}
    (5.2,2) -- (4.2,0.5) node[circle, draw, fill=black, inner sep=0pt, minimum width=4pt, label=below:{$(4,3,1)$}] {}
    };
}}
\caption{$(1,1,2)$-Macaulay tree $\alpha \wedge \alpha^{\prime}$}
\end{subfigure}
\caption{Macaulay trees $\alpha$, $\alpha^{\prime}$, $\alpha^{\prime\prime}$, and $\alpha \wedge \alpha^{\prime}$.}
\label{Fig:wedge product}
\end{figure}

\begin{example}
Figure \ref{Fig:wedge product} shows an example of a $(1,1)$-Macaulay tree $\alpha^{\prime}$, a $(2,2)$-Macaulay tree $\alpha$, the $(1,1)$-twin $\alpha^{\prime\prime}$ of $\alpha$ (which is its condensation), and the corresponding $(1,1,2)$-Macaulay tree $\alpha \wedge \alpha^{\prime}$.
\end{example}

\begin{definition}
Let $\alpha, \alpha^{\prime}$ be non-trivial generalized Macaulay representations. If $\alpha \wedge \alpha^{\prime}$ is compatible, or equivalently, if $\alpha \wedge \alpha^{\prime}$ is a generalized Macaulay representation, then we write $\alpha \preceq \alpha^{\prime}$. Also, write $\alpha_{\emptyset} \preceq \alpha^{\prime\prime}$ for all generalized Macaulay representations $\alpha^{\prime\prime}$.
\end{definition}

\begin{proposition}\label{prop:preceq-partial-order}
$\preceq$ is a partial order on generalized Macaulay representations.
\end{proposition}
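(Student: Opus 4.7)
The plan is to verify reflexivity, antisymmetry, and transitivity in turn.

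For reflexivity, the case $\alpha = \alpha_{\emptyset}$ holds by convention. For a non-trivial $\alpha$ of type ${\bf a}$, the left-most relative of any vertex $v \in Y \cup L \cup \{r_0\}$ of $T$ is a leaf $u$ with $\nu(u) = \nu(v)$, so the requirement $\nu(u) > {\bf 0}_n$ in the definition of a Macaulay tree forces $\nu(v) > {\bf 0}_n$ for every vertex $v$. Hence in Definition \ref{defn-twin} with ${\bf a}' = {\bf a}$, the set $S$ is the full vertex set while $S_0 = \emptyset$, so the while loop never executes and the ${\bf a}$-twin of $\alpha$ equals $\alpha$ itself. Consequently $\alpha \wedge \alpha$ glues two identical copies of $\alpha$ under a common vertex $\hat{r}$ labeled $n+1$, and the compatibility conditions at the new root $\widetilde{r}$ reduce, by the symmetry between the two subtrees, to conditions already enforced by $\alpha$ being compatible.

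For antisymmetry, suppose $\alpha \preceq \alpha'$ and $\alpha' \preceq \alpha$. The constraint that the ``smaller'' type bounds the ``larger'' in each wedge forces ${\bf a} = {\bf a}'$. By Theorem \ref{thm:numerical-char-color-compressed-complex}, $\alpha$ and $\alpha'$ correspond bijectively to isomorphism classes of pure color-compressed ${\bf a}$-balanced complexes. Translating the compatibility of $\alpha \wedge \alpha'$ and of $\alpha' \wedge \alpha$ into colex inequalities on the $t$-signatures of corresponding vertices yields matched pairs of opposite-direction inequalities; these collectively force all corresponding signatures to coincide, whence the associated color-compressed balanced complexes are isomorphic and $\alpha = \alpha'$.

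For transitivity, suppose $\alpha \preceq \alpha'$ and $\alpha' \preceq \alpha''$ with types ${\bf a}'' \leq {\bf a}' \leq {\bf a}$. The central structural fact, verified by unwinding Definition \ref{defn-twin}, is that the twin operation is ``transitive'': the ${\bf a}''$-twin of $\alpha$ coincides with the ${\bf a}''$-twin of the ${\bf a}'$-twin of $\alpha$. Using this, each signature inequality $\xi^{(t)}(x) \leq_{c\ell} \xi^{(t)}(\zeta^i_t(x))$ required for $\alpha \wedge \alpha''$ to be compatible can be decomposed into a step inside $\alpha \wedge \alpha'$ (supplied by the first hypothesis) and a step inside $\alpha' \wedge \alpha''$ (supplied by the second hypothesis). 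Transitivity of $\leq_{c\ell}$ then yields the desired inequality. The main obstacle is bookkeeping: tracking how the $\zeta$-maps and signatures in the three wedges align under iterated twinning, and ensuring the decomposition holds uniformly for every $t$-leading vertex and every color.
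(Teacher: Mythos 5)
Your three-part skeleton (reflexivity, antisymmetry, transitivity) matches the paper's, and your reflexivity argument is fine --- indeed more detailed than the paper's one-line assertion, and your observation that the ${\bf a}$-twin of $\alpha$ is $\alpha$ itself is correct. The problem is with the other two parts, where you stay entirely on the combinatorial side (signatures, $\zeta$-maps, colex inequalities on the trees) and, in doing so, leave the essential work undone. For transitivity you explicitly flag ``the main obstacle is bookkeeping: tracking how the $\zeta$-maps and signatures in the three wedges align under iterated twinning'' --- but that bookkeeping \emph{is} the proof, and your sketch never supplies the key missing ingredient: to chain $\alpha \preceq \beta$ with $\beta \preceq \gamma$ you need to compare the ${\bf c}$-twin of $\alpha$ with $\gamma$, whereas the hypothesis $\alpha \preceq \beta$ only gives you information comparing the ${\bf b}$-twin of $\alpha$ with $\beta$. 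You must first show that this information descends to the ${\bf c}$-twins (i.e.\ that compatibility of $\alpha\wedge\beta$ controls the relation between $\widetilde{\alpha}$ and $\widetilde{\beta}$, the ${\bf c}$-twins), and nothing in your proposal establishes this. Similarly, in antisymmetry the phrase ``colex inequalities on the $t$-signatures of corresponding vertices'' presupposes a vertex correspondence between $T$ and $T'$, which does not exist a priori since the two trees need not have the same shape; the claim that opposite-direction inequalities ``collectively force all corresponding signatures to coincide'' is asserted, not proved.

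The paper avoids all of this by translating everything to the geometric side. Compatibility (plus compressed-like) of $\alpha\wedge\beta$ implies, by Theorem \ref{thm:compressed-like-compatible=>color-compressed}, that $(\Delta_{\alpha\wedge\beta},\pi_{\alpha\wedge\beta})$ is color-compressed, and color-compression in the new $(n+1)$-st color is exactly the containment $\Delta_{\widetilde{\alpha}} \subseteq \Delta_{\beta}$ of the associated balanced complexes (where $\widetilde{\alpha}$ is the relevant twin). Containment of complexes is trivially transitive and antisymmetric, and one returns to trees via the bijection of Theorem \ref{thm:numerical-char-color-compressed-complex} together with Remark \ref{remark:distinct-Mac-trees-yield-same-complex} (condensed trees are recovered from their complexes). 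If you want to salvage your approach, the concrete lemma you need to prove is precisely this descent-to-twins statement, and at that point you would essentially be reproving Theorem \ref{thm:compressed-like-compatible=>color-compressed} in disguise; as written, the proposal has a genuine gap in both the antisymmetry and transitivity steps.
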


\begin{proof}
Given $n$-tuples $\rho = ([s_1]_1, \dots, [s_n]_n)$ and $\rho^{\prime} = ([s_1^{\prime}]_1, \dots, [s_n^{\prime}]_n)$ for some positive integers $s_1, \dots, s_n, s_1^{\prime}, \dots, s_n^{\prime}$, write $\rho \leq \rho^{\prime}$ to mean $(s_1, \dots, s_n) \leq (s_1^{\prime}, \dots, s_n^{\prime})$. Let ${\bf a}, {\bf b}, {\bf c}\in \mathbb{N}^n\backslash\{{\bf 0}_n\}$ satisfy ${\bf c} \leq {\bf b} \leq {\bf a}$, and let $\alpha, \beta, \gamma$ be arbitrary generalized Macaulay representations of types ${\bf a}, {\bf b}, {\bf c}$ respectively. Without loss of generality, assume $\alpha, \beta, \gamma$ are all non-trivial. Clearly $\alpha \preceq \alpha$, since $\alpha$ is compatible implies $\alpha \wedge \alpha$ is compatible.

Next, suppose $\alpha \preceq \beta$ and $\beta \preceq \alpha$. This means $\alpha \wedge \beta$ and $\beta \wedge \alpha$ are well-defined, which implies ${\bf a} = {\bf b}$, hence the condensation of the ${\bf a}$-twin of $\beta$ is $\beta$, and the condensation of the ${\bf b}$-twin of $\alpha$ is $\alpha$. Since $\alpha \wedge \beta$ is compressed-like and compatible, Theorem \ref{thm:compressed-like-compatible=>color-compressed} yields $(\Delta_{\alpha \wedge \beta}, \pi_{\alpha \wedge \beta})$ is color-compressed, hence by the construction of $\alpha \wedge \beta$, we get $\Delta_{\alpha} \subseteq \Delta_{\beta}$ and $\pi_{\alpha} \leq \pi_{\beta}$. A symmetric argument also gives $\Delta_{\beta} \subseteq \Delta_{\alpha}$ and $\pi_{\beta} \leq \pi_{\alpha}$, thus $(\Delta_{\alpha}, \pi_{\alpha}) = (\Delta_{\beta}, \pi_{\beta})$. Now $\alpha$ and $\beta$ are condensed, so Theorem \ref{thm:numerical-char-color-compressed-complex} yields $\alpha = \beta$.

Finally, suppose instead that $\alpha \preceq \beta$ and $\beta \preceq \gamma$. Let $\alpha^{\prime}$ be the ${\bf b}$-twin of $\alpha$, let $\widetilde{\alpha}$ be the ${\bf c}$-twin of $\alpha$, and let $\widetilde{\beta}$ be the ${\bf c}$-twin of $\beta$. Since $\alpha \wedge \beta$ and $\beta \wedge \gamma$ are compressed-like and compatible, Theorem \ref{thm:compressed-like-compatible=>color-compressed} says $(\Delta_{\alpha \wedge \beta}, \pi_{\alpha \wedge \beta})$ and $(\Delta_{\beta \wedge \gamma}, \pi_{\beta \wedge \gamma})$ are both color-compressed. By the construction of $\alpha \wedge \beta$, we get $\Delta_{\alpha^{\prime}} \subseteq \Delta_{\beta}$, $\Delta_{\widetilde{\alpha}} \subseteq \Delta_{\widetilde{\beta}}$, $\pi_{\alpha^{\prime}} \leq \pi_{\beta}$, and $\pi_{\widetilde{\alpha}} \leq \pi_{\widetilde{\beta}}$. Similarly, the construction of $\beta \wedge \gamma$ yields $\Delta_{\widetilde{\beta}} \subseteq \Delta_{\gamma}$ and $\pi_{\widetilde{\beta}} \leq \pi_{\gamma}$, thus $\Delta_{\widetilde{\alpha}} \subseteq \Delta_{\gamma}$ and $\pi_{\widetilde{\alpha}} \leq \pi_{\gamma}$. Consequently, the construction of $\alpha \wedge \gamma$ yields $(\Delta_{\alpha \wedge \gamma}, \pi_{\alpha \wedge \gamma})$ is a pure color-compressed $({\bf c},2)$-balanced complex, so since $\alpha \wedge \gamma$ is condensed, Remark \ref{remark:distinct-Mac-trees-yield-same-complex} tells us the Macaulay tree induced by the shedding tree of $(\Delta_{\alpha \wedge \gamma}, \pi_{\alpha \wedge \gamma})$ is precisely $\alpha \wedge \gamma$, and Theorem \ref{thm:numerical-char-color-compressed-complex} then yields $\alpha \wedge \gamma$ is compatible, i.e. $\alpha \preceq \gamma$.
\end{proof}

\begin{proposition}\label{prop:case-n=1-preceq-equiv}
Let $k, N, N^{\prime} \in \mathbb{P}$, let $\alpha$ be the generalized $(k+1)$-Macaulay representation of $N$, and let $\alpha^{\prime}$ be the generalized $k$-Macaulay representation of $N^{\prime}$. Then $\alpha \preceq \alpha^{\prime}$ if and only if $\partial_{[-1]}(\alpha) \leq N^{\prime}$.
\end{proposition}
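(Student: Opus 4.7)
The plan is to translate both the inequality $\partial_{[-1]}(\alpha) \leq N'$ and the relation $\alpha \preceq \alpha'$ into assertions about concrete pure compressed simplicial complexes via Theorem~\ref{thm:numerical-char-color-compressed-complex}, and then show these two assertions coincide. By Proposition~\ref{prop:condensed-compressed-case-n=1}, $\alpha$ and $\alpha'$ are unique and correspond respectively to pure compressed simplicial complexes $\Delta_\alpha$ (pure $k$-dimensional with $N$ facets) and $\Delta_{\alpha'}$ (pure $(k-1)$-dimensional with $N'$ facets), whose facets are colex initial segments of $\binom{\mathbb{P}}{k+1}$ and $\binom{\mathbb{P}}{k}$ of the corresponding sizes. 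Proposition~\ref{prop:shedding-tree-fine-f-vector} identifies $\partial_{[-1]}(\alpha)$ as the number of codimension-$1$ faces of $\Delta_\alpha$, and these faces form a colex initial segment of $\binom{\mathbb{P}}{k}$ generating a pure compressed $(k-1)$-dimensional complex $\Delta_{\alpha''}$. The condensation $\alpha''$ of the $k$-twin of $\alpha$ is a condensed $k$-Macaulay tree of $\partial_{[-1]}(\alpha)$, and a direct verification shows it is compressed-like (and trivially compatible since $n=1$); by Proposition~\ref{prop:condensed-compressed-case-n=1} it is the generalized $k$-Macaulay representation of $\partial_{[-1]}(\alpha)$, corresponding to $\Delta_{\alpha''}$. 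Since $\Delta_{\alpha''}$ and $\Delta_{\alpha'}$ are pure compressed $(k-1)$-dimensional complexes with colex-initial-segment facets, $\Delta_{\alpha''} \subseteq \Delta_{\alpha'}$ holds if and only if $\partial_{[-1]}(\alpha) \leq N'$, and the problem reduces to proving $\alpha \preceq \alpha' \Longleftrightarrow \Delta_{\alpha''} \subseteq \Delta_{\alpha'}$.

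For the forward direction, I would assume $\alpha \preceq \alpha'$ and apply Theorem~\ref{thm:numerical-char-color-compressed-complex} to obtain a pure color-compressed $(k,2)$-balanced complex $(\Delta, \pi)$ whose shedding tree induces $\alpha \wedge \alpha'$. Because both $T'$ and $T''$ are derived from single-color Macaulay trees, the unique color-$2$ label in $\widetilde{T}$ is at $\hat{r}$, and a short computation of left-weights gives $\omega^{[2]}(\hat{r}) = 3$, $\omega^{[2]}(r_1') = 2$, and $\omega^{[2]}(r_1'') = 1$. Hence $\Delta$ must have exactly three color-$2$ vertices $v_{2,1} < v_{2,2} < v_{2,3}$; tracing the shedding of $v_{2,3}$ through the $T'$ and $T''$ subtrees forces the color-$1$ parts paired with $\{v_{2,1}, v_{2,2}\}$, $\{v_{2,1}, v_{2,3}\}$, $\{v_{2,2}, v_{2,3}\}$ to equal the facet sets of $\Delta_{\alpha'}$, $\Delta_{\alpha''}$, and the empty set respectively. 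Applying color-compression of $\Delta$ in color~$2$ to any $G \in \Delta_{\alpha''}$ (which pairs with the colex-greater $\{v_{2,1}, v_{2,3}\}$) then forces $G$ to also pair with $\{v_{2,1}, v_{2,2}\}$, i.e., $G \in \Delta_{\alpha'}$, yielding $\Delta_{\alpha''} \subseteq \Delta_{\alpha'}$.

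Conversely, if $\Delta_{\alpha''} \subseteq \Delta_{\alpha'}$, I would construct $(\Delta, \pi)$ explicitly on three color-$2$ vertices and a sufficiently large color-$1$ vertex set, declaring its facets to be $G \cup \{v_{2,1}, v_{2,2}\}$ for each facet $G$ of $\Delta_{\alpha'}$ together with $G \cup \{v_{2,1}, v_{2,3}\}$ for each facet $G$ of $\Delta_{\alpha''}$. A direct check shows $(\Delta, \pi)$ is pure color-compressed $(k,2)$-balanced, and its shedding tree induces $\alpha \wedge \alpha'$ (using the construction of the wedge together with the uniqueness of condensed compressed-like compatible Macaulay trees), so Theorem~\ref{thm:numerical-char-color-compressed-complex} gives $\alpha \preceq \alpha'$. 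The hard part of this plan will be the structural analysis in the forward direction: verifying that the left-weight computations at $\hat{r}, r_1', r_1''$ force $|V_2| = 3$ with the specific facet-distribution across the three color-$2$ pairs, and then identifying the only nontrivial color-compression constraint as $\Delta_{\alpha''} \subseteq \Delta_{\alpha'}$. All other steps are direct consequences of the uniqueness result Proposition~\ref{prop:condensed-compressed-case-n=1}, the facet-count formula Proposition~\ref{prop:shedding-tree-fine-f-vector}, or routine verifications.
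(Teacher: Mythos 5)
Your proposal is correct and follows essentially the same route as the paper: both reduce the statement to the equivalence $\alpha \preceq \alpha' \Leftrightarrow \Delta_{\alpha''} \subseteq \Delta_{\alpha'}$ (where $\alpha''$ is the condensation of the $k$-twin of $\alpha$) and then use the fact that pure compressed complexes are determined by their number of facets together with Proposition \ref{prop:shedding-tree-fine-f-vector}. The only difference is that the paper asserts the equivalence $\alpha \wedge \alpha'$ compatible $\Leftrightarrow \Delta_{\alpha''} \subseteq \Delta_{\alpha'}$ in one clause (relying on the machinery of Theorems \ref{thm:compressed-like-compatible=>color-compressed} and \ref{thm:numerical-char-color-compressed-complex} as in the proof of Proposition \ref{prop:preceq-partial-order}), whereas you spell out the verification via the three color-$2$ vertices of $\Delta_{\alpha\wedge\alpha'}$; your structural claims there check out.
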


\begin{proof}
Let the condensation of the $k$-twin of $\alpha$ be $\alpha^{\prime\prime}$, which by construction is a generalized $k$-Macaulay representation of $\partial_{[-1]}(\alpha)$. By definition, $\alpha \preceq \alpha^{\prime}$ if and only if $\alpha \wedge \alpha^{\prime}$ is compatible, or equivalently, $\Delta_{\alpha^{\prime\prime}} \subseteq \Delta_{\alpha^{\prime}}$. Now, $\alpha^{\prime}, \alpha^{\prime\prime}$ are both generalized $k$-Macaulay representations, so by Theorem \ref{thm:numerical-char-color-compressed-complex}, $\Delta_{\alpha^{\prime}}$, $\Delta_{\alpha^{\prime\prime}}$ are both pure compressed $(k-1)$-dimensional simplicial complexes and hence each completely determined by its number of facets. This implies $\Delta_{\alpha^{\prime\prime}} \subseteq \Delta_{\alpha^{\prime}}$ if and only if $f_{k-1}(\Delta_{\alpha^{\prime\prime}}) \leq f_{k-1}(\Delta_{\alpha^{\prime}})$, which by Proposition \ref{prop:Macaulay-tree=>shedding-tree} is equivalent to $\partial_{[-1]}(\alpha) \leq N^{\prime}$.
\end{proof}

Finally, we characterize the fine $f$-vectors of colored complexes.

\begin{theorem}\label{thm:main-theorem-colored-complexes}
Let ${\bf a} \in \mathbb{P}^n$, and let $f = \{f_{\bf b}\}_{{\bf b} \leq {\bf a}}$ be an array of integers. Then the following are equivalent:
\begin{enum*}
\item $f$ is the fine $f$-vector of an ${\bf a}$-colored complex. \label{main-theorem:condition1}
\item $f$ is the fine $f$-vector of a color-compressed ${\bf a}$-colored complex. \label{main-theorem:condition2}
\item $f$ is the fine $f$-vector of a color-shifted ${\bf a}$-colored complex. \label{main-theorem:condition3}
\item $f_{{\bf 0}_n} = 1$, $f_{{\bf a}^{\prime}} = 0$ for all ${\bf a}^{\prime} \not\geq {\bf 0}_n$, $f_{\boldsymbol{\delta}_{i,n}} > 0$ for all $i\in [n]$, and there is an array $\{\alpha_{\bf b}\}_{{\bf 0}_n < {\bf b} \leq {\bf a}}$ such that each $\alpha_{{\bf b}}$ is a generalized ${\bf b}$-Macaulay representation of $f_{\bf b}$, and $\alpha_{\bf b} \preceq \alpha_{{\bf b}^{\prime}}$, for all ${\bf b}, {\bf b}^{\prime}$ satisfying ${\bf 0}_n < {\bf b}^{\prime} \lessdot {\bf b} \leq {\bf a}$.\label{main-theorem:condition4}
\end{enum*}

\end{theorem}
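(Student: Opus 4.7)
The plan is to prove the four conditions equivalent by observing that (i), (ii), (iii) collapse into one another via earlier results, so the real content lies in (ii) $\Leftrightarrow$ (iv). Specifically, Corollary \ref{cor:ColorCompressionPreservesFineFVector} gives (i) $\Leftrightarrow$ (ii); the color-complex analogue of Remark \ref{remark:color-shifted=color-compressed-in-1n-case} gives (ii) $\Rightarrow$ (iii) (a color-compressed complex, where the relevant restriction sets are colex-initial segments, is automatically color-shifted); and (iii) $\Rightarrow$ (i) is immediate from the definitions.

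For (ii) $\Rightarrow$ (iv), I would start with a color-compressed ${\bf a}$-colored complex $(\Delta, \pi)$ realizing $f$. The boundary-type conditions $f_{{\bf 0}_n} = 1$, $f_{{\bf a}^{\prime}} = 0$ for ${\bf a}^{\prime} \not\geq {\bf 0}_n$, and $f_{\boldsymbol{\delta}_{i,n}} > 0$ for $i \in [n]$ are transparent (the empty face, negativity, and non-emptiness of each $V_i$). For each ${\bf 0}_n < {\bf b} \leq {\bf a}$, define $\Delta_{{\bf b}} := \langle \mathcal{F}_{{\bf b}}(\Delta) \rangle$; one checks that $(\Delta_{{\bf b}}, \overline{\pi \cap V(\Delta_{{\bf b}})})$ is a pure color-compressed $\overline{{\bf b}}$-balanced complex with $f_{{\bf b}}$ facets, so Theorem \ref{thm:numerical-char-color-compressed-complex} produces a generalized ${\bf b}$-Macaulay representation $\alpha_{{\bf b}}$ of $f_{{\bf b}}$. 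To get the compatibility $\alpha_{{\bf b}} \preceq \alpha_{{\bf b}^{\prime}}$ for each cover ${\bf b}^{\prime} \lessdot {\bf b}$, I would argue that the condensation of the ${\bf b}^{\prime}$-twin of $\alpha_{{\bf b}}$ corresponds (via Theorem \ref{thm:numerical-char-color-compressed-complex}) to the pure color-compressed ${\bf b}^{\prime}$-balanced complex whose facets are the multidegree-${\bf b}^{\prime}$ faces of $\Delta_{{\bf b}}$; since each such face lives in $\Delta$, and hence in $\Delta_{{\bf b}^{\prime}}$, the combined Macaulay tree $\alpha_{{\bf b}} \wedge \alpha_{{\bf b}^{\prime}}$ is a legitimate $({\bf b}^{\prime},2)$-Macaulay tree, i.e., compatible.

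For (iv) $\Rightarrow$ (ii), given the compatible family $\{\alpha_{{\bf b}}\}$, I would use Proposition \ref{prop:Macaulay-tree=>shedding-tree} to produce the pure color-compressed $\overline{{\bf b}}$-balanced complexes $(\Delta_{\alpha_{{\bf b}}}, \pi_{\alpha_{{\bf b}}})$, embed them all in a common vertex set $\bigcup_{i=1}^{n} [s_i]_i$ (taking $s_i$ maximal over the relevant weights), and set $\Delta := \bigcup_{{\bf b}} \Delta_{\alpha_{{\bf b}}}$. The pairwise compatibilities, together with the transitivity of $\preceq$ from Proposition \ref{prop:preceq-partial-order}, force every multidegree-${\bf b}^{\prime}$ face that appears in $\Delta_{\alpha_{{\bf b}}}$ (for any ${\bf b} \geq {\bf b}^{\prime}$) to already lie in $\Delta_{\alpha_{{\bf b}^{\prime}}}$. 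This is precisely what is needed to show that $\Delta$ is closed under subsets, that each $\mathcal{F}_{{\bf b}}(\Delta) = \mathcal{F}_{{\bf b}}(\Delta_{\alpha_{{\bf b}}})$ (yielding $f_{{\bf b}}(\Delta) = f_{{\bf b}}$), and that color compression is preserved layer-by-layer.

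The main obstacle is the bridge between the purely combinatorial relation $\alpha \preceq \alpha^{\prime}$ on Macaulay trees and the geometric containment statement ``the condensation of the ${\bf a}^{\prime}$-twin of $\alpha$ is a subcomplex of the balanced complex determined by $\alpha^{\prime}$''. This equivalence is essentially a transfer principle: the twin operation sends $\nu(u)$ to $\nu(u) + ({\bf a}^{\prime} - {\bf a})$, so its facets correspond to the colex-minimal ${\bf a}^{\prime}$-shaped subfaces of the facets of $\Delta_{\alpha}$, and Lemma \ref{lemma:face-containment-criterion} then translates compatibility into face containment. The proof of Proposition \ref{prop:preceq-partial-order} already carries out precisely this translation (using color-compressedness of $\Delta_{\alpha \wedge \alpha^{\prime}}$ via Theorem \ref{thm:compressed-like-compatible=>color-compressed}), so once this transfer is invoked the two directions of (ii) $\Leftrightarrow$ (iv) reduce to verifying that the ${\bf b}$-layers $\Delta_{{\bf b}}$ behave correctly and that gluing is controlled by cover-compatibilities, both of which are formal given Theorem \ref{thm:numerical-char-color-compressed-complex}.
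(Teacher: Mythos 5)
Your proposal follows essentially the same route as the paper: (i)$\Leftrightarrow$(ii)$\Leftrightarrow$(iii) via Corollary \ref{cor:ColorCompressionPreservesFineFVector} and the fact that colex initial segments are shifted, (ii)$\Rightarrow$(iv) via the layers $\Delta_{\bf b} = \langle \mathcal{F}_{\bf b}(\Delta)\rangle$ together with Theorem \ref{thm:numerical-char-color-compressed-complex} and the twin construction, and (iv)$\Rightarrow$(ii) by taking the union of the $\Delta_{\alpha_{\bf b}}$ and using transitivity of $\preceq$. The only details you gloss over, both minor, are the extension of the $\overline{{\bf b}}$-Macaulay representation produced by the shedding tree to a genuine ${\bf b}$-Macaulay representation when ${\bf b}$ has zero entries, and the degenerate case $\mathcal{F}_{\bf b}(\Delta) = \emptyset$, handled in the paper by setting $\alpha_{\bf b} = \alpha_{\emptyset}$.
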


\begin{proof}
The equivalences \ref{main-theorem:condition1} $\Leftrightarrow$ \ref{main-theorem:condition2} $\Leftrightarrow$ \ref{main-theorem:condition3} follow immediately from Corollary \ref{cor:ColorCompressionPreservesFineFVector} and Remark \ref{remark:color-shifted=color-compressed-in-1n-case}, thus we are left to show that \ref{main-theorem:condition1} $\Leftrightarrow$ \ref{main-theorem:condition4}. Let $(\Delta, \pi)$ be an arbitrary colored-compressed ${\bf a}$-colored complex, and assume without loss of generality that $\pi = ([\lambda_1]_1, \dots, [\lambda_n]_n)$, where $\lambda_1, \dots, \lambda_n \in \mathbb{P}$. Clearly $f_{{\bf 0}_n}(\Delta) = 1$ and $f_{{\bf a}^{\prime}}(\Delta) = 0$ for all ${\bf a}^{\prime} \not\geq {\bf 0}_n$. By definition, $f_{\boldsymbol{\delta}_{i,n}}(\Delta) = \lambda_i > 0$ for each $i\in [n]$. For any ${\bf b}\in \mathbb{N}^n$ satisfying ${\bf 0}_n < {\bf b} \leq {\bf a}$, define $\Delta_{{\bf b}} := \big\langle \mathcal{F}_{\bf b}(\Delta) \big\rangle$, let $U_{{\bf b}}$ denote the vertex set of $\Delta_{{\bf b}}$, and let $\pi_{{\bf b}} := \pi \cap U_{\bf b}$. By default, set $U_{\bf b} = \emptyset$ if $\mathcal{F}_{\bf b}(\Delta) = \emptyset$. Notice that if $\mathcal{F}_{\bf b}(\Delta) \neq \emptyset$, then $(\Delta_{{\bf b}}, \overline{\pi_{{\bf b}}})$ is a pure color-compressed $\overline{{\bf b}}$-balanced complex, thus by Theorem \ref{thm:numerical-char-color-compressed-complex}, the $\overline{{\bf b}}$-Macaulay tree induced by the shedding tree of $(\Delta_{{\bf b}}, \overline{\pi_{{\bf b}}})$ is a generalized $\overline{{\bf b}}$-Macaulay representation, which we denote by $\overline{\alpha}_{{\bf b}} = (T_{\bf b}, \overline{\varphi_{\bf b}})$. We can then extend each such $\overline{\alpha_{{\bf b}}}$ to a generalized ${\bf b}$-Macaulay representation $\alpha_{{\bf b}} = (T_{\bf b}, \varphi_{\bf b})$ as follows.

Given ${\bf b} = (b_1, \dots, b_n)$, let $m$ be the number of non-zero entries in ${\bf b}$, and let $I := \{i_1, \dots, i_m\} \subseteq [n]$ be the set of indices with $i_1 < \dots < i_m$, such that $b_{i_1}, \dots, b_{i_m}$ are all the non-zero entries of ${\bf b}$. Let $(r_0, L, Y)$ be the $(\text{root},1,3)$-triple of $T_{\bf b}$, and define the vertex labeling $\varphi_{\bf b}$ of $T_{\bf b}$ by (i): $\varphi_{\bf b}(r_0) = {\bf b}$; (ii): $\varphi_{\bf b}(y) = i_{(\overline{\varphi_{\bf b}}(y))} \in I$ for all $y\in Y$; (iii): $(\varphi_{\bf b})^{[i_t]}(u) = (\overline{\varphi_{\bf b}})^{[t]}(u)$ for all $t\in [m]$, $u\in L$; and (iv): $(\varphi_{\bf b})^{[j]}(u) = \lambda_j$ for all $j\in [n]\backslash I$, $u\in L$. We can check that $\alpha_{\bf b}$ is a ${\bf b}$-Macaulay tree of $f_{\bf b}(\Delta)$, and the fact that $\overline{\alpha}_{\bf b}$ is a generalized Macaulay representation easily implies $\alpha_{\bf b}$ is also a generalized Macaulay representation. As for the case $\mathcal{F}_{\bf b}(\Delta) = \emptyset$, define $\alpha_{\bf b} := \alpha_{\emptyset}$.

Let ${\bf b}, {\bf b}^{\prime} \in \mathbb{N}^n$ satisfy ${\bf 0}_n < {\bf b}^{\prime} \lessdot {\bf b} \leq {\bf a}$. We want to show that $\alpha_{\bf b} \preceq \alpha_{{\bf b}^{\prime}}$. Clearly, if $\mathcal{F}_{\bf b}(\Delta) = \emptyset$, then $\alpha_{\bf b} = \alpha_{\emptyset} \preceq \alpha_{{\bf b}^{\prime}}$. Since $\mathcal{F}_{{\bf b}^{\prime}}(\Delta) = \emptyset$ implies $\mathcal{F}_{\bf b}(\Delta) = \emptyset$, we can assume $\mathcal{F}_{\bf b}(\Delta)$ and $\mathcal{F}_{{\bf b}^{\prime}}(\Delta)$ are non-empty, i.e. $\alpha_{\bf b}$ and $\alpha_{{\bf b}^{\prime}}$ are both non-trivial. Let $\widetilde{\alpha}_{{\bf b}^{\prime}, {\bf b}}$ denote the ${\bf b}^{\prime}$-twin of $\alpha_{\bf b}$. Theorem \ref{thm:numerical-char-color-compressed-complex} tells us $\Delta_{{\bf b}^{\prime}} = \Delta_{\alpha_{{\bf b}^{\prime}}}$ and $\big\langle \mathcal{F}_{{\bf b}^{\prime}}(\Delta_{{\bf b}}) \big\rangle = \Delta_{\widetilde{\alpha}_{{\bf b}^{\prime}, {\bf b}}}$, while it follows from definition that $\big\langle \mathcal{F}_{{\bf b}^{\prime}}(\Delta_{{\bf b}}) \big\rangle \subseteq \Delta_{{\bf b}^{\prime}}$. Thus, by the construction of $\alpha_{\bf b} \wedge \alpha_{{\bf b}^{\prime}}$, we get that $(\Delta_{\alpha_{\bf b} \wedge \alpha_{{\bf b}^{\prime}}}, \pi_{\alpha_{\bf b} \wedge \alpha_{{\bf b}^{\prime}}})$ is a pure color-compressed $({\bf b}^{\prime},2)$-balanced complex, therefore Theorem \ref{thm:numerical-char-color-compressed-complex} tells us $\alpha_{\bf b} \wedge \alpha_{{\bf b}^{\prime}}$ is compatible, i.e. $\alpha_{\bf b} \preceq \alpha_{{\bf b}^{\prime}}$.

Conversely, suppose instead statement \ref{main-theorem:condition4} of the theorem is true. By assumption, $\alpha_{\bf b} \preceq \alpha_{{\bf b}^{\prime}}$ for every ${\bf b}, {\bf b}^{\prime} \in \mathbb{N}^n$ satisfying ${\bf 0}_n < {\bf b}^{\prime} \lessdot {\bf b} \leq {\bf a}$, so since $\preceq$ is a partial order (Proposition \ref{prop:preceq-partial-order}), it follows that $\alpha_{\bf b} \preceq \alpha_{\bf c}$ for every ${\bf b}, {\bf c} \in \mathbb{N}^n$ satisfying ${\bf 0}_n < {\bf c} \leq {\bf b} \leq {\bf a}$. Define $\pi_{\Gamma} := ([f_{\boldsymbol{\delta}_{1,n}}]_1, \dots, [f_{\boldsymbol{\delta}_{n,n}}]_n)$, and let $\Gamma$ be the union of all simplicial complexes $\Delta_{\alpha_{\bf b}}$ over all $n$-tuples ${\bf b}$ satisfying ${\bf 0}_n < {\bf b} \leq {\bf a}$, $f_{\bf b}>0$. For each such ${\bf b}$, the ${\bf b}$-balanced complex $(\Delta_{\alpha_{\bf b}}, \overline{\pi_{{\bf b}}})$ is color-compressed, hence $(\Gamma, \pi_{\Gamma})$ is a color-compressed ${\bf a}$-colored complex.

Now, choose arbitrary ${\bf b}, {\bf c}\in \mathbb{N}^n$ such that ${\bf 0}_n < {\bf c} \leq {\bf b} \leq {\bf a}$. Note that $\alpha_{\emptyset} \preceq \alpha_{\bf b}$, while $\alpha_{\bf b} = \alpha_{\emptyset}$ if and only if $f_{\bf b} = 0$, so $f_{\bf b} = 0$ if and only if $f_{\bf b}(\Gamma) = 0$. Consequently, if $f_{\bf b} > 0$, then $f_{\bf c} > 0$ and $\mathcal{F}_{\bf c}(\Delta_{\alpha_{\bf b}}) \subseteq \Delta_{\alpha_{\bf c}}$, which imply $\mathcal{F}_{\bf c}(\Gamma) = \mathcal{F}_{\bf c}(\Delta_{\alpha_{\bf c}})$. Therefore by Proposition \ref{prop:Macaulay-tree=>shedding-tree}, $\alpha_{\bf b}$ is a generalized ${\bf b}$-Macaulay representation of $f_{\bf b}(\Gamma)$ for every ${\bf b} \in \mathbb{N}^n$ satisfying ${\bf 0}_n < {\bf b} \leq {\bf a}$, i.e. $f$ is the fine $f$-vector of $(\Gamma, \pi_{\Gamma})$.
\end{proof}

In the special case $n=1$, an ${\bf a}$-colored complex is precisely a $(k-1)$-dimensional simplicial complex for some $k\in [a_1]$, so in view of Theorem \ref{thm:main-theorem-colored-complexes} (cf. Proposition \ref{prop:case-n=1-preceq-equiv}), we can restate the Kruskal-Katona theorem as follows:

\begin{theorem}
Let $d\in \mathbb{P}$, and let $f = (f_0, \dots, f_{d-1}) \in \mathbb{Z}^d$. Then the following are equivalent:
\begin{enum*}
\item $f$ is the $f$-vector of a $(d-1)$-dimensional simplicial complex.
\item $f$ is the $f$-vector of a compressed $(d-1)$-dimensional simplicial complex.
\item $f$ is the $f$-vector of a shifted $(d-1)$-dimensional simplicial complex.
\item $f \in \mathbb{P}^d$ and $\alpha_d \preceq \dots \preceq \alpha_1$, where each $\alpha_k$ is the (unique) generalized $k$-Macaulay representation of $f_{k-1}$.
\end{enum*}
\end{theorem}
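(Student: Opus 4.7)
The plan is to apply Theorem~\ref{thm:main-theorem-colored-complexes} in the specialization $n=1$, ${\bf a} = (d)$. In this case, an ${\bf a}$-colored complex is precisely a simplicial complex of dimension at most $d-1$, and the fine $f$-vector $\{f_{{\bf b}}\}_{{\bf b}\leq {\bf a}}$ is indexed by the integers $0,1,\dots,d$; the component $f_{(k)}$ counts the $k$-subsets of the vertex set that are faces, which corresponds to $f_{k-1}$ in the classical $f$-vector indexing (with $f_{(0)} = 1$).

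First, I would dispatch the equivalences \ref{main-theorem:condition1}--\ref{main-theorem:condition3} by invoking Corollary~\ref{cor:ColorCompressionPreservesFineFVector} (giving compressed $\Leftrightarrow$ arbitrary in the $n=1$ case) together with Remark~\ref{remark:color-shifted=color-compressed-in-1n-case} (giving shifted $\Leftrightarrow$ compressed in the $n=1$ case). Note these agree with the uncolored definitions since a $(1, \dots, 1)$-colored complex in the general sense collapses to an ordinary complex when $n=1$.

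Next, for the equivalence with \ref{main-theorem:condition4}, I would apply Theorem~\ref{thm:main-theorem-colored-complexes} directly. The condition ${\bf 0}_n < {\bf b}^{\prime} \lessdot {\bf b} \leq {\bf a}$ becomes $1 \leq k-1 \lessdot k \leq d$, so the chain of inequalities $\alpha_{\bf b} \preceq \alpha_{{\bf b}^{\prime}}$ translates exactly to the chain $\alpha_d \preceq \cdots \preceq \alpha_1$. Uniqueness of each $\alpha_k$ is guaranteed by Proposition~\ref{prop:condensed-compressed-case-n=1}, so writing ``the (unique) generalized $k$-Macaulay representation of $f_{k-1}$'' is justified. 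The requirement $f \in \mathbb{P}^d$ needs a brief justification on both sides: if $\Delta$ is genuinely $(d-1)$-dimensional, then $f_{d-1} > 0$, and subset-closure of any facet of dimension $d-1$ forces $f_i > 0$ for every $i \in \{0, \dots, d-1\}$; conversely, if $f \in \mathbb{P}^d$ and the $\preceq$-chain holds, then Theorem~\ref{thm:main-theorem-colored-complexes} yields a colored complex of type $(d)$, and $f_{d-1} > 0$ guarantees that this complex has dimension exactly $d-1$.

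Finally, to show this is truly a restatement of the classical Kruskal-Katona theorem, I would unpack the relation $\alpha_k \preceq \alpha_{k-1}$ using Proposition~\ref{prop:case-n=1-preceq-equiv}, which says $\alpha_k \preceq \alpha_{k-1}$ if and only if $\partial_{[-1]}(\alpha_k) \leq f_{k-2}$. Since the unique generalized $k$-Macaulay representation of $f_{k-1}$ encodes the classical $k$-th Macaulay representation of $f_{k-1}$ (as explained following Proposition~\ref{prop:condensed-compressed-case-n=1}), the quantity $\partial_{[-1]}(\alpha_k)$ is precisely the classical boundary of $f_{k-1}$ that appears in the Kruskal-Katona inequality. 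Essentially no step is an obstacle here; the only care required is bookkeeping the index shift between $f_{(k)}$ and the classical $f_{k-1}$, and making sure that ``$(d-1)$-dimensional'' in the hypothesis lines up with $f \in \mathbb{P}^d$ via subset-closure.
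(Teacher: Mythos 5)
Your proposal is correct and follows the same route as the paper, which presents this theorem as an immediate consequence of Theorem~\ref{thm:main-theorem-colored-complexes} specialized to $n=1$, ${\bf a}=(d)$, read together with Proposition~\ref{prop:case-n=1-preceq-equiv} and the uniqueness statement of Proposition~\ref{prop:condensed-compressed-case-n=1}. One small imprecision: Remark~\ref{remark:color-shifted=color-compressed-in-1n-case} gives the equivalence of ``color-shifted'' and ``color-compressed'' only for type ${\bf 1}_n$, not for $n=1$ with $d\geq 2$ (where shifted does not imply compressed); what you actually need is only the one-directional implication (compressed $\Rightarrow$ shifted) from the first sentence of that remark, which combined with Corollary~\ref{cor:ColorCompressionPreservesFineFVector} closes the cycle of the first three equivalences.
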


As for the case ${\bf a} = {\bf 1}_n$, Theorem \ref{thm:main-theorem-colored-complexes} yields the following corollary.

\begin{corollary}\label{cor:numer-char-colored-complex-type1n}
Let $n\in \mathbb{P}$, let $f = \{f_S\}_{S \subseteq [n]}$ be an array of integers, and define $\phi: 2^{[n]} \to \{0,1\}^n$ by $S \mapsto (s_1, \dots, s_n)$, where $s_i = 1$ if and only if $i\in S$. Then the following are equivalent:
\begin{enum*}
\item $f$ is the flag $f$-vector of a ${\bf 1}_n$-colored complex.
\item $f_{\emptyset} = 1$, $f_{\{i\}} > 0$ for all $i\in [n]$, and there is an array $\{\alpha_S\}_{S\subseteq [n], S\neq \emptyset}$ such that each $\alpha_{S}$ is a generalized $\phi(S)$-Macaulay representation of $f_S$, and $\alpha_{S} \preceq \alpha_{S^{\prime}}$ for all non-empty $S, S^{\prime} \subseteq [n]$ satisfying $|S| = |S^{\prime}| + 1$.
\end{enum*}
\end{corollary}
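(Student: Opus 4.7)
The plan is to deduce this corollary as an essentially routine specialization of Theorem \ref{thm:main-theorem-colored-complexes} to the case ${\bf a} = {\bf 1}_n$. The map $\phi: 2^{[n]} \to \{0,1\}^n$ is a bijection between subsets $S\subseteq [n]$ and vectors ${\bf b} \in \mathbb{N}^n$ satisfying ${\bf 0}_n \leq {\bf b} \leq {\bf 1}_n$, so the flag $f$-vector $\{f_S\}_{S\subseteq [n]}$ of any ${\bf 1}_n$-colored complex $(\Delta,\pi)$ coincides, after reindexing along $\phi$, with its fine $f$-vector $\{f_{\phi(S)}(\Delta)\}_{\phi(S) \leq {\bf 1}_n}$. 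Under this reindexing, it is therefore enough to translate condition (iv) of Theorem \ref{thm:main-theorem-colored-complexes}.

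First I would rewrite the numerical hypotheses: the condition $f_{{\bf 0}_n} = 1$ becomes $f_\emptyset = 1$, the conditions $f_{\boldsymbol{\delta}_{i,n}}>0$ become $f_{\{i\}}>0$, and the condition $f_{{\bf a}^\prime} = 0$ for ${\bf a}^\prime \not\geq {\bf 0}_n$ is vacuous in the subset indexing (since $\phi(S) \geq {\bf 0}_n$ automatically). Next I would verify that the covering relation ${\bf 0}_n < {\bf b}^\prime \lessdot {\bf b} \leq {\bf 1}_n$ corresponds, via $\phi$, exactly to non-empty $S, S^\prime \subseteq [n]$ with $S^\prime \subsetneq S$ and $|S^\prime| = |S|-1$, i.e., $S^\prime = S\setminus\{i\}$ for some $i\in S$. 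The array $\{\alpha_{\bf b}\}_{{\bf 0}_n < {\bf b} \leq {\bf a}}$ of Theorem \ref{thm:main-theorem-colored-complexes} thus becomes an array $\{\alpha_S\}_{\emptyset \neq S \subseteq [n]}$ in which each $\alpha_S$ is a generalized $\phi(S)$-Macaulay representation of $f_S$.

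The only semantic subtlety to point out concerns the hypothesis ``$\alpha_S \preceq \alpha_{S^\prime}$ for all non-empty $S, S^\prime$ with $|S| = |S^\prime|+1$'' appearing in the corollary: the wedge $\alpha_S \wedge \alpha_{S^\prime}$ underlying the definition of $\preceq$ is only defined when the type of $\alpha_{S^\prime}$ is coordinatewise $\leq$ the type of $\alpha_S$, which together with $|S| = |S^\prime|+1$ forces $S^\prime \subsetneq S$. Hence this cardinality condition is equivalent to the cover relation $\phi(S^\prime) \lessdot \phi(S)$ needed to invoke Theorem \ref{thm:main-theorem-colored-complexes}. With these translations in place the corollary follows immediately from the theorem, so I anticipate no real obstacles; the main (and only) thing to check carefully is the bookkeeping of how empty fine $f$-vector entries (i.e., $f_S = 0$, corresponding to the trivial representation $\alpha_\emptyset$) interact with the covering relations, which is handled exactly as in the proof of Theorem \ref{thm:main-theorem-colored-complexes} using $\alpha_\emptyset \preceq \beta$ for every $\beta$.
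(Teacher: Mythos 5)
Your proposal is correct and matches the paper's approach exactly: the paper derives this corollary as an immediate specialization of Theorem \ref{thm:main-theorem-colored-complexes} to ${\bf a} = {\bf 1}_n$, with the reindexing of ${\bf b} \leq {\bf 1}_n$ by subsets $S \subseteq [n]$ via $\phi$. Your observation that the cardinality condition $|S| = |S'|+1$ together with the definedness of $\alpha_S \wedge \alpha_{S'}$ forces $S' \subsetneq S$, recovering the cover relation $\phi(S') \lessdot \phi(S)$, is the right reading of the statement.
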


By Theorem \ref{thm:BFSthm}, the flag $h$-vector of a $(d-1)$-dimensional completely balanced CM complex is the flag $f$-vector of a ${\bf 1}_d$-colored complex, hence by Corollary \ref{cor:numer-char-colored-complex-type1n}, we get the following different but equivalent numerical characterization of completely balanced CM complexes.

\begin{corollary}\label{cor:equiv-flag-h-vector-char}
Let $d\in \mathbb{P}$, let $h = \{h_S\}_{S \subseteq [d]}$ be an array of integers, and define $\phi: 2^{[d]} \to \{0,1\}^d$ by $S \mapsto (s_1, \dots, s_d)$, where $s_i = 1$ if and only if $i\in S$. Then the following are equivalent:
\begin{enum*}
\item $h$ is the flag $h$-vector of a $(d-1)$-dimensional completely balanced CM complex.
\item $h_{\emptyset} = 1$, $h_{\{i\}} > 0$ for all $i\in [d]$, and there is an array $\{\alpha_S\}_{S\subseteq [d], S\neq \emptyset}$ such that each $\alpha_{S}$ is a generalized $\phi(S)$-Macaulay representation of $h_S$, and $\alpha_{S} \preceq \alpha_{S^{\prime}}$ for all non-empty $S, S^{\prime} \subseteq [d]$ satisfying $|S| = |S^{\prime}| + 1$.
\end{enum*}
\end{corollary}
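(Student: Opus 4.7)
The plan is to derive Corollary \ref{cor:equiv-flag-h-vector-char} as a direct consequence of Theorem \ref{thm:BFSthm} combined with the already-proved Corollary \ref{cor:numer-char-colored-complex-type1n}, with essentially no new combinatorial content required. Since the characterization in (ii) matches verbatim the condition in Corollary \ref{cor:numer-char-colored-complex-type1n}(ii) (with $h$ in place of $f$), it suffices to show that (i) of the present corollary is equivalent to ``$h$ is the flag $f$-vector of a ${\bf 1}_d$-colored complex'', and then quote Corollary \ref{cor:numer-char-colored-complex-type1n}.

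First, I would unpack the indexing convention fixed at the end of Section \ref{sec:Preliminaries}: a completely balanced complex is of type ${\bf 1}_d$, and under the identification of each ${\bf b} \leq {\bf 1}_d$ with the subset $\{i\in [d]: b_i = 1\} \subseteq [d]$, the flag $h$-vector $\{h_S\}_{S \subseteq [d]}$ is literally a re-indexing of the fine $h$-vector $\{h_{{\bf b}}\}_{{\bf b} \leq {\bf 1}_d}$ via the map $\phi$ defined in the corollary statement. The same applies to flag $f$-vectors and fine $f$-vectors.

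Next, I would invoke Theorem \ref{thm:BFSthm}, specifically the equivalence (i)~$\Leftrightarrow$~(iii): $h = \{h_{{\bf b}}\}_{{\bf b}\leq {\bf 1}_d}$ is the fine $h$-vector of a ${\bf 1}_d$-balanced CM complex iff $h$ is the fine $f$-vector of a ${\bf 1}_d$-colored multicomplex. Since every monomial in a ${\bf 1}_d$-colored multicomplex $(M, \pi)$ satisfies $\deg(m_{X_i}) \leq 1$ for each $i$, each such monomial is squarefree, so the canonical bijection between squarefree monomials and subsets (noted in Section \ref{sec:Preliminaries}) identifies ${\bf 1}_d$-colored multicomplexes with ${\bf 1}_d$-colored complexes, preserving fine $f$-vectors on the nose (the shift in indexing between multicomplexes and simplicial complexes is absent because we index by multidegree/subset rather than by dimension). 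Combining this with the preceding re-indexing shows that statement (i) of the corollary is equivalent to ``$h$ is the flag $f$-vector of a ${\bf 1}_d$-colored complex''.

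Finally, I would apply Corollary \ref{cor:numer-char-colored-complex-type1n} (with $n = d$ and $f$ replaced by $h$), whose conclusion is precisely statement (ii) of the corollary, noting that the relation $\phi(S^{\prime}) \lessdot \phi(S)$ is equivalent to $S^{\prime} \subsetneq S$ with $|S| = |S^{\prime}| + 1$, so the condition ``$\alpha_S \preceq \alpha_{S^{\prime}}$ for all non-empty $S, S^{\prime}$ with $|S| = |S^{\prime}|+1$'' is the literal translation of the corresponding condition in Corollary \ref{cor:numer-char-colored-complex-type1n}. Because every step is a straightforward invocation of an already-established result or a routine re-indexing, there is no real obstacle; the only care required is in tracking the identifications between multicomplexes and simplicial complexes, and between multi-index and subset-index notations, to confirm they are compatible with the partial order $\preceq$.
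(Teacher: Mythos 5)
Your proposal is correct and follows exactly the paper's route: the paper likewise derives this corollary by combining Theorem \ref{thm:BFSthm} (the fine $h$-vector of a ${\bf 1}_d$-balanced CM complex equals the fine $f$-vector of a ${\bf 1}_d$-colored multicomplex, hence of a ${\bf 1}_d$-colored complex) with Corollary \ref{cor:numer-char-colored-complex-type1n}. Your extra care in tracking the multicomplex/complex and multi-index/subset identifications is sound but adds nothing beyond what the paper's one-line argument already relies on.
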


\section*{Acknowledgements}
The author thanks Edward Swartz for his enthusiasm and patience in reading several drafts of this paper, as well as for giving many helpful comments and suggestions. The author also thanks Louis Billera for his insightful comments that helped improve the paper.

\bibliographystyle{plain}
\bibliography{References}

\end{document}